\theoremstyle{plain}
\newtheorem{thm}{Theorem}[section]
\newtheorem{cor}[thm]{Corollary}
\newtheorem{prob}[thm]{Problem}
\newtheorem{pro}[thm]{Proposition}
\newtheorem{lem}[thm]{Lemma}
\newtheorem{conjecture}[thm]{Conjecture}
\newtheorem{proposition-principale}[thm]{Proposition principale}
\newtheorem{theoalph}{Theorem}
\newtheorem{claim}{Claim}
\theoremstyle{definition}
\newtheorem{defi}[thm]{Definition}
\newtheorem{que}{Question}[section]
\newtheorem{eg}[thm]{Example}
\newtheorem{rem}[thm]{Remark}
\newtheorem{rems}[thm]{Remarks}
\numberwithin{equation}{section}       
\begin{document}

\setlength{\baselineskip}{0.54cm}         
\title{Holomorphic vector fields with a barycentric condition}
\date{}

\author{Dominique Cerveau}
\address{Univ. Rennes, CNRS, IRMAR-UMR $6625$, F-35000 Rennes, France}
\email{dominique.cerveau@univ-rennes1.fr}

\author{Julie D\'eserti}
\address{Universit\'e d'Orl\'eans, Institut Denis Poisson, route de Chartres, $45067$ Orl\'eans Cedex $2$, France}
\email{deserti@math.cnrs.fr}

\author{Alcides Lins Neto}
\address{IMPA, Estrada Dona Castorina, 110, Horto, Rio de Janeiro, Brasil}
\email{alcides@impa.br}

\subjclass[2020]{32A10, 32M35, 34M45}

\keywords{}

\begin{abstract} 
We study the $p$-tuples  of holomorphic vector fields $(X_1,X_2,\ldots,X_p)$
satisfying the barycentric property $\displaystyle\sum_k\exp tX_k=p\cdot\mathrm{id}$, where 
$\exp tX$ denotes the flow of $X$.
\end{abstract}

\maketitle

\section{Introduction}

Let $\mathcal{U}$ be a connected open subset of $\mathbb{R}^n$
(resp. $\mathbb{C}^n$).
Let $X_1$, $X_2$, $\ldots$, 
$X_p$ be $p$ analytic (resp. holomorphic) distinct vector 
fields on $\mathcal{U}$. Denote by 
$\varphi_t^k=\exp(tX_k)$ the local one-parameter
subgroup of~$X_k$; it is the solution of the following 
ordinary differential equation
\[
\frac{d\varphi_t^k(x)}{dt}=X_k(\varphi_t^k(x))
\]
with initial data $\varphi_0^k(x)=x$.

For any point $x\in\mathcal{U}$, $\varphi^k_t(x)$ 
is well-defined for $t$ sufficiently small and 
we assume that 
\begin{equation}\label{eq:chambar}
\displaystyle\sum_{k=1}^pX_k(\varphi_t^k(x))=0\qquad\forall\,x\in\mathcal{U}\text{ and $t$ small.}
\end{equation}
In particular $\displaystyle\sum_{k=1}^p\exp(tX_k)=p\mathrm{id}$ and
by doing $t=0$ in  (\ref{eq:chambar}) we get 
\[
\displaystyle\sum_{k=1}^pX_k=0.
\]

Let us give an interpretation of $(\ref{eq:chambar})$: 
at any point $x$ there are $p$ identical particles 
transported by the vector fields $X_k$ while 
preserving their barycenter at the initial position
$x$. The condition $(\ref{eq:chambar})$ is called 
\textsl{barycentric property}. A set of $p$ 
vector fields 
$X_1$, $X_2$, $\ldots$, $X_p$ satisfying 
the barycentric property is called a \textsl{$p$-chambar} 
and is denoted $\mathrm{Ch}(X_1,X_2,\ldots,X_p)$. 
In \S \ref{sec:remeg} we give a long list of detailed examples. 
The barycentric property produces 
interesting ordinary differential equations in 
dimension $\geq 1$.

\begin{rem}
The barycentric property is invariant by 
affine transformations. Let 
$\mathrm{Ch}(X_1,X_2,\ldots,X_p)$ be a $p$-chambar
in some open subset $\mathcal{U}\subset\mathbb{C}^n$
and let $T$ be an affine transformation of 
$\mathbb{C}^n$. Then the vector fields 
$T_*X_1$, $T_*X_2$, $\ldots$, $T_*X_p$ satisfy
the barycentric condition. 

In fact, if a biholomorphism 
$f\colon \mathcal{U}\to f(\mathcal{U})\subset\mathbb{C}^n$
sends any set of vector fields on 
$\mathcal{U}$ with the barycentric
property into another set with the 
barycentric property, then $f$ is 
an affine transformation. However, 
in some particular cases of $p$-chambars 
there are other types of biholomorphisms
with this property (see for instance
Theorem \ref{thm:cstpchamb}).
\end{rem}

If $X$ is a vector field on $\mathcal{U}$, then 
$\mathcal{F}_X$ denotes the foliation (maybe 
singular) whose leaves are the integral curves 
of $X$. Hence $\mathcal{F}_X$ is a foliation by 
(real or complex) curves. From now on all the 
vector fields $X_k$ are not identically
zero.

In the case of a $2$-chambar $\mathrm{Ch}(X_1,X_2)$ 
condition (\ref{eq:chambar}) implies that 
$\mathcal{F}_{X_1}=\mathcal{F}_{X_2}$. We have also (Theorem~\ref{thm:dim1}):

\begin{theoalph}\label{thm:2chambar}
{\sl Let $\mathcal{U}$ be an open subset of $\mathbb{R}^n$ 
$($resp. $\mathbb{C}^n)$.
Let $X_1$, $X_2$ be two analytic $($resp. holomorphic$)$ vector fields 
on $\mathcal{U}$. Assume that $X_1$ and $X_2$ satisfy the barycentric 
property. 

Then 
$\mathcal{F}_{X_1}=\mathcal{F}_{X_2}$, and it is a 
foliation by straight lines:
\begin{itemize}
\item[$\diamond$] the closure of the generic leaves
are intersection of lines with the open subset 
$\mathcal{U}$;

\item[$\diamond$] on each line the flow 
$\varphi_t^k=\exp(tX_k)$, $k=1$, $2$, coincides with the 
flow of a constant vector field.
\end{itemize}}
\end{theoalph}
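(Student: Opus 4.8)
\emph{Proof sketch.} The plan is to reduce everything to the single vector field $X_1$ and to show that its integral curves are straight lines traversed with constant speed. First I would put $t=0$ in the barycentric identity \eqref{eq:chambar}: this gives $X_1+X_2=0$ on $\mathcal{U}$, so $X_2=-X_1$. In particular $X_1$ and $X_2$ are collinear at every point, hence have the same unparametrized integral curves, and $\mathcal{F}_{X_1}=\mathcal{F}_{X_2}$. Write $\varphi_t=\varphi_t^1=\exp(tX_1)$. Since $X_2=-X_1$, the flow of $X_2$ is the time-reversed flow of $X_1$, i.e. $\varphi_t^2=\varphi_{-t}$, so \eqref{eq:chambar} becomes, for every $x\in\mathcal{U}$ and $t$ small,
\[
X_1(\varphi_t(x))=X_1(\varphi_{-t}(x)).
\]

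Next I would differentiate this identity in $t$ at $t=0$. Writing $X_1=\sum_j a_j\,\partial_{x_j}$, the chain rule gives $\frac{d}{dt}\big|_{t=0}X_1(\varphi_{\pm t}(x))=\pm\,DX_1(x)\cdot X_1(x)$, whose $j$-th component is $\pm\,X_1(a_j)(x)$. Equating the two sides forces $DX_1\cdot X_1\equiv 0$ on $\mathcal{U}$, i.e. every coefficient $a_j$ is a first integral of $X_1$. (Equivalently, one may differentiate \eqref{eq:chambar} directly in $t$ at $t=0$ and use $X_2=-X_1$, which gives $2\,DX_1\cdot X_1=0$.)

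Finally I would integrate the flow explicitly. For fixed $x$, the curve $g(t):=X_1(\varphi_t(x))$ has derivative $g'(t)=DX_1(\varphi_t(x))\cdot X_1(\varphi_t(x))=(DX_1\cdot X_1)(\varphi_t(x))=0$, using that $DX_1\cdot X_1$ vanishes at \emph{every} point of $\mathcal{U}$ and that $\varphi_t(x)\in\mathcal{U}$. Hence $g$ is constant equal to $X_1(x)$, so $\dot\varphi_t(x)=X_1(\varphi_t(x))=X_1(x)$ and $\varphi_t(x)=x+t\,X_1(x)$ as long as the trajectory stays in $\mathcal{U}$. Thus through each $x$ with $X_1(x)\neq 0$ the leaf of $\mathcal{F}_{X_1}$ lies on the line $x+\mathbb{R}X_1(x)$ (resp. $x+\mathbb{C}X_1(x)$), and its closure is a connected component of the intersection of that line with $\mathcal{U}$: this is the first bullet. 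Moreover, on such a line $X_1$ is the constant vector $X_1(x)$ and $X_2=-X_1$ the constant vector $-X_1(x)$, so $\varphi_t^1$ and $\varphi_t^2$ are the translation flows of those constant vector fields, which is the second bullet.

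I do not expect a genuine obstacle here: the only points requiring care are that the whole statement is local (flows defined for small $t$, leaves being connected components of a line intersected with $\mathcal{U}$, possible singular points of the foliation at the zeros of $X_1$), and the observation that $X_2=-X_1$ turns \eqref{eq:chambar} into the time-symmetry $X_1(\varphi_t(x))=X_1(\varphi_{-t}(x))$; once that is noticed, a single differentiation in $t$ does all the work.
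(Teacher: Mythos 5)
Your proof is correct and follows essentially the same route as the paper's (Theorem \ref{thm:dim1}): deduce $X_2=-X_1$ at $t=0$, differentiate the barycentric identity in $t$ to obtain $DX_1\cdot X_1\equiv 0$, so the coefficients of $X_1$ are first integrals, and conclude that the trajectories lie on lines along which both flows are translation flows. Your explicit integration via $g(t)=X_1(\varphi_t(x))$ is just a slightly more detailed rendering of the same argument.
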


The link between the $2$-chambards and 
the foliations by straight lines suggests that certain special 
dynamics appear in dimension strictly larger than $1$.

In \S \ref{sec:remeg} we will construct explicit examples 
satisfying Theorem \ref{thm:2chambar}. It is 
sufficient to consider any foliation by straight lines
$\mathcal{F}$ (maybe singular) and to take a vector 
field $X$ whose restriction to each leaf is "constant". 

\smallskip

In the algebraic case the foliations by 
straight lines are classified on $\mathbb{P}^2_\mathbb{C}$
and~$\mathbb{P}^3_\mathbb{C}$. We will see that 
in this case the flows associated to a global 
algebraic $2$-chambar are some special birational
flows (\S\ref{sec:2cham}).

\smallskip

We will consider the case of colinear vector 
fields (a condition satisfied by the $2$-chambars),
{\it i.e.} the case where $X_i=a_iX$ with $a_i$ 
constant for any $1\leq i\leq p$; such chambars are called rigid chambars. The barycentric
property implies that $\mathcal{F}_X$ is a 
foliation by straight lines in the real case (Theorem~\ref{thm:realnotcomp}) but not in 
the complex case. We
will see the two following results (\S\ref{sec:rigidcham}, Theorem \ref{thm:rigidp} and Corollary \ref{cor:rigidp}):

\begin{theoalph}
{\sl If $\mathrm{Ch}(a_1X,a_2X,\ldots,a_pX)$, 
$a_k\in\mathbb{C}^*$, is a rigid $p$-chambar on the connected open 
set $\mathcal{U}\subset\mathbb{C}^n$, 
then the flow $\exp tX$ of $X$ is polynomial 
of degree at most $p-1$ as a function of the time $t$.
In particular, the orbits of~$X$ are contained in some 
rational curves.}
\end{theoalph}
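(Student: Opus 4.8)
The plan is to combine a Taylor expansion in the time variable with the one‑parameter group law $\exp((t+s)X)=\exp(tX)\circ\exp(sX)$. Write $\phi_u:=\exp(uX)$; since the flow of $X_k=a_kX$ at time $t$ is $\phi_{a_kt}$, the barycentric property reads
\[
\sum_{k=1}^{p}\phi_{a_kt}(x)=p\,x\qquad\text{for every }x\in\mathcal{U}\text{ and every small }t.
\]
Expanding $\phi_u(x)=\sum_{n\ge 0}c_n(x)\,u^n$ near $u=0$, with holomorphic coefficients $c_n$ on $\mathcal{U}$ (so $c_0=\mathrm{id}$ and $c_1=X$), and collecting powers of $t$, the coefficient of $t^n$ for $n\ge 1$ must vanish. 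This gives
\[
S_n\,c_n\equiv 0\ \text{ on }\mathcal{U},\qquad\text{where }\ S_n:=\sum_{k=1}^{p}a_k^{\,n},
\]
the constant term merely reproducing $p\cdot\mathrm{id}=p\cdot\mathrm{id}$ since $S_0=p$.

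Next I would rule out $S_1=S_2=\cdots=S_p=0$. By Newton's identities this would force all elementary symmetric functions $e_1,\dots,e_p$ of $a_1,\dots,a_p$ to vanish, in particular $e_p=a_1a_2\cdots a_p=0$, contradicting $a_k\in\mathbb{C}^{*}$. So fix an index $n_*\in\{1,\dots,p\}$ with $S_{n_*}\ne 0$; the previous step then yields $c_{n_*}\equiv 0$ on $\mathcal{U}$.

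Finally I would turn the vanishing of this single Taylor coefficient into polynomiality of the whole flow, and this is where the group law re‑enters. Fix $x$ and set $\gamma(u):=\phi_u(x)$. From $\phi_{u+v}(x)=\phi_v(\phi_u(x))=\sum_{n\ge 0}c_n(\phi_u(x))\,v^n$ one reads off $\gamma^{(n)}(u)=n!\,c_n(\phi_u(x))$ whenever $\phi_u(x)\in\mathcal{U}$. Taking $n=n_*$ and using $c_{n_*}\equiv 0$ gives $\gamma^{(n_*)}\equiv 0$ for $u$ near $0$, so $\gamma$ agrees there with a polynomial of degree at most $n_*-1\le p-1$. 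Since this holds for every $x\in\mathcal{U}$, the flow $\exp(tX)$ is polynomial of degree at most $p-1$ in $t$. For the last assertion: when $X(x)\ne 0$ the map $t\mapsto\exp(tX)(x)$ is a nonconstant polynomial parametrization, so the orbit of $x$ lies on the rational curve obtained as the Zariski closure of its image (and it is a single point when $X(x)=0$).

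The expansion in the first step is routine; the real content is the arithmetic input (Newton's identities excluding $S_1=\cdots=S_p=0$) together with the observation that the group law must be fed back in — the barycentric relation by itself only kills those $c_n$ with $S_n\ne 0$, and for instance when the $a_k$ run over all $p$‑th roots of unity this does not suffice to force polynomiality without invoking $\phi_{u+v}=\phi_v\circ\phi_u$. I expect this last packaging to be the main (though modest) obstacle.
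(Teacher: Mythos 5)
Your proof is correct and follows essentially the same route as the paper: expand the flow in the time variable, deduce $S_n\,c_n\equiv 0$ from the barycentric relation, invoke Newton's identities (with $a_k\in\mathbb{C}^*$) to find some $n_*\le p$ with $S_{n_*}\neq 0$, and then propagate the vanishing of that single coefficient to the entire tail of the expansion. The only cosmetic difference is the propagation step: the paper uses the explicit formula $c_n=\frac{1}{n!}X^n(x_j)$ so that $X^m(x_j)=X^{m-n_*}\big(X^{n_*}(x_j)\big)\equiv 0$ for $m\ge n_*$, whereas you re-derive the same fact from the group law via $\gamma^{(n_*)}(u)=n_*!\,c_{n_*}(\phi_u(x))$ — two equivalent phrasings of one and the same argument.
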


\begin{theoalph}
{\sl Let $\mathrm{Ch}(a_1X,a_2X,\ldots,a_pX)$ be a rigid $p$-chambar 
on an open set $\mathcal{U}\subset\mathbb{C}^n$. 
If $X$ has a singular point, then the
set $\mathrm{Sing}(X)$ of $X$ has
dimension $\geq 1$.}
\end{theoalph}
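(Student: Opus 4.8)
The plan is to argue by contradiction: assume some singular point $x_0$ of $X$ is isolated in $\mathrm{Sing}(X)$ and contradict the fact (the previous theorem, $\mathrm{Ch}(a_1X,\dots,a_pX)$ rigid) that $\varphi_t=\exp tX$ is polynomial in $t$ of degree $\le p-1$. After an affine change of coordinates — legitimate, since affine maps preserve the chambar structure — I would take $x_0=0$ and write $\varphi_t(x)=\sum_{j=0}^{p-1}t^j\,c_j(x)$ with $c_0=\mathrm{id}$, $c_1=X$ and, by the Lie series, $c_j=\tfrac1{j!}X^j(\mathrm{id})$; equivalently $X^p(x_i)\equiv0$ for every coordinate $x_i$. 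Since $c_{j,i}=\tfrac1{j!}X^{j}(x_i)$ lies in $I:=(X_1,\dots,X_n)$ for all $j\ge1$, every $c_j$ with $j\ge1$ vanishes at $0$, so indeed $\varphi_t(0)=0$; differentiating at $0$ gives $D\varphi_t(0)=\exp\bigl(t\,DX(0)\bigr)$, which being polynomial in $t$ forces the linear part $A:=DX(0)$ to be nilpotent. By hypothesis $0$ is an isolated zero of $X$, so $I$ is $\mathfrak m_0$-primary. I would also record that for $t\ne0$ small $\varphi_t$ has no fixed point near $0$ other than $0$: a fixed point $q\ne0$ is not singular (isolatedness), so its orbit $s\mapsto\varphi_s(q)$ is a non-constant polynomial curve in $s$ and periodic of period $t$, which is impossible.

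Next I would extract a first integral. Differentiating the group law $\varphi_s\circ\varphi_t=\varphi_{s+t}$ and comparing powers of $s$ yields the chain of relations $X(c_{k,i})=(k+1)\,c_{k+1,i}$; hence, if $d\le p-1$ is the exact degree of $\varphi_t$ in $t$, the components of the leading coefficient $c_d$ are first integrals of $X$, they vanish at $0$, and they are not all identically zero. The core of the proof is then a reduction to dimension one, where the statement is transparent: in a disc a field $X=h(z)\partial_z$ with $h(0)=0$, $h\not\equiv0$ has $X^k(z)$ of order $k(\mathrm{ord}_0h-1)+1\ge1$ for all $k$, so $X^k(z)\not\equiv0$ and the flow cannot be polynomial in $t$ — i.e. in dimension one a rigid chambar field has no singular point, so the assertion holds vacuously. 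To descend, note that the linear part of $X^d(x_i)$ equals $(A^dx)_i$; so if $A^d\ne0$ some $c_{d,i}$ has nonzero differential at $0$, the set $S=\{c_{d,i}=0\}$ is a smooth $X$-invariant hypersurface through $0$, $X|_S$ has an isolated zero at $0$, and (after putting $S$ in linear position, since polynomiality of the flow is invariant only under affine, not arbitrary, biholomorphisms) the restricted flow is again polynomial in $t$ of bounded degree; one concludes by induction on $n$. The genuinely degenerate case is $A^d=0$, when all the natural first integrals $c_{d,i}$ are singular at $0$; there I would instead take an analytic branch (a separatrix) of the invariant set $\bigcap_i\{c_{d,i}=0\}$ through $0$, straighten its smooth locus by an adapted affine change or pass to its normalisation, and run the same one-dimensional contradiction — or, in low dimensions, simply perform a lowest-order analysis of the identities $X^p(x_i)\equiv0$ at $0$, which already contradicts $I$ being $\mathfrak m_0$-primary.

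The hard part will be precisely this descent: transferring the polynomial character of the flow to an invariant subvariety that is not a coordinate subspace, and handling the fully degenerate singularities whose linear part is too small for any first integral to cut out a smooth invariant hypersurface. Everything else — the polynomial expansion of $\varphi_t$, nilpotence of $A$, the chain relations producing first integrals, and the one-dimensional base case — is routine once the previous theorem is in hand.
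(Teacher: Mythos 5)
Your reduction to the statement ``a $t$-polynomial vector field has no isolated singularity'' is exactly the right first step (it is the paper's Theorem~\ref{thm:rigidp} followed by Theorem~\ref{thm:singdim}), and your preliminary observations are sound: nilpotency of $A=DX(0)$, the chain relations $X(c_{k,i})=(k+1)c_{k+1,i}$, the fact that the leading coefficients $c_{d,i}$ are first integrals vanishing at $0$, and the one-dimensional impossibility via the order count $\mathrm{ord}_0\,X^k(z)=k(\mathrm{ord}_0 h-1)+1$. But the heart of your argument --- the descent to dimension one --- is not carried out, and the mechanism you propose for it does not work. Restricting to the invariant hypersurface $S=\{c_{d,i}=0\}$ only gives a flow that is polynomial in $t$ \emph{as a map into the ambient $\mathbb{C}^n$}; to invoke an inductive hypothesis in dimension $n-1$ (or your one-variable base case) you must chart $S$ by an open set of $\mathbb{C}^{n-1}$, and such a chart is in general a non-affine biholomorphism, which, as you yourself note, destroys $t$-polynomiality. ``Putting $S$ in linear position'' is not available: $S$ is a curved level set, not an affine hyperplane. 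The degenerate case $A^d=0$ is in even worse shape: the existence of a separatrix (or of an invariant analytic branch of $\bigcap_i\{c_{d,i}=0\}$ through $0$) is not justified --- in dimension $\geq 3$ separatrices need not exist, and indeed Theorem~\ref{thm:singdim} of the paper asserts precisely that a $t$-polynomial vector field has \emph{no} separatrix through a singularity --- and normalisation of a singular branch is again a non-affine conjugation, so the one-dimensional computation cannot be transported along it. So the essential difficulty, which you explicitly flag as ``the hard part,'' is left unresolved.

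The idea you are missing is the one the paper uses, and it bypasses any dimension reduction: $t$-polynomiality is exploited \emph{globally in time}, not through coordinate computations. If $X$ is $t$-polynomial on a closed ball $B$ with $X(0)=0$, then for a nonsingular $x$ the orbit $t\mapsto\varphi_t(x)$ is a nonconstant polynomial map, hence proper ($|\varphi_{t_n}(x)|\to\infty$ when $|t_n|\to\infty$); this forbids $0$ from lying in the closure of the leaf through $x$ (Lemma~\ref{lem:tecnotclos}). If $0$ were an isolated singularity, one takes nonsingular points $x_n\to 0$, notes that each leaf $\mathcal{L}_{x_n}$ is closed in $B$ and meets the sphere $S(0,r)$ at some $y_n$, passes to a limit point $y_0$ of $(y_n)$, and uses the lemma to confine $\mathcal{L}_{y_0}$, hence the nearby leaves $\mathcal{L}_{y_n}=\mathcal{L}_{x_n}$, to $B\smallsetminus B(0,r')$ --- contradicting $x_n\to 0$. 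This argument lives entirely in $\mathbb{C}^n$, needs no first integrals with nonzero differential, no invariant hypersurfaces, and no treatment of degenerate linear parts, which is exactly where your proposal breaks down.
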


\smallskip

We will see also examples where the $X_i$'s are 
polynomial vector fields, and more generally 
rational vector fields. In particular, in the 
linear case we get (\S\ref{sec:linnil}, Theorem \ref{thm:linnil}):

\begin{theoalph}
{\sl Let $X_1$, $X_2$, $\ldots$, $X_p$ be some linear 
vector fields on $\mathcal{U}\subset\mathbb{R}^n$ 
$($resp. $\mathbb{C}^n)$. 

If they satisfy the barycentric property ,then they are nilpotent.
In particular, the flows $\exp(tX_k)$ are
polynomials in $t$.}
\end{theoalph}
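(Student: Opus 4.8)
The plan is to reduce everything to linear algebra and then to an elementary fact about symmetric functions of the eigenvalues. Write each linear vector field as $X_k(x)=A_k\,x$ for an $n\times n$ matrix $A_k$ with entries in $\mathbb{C}$ (resp. $\mathbb{R}$); then $\varphi^k_t=\exp(tX_k)$ is the linear map $\varphi^k_t(x)=e^{tA_k}x$, and $X_k(\varphi^k_t(x))=A_k\,e^{tA_k}\,x$. Hence the barycentric property $(\ref{eq:chambar})$ is equivalent to the matrix identity $\sum_{k=1}^p A_k\,e^{tA_k}=0$ for $t$ small (the left-hand side is a linear function of $x$ vanishing on a nonempty open set, so it vanishes identically), and therefore, both sides being entire in $t$, for every $t\in\mathbb{C}$. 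Since $\sum_k A_k\,e^{tA_k}=\frac{d}{dt}\sum_k e^{tA_k}$, this says $\sum_{k=1}^p e^{tA_k}$ is constant, equal to its value $p\,\mathrm{id}$ at $t=0$. Expanding in powers of $t$,
\[
\sum_{k=1}^p A_k\,e^{tA_k}=\sum_{m\ge 0}\frac{t^m}{m!}\Big(\sum_{k=1}^p A_k^{\,m+1}\Big)=0,
\]
so that $\sum_{k=1}^p A_k^{\,m}=0$ for every $m\ge 1$.

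Now I would take traces: $\sum_{k=1}^p\mathrm{tr}(A_k^{\,m})=0$ for all $m\ge 1$. Let $\mathcal{S}$ be the multiset of cardinality $np$ consisting of all eigenvalues (with algebraic multiplicity, viewed in $\mathbb{C}$) of all the $A_k$. Since $\mathrm{tr}(A_k^{\,m})$ is the $m$-th power sum of the eigenvalues of $A_k$, the preceding relation reads $\sum_{\mu\in\mathcal{S}}\mu^m=0$ for every $m\ge 1$. By Newton's identities the vanishing of the power sums $p_1,\ldots,p_{np}$ forces recursively the vanishing of the elementary symmetric functions $e_1,\ldots,e_{np}$ of $\mathcal{S}$; hence the monic polynomial with root multiset $\mathcal{S}$ is $x^{np}$, i.e. every $\mu\in\mathcal{S}$ equals $0$. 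Thus $0$ is the only eigenvalue of each $A_k$, so each $A_k$ is nilpotent — and over $\mathbb{R}$ nilpotency of $A_k$ is equivalent to that of its complexification. Consequently $A_k^n=0$ and $e^{tA_k}=\sum_{j=0}^{n-1}\frac{t^j}{j!}A_k^j$ is polynomial in $t$, which is the last assertion. Alternatively, taking traces directly in $\sum_k e^{tA_k}=p\,\mathrm{id}$ gives $\sum_{\mu\in\mathcal{S}}e^{t\mu}=np$ for all $t$, and the linear independence of the functions $t\mapsto e^{t\mu}$ over distinct values of $\mu$, together with the positivity of the multiplicities, again forces $\mathcal{S}=\{0,\ldots,0\}$.

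I do not expect a serious obstacle here. The only points deserving some care are the passage from the pointwise condition $(\ref{eq:chambar})$ to the matrix identity $\sum_k A_k\,e^{tA_k}=0$ and the extraction of the relations $\sum_k A_k^{\,m}=0$ from it, together with the observation that the hypothesis controls only the symmetric combinations $\sum_k A_k^{\,m}$, not the individual matrices $A_k$; so one genuinely needs all exponents $m$ (equivalently the full identity $\sum_k e^{tA_k}=p\,\mathrm{id}$), not just finitely many of them, in order to conclude via Newton's identities.
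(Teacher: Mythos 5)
Your proof is correct and follows essentially the same route as the paper: expand the flows in powers of $t$ to get $\sum_{k}A_k^{\ell}=0$ for all $\ell\geq 1$, take traces, and deduce from the vanishing of all power sums that every eigenvalue of every $A_k$ is zero, hence each $A_k$ is nilpotent and each flow is polynomial in $t$. The only difference is that you spell out the Newton's-identities step (and the real case via complexification), which the paper leaves implicit.
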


In the case of $3$-chambars one gets (Theorem \ref{thm:lin3cham}):

\begin{theoalph}
{\sl Let $X_1$, $X_2$, $X_3$ be some linear vector fields
on $\mathbb{C}^n$. 

If they satisfy the barycentric property, then, up to 
conjugacy, they are contained in the 
Heisenberg Lie algebra $\mathfrak{h}_n$ 
$($we identify $X_i$ with its matrix$)$.}
\end{theoalph}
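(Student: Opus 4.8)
The strategy is to show that the matrices $A_1,A_2,A_3$ of $X_1,X_2,X_3$ admit a common complete flag strictly lowered by each of them; by induction on $n$ this reduces to producing a single proper nonzero invariant subspace, and the relations coming from the barycentric condition are used precisely to force its existence.

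\emph{Setup and reduction.} By Theorem~\ref{thm:linnil} each $A_k$ is nilpotent, and since $\sum_k A_k e^{tA_k}=\frac{d}{dt}\sum_k e^{tA_k}=0$ one has $\sum_{k=1}^3 e^{tA_k}=3\,\mathrm{id}$ for all $t$; the $e^{tA_k}$ being now polynomial in $t$, identification of coefficients gives $\sum_{k=1}^3 A_k^{\,j}=0$ for every $j\ge 1$. These relations (and conversely the reconstitution of the flow from them) pass to any subspace invariant under the associative algebra $\mathcal A$ generated by $A_1,A_2,A_3$ and to the corresponding quotient, so the barycentric property is inherited by both. Hence it suffices to prove: if $A_1,A_2,A_3$ are not all zero there is a proper nonzero $\mathcal A$-invariant subspace $U$; splicing the flags produced on $U$ and on $\mathbb{C}^n/U$ (both of smaller dimension) yields a complete flag $0\subset F_1\subset\dots\subset F_n=\mathbb{C}^n$ with $A_kF_i\subseteq F_{i-1}$, and a basis adapted to it conjugates the $A_k$ into $\mathfrak h_n$. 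We may assume $A_1,A_2\ne 0$: if all $A_k$ vanish there is nothing to prove, and if exactly one, say $A_1$, vanishes then $A_2=-A_3\ne 0$ with $2A_2^2=\sum A_k^2=0$, so $\ker A_2$ is the desired subspace.

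\emph{The key relations and the invariant subspace.} Substituting $A_3=-(A_1+A_2)$, the relation in exponent $j$ becomes $(A_1+A_2)^j=(-1)^{j+1}(A_1^{\,j}+A_2^{\,j})$; left-multiplying the one for $j$ by $A_1+A_2$ and comparing with the one for $j+1$ gives, for every $j\ge 1$,
\[
A_1A_2^{\,j}+A_2A_1^{\,j}=-2\bigl(A_1^{\,j+1}+A_2^{\,j+1}\bigr).
\]
Evaluated on $v\in\ker A_1$ (where $A_1^{\,i}v=0$ for $i\ge 1$) this collapses to $A_1(A_2^{\,j}v)=-2\,A_2^{\,j+1}v$ for all $j\ge 1$, while $A_1v=0$. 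Therefore $U:=\sum_{j\ge 0}A_2^{\,j}(\ker A_1)$ is preserved by $A_2$ (being a sum of its iterates applied to $\ker A_1$) and by $A_1$ (which kills the $j=0$ summand and sends $A_2^{\,j}(\ker A_1)$ into $A_2^{\,j+1}(\ker A_1)$ for $j\ge 1$), hence is $\mathcal A$-invariant, and $U\supseteq\ker A_1\ne 0$.

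\emph{Conclusion and the main difficulty.} If $U\subsetneq\mathbb{C}^n$ we are done. The remaining, and delicate, case is $U=\mathbb{C}^n$: then $\mathrm{Im}\,A_2=A_2U=\sum_{j\ge 1}A_2^{\,j}(\ker A_1)$, and the displayed identity says that $A_1=-2A_2$ on $\mathrm{Im}\,A_2$, i.e. $A_1A_2=-2A_2^2$. Since $A_2$ is a nonzero nilpotent, $\ker A_2\cap\mathrm{Im}\,A_2\ne 0$; any nonzero $w$ in it has $A_2w=0$ and, lying in $\mathrm{Im}\,A_2$, also $A_1w=0$ and hence $A_3w=-(A_1+A_2)w=0$. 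Thus $\bigcap_k\ker A_k\ne 0$, which is a proper (as $A_1\ne 0$) nonzero $\mathcal A$-invariant subspace, and the induction closes. The crux is exactly this dichotomy: a priori the $A_2$-orbit of $\ker A_1$ might already be all of $\mathbb{C}^n$, and it is the higher-order relations $A_1A_2^{\,j}+A_2A_1^{\,j}=-2(A_1^{\,j+1}+A_2^{\,j+1})$ — which use the barycentric condition beyond its first two orders $\sum A_k=\sum A_k^2=0$ — that then force a nontrivial common kernel.
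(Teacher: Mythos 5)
Your proof is correct. Both arguments start from the same translation of the barycentric property into the relations $\sum_k A_k^j=0$ for all $j\ge 1$ (after nilpotency of the $A_k$, Theorem \ref{thm:linnil}), and your displayed identity $A_1A_2^{\,j}+A_2A_1^{\,j}=-2(A_1^{\,j+1}+A_2^{\,j+1})$ is precisely the case $k=1$ of the identity $X_1^kX_2^j+X_2^kX_1^j=2X_3^{k+j}$ with which the paper's Lemma \ref{lem:tec1} begins. From there the routes genuinely diverge. The paper pushes the non-commutative combinatorics much further: Lemma \ref{lem:tec1} handles arbitrary monomials $f$, proving $f(X_1,X_2)+f(X_2,X_1)=n(f)\,X_3^{\deg f}$ by induction on a length, Lemma \ref{lem:tec2} then uses traces of powers (a Newton-identity argument) to show every polynomial in $X_1,X_2$ without constant term is nilpotent, and the conclusion is reached by invoking the classical structure theory (Engel; a nil matrix algebra is, up to conjugacy, strictly triangular). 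You bypass both the general monomial lemma and the appeal to Engel: using only the degree-one instances of the relations you build the explicit invariant subspace $U=\sum_{j\ge 0}A_2^{\,j}(\ker A_1)$, and in the residual case $U=\mathbb{C}^n$ you extract a nonzero common kernel vector from the identity $A_1=-2A_2$ on $\mathrm{Im}\,A_2$; an induction on the dimension (restriction and quotient inherit the relations) then assembles the common flag, i.e.\ you re-prove in this specific situation the triangularization theorem the paper quotes. Your argument is therefore more elementary and self-contained; what the paper's heavier machinery buys is the stronger intermediate statement of Theorem \ref{thm:3linchambar} — every element of the algebra $\mathcal{G}=\langle X_1,X_2,X_3\rangle$ is nilpotent, hence any product of $n$ elements of $\mathcal{G}$ vanishes — of which the containment in $\mathfrak{h}_n$ is one consequence, although your simultaneous strict triangularization recovers that statement a posteriori as well.
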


We then give the classification of the 
$3$-chambars in dimension $1$, all
chambars appearing in this classification are rigid (\S\ref{sec:3cham4cham}, Theorem \ref{thm:loc3bfc}):

\begin{theoalph}\label{thm:3ch1dim}
{\sl Let $\mathrm{Ch}(X_1,X_2,X_3)$ be a $3$-chambar in one variable. 

In the real case $\mathrm{Ch}(X_1,X_2,X_3)$ 
is constant $(${\it i.e.} the $X_i$'s are distinct constant 
vector fields$)$.

In the complex case 
\begin{itemize}
\item[$\diamond$] either $\mathrm{Ch}(X_1,X_2,X_3)$ 
is constant, 
\item[$\diamond$] or 
$\mathrm{Ch}(X_1,X_2,X_3)=\mathrm{Ch}\Big(a(x)\frac{\partial}{\partial x},\mathbf{j}a(x)\frac{\partial}{\partial x},\mathbf{j}^2a(x)\frac{\partial}{\partial x}\Big)$, where 
$\mathbf{j}^3=1$, and $a(x)=\sqrt{\lambda x+\mu}$ with $\lambda\in\mathbb{C}^*$, 
$\mu\in\mathbb{C}$.
\end{itemize}}
\end{theoalph}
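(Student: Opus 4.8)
\emph{Set‑up.} Write $X_k=a_k\,\partial/\partial x$ with $a_k$ analytic (resp. holomorphic), $a_k\not\equiv0$, and $a_1,a_2,a_3$ pairwise distinct. Since $\varphi_t^k(x)=x+\sum_{m\ge1}\frac{t^m}{m!}\bigl(X_k^{m-1}a_k\bigr)(x)$, the barycentric property $\sum_k\exp(tX_k)=3\,\mathrm{id}$ is equivalent to the infinite system $\sum_{k=1}^3 X_k^{j}a_k=0$, $j\ge0$. For $j=0$ this reads $a_1+a_2+a_3=0$; for $j=1$ it reads $\sum_k a_ka_k'=0$, i.e. $a_1^2+a_2^2+a_3^2$ equals a constant $\rho$, equivalently $a_1^2+a_1a_2+a_2^2=\rho/2$. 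I argue over $\mathbb C$; the real case follows from the same computation, as noted at the end.

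\emph{Main step: each $a_k^2$ is affine in $x$.} On the (dense, open) set where all $a_k$ are non‑zero, rectify the fields: with $\theta_k(x)=\int^x ds/a_k(s)$ and $\psi_k:=\theta_k^{-1}$ one has $\varphi_t^k(x)=\psi_k(\theta_k(x)+t)$, so the barycentric property becomes the functional equation
\[
\psi_1(s)+\psi_2\bigl(s+\sigma_2(x)\bigr)+\psi_3\bigl(s+\sigma_3(x)\bigr)=3x,\qquad \sigma_j:=\theta_j-\theta_1 ,
\]
valid on an open set of pairs $(x,s)$; distinctness of the $a_k$ makes $\sigma_2',\sigma_3',(\sigma_3-\sigma_2)'$ not identically zero. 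Differentiating in $s$ kills the right‑hand side; as $\psi_1'(s)$ is independent of $x$, differentiating again in $x$ gives $\psi_2''(s+\sigma_2)\,\sigma_2'+\psi_3''(s+\sigma_3)\,\sigma_3'\equiv0$. In the variables $(v,x)$ with $v=s+\sigma_2(x)$ (so $s+\sigma_3(x)=v+\delta(x)$, $\delta:=\sigma_3-\sigma_2$) this reads $\psi_2''(v)=-(\sigma_3'/\sigma_2')(x)\,\psi_3''(v+\delta(x))$; differentiating once more in $x$ and using that the left‑hand side does not depend on $x$ forces $\psi_3'''/\psi_3''\equiv\kappa$ for a constant $\kappa$ (wherever $\psi_3''\ne0$), hence $\psi_3''(y)=Ce^{\kappa y}$, and then symmetrically $\psi_k''(y)=C_ke^{\kappa y}$ with the \emph{same} $\kappa$. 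When $\kappa=0$, every $\psi_k$ is a polynomial of degree $\le2$, and from $a_k(x)=\psi_k'(\theta_k(x))$ together with $\theta_k'=1/a_k$ one gets $a_ka_k'=\psi_k''=\mathrm{const}$, i.e. $a_k^2=\lambda_k x+\mu_k$ (and, by analytic continuation, on all of $\mathcal U$); equivalently the flows $\varphi_t^k$ are polynomial of degree $\le2$ in $t$, consistent with Theorem~\ref{thm:rigidp}. The crux is to exclude $\kappa\ne0$: there $X_ka_k=a_ka_k'=\kappa(a_k-d_k)$ for constants $d_k$, so $X_k^{j}a_k=\kappa^j(a_k-d_k)$ and the barycentric system collapses to $\sum_k a_k=0$, $\sum_k d_k=0$; but $a_k$ is then the genuinely multivalued, non‑constant solution of $a_k+d_k\log(a_k-d_k)=\kappa x+C_k$, and no choice of the constants makes $a_1+a_2+a_3$ vanish identically — the branch points of the $a_k$ would have to coincide, and matching the resulting local expansions of $a_1+a_2+a_3$ near such a point gives a contradiction. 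Hence $\kappa=0$. This is the point I expect to be the main difficulty.

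\emph{Conclusion.} With $a_k^2=\lambda_k x+\mu_k$, the relation $j=1$ gives $\sum_k\lambda_k=0$ and $\sum_k\mu_k=\rho$. From $a_1=-(a_2+a_3)$ one gets $\lambda_1x+\mu_1=(\lambda_2x+\mu_2)+2a_2a_3+(\lambda_3x+\mu_3)$, and squaring the resulting expression for $2a_2a_3$ yields $(\lambda_2x+\mu_2)(\lambda_3x+\mu_3)=\bigl(\lambda_1x+\mu_1-\rho/2\bigr)^2$. Its $x^2$‑coefficient together with $\sum_k\lambda_k=0$ gives $\lambda_2^2+\lambda_2\lambda_3+\lambda_3^2=0$, and then its $x$‑ and $x^0$‑coefficients together with $\sum_k\mu_k=\rho$ force $\rho=0$ and (up to relabelling the three fields and the choice of primitive cube root $\mathbf j$) $\lambda_k=\mathbf j^{2(k-1)}\lambda$, $\mu_k=\mathbf j^{2(k-1)}\mu$ — unless all $\lambda_k$, hence all $a_k$, are constant. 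In the first case $a_k=\pm\mathbf j^{k-1}\sqrt{\lambda x+\mu}$ with $\lambda\ne0$, and $a_1+a_2+a_3=0$ forces the three signs to agree, so absorbing the common sign into the square root one obtains $\mathrm{Ch}(X_1,X_2,X_3)=\mathrm{Ch}\bigl(a(x)\partial_x,\mathbf j a(x)\partial_x,\mathbf j^2 a(x)\partial_x\bigr)$ with $a=\sqrt{\lambda x+\mu}$; in the second case the $a_k$ are distinct non‑zero constants of sum $0$, i.e. $\mathrm{Ch}(X_1,X_2,X_3)$ is constant. All these chambars are visibly rigid. Finally, over $\mathbb R$ the form $(s,u)\mapsto s^2+su+u^2$ is positive definite, so $\lambda_2^2+\lambda_2\lambda_3+\lambda_3^2=0$ already forces $\lambda_2=\lambda_3=\lambda_1=0$ and every $a_k$ constant; hence in the real case only the constant chambars occur. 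Apart from the exclusion of the exponential solutions $\kappa\ne0$, the argument is bookkeeping with the identities $\sum_k X_k^{j}a_k=0$.
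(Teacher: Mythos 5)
Your reduction is mostly sound where it is carried out: the rectification $\varphi^k_t(x)=\psi_k(\theta_k(x)+t)$, the functional equation, and the d'Alembert-type differentiations giving $\psi_k''(y)=C_ke^{\kappa y}$ with one common $\kappa$ are correct, and the $\kappa=0$ endgame (sum of three square roots of affine functions, forcing either constants or $(1,\mathbf{j},\mathbf{j}^2)\sqrt{\lambda x+\mu}$, plus the positive-definiteness remark over $\mathbb{R}$) checks out. This is a genuinely different route from the paper, which never rectifies but works with the iterated derivatives $X_i^k(x)$, the auxiliary functions $f_k$ defined by $(X_i^k(x))'=f_k\,a_i'$, and a case analysis on $F_k=f_2f_{k+1}-f_3f_k$ and $G_k=f_2'f_{k+1}-f_3f_k'$.

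The genuine gap is exactly the point you flag: the exclusion of $\kappa\neq 0$. In that case the flows are $\varphi^k_t(x)=x+d_kt+\kappa^{-1}(a_k-d_k)(e^{\kappa t}-1)$ and, as you say, the barycentric property collapses to $\sum_k a_k\equiv 0$, $\sum_k d_k=0$; so the whole theorem hinges on proving that three distinct non-constant holomorphic solutions of $a_ka_k'=\kappa(a_k-d_k)$ on a connected open set cannot have identically vanishing sum, and the one sentence you offer does not establish this. Concretely: (i) a solution with $d_k=0$ is affine, $a_k=\kappa x+c_k$, and has no branch points, so "the branch points of the $a_k$ would have to coincide" does not even address the mixed configurations (one affine member plus two transcendental ones summing to an affine function), which must be excluded as well; (ii) the $a_k$ are single-valued on $\mathcal{U}$, and the square-root branch points of the multivalued solutions of $a+d\log(a-d)=\kappa x+C$ lie over the $x$-values where a continuation of $a$ vanishes — points that need not belong to $\mathcal{U}$ at all — so one must argue via analytic continuation that such points exist, are common to the three functions, and that the local Puiseux expansions of $a_1+a_2+a_3$ cannot cancel; none of this is indicated, and it is not bookkeeping. (For comparison, the paper's corresponding degenerate branch — its case $F_k=G_k\equiv 0$, $f_3\neq 0$ — closes because there the $a_i$ are forced to be affine with one common slope, so $\sum_ia_i=0$ kills the slope; the transcendental solutions never have to be confronted.) The case can be handled — for instance, setting $u_k=d_k/a_k$, the identity $\sum_ka_k'=0$ gives $\sum_ku_k=3$, and differentiating this repeatedly using $u_k'=-\kappa u_k^2(1-u_k)/d_k$ already disposes of the mixed configurations by a short polynomial-identity argument, while the all-transcendental case needs a further elimination using that $\sum a_k=0$ and $\sum a_k^2$ is constant — but some such argument must actually be written; as it stands, the central case of your proof is asserted rather than proved.
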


Note that the classification
implies that the global $3$-chambars in one variable have no 
singularities where they are defined; this is not 
the case in higher dimensions (consider the 
nilpotent linear cases). Whereas $2$-chambars
and $3$-chambars on an open subset of $\mathbb{C}$
are rigid the $4$-chambars are not. The classification
of $p$-chambars on $\mathbb{C}$ for $p\geq 4$ is a 
difficult problem in particular because of irreducibility
problems. Nevertheless we obtain 
interesting properties of such chambars.

\smallskip

In \S \ref{sec:homogcham} we deal with chambars generated by 
homogeneous vector fields (homogeneous chambars). 
Among other results we will see the classification of
homogeneous chambars of degree $2$ (Theorem \ref{thm:hom3cham}):

\begin{theoalph}
{\sl Let $\mathrm{Ch}(X_1,X_2,X_3)$ be an homogeneous 
$3$-chambar of $\mathbb{C}^2$ of degree $2$. Then,
after a change of variables, $X_i$ can be written 
as $a_iy^2\frac{\partial}{\partial x}$, and the
$a_i$'s satisfy: $a_1+a_2+a_3=0$. 
In particular, any homogeneous 
$3$-chambar of $\mathbb{C}^2$ of degree $2$ is 
rigid.}
\end{theoalph}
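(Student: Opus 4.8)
The plan is to start from the barycentric equation $\sum_{k=1}^3 X_k(\varphi_t^k(x))=0$ for homogeneous degree-$2$ vector fields on $\mathbb{C}^2$ and extract strong constraints by differentiating in $t$ at $t=0$. Writing $X_k$ for the corresponding homogeneous quadratic vector field, the order-zero term gives $X_1+X_2+X_3=0$, and the order-one term gives $\sum_k DX_k\cdot X_k=0$, i.e. $\sum_k [X_k,X_k]$-type expressions — more precisely $\sum_k DX_k(X_k)=0$, where $DX_k(X_k)$ is a homogeneous vector field of degree $3$. Higher-order terms give relations among iterated derivatives $DX_k(DX_k(X_k))$, etc. Since $X_1+X_2+X_3=0$ we may eliminate $X_3=-X_1-X_2$ and treat the pair $(X_1,X_2)$ as the unknown, living in the $6$-dimensional space of quadratic vector fields on $\mathbb{C}^2$ (each component a binary quadratic form).

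Next I would exploit homogeneity together with Theorem~\ref{thm:2chambar}-style reasoning applied to sub-chambars, or more directly the rigidity results for $3$-chambars in one variable (Theorem~\ref{thm:3ch1dim}): restricting a homogeneous chambar to an invariant line, or passing to the associated foliation $\mathcal{F}_{X_k}$ on $\mathbb{P}^1$, should force the three foliations $\mathcal{F}_{X_1},\mathcal{F}_{X_2},\mathcal{F}_{X_3}$ to coincide. Indeed the degree-$2$ homogeneous vector field $X_k=(P_k,Q_k)$ defines on $\mathbb{P}^1$ a map, and the common-foliation condition (which one expects from the $2$-chambar analysis applied pairwise, since any sub-pair need not itself be a chambar, so here one must instead use the full order-$1$ relation $\sum DX_k(X_k)=0$ combined with $\sum X_k=0$) should imply that $X_1,X_2,X_3$ are pairwise collinear as vector-field-valued objects, i.e. $X_k=a_k Y$ for a single quadratic vector field $Y$ and constants $a_k$ with $\sum a_k=0$. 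This is the rigidity conclusion; the barycentric condition then reduces to a condition on the single flow $\exp tY$, and by the rigid-chambar theorem its flow is polynomial of degree $\le 2$ in $t$.

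Then I would classify the quadratic $Y$ on $\mathbb{C}^2$ whose flow is polynomial of degree $\le 2$: such a $Y$ must be nilpotent in an appropriate sense, and a short normal-form computation (the Jordan-type classification of quadratic vector fields with polynomial flow, using that $DY(Y)\not\equiv 0$ in general but $DY(DY(Y))\equiv 0$) should show that after an affine — in fact linear, by homogeneity — change of coordinates $Y$ is a nonzero multiple of $y^2\,\partial/\partial x$. Rescaling absorbs the multiple into the $a_i$, giving $X_i=a_i y^2\,\partial/\partial x$ with $a_1+a_2+a_3=0$, and one checks directly that this triple is indeed a chambar (the flow of $a_i y^2\partial/\partial x$ is $(x,y)\mapsto(x+a_i t y^2, y)$, and $\sum a_i y^2=0$), which also re-proves rigidity.

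The main obstacle I expect is the step forcing the three vector fields to be collinear: unlike the $2$-chambar case, no sub-pair of a $3$-chambar is itself a chambar, so one cannot directly invoke Theorem~\ref{thm:2chambar}. The argument must instead squeeze collinearity out of the combined relations $X_1+X_2+X_3=0$, $\sum_k DX_k(X_k)=0$, and the higher-order identities, most cleanly by passing to the induced dynamics on $\mathbb{P}^1$ (where degree-$2$ homogeneous vector fields give a tractable finite-dimensional picture) and showing the three induced objects share an invariant point and tangent direction, then bootstrapping. Once collinearity is in hand, everything reduces to the already-proved rigid case and a finite normal-form computation, which is routine.
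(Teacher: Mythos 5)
There is a genuine gap, and it sits exactly where you place your ``main obstacle'': the step forcing $X_1,X_2,X_3$ to be collinear (and, beyond that, forcing the common factor to be $y^2\frac{\partial}{\partial x}$) is never argued, only hoped for. The low-order identities you propose to squeeze are too weak: \emph{every} $3$-chambar satisfies $\sum_k X_k=0$ and $\sum_k DX_k\cdot X_k=0$, and the paper exhibits non-collinear $3$-chambars of affine vector fields on $\mathbb{C}^2$ (the parabola example $X_k=(a_k+2b_ky)\frac{\partial}{\partial x}+b_k\frac{\partial}{\partial y}$ with $\sum a_k=\sum b_k=\sum b_k^2=0$), so collinearity is a consequence of homogeneity of degree $2$ specifically and requires a real argument, not just ``induced dynamics on $\mathbb{P}^1$ plus bootstrapping.'' The paper's proof of exactly this point is the bulk of Section 6: it introduces $f_j$ defined by $R\wedge X_j=f_j\,\frac{\partial}{\partial x}\wedge\frac{\partial}{\partial y}$, proves $\sum_j f_j=0$, $\sum_j h_j=0$ (where $X_j(f_j)=h_jf_j$), derives the dichotomy of Lemma 6.4 (either the $f_j$ share two linear factors or each $f_j$ is a first integral of $X_j$), rules out $f_j\equiv 0$ (Lemma 6.5, using the one-dimensional classification and an homography argument), and then handles the first-integral case by a delicate dynamical argument (levels of $f_2$ meeting an invariant line in three points, forcing $f_2$ to be a perfect cube, else a contradiction with finiteness of intersection points). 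None of this, nor any substitute for it, appears in your plan.

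Two secondary points. First, even granting that the three foliations coincide, for homogeneous fields of equal degree this only gives $X_k=g_kY$ with $g_k$ a degree-zero homogeneous (i.e.\ nonconstant rational in $y/x$) factor — compare $y^2\frac{\partial}{\partial x}$ and $xy\frac{\partial}{\partial x}$ — so passing from ``same foliation'' to constant ratios is itself a step needing proof (the paper gets constants only at the end, via the power-sum relations $a_1^k+a_2^k+a_3^k=0$, $k=1,2,3$). Second, your final normal-form step (a quadratic homogeneous $Y$ on $\mathbb{C}^2$ with $t$-polynomial flow of degree $\le 2$ is linearly conjugate to $c\,y^2\frac{\partial}{\partial x}$) is plausible and could be carried out, e.g.\ using Theorem 4.10 to get $\dim\mathrm{Sing}(Y)\geq 1$ and hence $Y=\ell\cdot L$ with $\ell$ linear, but as written it is asserted rather than proved. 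As it stands the proposal reduces the theorem to its hardest assertion and leaves that assertion open.
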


\tableofcontents

\subsection*{Acknowledgement} We warmly thank the referee 
for his careful reading.

\section{Remarks and examples}\label{sec:remeg}

Let $\mathcal{U}$ be a connected open subset of $\mathbb{R}^n$ 
(resp. $\mathbb{C}^n$). Denote by $\mathcal{O}(\mathcal{U})$
the ring of analytic (resp. holomorphic) functions
and by $\chi(\mathcal{U})$ the $\mathcal{O}(\mathcal{U})$-module 
of vector fields on $\mathcal{U}$. We denote
also by $\mathcal{O}(\mathbb{C}^n,a)$ and 
by $\chi(\mathbb{C}^n,a)$ the germs of the 
previous spaces at $a\in\mathcal{U}$.
Let $X_1$, $X_2$, $\ldots$, $X_p$, 
$Y_1$, $Y_2$, $\ldots$, $Y_q$ be some
analytic or holomorphic vector fields on
$\mathcal{U}$.
If the $p$-tuple $(X_1,\,X_2,\,\ldots,\,X_p)$ 
and the $q$-tuple $(Y_1,\,Y_2,\,\ldots,\,Y_q)$ 
satisfy the barycentric property, then the $(p+q)$-tuple 
$(X_1,\,X_2,\,\ldots,\,X_p,\,Y_1,\,Y_2,\,\ldots,\,Y_q)$
satisfy the barycentric property. This type of example is
called a \textsl{reducible chambar}. A chambar 
is \textsl{irreducible} if it is not reducible.

\subsection{Elementary examples and their variants}\label{subsec:elex}

The most elementary example is the exam\-ple of 
constant vector fields. Let $v_1$, $v_2$, $\ldots$, 
$v_p$ be $p$ distinct constant vector fields on $\mathbb{R}^n$
(resp.~$\mathbb{C}^n$) such that 
\[
v_1+v_2+\ldots+v_p=0.
\]
The translation flows $T_t^{v_k}(x)=x+tv_k$
satisfy the barycentric property
\[
\displaystyle\sum_{k=1}^pT_t^{v_k}(x)=
\displaystyle\sum_{k=1}^p(x+tv_k)=\displaystyle\sum_{k=1}^px+\displaystyle\sum_{k=1}^ptv_k=px+t\times 0=px
\]
and the vector fields $v_1$, $v_2$, $\ldots$, $v_p$
define a $p$-chambar. Such a chambar is called a \textsl{constant $p$-chambar}.
The trajectories of the $v_k$ are straight lines. 
The constant chambar $(v_1,v_2,\ldots,v_p)$
is reducible if and only if there is a subfamily
$(v_{j_1},v_{j_2},\ldots,v_{j_\ell})$ 
such that $\displaystyle\sum_{k=1}^\ell v_{j_k}=0$.

\smallskip

Let us give a simple variant of this example. Fix some coordinates
\[
(x,y)=(x_1,x_2,\ldots,x_q,y_1,y_2,\ldots,y_{n-q});
\]
take $p$ vector fields
\[
X_k=f_1^k(x)\frac{\partial}{\partial y_1}+f_2^k(x)\frac{\partial}{\partial y_2}+\ldots+f_{n-q}^k(x)\frac{\partial}{\partial y_{n-q}}
\]
where the $f_i^k$'s denote some analytic functions. Assume
that
\[
X_1+X_2+\ldots+X_p=0.
\]
The $X_k$'s satisfy the barycentric property since for any value of the 
parameter $x$ the $X_k$ are constant vector fields in the linear
subspaces $x=$ constant.

\smallskip

We can enrich this family of examples as follows. On the open 
subset $\mathcal{U}$ consider a regular foliation $\mathcal{F}$
of codimension $q$ whose leaves are of the form $A\cap\mathcal{U}$
where the $A$'s are affine subspaces of codimension $q$. Take now
analytic vector fields $X_k$ constant on any leaf of $\mathcal{F}$
and such that $X_1+X_2+\ldots+X_p=0$. Then 
$(X_1,\,X_2,\,\ldots,\,X_p)$ is a $p$-chambar. 

\smallskip

These examples play an important role in the article.

Another kind of construction that will be used is the formula expressing the flow of a vector field. Let $X=\displaystyle\sum_{k=1}^nA_k(x)\frac{\partial}{\partial x_k}$ be an analytic vector field on an open subset $\mathcal{U}$ of $\mathbb{R}^n$ or $\mathbb{C}^n$, considered as a derivation on~$\mathcal{O}(\mathcal{U})$: if $f\in\mathcal{O}(\mathcal{U})$, then 
\[
X(f)=\displaystyle\sum_{k=1}^nA_k\,\frac{\partial f}{\partial x_k}.
\]

Let $(t,x)\mapsto\varphi_t(x)$ be the flow of $X$. For $x\in\mathcal{U}$ fixed set $h(t)=f(\varphi_t(x))$. The Taylor series of $h$ at $t=0$ is of the form $h(t)=h(0)+\displaystyle\sum_{k=1}^\infty\frac{h^{(k)}(0)}{k!}t^k$.

On the other hand, $h(0)=x$ and $h^{(k)}(0)=X^k(f)$. In particular we get 
\[
f(\varphi_t(x))=x+\displaystyle\sum_{k\geq 1}\frac{1}{k!}X^k(f)(x)t^k. 
\]
If we specialize the above formula doing $f(x)=x_j$, the $j$-th coordinate of $x=(x_1,x_2,\ldots,x_n)$, then $\varphi_t(x)=\big(\varphi^1_t(x),\varphi^2_t(x),\ldots,\varphi^n_t(x)\big)$ where
\begin{equation}\label{eq:al}
\varphi^j_t(x)=x_j+\displaystyle\sum_{k\geq 1}\frac{1}{k!}X^k(x_j)t^k
\end{equation}
Formula (\ref{eq:al}) will appear in some examples. Let us now give a consequence
of (\ref{eq:al}):

\begin{pro}\label{pro:csqeqal}
{\sl Let $\mathcal{U}\subset\mathbb{C}^n$ be an open
subset. Let $X_1$, $X_2$, $\ldots$, $X_p$ be some distinct elements of $\chi(\mathcal{U})$. 
Then $X_1$, $X_2$, $\ldots$, $X_p$ define a $p$-chambar if and only if for any 
$1\leq j\leq n$
\[
\displaystyle\sum_{k=1}^pX_k^\ell(x_j)=0\qquad\forall\,\ell\geq 1
\]
where $x_j$ denotes the $j$-th coordinate of 
$x=(x_1,x_2,\ldots,x_n)$.}
\end{pro}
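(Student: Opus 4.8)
The plan is to reduce the barycentric property to a statement about the Taylor expansion in $t$ of the flows, and then to read off the conclusion directly from formula (\ref{eq:al}).

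First I would record that the $p$-chambar condition $\sum_{k=1}^p\exp(tX_k)=p\,\mathrm{id}$ is equivalent to the infinitesimal barycentric property (\ref{eq:chambar}). Indeed, for a fixed $x\in\mathcal{U}$ one has $\frac{d}{dt}\sum_{k=1}^p\varphi_t^k(x)=\sum_{k=1}^pX_k(\varphi_t^k(x))$, so (\ref{eq:chambar}) holds on $\mathcal{U}$ for $t$ small if and only if $t\mapsto\sum_{k=1}^p\varphi_t^k(x)$ is locally constant, hence equal to its value $\sum_{k=1}^p x=p\,x$ at $t=0$; that is, if and only if $\sum_{k=1}^p\varphi_t^k(x)=p\,x$ for all $x\in\mathcal{U}$ and all small $t$.

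Next I would apply (\ref{eq:al}) to each $X_k$ componentwise. Writing $\varphi_t^k(x)=\big(\varphi_t^{k,1}(x),\ldots,\varphi_t^{k,n}(x)\big)$, formula (\ref{eq:al}) gives, for each $1\le j\le n$,
\[
\sum_{k=1}^p\varphi_t^{k,j}(x)=p\,x_j+\sum_{\ell\ge 1}\frac{t^\ell}{\ell!}\Big(\sum_{k=1}^pX_k^\ell(x_j)\Big),
\]
a power series in $t$ that converges for $t$ small, with coefficients the holomorphic functions $\sum_{k=1}^pX_k^\ell(x_j)\in\mathcal{O}(\mathcal{U})$. By the previous paragraph, $X_1,\ldots,X_p$ form a $p$-chambar precisely when this series reduces to its constant term $p\,x_j$ for every $j$; by uniqueness of the coefficients of a convergent power series in $t$ (applied for each fixed $x$), this happens if and only if $\sum_{k=1}^pX_k^\ell(x_j)\equiv 0$ on $\mathcal{U}$ for all $\ell\ge 1$ and all $1\le j\le n$, which is exactly the asserted condition.

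There is no serious obstacle here: the statement is a formal consequence of (\ref{eq:al}). The only points deserving attention are the equivalence between the integrated form $\sum_k\exp(tX_k)=p\,\mathrm{id}$ and the differential form (\ref{eq:chambar}) of the barycentric property, and the elementary fact that a convergent power series in $t$ vanishing identically for small $t$ has all its coefficients equal to zero — applied here coefficient by coefficient in $\ell$, with $x$ treated as a parameter.
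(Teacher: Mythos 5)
Your argument is correct and is exactly the argument the paper has in mind: the proposition is stated there as an immediate consequence of formula (\ref{eq:al}), obtained by expanding each flow componentwise as a power series in $t$ and identifying coefficients, with the preliminary (and standard) equivalence between the integrated condition $\sum_k\exp(tX_k)=p\,\mathrm{id}$ and the differential form (\ref{eq:chambar}). Nothing further is needed.
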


\subsection{Barycentric property and integrability}

Let $\mathrm{Ch}(X_1,X_2,\ldots,X_p)$ be a $p$-chambar.
Exam\-ples seen in \S\ref{subsec:elex} and $2$-chambars
may suggest that the Pfaff system generated by 
$X_1$, $X_2$, $\ldots$, $X_p$ is an integrable 
system, {\it i.e.} tangent to a foliation. 
The following example of $3$-chambar in dimension 
$3$ shows that this is not the case. Let us 
consider 
\begin{align*}
& X_1=-2\frac{\partial}{\partial x_1}+\frac{\partial}{\partial x_3}, && X_2=\frac{\partial}{\partial x_1}+x_1\frac{\partial}{\partial x_2}+\frac{\partial}{\partial x_3}, &&
X_3=\frac{\partial}{\partial x_1}-x_1\frac{\partial}{\partial x_2}-2\frac{\partial}{\partial x_3}.
\end{align*}
The flows of the $X_i$ are 
\begin{align*}
& \exp tX_1=(x_1-2t,x_2,x_3+t), \\
& \exp tX_2=\Big(x_1+t,x_2+tx_1+\frac{t^2}{2},x_3+t\Big), \\
& \exp tX_3=(x_1+t,x_2-x_1t-\frac{t^2}{2},x_3-2t). &&
\end{align*}
The barycentric property is satisfied; the leaves of $X_1$ are lines and the generic
leaves of $X_2$ and $X_3$ are parabolas. 
Let $\omega=-x_1dx_1+dx_2-2x_1dx_3$. Then 
$\omega(X_i)=0$, so $\omega$
defines the Pfaffian system
associated to the $X_i$. 
A direct computation yields to 
\[
\omega\wedge \mathrm{d}\omega=2\mathrm{d} x_1\wedge \mathrm{d} x_2\wedge\mathrm{d} x_3,
\]
{\it i.e.} the $2$-plane field associated to $\omega$
is a contact structure hence is not integrable.

\subsection{Fundamental example in dimension $1$ and generalization}\label{subsec:fundex}

Let us consider the translation flow
$\psi_t(x)=x+t$ on $\mathbb{C}$. Let
$\nu$ be an integer $\geq 2$. Denote by 
$x^{\frac{1}{\nu}}$ the principal branch
of the $\nu$-th root. Then 
\[
\varphi_{\nu,t}(x)=\big(\psi_t(x^{\frac{1}{\nu}})\big)^\nu=\big(x^{\frac{1}{\nu}}+t\big)^\nu
\]
defines a flow, at least in a neighborhood
of $1$ since it is a conjugate of the 
translation flow. This flow is polynomial in the 
time $t$ and corresponds 
to the vector field
\[
Z_\nu=\nu x^{\frac{\nu-1}{\nu}}\frac{\partial}{\partial x}=\nu\frac{x}{x^{\frac{1}{\nu}}}\frac{\partial}{\partial x}
\]
well defined at least in a neighborhood 
of $1$. 

Let $\sigma$ be a primitive $(\nu+1)$-th root
of unity. Then 
\[
\varphi_{\nu,\sigma t}(x)=\big(x^{\frac{1}{\nu}}+\sigma t\big)^\nu
\]
is the flow of the vector field
\[
\sigma Z_\nu=\nu\sigma\frac{x}{x^{\frac{1}{\nu}}}\frac{\partial}{\partial x}.
\]

Of course 
$\displaystyle\sum_{p=0}^\nu\sigma^p\cdot Z_\nu=0$ 
and 
\[
\displaystyle\sum_{p=0}^\nu \Big(x^{\frac{1}{\nu}}+\sigma^pt\Big)^\nu=
\sum_{p=0}^\nu\sum_{k=0}^\nu\binom{\nu}{k}x^{\frac{\nu-k}{\nu}}\sigma^{pk}t^k=\sum_{k=0}^\nu\Big(\sum_{p=0}^\nu\sigma^{pk}\Big)t^k\binom{\nu}{k}x^{\frac{\nu-k}{\nu}}=(\nu+1)x.
\]

\medskip

We can thus state 

\begin{pro}\label{pro:zn}
{\sl Let $Z_\nu$ be the vector field defined in a neighborhood of 
$1$ by 
\[
Z_\nu=\nu x^{\frac{\nu-1}{\nu}}\frac{\partial}{\partial x}=\nu\frac{x}{x^{\frac{1}{\nu}}}\frac{\partial}{\partial x}.
\]

The $(\nu+1)$-tuple $(Z_\nu,\sigma Z_\nu,\ldots,\sigma^\nu Z_\nu)$ is an irreducible $(\nu+1)$-chambar in a neighborhood of $1$.}
\end{pro}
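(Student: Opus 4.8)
The plan is to split the statement into its two halves: that $(Z_\nu,\sigma Z_\nu,\ldots,\sigma^\nu Z_\nu)$ is a $(\nu+1)$-chambar, and that it is irreducible. For the first half almost everything is already in place: the displayed identity $\sum_{p=0}^\nu(x^{\frac1\nu}+\sigma^p t)^\nu=(\nu+1)x$ established just before the statement says precisely that $\sum_{p=0}^\nu\varphi_{\nu,\sigma^p t}=(\nu+1)\,\mathrm{id}$ near $1$, which is the barycentric property. The only thing to add is that the $\nu+1$ vector fields $\sigma^p Z_\nu$, $0\le p\le\nu$, are pairwise distinct, which holds because $Z_\nu\not\equiv 0$ and the $\sigma^p$ are pairwise distinct. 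Alternatively one may invoke Proposition \ref{pro:csqeqal}: since $(\sigma^p Z_\nu)^\ell=\sigma^{p\ell}Z_\nu^\ell$ as derivations, $\sum_{p=0}^\nu(\sigma^p Z_\nu)^\ell(x)=\big(\sum_{p=0}^\nu\sigma^{p\ell}\big)Z_\nu^\ell(x)$, and this vanishes for every $\ell\ge 1$: the bracketed geometric sum is $0$ whenever $\nu+1\nmid\ell$, while $Z_\nu^\ell(x)=0$ whenever $\ell>\nu$, the flow of $Z_\nu$ being polynomial of degree $\nu$ in $t$.

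For irreducibility I would argue by contradiction. By the definition of reducibility, if the chambar were reducible the index set $\{0,1,\ldots,\nu\}$ would split into two nonempty parts each indexing a sub-chambar; in particular there would be a proper nonempty subset $A\subsetneq\{0,1,\ldots,\nu\}$ such that $(\sigma^p Z_\nu)_{p\in A}$ satisfies the barycentric property, i.e. $\sum_{p\in A}\varphi_{\nu,\sigma^p t}=|A|\,\mathrm{id}$, which reads $\sum_{p\in A}(u+\sigma^p t)^\nu=|A|\,u^\nu$ with $u=x^{\frac1\nu}$ near $1$ and $t$ small. Expanding in powers of $t$ and using that $u^{\nu-k}\ne 0$ and $\binom{\nu}{k}\ne 0$ for $0\le k\le\nu$, this forces $\sum_{p\in A}\sigma^{pk}=0$ for every $k=1,2,\ldots,\nu$.

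Now set $P_A(X)=\sum_{p\in A}X^p$, a polynomial of degree $\le\nu$ all of whose coefficients lie in $\{0,1\}$. The relations above say that $P_A$ vanishes at the $\nu$ distinct points $\sigma,\sigma^2,\ldots,\sigma^\nu$, that is, at every $(\nu+1)$-th root of unity other than $1$; hence $P_A(X)=c\,(1+X+\cdots+X^\nu)$ for some constant $c$. Comparing coefficients forces $c\in\{0,1\}$, whence $A=\emptyset$ or $A=\{0,\ldots,\nu\}$, contradicting that $A$ is proper and nonempty. Therefore the $(\nu+1)$-chambar is irreducible.

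The computations are all elementary, and there is no serious obstacle; the one genuinely useful observation — the only place where "there is something to prove" — is the reduction of the sub-barycentricity of a subfamily to the vanishing of $P_A$ at the nontrivial $(\nu+1)$-th roots of unity, after which the rigidity of the coefficient pattern $\{0,1\}$ finishes the argument. A minor point to keep in mind is that everything takes place in a neighbourhood of $1$, where the principal branch $x^{\frac1\nu}$ and all the powers $x^{\frac{\nu-k}{\nu}}$ are holomorphic and nonvanishing, so that cancelling the factors $u^{\nu-k}$ is legitimate.
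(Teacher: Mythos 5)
Your proof is correct. For the barycentric half you follow exactly the paper's own argument: the displayed identity $\sum_{p=0}^{\nu}\bigl(x^{\frac{1}{\nu}}+\sigma^{p}t\bigr)^{\nu}=(\nu+1)x$ computed just before the Proposition is the whole proof in the paper (which then simply says ``we can thus state''), and your alternative check via Proposition \ref{pro:csqeqal} is a harmless reformulation of the same computation. Where you genuinely add something is the irreducibility, which the paper asserts without argument: your reduction of sub-barycentricity of a subfamily indexed by $A$ to the relations $\sum_{p\in A}\sigma^{pk}=0$ for $1\le k\le\nu$ (legitimate, since $x^{\frac{\nu-k}{\nu}}$ and $\binom{\nu}{k}$ are nonzero near $1$), followed by the observation that the $\{0,1\}$-coefficient polynomial $P_A(X)=\sum_{p\in A}X^{p}$ of degree at most $\nu$ vanishing at all nontrivial $(\nu+1)$-th roots of unity must be a constant multiple of $1+X+\cdots+X^{\nu}$, is sound and gives $A=\emptyset$ or $A=\{0,1,\ldots,\nu\}$. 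An equivalent finish, closer in spirit to arguments used elsewhere in the paper (e.g.\ in Theorem \ref{thm:linnil}), would be to note that the vanishing of the power sums $\sum_{p\in A}\sigma^{pk}$ for $1\le k\le |A|\le\nu$ forces, by Newton's identities, all the $\sigma^{p}$ with $p\in A$ to vanish, which is absurd; both routes are elementary, and yours has the small advantage of identifying exactly which subsets could occur.
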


One can conjugate a chambar by an affine map; 
hence
\[
\left((\lambda x+\mu)^{\frac{\nu-1}{\nu}}\frac{\partial}{\partial x},\sigma(\lambda x+\mu)^{\frac{\nu-1}{\nu}}\frac{\partial}{\partial x},\sigma^2(\lambda x+\mu)^{\frac{\nu-1}{\nu}}\frac{\partial}{\partial x},\ldots,\sigma^\nu(\lambda x+\mu)^{\frac{\nu-1}{\nu}}\frac{\partial}{\partial x}\right)
\]
produces a $(\nu+1)$-chambar where it makes 
sense. 

For $\nu=2$ the previous construction gives 
the flow $\varphi_{2,t}(x)=x+2t\sqrt{x}+t^2$ associated
to the vector field 
$Z_2=2\sqrt{x}\frac{\partial}{\partial x}$ 
and the $3$-chambar 
$\mathrm{Ch}(Z_2,\mathbf{j}Z_2,\mathbf{j}^2Z_2)$, 
$\mathbf{j}^3=1$, 
but also its affine conjugates.

An immediate generalization in any dimension is the following.
Consider $P(x)=(P_1(x),P_2(x),\ldots,P_n(x))$ such that
\begin{itemize}
\item[$\diamond$] $P_j\in\mathbb{C}[x_1,x_2,\ldots,x_n]$, $\deg P_1=\nu\geq 2$ and $\deg P_j\leq \nu$,

\item[$\diamond$] $P(0)=0$,

\item[$\diamond$] and $DP(0)=\rho\cdot\mathrm{id}$ where $\mathrm{id}$ is the 
identity of $\mathbb{C}^n$ and $\vert\rho\vert>1$.
\end{itemize}

There exists a neighborhood $U$ of $0\in\mathbb{C}^n$ such that 
$V=P(U)\supset U$ and $P_{\vert U}$ has an inverse $\phi\colon V\to U$.
To any $a=(a_1,a_2,\ldots,a_n)\in\mathbb{C}^n$ we can associate
a flow defined in a neighborhood of $(0,0)\in\mathbb{C}\times\mathbb{C}^n$ by
\[
\varphi_t(x)=P\big(\phi(x)+ta\big).
\]
The vector field associated to this flow is
\begin{equation}\label{eq:flowvf}
X(x)=DP(\phi(x))\cdot a
\end{equation}

\begin{pro}\label{pro:irrcham}
{\sl Let $X$ be as in $(\ref{eq:flowvf})$ and let $\sigma$ be a 
primitive $(\nu+1)$-th root of unity. Then the $(\nu+1)$-tuple 
$(X,\sigma X,\ldots,\sigma^\nu X)$ is an irreducible 
$(\nu+1)$-chambar in a neighborhood of $0\in\mathbb{C}^n$.}
\end{pro}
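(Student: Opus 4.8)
The plan is to verify three things in order: (a) the map $\varphi_t(x)=P(\phi(x)+ta)$ is the flow of the field $X$ of $(\ref{eq:flowvf})$, so that $t\mapsto P(\phi(x)+\sigma^p ta)$ is the flow of $\sigma^p X$; (b) the $(\nu+1)$-tuple $(X,\sigma X,\ldots,\sigma^\nu X)$ satisfies the barycentric property; (c) it is irreducible. For (a) one differentiates: $\dot\varphi_t(x)=DP(\phi(x)+ta)\cdot a$, while $\phi(\varphi_t(x))=\phi\big(P(\phi(x)+ta)\big)=\phi(x)+ta$ since $\phi=(P|_U)^{-1}$, whence $X(\varphi_t(x))=DP(\phi(x)+ta)\cdot a=\dot\varphi_t(x)$ and $\varphi_0=\mathrm{id}$; since the flow of $cX$ is $t\mapsto\varphi_{ct}$, the flow of $\sigma^p X$ is $t\mapsto P(\phi(x)+\sigma^p ta)$. (Here $a\neq 0$, so that $X$, hence each $\sigma^p X$, is a nonvanishing field near $0$, $DP$ being invertible there.) This is the computation already indicated before $(\ref{eq:flowvf})$, a form of $(\ref{eq:al})$.

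For (b), fix $x$, put $y=\phi(x)$, and expand each coordinate in powers of the time: $P_j(y+sa)=\sum_{k=0}^\nu c_{j,k}(y)\,s^k$, a polynomial of degree $\le\nu$ in $s$ (as $\deg P_j\le\nu$), with $c_{j,0}(y)=P_j(y)$ and $c_{j,k}(y)=\tfrac1{k!}(a\cdot\nabla)^k P_j(y)$. Then
\[
\sum_{p=0}^\nu\big(\exp(\sigma^p tX)(x)\big)_j=\sum_{p=0}^\nu P_j(y+\sigma^p ta)=\sum_{k=0}^\nu c_{j,k}(y)\Big(\sum_{p=0}^\nu\sigma^{pk}\Big)t^k .
\]
Since $\sigma$ is a primitive $(\nu+1)$-th root of unity, $\sum_{p=0}^\nu\sigma^{pk}$ equals $\nu+1$ when $(\nu+1)\mid k$ and $0$ otherwise, so for $0\le k\le\nu$ only $k=0$ survives and the sum is $(\nu+1)P_j(y)=(\nu+1)x_j$; hence $\sum_{p=0}^\nu\exp(\sigma^p tX)=(\nu+1)\,\mathrm{id}$, which is the barycentric property (the $k=1$ coefficient recovers $\sum_p\sigma^p=0$, i.e. $\sum_p\sigma^p X=0$). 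This is the $n$-dimensional analogue of the computation preceding Proposition $\ref{pro:zn}$.

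For (c), suppose the chambar splits as a concatenation of two chambars: there is a partition $\{0,1,\ldots,\nu\}=A\cup B$ into nonempty (proper) subsets with $(\sigma^p X)_{p\in A}$ a chambar, hence $\sum_{p\in A}\exp(\sigma^p tX)=|A|\,\mathrm{id}$. Running the expansion of (b) over $p\in A$ forces, for every coordinate $j$ and every $1\le k\le\nu$, the identity $\big(\sum_{p\in A}\sigma^{pk}\big)(a\cdot\nabla)^k P_j\equiv 0$. Let $\widetilde P_1$ be the homogeneous part of $P_1$ of degree $\nu$, nonzero because $\deg P_1=\nu$; then $(a\cdot\nabla)^\nu P_1=\nu!\,\widetilde P_1(a)$, so provided $\widetilde P_1(a)\neq 0$ none of $(a\cdot\nabla)^k P_1$, $1\le k\le\nu$, vanishes identically (if $(a\cdot\nabla)^k\widetilde P_1\equiv 0$ for some $k\le\nu$, a further $(a\cdot\nabla)^{\nu-k}$ would annihilate the constant $\widetilde P_1(a)$). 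Therefore $\sum_{p\in A}\sigma^{pk}=0$ for all $1\le k\le\nu$; equivalently the polynomial $g(z)=\sum_{p\in A}z^p$, of degree $\le\nu$ and with coefficients in $\{0,1\}$, vanishes at the $\nu$ distinct roots $\sigma,\sigma^2,\ldots,\sigma^\nu$, so $g$ is a scalar multiple of $1+z+\cdots+z^\nu$ — which with $\{0,1\}$ coefficients forces $A=\varnothing$ or $A=\{0,\ldots,\nu\}$, a contradiction. Hence the chambar is irreducible.

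The main obstacle is this last step, precisely the need that none of the intermediate directional derivatives $(a\cdot\nabla)^k P$, $1\le k\le\nu$, be identically zero: that is exactly what makes the forced zero set $\{\sigma,\ldots,\sigma^\nu\}$ of $g$ large enough to pin $A$ down, and it rests on $\deg P_1=\nu$ together with $\widetilde P_1(a)\neq 0$. The latter holds for $a$ outside the hypersurface $\{\widetilde P_1=0\}$, and automatically when $n=1$ (there $\widetilde P_1(x)=c\,x^\nu$), so the statement is to be read for such $a$; for $a$ on that locus the flow $t\mapsto P(\phi(x)+ta)$ drops degree in $t$ and the reduction argument must be reconsidered.
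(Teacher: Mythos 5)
Your proof is correct, and your parts (a)--(b) are essentially the paper's own argument: the paper expands $\varphi_t(x)=P(\phi(x)+ta)=x+tH_1(x,a)+\sum_{j=2}^{\nu}t^jH_j(x,a)$ with $H_j$ homogeneous of degree $j$ in $a$, substitutes $\sigma^k t$ for $t$, and concludes from $\sum_{k=0}^{\nu}\sigma^{jk}=0$ for $1\le j\le\nu$ --- exactly your root-of-unity computation, phrased via the Taylor expansion in $t$ rather than via the coefficients $\frac{1}{k!}(a\cdot\nabla)^kP_j$. Where you genuinely go beyond the paper is part (c): the paper's proof stops once the barycentric identity is established and never addresses irreducibility, whereas you derive, for a putative sub-chambar indexed by $A$, the vanishing of the subset power sums $\sum_{p\in A}\sigma^{pk}$ for all $1\le k\le\nu$, and then use that a polynomial with coefficients in $\{0,1\}$ of degree $\le\nu$ vanishing at the $\nu$ points $\sigma,\ldots,\sigma^{\nu}$ must be $0$ or $1+z+\cdots+z^{\nu}$. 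Your caveat is not pedantry but is actually necessary: the argument needs the flow to have full $t$-degree $\nu$ (guaranteed by your condition that the degree-$\nu$ homogeneous part of $P_1$ does not vanish at $a$), and for degenerate $a$ the literal statement can fail when $\nu+1$ is composite. For instance, with $\nu=3$, $P(x_1,x_2)=(\rho x_1+x_2^3,\rho x_2)$ and $a=(1,0)$ one gets the constant field $X=\rho\frac{\partial}{\partial x_1}$, and $(X,\mathbf{i}X,-X,-\mathbf{i}X)$ splits into the two $2$-chambars $(X,-X)$ and $(\mathbf{i}X,-\mathbf{i}X)$, hence is reducible by the paper's own criterion for constant chambars. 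So your proposal supplies, under a genericity hypothesis on $a$ that is in fact needed, a proof of the irreducibility claim that the paper asserts but does not prove.
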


\begin{proof}[{\sl Proof}]
Since $P$ has degree $\nu$ 
\begin{eqnarray*}
\varphi_t(x)&=&P\big(\phi(x)+ta\big)\\
&=&P(\phi(x))+tDP(\phi(x))\cdot a+\displaystyle\sum_{j=2}^\nu \frac{t^j}{j!}D^{(j)}P(\phi(x))\cdot a\\
&=&x+tH_1(x,a)+\displaystyle\sum_{j=2}^\nu t^jH_j(x,a)
\end{eqnarray*}
where $H_j(x,a)$ is homogeneous of degree $j$ with respect to 
$a\in\mathbb{C}^n$. Hence the flow of $\sigma^kX$~is
\[
\varphi_{\sigma^k\cdot t}(x)=x+\sigma^ktH_1(x,a)+\displaystyle\sum_{j=2}^\nu\sigma^{jk}t^jH_j(x,a)
\]
and so 
\[
\displaystyle\sum_{k=0}^\nu\varphi_{\sigma^k\cdot t}(x)=\displaystyle\sum_{k=0}^\nu\Big(x+\sigma^ktH_1(x,a)+\displaystyle\sum_{j=2}^\nu\sigma^{jk}t^jH_j(x,a)\Big)=(\nu+1)x
\]
because $\displaystyle\sum_{k=0}^\nu\sigma^{jk}=0$ if $1\leq j\leq \nu$.
\end{proof}

\begin{rem}
The construction produces vector fields $X$
whose flow $\exp tX$ is
polynomial in the variable time $t$.
\end{rem}

\begin{eg}
A global example of this kind (Proposition \ref{pro:irrcham})
can be given by a polynomial 
diffeomorphism $P\colon\mathbb{C}^n\to\mathbb{C}^n$. 
For instance
\[
P(x_1,x_2,\ldots,x_n)=(x_1,x_2+q_2(x_1),x_3+q_3(x_1,x_2),\ldots,x_n+q_n(x_1,x_2,\ldots,x_{n-1}))
\]
where $q_j\in\mathbb{C}[x_1,x_2,\ldots,x_{j-1}]$, $2\leq j\leq n$.

\end{eg}

\medskip

As a particular example, consider the polynomial
diffeomorphism of $\mathbb{C}^2$ 
\[
\phi(x,y)=(x+y^2,y).
\]
Conjugating the flow
\[
(x+a_kt,y+b_kt)\qquad a_k,\,b_k\in\mathbb{C}
\]
with $\varphi$ we get the flow
\[
\phi_k^t=\big(x+a_kt+2b_kty+b_k^2t^2,y+b_kt\big);
\]
one can check that it is the flow of the affine 
vector field
\[
X_k=\Big(a_k+2b_ky\Big)\frac{\partial}{\partial x}+b_k\frac{\partial}{\partial y}.
\]
Remark that this flow is polynomial in the time $t$.

As soon as $b_k\not=0$ the trajectories are 
the parabola
\[
f_k=a_ky+b_ky^2-b_kx=\text{constant}.
\]
For $p\geq 3$ if we choose $a_1$, $a_2$, $\ldots$, 
$a_p$, $b_1$, $b_2$, $\ldots$, $b_p$ such that 
\begin{equation}\label{eq:hidi}
a_1+a_2+\ldots+a_p=b_1+b_2+\ldots+b_p=b_1^2+b_2^2+\ldots+b_p^2=0
\end{equation}
then the $X_k$ satisfy the barycentric property and 
produce a $p$-chambar. For a generic choice of the 
parameters $a_k$ and $b_k$ the $X_k$ are not 
$\mathbb{C}$-colinear. Note that for $p=3$ if 
$(\ref{eq:hidi})$ holds, then the web
$\mathrm{W}(X_1,X_2,X_3)$ is an hexagonal web 
(\emph{see for instance} \cite{PereiraPirio}) since
$f_1+f_2+f_3=0$.

\subsection{Polynomial vector fields that satisfy the barycentric property}

\begin{pro}\label{pro:dim1pol}
{\sl In dimension $1$ the polynomial vector
fields that satisfy the barycentric 
property are the constant vector fields
\[
a_k\frac{\partial}{\partial x}
\]
with $a_k\in\mathbb{C}^*$ and 
$\displaystyle\sum_{k=1}^pa_k=0$.}
\end{pro}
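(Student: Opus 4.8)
The plan is to invoke Proposition~\ref{pro:csqeqal}, which in one variable turns the barycentric property into the family of identities
\[
\sum_{k=1}^p X_k^\ell(x)=0,\qquad \ell\geq 1 ,
\]
valid as polynomial identities (the left-hand sides are polynomials vanishing on the open set where the $X_k$ live), and then to extract a contradiction from their top-degree coefficients. Write $X_k=P_k(x)\frac{\partial}{\partial x}$ with $P_k\in\mathbb{C}[x]$ and set $R_\ell^k:=X_k^\ell(x)$, so $R_0^k=x$ and $R_{\ell+1}^k=P_k\cdot(R_\ell^k)'$. First I would record that, when $d_k:=\deg P_k\geq 1$, an easy induction gives $\deg R_\ell^k=\ell(d_k-1)+1$ for all $\ell\geq 1$, while the leading coefficient of $R_\ell^k$ equals $c_k^{\,\ell}\,M_\ell(d_k)$, where $c_k$ is the leading coefficient of $P_k$ and
\[
M_\ell(d)=\prod_{j=1}^{\ell-1}\bigl(j(d-1)+1\bigr)
\]
is a positive integer depending only on $\ell$ and $d$, \emph{not} on $k$; if instead $d_k=0$, then $R_1^k=P_k$ is constant and $R_\ell^k=0$ for $\ell\geq 2$.

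Next I would show that $d:=\max_k d_k$ equals $0$. Suppose $d\geq 1$ and let $S=\{\,k : d_k=d\,\}$, which is nonempty. For $k\in S$ one has $\deg R_\ell^k=\ell(d-1)+1\geq 1$, whereas for $k\notin S$ either $d_k=0$ and $\deg R_\ell^k\leq 0$, or $1\leq d_k<d$ and $\deg R_\ell^k=\ell(d_k-1)+1<\ell(d-1)+1$; in every case $\deg R_\ell^k<\ell(d-1)+1$. Hence the coefficient of $x^{\ell(d-1)+1}$ in the identity $\sum_k R_\ell^k=0$ is contributed only by the indices in $S$, and since all of them share $d_k=d$ it equals $M_\ell(d)\sum_{k\in S}c_k^{\,\ell}$; therefore $\sum_{k\in S}c_k^{\,\ell}=0$ for every $\ell\geq 1$. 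The numbers $c_k$ ($k\in S$) are nonzero, so this is absurd: a finite family of nonzero complex numbers all of whose power sums vanish is empty — group equal values and use that a Vandermonde matrix with distinct nonzero entries is invertible (equivalently, Newton's identities force every elementary symmetric function, hence every $c_k$, to vanish). This contradiction gives $d=0$.

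Consequently every $X_k$ is a constant field $a_k\frac{\partial}{\partial x}$; the identities with $\ell\geq 2$ then hold automatically, and the one with $\ell=1$ reads $\sum_{k=1}^p a_k=0$. Since the $X_k$ are not identically zero (our standing convention), $a_k\in\mathbb{C}^*$, which is the asserted description; the converse — that $p$ distinct such constant fields do satisfy the barycentric property — was already checked in \S\ref{subsec:elex}. There is no serious obstacle here; the one step that needs genuine care is the bookkeeping of leading coefficients in the first paragraph, and the crucial observation is that the factor $M_\ell(d)$ is the same for all $k\in S$, so that it cancels and everything comes down to the elementary statement about vanishing power sums.
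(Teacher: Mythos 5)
Your proof is correct and follows essentially the same route as the paper's: it invokes Proposition~\ref{pro:csqeqal}, tracks the degree $\ell(d-1)+1$ and the leading coefficient $c_k^{\,\ell}M_\ell(d)$ of $X_k^\ell(x)$ (your explicit product $M_\ell$ is just the paper's recursively defined $A(\ell)$), isolates the indices of maximal degree, and concludes from the vanishing of all power sums of the nonzero leading coefficients. The only difference is cosmetic: you spell out the Vandermonde/Newton argument that the paper leaves as a parenthetical remark.
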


\begin{proof}[{\sl Proof}]
The proof is based on Proposition \ref{pro:csqeqal}.
Let $X=P(x)\frac{\partial}{\partial x}$ where $P\in\mathcal{O}(\mathbb{C})$
is viewed as a derivation on $\mathcal{O}(\mathbb{C})$.
According to $(\ref{eq:al})$ the flow $\varphi_t$ of $X$ 
is 
\[
\varphi_t(x)=x+\displaystyle\sum_{k\geq 1}\frac{1}{k!}X^k(x)t^k.
\]
If $P\in\mathbb{C}[x]$ is a polynomial of degree $d\geq 1$, 
then $X^k(x)$ is also a polynomial for any $k\geq 1$. Let us 
write $X^k(x)$ as $X^k(x)=\displaystyle\sum_{j=0}^{d(k)}a_j^kx^j$.
If we set $d(\ell):=\deg(X^\ell(x))$, then 
\begin{itemize}
\item[(1)] since $\deg(X)=d$, then $a_d^1\not=0$;

\item[(2)] $d(\ell)=(d-1)\ell+1$ because $d(\ell+1)=\deg(X(x))+d(\ell)-1=d+d(\ell)-1$;

\item[(3)] the equality $a_{d(\ell+1)}^{\ell+1}=d(\ell)a_d^1a_{d(\ell)}^\ell$ holds.
\end{itemize}

By recurrence we get from $(3)$ that $a_{d(\ell)}^\ell=A(\ell)(a_d^1)^\ell$ where
\begin{itemize}
\item[(4)] $A(1)=1$ and $A(\ell+1)=d(\ell)A(\ell)$ for $\ell\geq 1$.
\end{itemize}

On the one hand if $d=0$, then $X(x)\not=0$ and $X^\ell(x)=0$ for all
$\ell\geq 2$. On the other hand it follows from $(2)$, $(3)$ and $(4)$ 
that if $d\geq 1$, then $d(\ell)\geq 1$ and $A(\ell)\geq 1$ for all 
$\ell\geq 1$. 

\smallskip

Now assume that $(X_1,X_2,\ldots,X_p)$ is a polynomial $p$-chambar on 
$\mathbb{C}$. Let $d=\displaystyle\max_{1\leq j\leq p}\deg(X_j)$. 
Suppose by contradiction that $d\geq 1$. Without lost of generality
we can assume that 
\[
\{j\,\vert\,\deg(X_j)=d\}=\{1,\,2,\,\ldots,\,q\}\subset\{1,\,2,\ldots,\,p\}.
\]
Set $X_k=P_k(x)\frac{\partial}{\partial x}$ where 
$P_k(x)=\displaystyle\sum_{j=0}^da_{kj}x^j$, $1\leq k\leq p$,
where 
\[
a_{jd}\not=0\text{ if } 1\leq j\leq q \quad \text{and} \quad a_{jd}=0\text{ if }q<j\leq p. 
\]
\begin{claim}
For any $\ell\geq 1$ we have 
\[
(a_{1d})^\ell+(a_{2d})^\ell+\ldots+(a_{qd})^\ell=0.
\]
\end{claim}
The statement follows from the Claim 
(indeed if $(a_{1d})^\ell+(a_{2d})^\ell+\ldots+(a_{qd})^\ell=0$ for any $\ell\geq 1$, then $a_{1d}=a_{2d}=\ldots=a_{qd}=0$). Let us 
now justify it:

\begin{proof}[{\sl Proof of the Claim}]
Set $d(k,\ell)=\deg(X_k^\ell(x))$, $1\leq k\leq p$. Note that:
\begin{itemize}
\item[$\diamond$] if $d=1$, then $d(k,\ell)=1$ for all $1\leq k\leq q$ and all 
$\ell\geq 1$; furthermore if $q<k\leq p$, then $X_k^\ell(x)=0$ for all $\ell\geq 2$.

\item[$\diamond$] id $d>1$ and $1\leq k\leq q$, then $d\leq d(k,\ell)=(d-1)k+1$ and so $d(k,\ell)<d(k,\ell+1)$ for all $\ell\geq 1$. Moreover if $q<k\leq p$, then either $d(k,\ell)<(d-1)k+1$ or $X_k^\ell(x)=0$ for all $\ell\geq 2$.
\end{itemize}
Given $1\leq k\leq p$ let $a(k,\ell)$ be the  coefficient of $x^{d(k,\ell)}$ 
in the polynomial $X_k^\ell(x)$. If follows from the above computations
that
\begin{itemize}
\item[$\diamond$] if $1\leq k\leq q$, then $a(k,\ell)=A(\ell)(a_{kd})^\ell$ where 
$A(\ell)\not=0$,

\item[$\diamond$] if $q<k\leq p$, then $a(k,\ell)=0$.
\end{itemize}
According to Proposition \ref{pro:csqeqal} we get that
\[
X_1^\ell(x)+X_2^\ell(x)+\ldots+
X_p^\ell(x)=0
\]
implies 
\[
A(\ell)\Big((a_{1d})^\ell+(a_{2d})^\ell+\ldots+(a_{qd})^\ell=0\Big)=0;
\]
as $A(\ell)\not=0$ we finally obtain that $(a_{1d})^\ell+(a_{2d})^\ell+\ldots+(a_{qd})^\ell=0$.
\end{proof}
\end{proof}

\begin{rem}
If $p=3$, then Proposition \ref{pro:dim1pol}
is a consequence of Theorem 
\ref{thm:loc3bfc}.
\end{rem}

\begin{rem}
If $X$ is a holomorphic vector field on the 
Riemann sphere 
$\overline{\mathbb{C}}=\mathbb{C}\cup\{\infty\}$,
then in the affine chart $\mathbb{C}$ there exists a polynomial function $a$ 
of degree $\leq 2$ such that 
$X=a(x)\frac{\partial}{\partial x}$.
The only $p$-tuple of global vector fields
that satisfy the barycentric property
in this chart are
 the constant vector fields.
\end{rem}

\subsection{Examples produced by those of dimension $1$}\label{sec:exconj}

We need a definition:

\begin{defi}
A $p$-chambar of the form
$\mathrm{Ch}(a_1X,a_2X,\ldots,a_pX)$, 
with $a_i$ constant, is called  
\textsl{rigid}.
\end{defi}

\smallskip

Propositions  \ref{pro:zn} and \ref{pro:irrcham}  give examples of 
rigid $p$-chambars.

\smallskip

Let us give a construction presented in dimension $2$
for simplicity but that can be gene\-ralized in any 
dimension $n$ and for any $p$. 

Consider the vector field 
$X(x)=2\sqrt{x}\frac{\partial}{\partial x}$ that 
induces the flow 
$\varphi_t(x)=x+2t\sqrt{x}+t^2$, a 
special case of \S \ref{subsec:fundex}. A first 
$3$-chambar in dimension $2$ is
\[
\mathrm{Ch}\big(X(x)+X(y),\mathbf{j}(X(x)+X(y)),\mathbf{j}^2(X(x)+X(y))\big)
\]
which is rigid. Similarly one can consider
\[
\mathrm{Ch}\big(X(x)+X(y),\mathbf{j}X(x)+\mathbf{j}^2X(y),\mathbf{j}^2X(x)+\mathbf{j}X(y)\big)
\]
which is non-rigid. These examples are well 
defined on any simply connected open subset 
that do not intersect the axis $x=0$ and $y=0$.

\smallskip
Let us now give an example of a non-rigid irreducible 
$4$-chambar still in dimension $2$
\[
\mathrm{Ch}\Big(X(x),\mathbf{j}X(x)+X(y),\mathbf{j}^2X(x)+\mathbf{j}X(y),\mathbf{j}^2X(y)\Big)
\]
that can be generalized to a $5$-chambar as
follows
\[
\mathrm{Ch}\Big(X(x),\mathbf{j}X(x),\mathbf{j}^2X(x)+X(y),\mathbf{j}X(y),\mathbf{j}^2X(y)\Big).
\]

\begin{eg}
Another way to obtain examples is 
by taking the real part of a 
complex $p$-chambar on $\mathbb{C}^n$.
For instance, if we set 
$z=x+\mathbf{i}y$, then 
$\frac{d}{dz}=\frac{1}{2}\left(\frac{d}{dx}-\mathbf{i}\frac{d}{dy}\right)$,
\[
\sqrt{2}\sqrt{z}=\underbrace{\sqrt{\sqrt{x^2+y^2}+x}}_{A(x,y)}+\mathbf{i}\underbrace{\sqrt{\sqrt{x^2+y^2}-x}}_{B(x,y)}
\]
and
\[
\mathrm{Re}\left(\sqrt{z}\,\frac{d}{dz}\right)=\frac{1}{2\sqrt{2}}\left(A(x,y)\,\frac{d}{dx}+B(x,y)\,\frac{d}{dy}\right).
\]
The three vector fields 
$\mathrm{Re}\left(\sqrt{z}\,\frac{d}{dz}\right)$, 
$\mathrm{Re}\left(\mathbf{j}\,\sqrt{z}\,\frac{d}{dz}\right)$, 
$\mathrm{Re}\left(\mathbf{j}^2\,\sqrt{z}\,\frac{d}{dz}\right)$ 
give a real $3$-chambar but if we consider $x$, $y$ 
as complex variables we get a $3$-chambar on a suitable 
open set of $\mathbb{C}^2$.

Let us remark that we can iterate this process: take a 
chambar on $\mathbb{C}^n$, its real part gives a 
chambar on $\mathbb{R}^{2n}$ whose complexification 
is a chambar on $\mathbb{C}^{2n}$ and so on...
\end{eg}

\smallskip

\subsection{Examples associated to some polynomial flows in $t$}\label{subsec:exples}

\subsubsection{Polynomial examples}

Let $P=p_0+p_1x+\ldots+p_Nx^\nu$ be a polynomial 
of degree $\nu$. Consider the vector field
\[
X=a\frac{\partial}{\partial x}+P(x)\frac{\partial}{\partial y}
\]
where $a\in\mathbb{C}^*$. Its flow is polynomial in $t$:
\[
\varphi_t(x,y)=\left(x+at,y+\displaystyle\sum_{k=0}^\nu p_k\left(\frac{(x+at)^{k+1}}{a(k+1)}-\frac{x^{k+1}}{a(k+1)}\right)\right)
\]
that can we rewritten
\[
\varphi_t(x,y)=\Big(x+at,y+\widetilde{P}_a(x+at)-\widetilde{P}_a(x)\Big)
\]
where $\widetilde{P}_a(y)=\displaystyle\sum_{k=0}^\nu p_k\frac{y^{k+1}}{a(k+1)}$.

Let us consider $p$ vector fields $X_1$, $X_2$, $\ldots$, 
$X_p$ of the following form
\[
X_k=a_k\frac{\partial}{\partial x}+P_k(x)\frac{\partial}{\partial y}.
\]
The barycentric property is equivalent to 
\begin{equation}\label{eq:polbp1}
\displaystyle\sum_{k=1}^pa_k=0
\end{equation}
and 
\begin{equation}\label{eq:polbp2}
\displaystyle\sum_{k=1}^p \widetilde{P}_{k,a_k}(x+a_kt)-\widetilde{P}_{k,a_k}(x)=0
\end{equation}
Note that $(\ref{eq:polbp2})$ holds if and only if 
\[
\frac{\partial}{\partial t}\left(\displaystyle\sum_{k=1}^p \widetilde{P}_{k,a_k}(x+a_kt)\right)=0
\]
if and only if
\begin{equation}\label{eq:polbp3}
\displaystyle\sum_{k=1}^\nu P_k(x+a_kt)=0
\end{equation}
As soon as we have fixed the constants $a_1$, $a_2$, 
$\ldots$, $a_p$ the equality $(\ref{eq:polbp3})$
is a linear system in the coefficients of the 
polynomials $P_k$, system that sometimes has non-trivial
solutions. 

Consider for instance the case $p=3$ and $\nu=2$. Set
\begin{align*}
& P_1=\alpha_0+\alpha_1x+\alpha_2x^2, && P_2=\beta_0+\beta_1x+\beta_2x^2, &&
P_3=\gamma_0+\gamma_1x+\gamma_2x^2.
\end{align*}

Conditions $(\ref{eq:polbp1})$ and $(\ref{eq:polbp3})$ are 
equivalent to 
\begin{align*}
& (I)\,\,\left\{
\begin{array}{ll}
a_1+a_2+a_3=0\\
\alpha_0+\beta_0+\gamma_0=0\\
\alpha_1+\beta_1+\gamma_1=0\\
\alpha_1a_1+\beta_1a_2+\gamma_1a_3=0
\end{array}
\right.
&& (II)\,\,\left\{
\begin{array}{ll}
\alpha_2+\beta_2+\gamma_2=0\\
\alpha_2a_1+\beta_2a_2+\gamma_2a_3=0\\
\alpha_2a_1^2+\beta_2a_2^2+\gamma_2a_3^2=0
\end{array}
\right.
\end{align*}
In other words $(\ref{eq:polbp1})$ and $(\ref{eq:polbp3})$
give seven equations in the parameters space 
$\alpha$, $\beta$, $\gamma$, $a$ of dimension~$12$.
The set of solutions is not irreducible. 
Assume that the parameters $a=\underline{a}$ satisfies
$\underline{a_1}\not=\underline{a_2}\not=\underline{a_3}$. 
Then in a neighborhood of $a=\underline{a}$ the system
$(II)$ is a Vandermonde one so has for solution 
$\alpha_2=\beta_2=\gamma_2=0$. Then 
$(I)$ and $(II)$ are equivalent to 
\[
\left\{
\begin{array}{lllll}
a_1+a_2+a_3=0\\
\alpha_0+\beta_0+\gamma_0=0\\
\alpha_1+\beta_1+\gamma_1=0\\
\alpha_1a_1+\beta_1a_2+\gamma_1a_3=0\\
\alpha_2=\beta_2=\gamma_2=0
\end{array}
\right.
\]
that defines a quadric of dimension $12-7=5$. 
But there are solutions such that two of the 
$\underline{a_i}$ are equal. For instance if 
$\underline{a_1}=\underline{a_2}=\underline{a_3}=0$, 
then $(I)$ and $(II)$ are equivalent to 
\[
\underline{a_1}=\underline{a_2}=\underline{a_3}=\alpha_0+\beta_0+\gamma_0=\alpha_1+\beta_1+\gamma_1=\alpha_2+\beta_2+\gamma_2=0
\]
which is a linear space of dimension $12-6=6$.

Hence the set $\Sigma$ of vector fields of this
type satisfying the barycentric pro\-perty
is not irreducible. 

\subsubsection{Birational examples}

Take $(a_1,a_2,\ldots,a_p)$ a $p$-tuple of $\mathbb{C}^n$ and set
for $1\leq k\leq p$
\[
a_k=(a_{k,1},a_{k,2},\ldots,a_{k,n}).
\]
Consider the translation flow 
\[
T_t^{a_k}(x_1,x_2,\ldots,x_n)=(x_1+a_{k,1}t,x_2+a_{k,2}t,\ldots,x_n+a_{k,n}t).
\]
Denote by $\psi$ the blow-up
\[
\psi\colon(x_1,x_2,\ldots,x_n)\dashrightarrow(x_1,x_1x_2,\ldots,x_1x_n).
\]
The lift $F_t^k$ of $T_t^{a_k}$ by $\psi$ can be 
written
\begin{eqnarray*}
F_t^k(x)&=&\psi\circ T_t^{a_k}\circ\psi^{-1}(x)\\
&=& \left(x_1+a_{k,1}t,\big(x_1+a_{k,1}t\big)\left(\frac{x_2}{x_1}+a_{k,2}t\right),\ldots,\big(x_1+a_{k,1}t\big)\left(\frac{x_n}{x_1}+a_{k,n}t\right)\right).
\end{eqnarray*}
The condition $\displaystyle\sum_{k=1}^pF_t^k(x)=px$ is satisfied if 
\begin{itemize}
\item[$\diamond$] for any $1\leq \ell\leq n$
\[
\displaystyle\sum_{k=1}^pa_{k,\ell}=0
\]
\item[$\diamond$] and for any $2\leq \ell\leq n$
\[
\displaystyle\sum_{k=1}^pa_{k,1}a_{k,\ell}=0.
\]
\end{itemize}

\begin{rem}
In the previous examples we assume that the 
$a_k$'s are not all zero. Up to a li\-near 
conjugation (such a conjugation preserves a 
barycentric property) we can assume that 
$a_1=(1,0,0,\ldots,0)$. The previous 
conditions can be rewritten
\[
\left\{
\begin{array}{ll}
\displaystyle\sum_{k=1}^p a_{k,\ell}=0\qquad 1\leq\ell\leq n\\
a_{1,\ell}=0\qquad 2\leq\ell\leq n
\end{array}
\right.
\]
that thus form a linear subspace of the space
of coefficients $a_{j,i}$. These examples of 
$p$-chambars are given by birational flows 
quadratic in the time $t$ (\emph{see}
\cite{CerveauDeserti} for other examples).
\end{rem}

\subsection{Examples of chambars whose flows are 
non-algebraic/non-polynomial in $t$}

Let $k$ be an integer; consider $q_k$ vector fields 
of the following form
\[
X_k^j=a_k\frac{\partial}{\partial x}+b_{k,j}\mathrm{e}^{\lambda_kx}\frac{\partial}{\partial y}\qquad 1\leq j\leq q_k
\]
where $a_k$, $b_{k,j}$ and $\lambda_k$ belong to 
$\mathbb{C}^*$. The flows of $X_k^j$ is
\[
(\exp tX_k^j)(x,y)=\Big(x+a_kt,y+\frac{b_{k,j}}{\lambda_ka_k}\mathrm{e}^{\lambda_kx}(\mathrm{e}^{\lambda_ka_kt}-1)\Big)
\]
Set $\ell=\displaystyle\sum_{k=1}^pq_k$. The $\ell$ 
vector fields $X_k^j$ form a $\ell$-chambar if and
only if for any $1\leq k\leq p$ the following 
equalities hold
\begin{align*}
&\displaystyle\sum_{k=1}^pq_ka_k=0, && \displaystyle\sum_{j=1}^{q_k}b_{k,j}=0.
\end{align*}
Contrary to the previous example the flows 
$\exp tX_k^j$ are non-polynomial: their 
orbits are the levels of the functions
\[
\lambda_ka_ky-b_{k,j}\mathrm{e}^{\lambda_kx}.
\]
This construction starts with $\ell=4$ and 
produces global chambars on $\mathbb{C}^2$. 
It can be gene\-ralized to higher dimensions.

\subsection{Compatible diffeomorphisms}

The concept of $p$-chambar is an affine one, that is the barycentric
property is invariant under the action of the group
of affine transformations; if $\mathcal{C}$ is a local
$p$-chambar and $\phi$ a diffeomorphism, then, in 
general, $\phi_*\mathcal{C}$ is not a chambar.

\begin{prob}
Let $\mathrm{Ch}_c$ be a constant chambar;
what are the diffeomorphisms $\phi$ such that 
$\phi_*\mathrm{Ch}_c$ is a $p$-chambar ?
What is the structure of such a set
of diffeomorphisms ?
\end{prob} 

Let us give an answer to this problem in the special case $p=3$, 
$n=2$. Let $\mathrm{Ch}(X_1,X_2,X_3)$ a constant $3$-chambar in 
$\mathbb{C}^2$. We say that $\mathrm{Ch}(X_1,X_2,X_3)$ is 
\textsl{generic} if the $X_i$'s
are linearly independent. We immediately notice that a generic 
constant $3$-chambar is linearly conjugate to the "standard" 
$3$-chambar
\[
\mathrm{Ch}_0=\mathrm{Ch}\left(\frac{\partial}{\partial x},\frac{\partial}{\partial y},-\left(\frac{\partial}{\partial x}+\frac{\partial}{\partial y}\right)\right).
\]
Let $\phi$ be a local diffeomorphism; we say that $\phi$ is 
\textsl{compatible with} $\mathrm{Ch}_0$ if 
$\phi_*\mathrm{Ch}_0$ is a $3$-chambar. We have the following 
statement (recall that $\mathbf{j}$, $\mathbf{j}^2$ are the roots
of $t^2+t+1$):

\begin{thm}\label{thm:cstpchamb}
{\sl A local diffeomorphism of $\mathbb{C}^2$ is compatible 
with $\mathrm{Ch}_0$ if and only if it can be written
$L+F$ where $L$ denotes an affine inversible transformation 
and $F=(f,g)$ with
\[
f,\,g\in\langle(y+\mathbf{j}x)^2,(y+\mathbf{j}^2x)^2,xy(y-x)\rangle_\mathbb{C}.
\]}
\end{thm}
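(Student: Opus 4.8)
The plan is to work directly with the defining condition coming from Proposition \ref{pro:csqeqal}, which translates the barycentric property of $\phi_*\mathrm{Ch}_0$ into a countable family of polynomial identities, and then to extract from the low-order identities enough information to pin down $F$ exactly. Write $\phi = (\phi_1,\phi_2)$ and set $Y_i = \phi_*X_i$ for $i=1,2,3$, where $X_1 = \partial/\partial x$, $X_2 = \partial/\partial y$, $X_3 = -(\partial/\partial x + \partial/\partial y)$. Using the push-forward formula $Y_i = D\phi(\phi^{-1})\cdot v_i$ with $v_1=(1,0)$, $v_2=(0,1)$, $v_3=(-1,-1)$, the condition $\sum_i Y_i^\ell(x_j) = 0$ for all $\ell\geq 1$ and $j=1,2$ must hold. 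The key observation is that $Y_i$ is, up to the coordinate change $\phi$, a constant vector field, so $Y_i^\ell(x_j)$ has a clean interpretation: if $\psi = \phi^{-1}$, then $Y_i^\ell(\phi_j) = $ (the $\ell$-th derivative in the direction $v_i$ of the identity function on the source), which is zero for $\ell\geq 2$; the content is entirely in re-expressing things on the target. Concretely, since $\exp(tX_i)$ on the source is translation by $tv_i$, the flow of $Y_i$ is $\phi\circ T^{v_i}_t\circ\phi^{-1}$, and the barycentric property reads $\sum_i \phi(\psi(x) + t v_i) = 3x = \sum_i \phi(\psi(x))$ for all small $t$, i.e.
\[
\sum_{i=1}^3 \phi\big(w + t v_i\big) = 3\,\phi(w)\qquad\text{for all }w\text{ near }0.
\]
So the real statement to prove is: the holomorphic maps $\phi$ satisfying this functional equation are exactly the affine maps plus maps $F=(f,g)$ with $f,g$ in the span of $(y+\mathbf{j}x)^2$, $(y+\mathbf{j}^2x)^2$, $xy(y-x)$.

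The heart of the argument is a componentwise analysis: it suffices to determine the scalar holomorphic functions $h$ near $0\in\mathbb{C}^2$ with $\sum_{i=1}^3 h(w+tv_i) = 3h(w)$ identically in $t$. Expand $h$ in homogeneous pieces $h = \sum_{d\geq 0} h_d$; since the equation is linear in $h$ and respects the grading only after expanding $h(w+tv_i)$ in $t$, I would instead expand in powers of $t$: the coefficient of $t^k$ gives $\sum_i (v_i\cdot\nabla)^k h = 0$ for all $k\geq 1$. Now $(v_1\cdot\nabla) = \partial_x$, $(v_2\cdot\nabla)=\partial_y$, $(v_3\cdot\nabla) = -(\partial_x+\partial_y)$, so writing $u=y+\mathbf{j}x$, $\bar u = y+\mathbf{j}^2 x$ (a basis in which the three directional derivatives become, up to scalars, $\partial_u$, $\partial_{\bar u}$, and $-(\mathbf{j}\partial_u + \mathbf{j}^2\partial_{\bar u})$-type expressions), one computes $\sum_i (v_i\cdot\nabla)^k$ as a polynomial in $\partial_u,\partial_{\bar u}$ whose coefficients are $1 + \mathbf{j}^m + \mathbf{j}^{2m}$ for various $m$; these vanish precisely when $3\nmid m$. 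A short computation with the multinomial/binomial expansion shows that $\sum_i(v_i\cdot\nabla)^k$ annihilates $h$ for all $k\geq 1$ if and only if $\partial_u^2\partial_{\bar u}\, h = \partial_u\partial_{\bar u}^2\, h = \partial_u^3 h = \partial_{\bar u}^3 h = 0$, equivalently $h$ is a polynomial of degree $\leq 2$ whose degree-$2$ part lies in $\langle u^2, \bar u\,^2, u\bar u\cdot(\text{nothing of degree}\leq 2\text{ obstructed})\rangle$ — here one must check that the only degree-$2$ monomial killed by all the operators beyond $u^2,\bar u\,^2$ is $u\bar u$, and translating $u\bar u$ back to $(x,y)$ gives (a multiple of) $xy(y-x)$? — I should double-check this last translation, since $u\bar u = (y+\mathbf{j}x)(y+\mathbf{j}^2x) = y^2 + (\mathbf{j}+\mathbf{j}^2)xy + \mathbf{j}^3 x^2 = y^2 - xy + x^2$, which is degree $2$, not $3$; so the correct reading is that the obstruction-free degree-$2$ space is spanned by $u^2,\bar u\,^2, u\bar u$, and the cubic $xy(y-x)$ must enter because $\partial_u\partial_{\bar u}$ applied to a \emph{cubic} can vanish — so in fact $h$ need not be a polynomial of degree $\leq 2$; degree-$3$ terms with $\partial_u^2\partial_{\bar u} h = \partial_u\partial_{\bar u}^2 h=\partial_u^3 h=\partial_{\bar u}^3 h = 0$ are exactly the multiples of $u^2\bar u - u\bar u\,^2 = u\bar u(u-\bar u)$, and $u-\bar u = (\mathbf{j}-\mathbf{j}^2)x$, so $u\bar u(u-\bar u)$ is a nonzero multiple of $(y^2-xy+x^2)\cdot x$, which one checks equals a multiple of $xy(y-x)$ modulo $\langle u^2,\bar u\,^2\rangle$ times linear terms — this identification is the computational crux.

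So the clean statement of the single-function lemma is: $\sum_i h(w+tv_i)\equiv 3h(w)$ iff $h = (\text{affine}) + (\text{element of } V)$ with $V = \langle (y+\mathbf{j}x)^2,\ (y+\mathbf{j}^2x)^2,\ xy(y-x)\rangle_{\mathbb{C}}$; applying it to each of $\phi_1,\phi_2$ and noting $\phi=L+F$ with $L$ the affine part and requiring $\phi$ to be a \emph{local diffeomorphism} forces $L$ to be invertible (the elements of $V$ vanish to order $2$ at $0$, so $D\phi(0)=DL(0)$ must be invertible), gives the theorem. The steps in order: (1) reduce to the functional equation $\sum_i\phi(w+tv_i)=3\phi(w)$ via the flow description and reduce to scalar $h$; (2) extract the PDE system $\sum_i(v_i\cdot\nabla)^k h = 0$, $k\geq 1$; (3) diagonalize using the basis $u=y+\mathbf{j}x$, $\bar u = y+\mathbf{j}^2 x$ and the vanishing pattern of $1+\mathbf{j}^m+\mathbf{j}^{2m}$; (4) solve the resulting constant-coefficient system to get $h\in(\text{affine})\oplus V$; (5) translate $V$ back to the stated generators and handle the invertibility of $L$. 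The main obstacle is step (4) together with the bookkeeping in step (5): showing that the infinitely many conditions collapse to finitely many — equivalently that $h$ is forced to be polynomial of degree $\leq 3$ — and then correctly identifying the three-dimensional solution space with the stated span, in particular checking that $u\bar u(u-\bar u)$ really is proportional to $xy(y-x)$ modulo the span of $u^2$ and $\bar u\,^2$ (and absorbing any linear discrepancy into $L$). I expect step (3)–(4) to require a careful but elementary induction on the degree of the homogeneous components of $h$, using that $\partial_u^a\partial_{\bar u}^b$ with $a+b\geq 4$ and $a,b$ not both $\geq 1$-of-the-bad-type eventually forces all high-degree terms to vanish.
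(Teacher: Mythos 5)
Your reduction to the functional equation $\sum_{i}\phi(w+tv_i)=3\phi(w)$, the extraction of the conditions $\sum_i(v_i\cdot\nabla)^k h=0$ for $k\ge 1$, and the diagonalization in the variables $u=y+\mathbf{j}x$, $\bar u=y+\mathbf{j}^2x$ follow exactly the paper's route (the $k=2$ and $k=3$ conditions are, up to nonzero constants, the operators $S$ and $T$ used in Lemma \ref{lem:tec}). The genuine problem is the crux computation in your steps (3)--(4): the claimed equivalent PDE system is wrong, and with it your description of the solution space. In the $(u,\bar u)$ variables the three directional derivatives are $\mathbf{j}\partial_u+\mathbf{j}^2\partial_{\bar u}$, $\partial_u+\partial_{\bar u}$ and $\mathbf{j}^2\partial_u+\mathbf{j}\partial_{\bar u}$, and
\[
\sum_{i=1}^3(v_i\cdot\nabla)^k=3\sum_{\substack{a+b=k\\ a\equiv b\ (\mathrm{mod}\ 3)}}\binom{k}{a}\,\partial_u^a\partial_{\bar u}^b ,
\]
so the full system is equivalent to the two conditions $\partial_u\partial_{\bar u}h=0$ (the $k=2$ condition, i.e.\ $S(h)=0$, which is absent from your list) and $\partial_u^3h+\partial_{\bar u}^3h=0$ (the $k=3$ condition, which you split incorrectly into the two separate conditions $\partial_u^3h=0$ and $\partial_{\bar u}^3h=0$); every condition with $k\ge 4$ is then automatic. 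The system you actually state, $\partial_u^2\partial_{\bar u}h=\partial_u\partial_{\bar u}^2h=\partial_u^3h=\partial_{\bar u}^3h=0$, kills all third-order derivatives, so its kernel is the space of polynomials of degree $\le 2$ in $(u,\bar u)$; this kernel contains $u\bar u=x^2-xy+y^2$, which does \emph{not} satisfy the barycentric relation (direct substitution leaves an excess $3t^2$, consistent with $S(u\bar u)=3\neq 0$), and it does not contain $xy(y-x)$, which does. Your attempted patch is inconsistent as well: a cubic annihilated by all four of your operators is $0$, and $u^2\bar u-u\bar u^2$ is not annihilated by $\partial_u^2\partial_{\bar u}$.

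The correct endgame, which repairs your argument and brings it back to the paper's: $\partial_u\partial_{\bar u}h=0$ gives $h=\varphi(u)+\psi(\bar u)$; then $\varphi'''(u)+\psi'''(\bar u)\equiv 0$ forces $\varphi'''\equiv c$ and $\psi'''\equiv -c$, so $h$ is affine plus an element of $\langle u^2,\ \bar u^2,\ u^3-\bar u^3\rangle_{\mathbb{C}}$, and one has exactly $u^3-\bar u^3=3(\mathbf{j}-\mathbf{j}^2)\,xy(y-x)$, so no reduction modulo $\langle u^2,\bar u^2\rangle$ or absorption of discrepancies is needed. With this correction, the remaining points you list (componentwise application of the scalar lemma, invertibility of $L$ because $F$ vanishes to order $2$ at $0$, and the converse check that the three generators satisfy the relation) are fine, and the proof then coincides in substance with the paper's proof of Theorem \ref{thm:cstpchamb} via Lemma \ref{lem:tec}, which works directly with $S$ and $T$ and shows $\ker S\cap\ker T\subset\ker(\partial^3/\partial x^3)\cap\ker(\partial^3/\partial y^3)$ before identifying the six-dimensional space.
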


\begin{rem}
A local compatible diffeomorphism is in fact a global application, 
but not in general a global diffeomorphism.
\end{rem}

Let us first state and prove the following 
result we use in the proof of 
Theorem \ref{thm:cstpchamb}:

\begin{lem}\label{lem:tec}
{\sl If $h$ is a holomorphic function satisfying the 
P.D.E's
\begin{align*}
& \frac{\partial^2h}{\partial x^2}+\frac{\partial^2h}{\partial x \partial y}+\frac{\partial^2h}{\partial y^2}=0 && \frac{\partial^3h}{\partial x^2 \partial y}+\frac{\partial^3h}{\partial x\partial y^2}=0
\end{align*}
then $h$ is a polynomial of degree $3$ of the form
\[
h(x,y)=\alpha_0+\alpha_1x+\alpha_2y+\alpha_3(x+\mathbf{j}y)^2+\alpha_4(x+\mathbf{j}^2y)^2+\alpha_5xy(y-x)
\]
with $\alpha_0$, $\alpha_1$, $\ldots$, $\alpha_5\in\mathbb{C}$.}
\end{lem}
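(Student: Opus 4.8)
The plan is to first extract from the two given equations the much simpler identities $h_{xxx}\equiv 0$ and $h_{yyy}\equiv 0$ --- which already force $h$ to be a polynomial --- and then to solve a finite linear system. (Throughout I write $h_x$ for $\partial h/\partial x$, and similarly for higher derivatives.)

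First I would differentiate the first equation with respect to $x$, obtaining $h_{xxx}+h_{xxy}+h_{xyy}=0$; the second equation kills the last two terms, so $h_{xxx}\equiv 0$. Differentiating the first equation with respect to $y$ instead and using the second equation again gives $h_{yyy}\equiv 0$. On a polydisc contained in the domain of $h$, integrating $h_{xxx}=0$ three times in $x$ writes $h=A(y)+B(y)\,x+C(y)\,x^2$ with $A,B,C$ holomorphic; then $h_{yyy}=0$ forces $A'''=B'''=C'''=0$, so $A,B,C$ are polynomials of degree $\le 2$, and $h$ agrees on the polydisc with a polynomial $Q(x,y)$ of degree $\le 2$ in each variable. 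Since the domain is connected, the identity theorem gives $h\equiv Q$. Thus $h=\sum_{0\le i,j\le 2}c_{ij}x^iy^j$, depending a priori on nine coefficients.

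Next I would substitute this ansatz into the two equations. Expanding $h_{xx}+h_{xy}+h_{yy}$ and $h_{xxy}+h_{xyy}$ and equating the coefficients of the monomials, all the resulting relations collapse to exactly three independent ones, namely $c_{22}=0$, $c_{21}+c_{12}=0$ and $2c_{20}+c_{11}+2c_{02}=0$ (the second equation produces nothing new), so the solution space is $6$-dimensional. It then remains to check that the six functions $1$, $x$, $y$, $(x+\mathbf{j}y)^2$, $(x+\mathbf{j}^2y)^2$, $xy(y-x)$ each satisfy both equations --- for the two squares this is $1+\mathbf{j}+\mathbf{j}^2=0$ together with the vanishing of their third derivatives, while for $xy(y-x)$ one computes $h_{xx}+h_{xy}+h_{yy}=-2y+(2y-2x)+2x=0$ and $h_{xxy}+h_{xyy}=-2+2=0$ --- and that they are linearly independent, which follows by comparing homogeneous components (the two squares being independent since $\mathbf{j}\neq\mathbf{j}^2$). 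Six independent solutions in a $6$-dimensional space form a basis, which is exactly the asserted expression for $h$.

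The only point that needs care is that $h$ is assumed merely holomorphic, so its polynomiality has to be \emph{derived} rather than taken for granted; the identities $h_{xxx}=h_{yyy}=0$ together with the identity theorem do exactly this. Everything after that is routine linear algebra in a nine-dimensional coefficient space, so there is no serious obstacle.
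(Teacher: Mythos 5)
Your proof is correct and follows essentially the same route as the paper: you derive $h_{xxx}=h_{yyy}=0$ by differentiating the first equation and substituting the second (the paper phrases this as $\partial_x\cdot S=\partial_x^3+T$), reduce to polynomials of degree at most $2$ in each variable, and then identify the six-dimensional solution space by a coefficient computation. Your explicit dimension count and basis verification is just a slightly more detailed version of the paper's final step, so there is nothing to add.
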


\begin{proof}[{\sl Proof}]
To simplify the notations let us consider the 
differential operators
\begin{align*}
& S=\frac{\partial^2}{\partial x^2}+\frac{\partial^2}{\partial x \partial y}+\frac{\partial^2}{\partial y^2} && T=\frac{\partial^3}{\partial x^2 \partial y}+\frac{\partial^3}{\partial x\partial y^2}
\end{align*}

The inclusion $\langle 1,\,x,\,y,\,(y+\mathbf{j}x)^2,\,(y+\mathbf{j}^2x)^2,\,xy(y-x)\rangle_\mathbb{C}\subset\ker(S)\cap \ker(T)$ 
is straightforward.

\medskip

Note that 
\[
\frac{\partial}{\partial x}\cdot S=\frac{\partial^3}{\partial x^3}+\frac{\partial^2}{\partial x^2}\frac{\partial}{\partial y}+\frac{\partial}{\partial x}\frac{\partial^2}{\partial y^2}=\frac{\partial^3}{\partial x^3}+T
\]
so $\ker(S)\cap\ker(T)\subset\ker\left(\frac{\partial^3}{\partial x^3}\right)$.

Similarly $\frac{\partial}{\partial y}\cdot S=\frac{\partial^3}{\partial y^3}+T$ and thus 
$\ker(S)\cap\ker(T)\subset\ker\left(\frac{\partial^3}{\partial y^3}\right)$.

As a result $\ker(S)\cap \ker(T)\subset \ker\left(\frac{\partial^3}{\partial x^3}\right)\cap \ker\left(\frac{\partial^3}{\partial y^3}\right)$. In particular if $h$ belongs to 
$\ker(S)\cap\ker(T)$, then $\frac{\partial^3h}{\partial x^3}=\frac{\partial^3h}{\partial y^3}=0$.

Let $h=\displaystyle\sum_{k,\ell}h_{k,\ell}x^ky^\ell$ be 
the Taylor series of $h$ at $(0,0)$. If $\frac{\partial^3h}{\partial x^3}=\frac{\partial^3h}{\partial y^3}=0$, then $h_{k,\ell}\not=0$
if and only if $k$, $\ell\leq 2$. However if $k=\ell=2$, 
then we have $S(x^2y^2)=2y^2+2x^2+4xy\not=0$ and so 
\[
\ker(S)\cap\ker(T)=\langle 1,\,x,\,y,\,(y+\mathbf{j}x)^2,\,(y+\mathbf{j}^2x)^2,\,xy(y-x)\rangle_\mathbb{C}.
\]
\end{proof}

\begin{proof}[{\sl Proof of Theorem \ref{thm:cstpchamb}}]
If $\phi$ is a local diffeomorphism of $\mathbb{C}^2$
compatible with $\mathrm{Ch}_0$, then the 
barycentric condition asserts that
\begin{equation}\label{eq:barycond}
\phi(x+t,y)+\phi(x,y+t)+\phi(x-t,y-t)=3\phi(x,y).
\end{equation}
We can assume that $\phi$ is defined in a neighborhood
of $(0,0)$. Let us write $\phi$ as $L+(f,g)$
where $L$ is affine and $f$, $g\in\mathcal{O}(\mathbb{C}^2,0)$ satisfy $(f,g)(0,0)=D(f,g)(0,0)=(0,0)$. 
By derivating (\ref{eq:barycond}) twice with respect
to $t$, we get that both components $f$
and~$g$ satisfy the P.D.E.
\[
\frac{\partial^2h}{\partial x^2}+\frac{\partial^2h}{\partial x\partial y}+\frac{\partial^2h}{\partial y^2}=0.
\]
The solutions of such P.D.E. are of the following 
type
\begin{equation}\label{eq:solcla}
h=\varphi_+(y+\mathbf{j}x)+\varphi_-(y+\mathbf{j}^2x)
\end{equation}
with $\mathbf{j}$, $\mathbf{j}^2$ the roots of $t^2+t+1$
and $\varphi_+$, $\varphi_-$ holomorphic in one 
variable defined on suitable domains. 

A third derivation with respect to $t$ shows
that $f$ and $g$ also satisfy the P.D.E.
\begin{equation}\label{eq:barycond2}
0=\frac{\partial^3h}{\partial x^2\partial y}+\frac{\partial^3h}{\partial x\partial y^2}=\frac{\partial^2}{\partial x\partial y}\left(\frac{\partial h}{\partial x}+\frac{\partial h}{\partial y}\right).
\end{equation}

Lemma \ref{lem:tec} allows to conclude (note that, 
with the notations of Lemma \ref{lem:tec} an 
element of $\ker S\cap\ker T$ satisfies 
relation (\ref{eq:barycond})).
\end{proof}

More generally, one can state:

\begin{thm}\label{thm:baz}
{\sl Let $f\colon \mathcal{U}\to f(\mathcal{U})\subset\mathbb{C}^n$
be a biholomorphism from the open set $\mathcal{U}\subset\mathbb{C}^n$
to $f(\mathcal{U})$, $n\geq 2$. Assume that the vector fields 
\begin{align*}
& f_*\frac{\partial}{\partial x_1},&& f_*\frac{\partial}{\partial x_2}, &&\ldots, &&f_*\frac{\partial}{\partial x_n}, && f_*\left(-\frac{\partial}{\partial x_1}-\frac{\partial}{\partial x_2}-\ldots-\frac{\partial}{\partial x_n}\right)
\end{align*}
satisfy the barycentric property. Then all the components $f_j$ of $f$ are 
polynomial.}
\end{thm}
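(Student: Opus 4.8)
The plan is to mimic, in arbitrary dimension, the argument that proves Theorem \ref{thm:cstpchamb}, extracting from the barycentric property a system of linear PDE's satisfied by each component $f_j$ of $f$, and then to show those PDE's force $f_j$ to be polynomial. First I would write the barycentric condition explicitly. Set $e_1,\dots,e_n$ for the standard basis and $e_0=-(e_1+\dots+e_n)$; the pushed-forward vector fields $f_*\partial/\partial x_k$ have flows $t\mapsto f(f^{-1}(x)+t e_k)$, and similarly for $f_*(e_0)$. Hence, writing the barycentric identity $\sum_{k=0}^{n}\exp(tf_*e_k)=(n+1)\,\mathrm{id}$ and precomposing with $f$ (equivalently, evaluating at $y=f^{-1}(x)$), one gets, for $y$ in a neighbourhood of a base point and $t$ small,
\[
\sum_{k=0}^{n} f(y+t\,e_k)=(n+1)\,f(y).
\]
This is the $n$-dimensional analogue of (\ref{eq:barycond}), and it holds componentwise for each $f_j$.

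Next I would differentiate this functional identity repeatedly in $t$ and set $t=0$. Differentiating $m$ times gives, for every $m\ge 1$ and every component $h=f_j$,
\[
\sum_{k=0}^{n}\,(e_k\cdot\nabla)^m\,h=0,
\]
i.e. $\sum_{k=0}^{n} D^{m}h(e_k,\dots,e_k)=0$ where $D^m h$ is the $m$-th total derivative, a symmetric $m$-form. Equivalently, if $P_m$ denotes the symmetric multilinear operator whose diagonal is $v\mapsto (v\cdot\nabla)^m$, then $\sum_{k=0}^{n} (v\cdot\nabla)^m h$ evaluated at $v=e_k$ vanishes; since the identity also holds after the affine change of coordinates given by the linear part of $f$ (the barycentric property being affine-invariant, cf. the Remark after the introduction), one may normalise the vectors $e_0,\dots,e_n$ to be any $n+1$ vectors summing to zero. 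The key point is purely algebraic: the constraints $\sum_k (e_k\cdot\nabla)^m h=0$ for $m=2,3$ already generate, via composition with first-order operators exactly as in Lemma \ref{lem:tec} (where one checks $\partial_{x_i}\circ S$ produces $\partial_{x_i}^3$ plus lower terms lying in $\ker T$), all the pure third derivatives $\partial^3 h/\partial x_i^3$; more generally I expect the $m=2$ and $m=3$ relations to force $\partial^3 h/\partial x_i^3=0$ for all $i$. Once every pure third $x_i$-derivative of $h$ vanishes, $h$ is a polynomial of degree at most $2$ in each variable separately, hence a polynomial outright. That proves $f_j$ is polynomial.

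The step I expect to be the main obstacle is the algebraic lemma generalising Lemma \ref{lem:tec}: showing that the two operators $S=\sum_{k=0}^n (e_k\cdot\nabla)^2$ and $T=\sum_{k=0}^n (e_k\cdot\nabla)^3$ have the property that $\ker S\cap\ker T$ (among holomorphic germs) is contained in $\bigcap_i\ker(\partial^3/\partial x_i^3)$. In dimension $2$ this is the computation carried out in the proof of Lemma \ref{lem:tec}, using the specific vectors $e_0=-e_1-e_2$; in general one must check that the quadratic form $q(v)=\sum_k\langle v,e_k\rangle^2$ associated to $S$ is nondegenerate (true since $e_1,\dots,e_n$ span, as $\sum\langle v,e_k\rangle^2 = |v|^2+\langle v,e_1+\dots+e_n\rangle^2$ in the standard normalisation) and that, modulo the ideal generated by $S$ in the polynomial ring $\mathbb{C}[\partial_1,\dots,\partial_n]$, the cubics $\partial_i^3$ are congruent to multiples of $T$. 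Concretely: from $\partial_i\cdot S=\partial_i^3+\partial_i^2\sum_{j\ne i}\partial_j+\dots$ one expresses $\partial_i^3$ in terms of $\partial_i\cdot S$ and lower-weight combinations, then uses $S$ again and finally $T$ to kill the remainder; this is a finite linear-algebra verification in the degree-$\le 3$ part of $\mathbb{C}[\partial_1,\dots,\partial_n]$, which one can organise uniformly in $n$. Once the lemma is in hand, the Taylor-coefficient argument of Lemma \ref{lem:tec} applies verbatim: $\partial_i^3 h=0$ for all $i$ bounds the $x_i$-degree of $h$ by $2$ for each $i$, so $h$ has finitely many Taylor coefficients and is a polynomial, completing the proof that all $f_j$ are polynomial.
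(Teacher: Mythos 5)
Your reduction of the barycentric hypothesis to the functional equation $\sum_{k=0}^{n} f(y+te_k)=(n+1)f(y)$, and then, by differentiating in $t$ at $t=0$, to the system $T_m(f_j)=0$ for all $m\geq 2$ with $T_m=\sum_{i=1}^n\partial_{x_i}^m+(-1)^m(\partial_{x_1}+\dots+\partial_{x_n})^m$, is exactly the paper's Lemma \ref{claim1}. The gap is in the next step: you claim that the relations for $m=2,3$ alone already force $\partial^3 f_j/\partial x_i^3=0$ for all $i$, as a generalization of Lemma \ref{lem:tec}. This is false as soon as $n\geq 3$. The symbols $P_2(z)=\sum_i z_i^2+(\sum_i z_i)^2$ and $P_3(z)=\sum_i z_i^3-(\sum_i z_i)^3$ are just two homogeneous polynomials in $n\geq 3$ variables, so their common zero locus is a cone of positive dimension, and any nonzero common zero $\zeta$ produces the non-polynomial function $h(x)=\exp(\langle\zeta,x\rangle)$ which lies in $\ker T_2\cap\ker T_3$ while $\partial_i^3h=\zeta_i^3h\neq 0$ for some $i$. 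Concretely, for $n=3$ take $\zeta=(1,\mathbf{i},-1)$, so that $\zeta_1,\zeta_2,\zeta_3,-(\zeta_1+\zeta_2+\zeta_3)$ are the four fourth roots of unity, whose power sums of order $1,2,3$ vanish: then $T_2h=T_3h=0$ but $\partial_1^3h=h\neq 0$ (and indeed $T_4h\neq 0$). Hence no finite linear-algebra verification in the degree $\leq 3$ part of $\mathbb{C}[\partial_1,\dots,\partial_n]$ can yield what you want; the two-variable computation of Lemma \ref{lem:tec}, where $\partial_x\cdot S=\partial_x^3+T$, is special to $n=2$ and does not propagate.

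The paper's proof necessarily uses the whole family $T_k$, $k\geq 2$. It passes to the symbols $P_k$, shows by a Vandermonde/power-sum argument (which needs $k$ up to $n+1$) that the common zero set of all the $P_k$ is $\{0\}$, and then invokes the Nullstellensatz to conclude $z_j^p\in\langle P_k\,\vert\,k\geq 2\rangle$ for some $p$, hence $\partial_{x_j}^pf_\ell=0$ for every $j$ and each $f_\ell$ is a polynomial of degree at most $n(p-1)$. Note that this $p$ is not claimed to be $3$, so even the shape of your conclusion (degree at most $2$ in each variable) is too strong in general. To repair your argument you must retain all the operators $T_k$ and run this ideal-theoretic (or an equivalent) argument; with $T_2$ and $T_3$ alone the key lemma you rely on is simply false for $n\geq 3$.
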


\begin{lem}\label{claim1}
{\sl Let $h\in\mathcal{O}(\mathcal{U})$ be a holomorphic function
with the property that 
\begin{small}
\begin{equation}\label{eq:claim}
\displaystyle\sum_{j=1}^nh(x_1,x_2,\ldots,x_{j-1}x_j+t,x_{j+1},x_{j+2},\ldots,x_n)+h(x_1-t,x_2-t,\ldots,x_n-t)=(n+1)h(x_1,x_2,\ldots,x_n)
\end{equation}
\end{small}
for all $x\in\mathcal{U}$ and $t\in\mathbb{C}$ with $\vert t\vert$ 
small enough. Then $h$ satisfies the system of P.D.Es 
\[
\left\{
\begin{array}{lll}
T_2(h)=0\\
T_3(h)=0\\
\ldots
\end{array}
\right.
\]
where $T_k$ is the differential operator
\[
T_k=\frac{\partial^k}{\partial x_1^k}+\frac{\partial^k}{\partial x_2^k}+\ldots+\frac{\partial^k}{\partial x_n^k}+(-1)^k\Big(\frac{\partial}{\partial x_1}+\frac{\partial}{\partial x_2}+\ldots+\frac{\partial}{\partial x_n}\Big)^k.
\]}
\end{lem}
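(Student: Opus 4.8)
The plan is to differentiate the functional equation \eqref{eq:claim} repeatedly with respect to $t$ and evaluate at $t=0$, just as was done in the proof of Theorem \ref{thm:cstpchamb} for the case of $\mathrm{Ch}_0$ in dimension $2$. First I would fix $x\in\mathcal{U}$ and set $\Phi(t)=\sum_{j=1}^n h(x_1,\ldots,x_j+t,\ldots,x_n)+h(x_1-t,\ldots,x_n-t)$, which by hypothesis equals the constant $(n+1)h(x)$ for $t$ small; hence $\Phi^{(k)}(0)=0$ for every $k\geq 1$. The $k$-th derivative of the $j$-th summand at $t=0$ is $\partial^k h/\partial x_j^k$, while the chain rule applied to $t\mapsto h(x_1-t,\ldots,x_n-t)$ gives $(-1)^k\bigl(\tfrac{\partial}{\partial x_1}+\cdots+\tfrac{\partial}{\partial x_n}\bigr)^k h$. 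Summing, $\Phi^{(k)}(0)=T_k(h)(x)$, and since $x\in\mathcal{U}$ was arbitrary this yields $T_k(h)\equiv 0$ on $\mathcal{U}$ for all $k\geq 1$.

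It remains to observe that the cases $k=1$ is vacuous: $T_1=\frac{\partial}{\partial x_1}+\cdots+\frac{\partial}{\partial x_n}-\bigl(\frac{\partial}{\partial x_1}+\cdots+\frac{\partial}{\partial x_n}\bigr)=0$ identically, so the first nontrivial constraint is $T_2(h)=0$, and then $T_3(h)=0$, and so on, exactly as stated. So the content is really just: the operator $T_k$ is precisely the coefficient extracted by $\frac{d^k}{dt^k}\big|_{t=0}$ from the left-hand side of \eqref{eq:claim}.

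There is essentially no obstacle here; the only points requiring a word of care are the interchange of differentiation with the (finite) sum — which is immediate — and the fact that $h$ being holomorphic allows term-by-term differentiation of its Taylor expansion in $t$, so that the vanishing of $\Phi$ near $t=0$ genuinely forces all Taylor coefficients, i.e. all $\Phi^{(k)}(0)$, to vanish. One should also note that the shifts $x_j\mapsto x_j+t$ and $(x_1,\ldots,x_n)\mapsto(x_1-t,\ldots,x_n-t)$ keep the argument inside $\mathcal{U}$ for $|t|$ small since $\mathcal{U}$ is open, so $\Phi$ is genuinely defined and holomorphic near $t=0$. With these remarks the lemma follows directly from the chain-rule computation of $\Phi^{(k)}(0)$.
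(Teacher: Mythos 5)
Your proof is correct and is essentially the paper's own argument: the paper also fixes the function $\varphi(t,x)=\sum_j h(x+te_j)+h(x+tv)$, uses \eqref{eq:claim} to see it is independent of $t$, differentiates $k$ times in $t$ via the chain rule, and sets $t=0$ to obtain $T_k(h)=0$. Your added remark that $T_1\equiv 0$ (so the system genuinely starts at $k=2$) is a harmless clarification not present in, but fully consistent with, the paper.
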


\begin{proof}[{\sl Proof}]
Let $e_1=(1,0,0,\ldots,0)$, $e_2=(0,1,0,0,\ldots,0)$, $\ldots$, 
$e_n=(0,0,\ldots,0,1)$ and $v=-\displaystyle\sum_{j=1}^ne_j$. 
The idea is to prove by induction on $k\geq 1$ that for 
any $t\in(\mathbb{C},0)$
\begin{equation}\label{eq:int}
\displaystyle\sum_{j=1}^n\frac{\partial^k}{\partial x_j}h(x+te_j)+(-1)^k\left(\frac{\partial}{\partial x_1}+\frac{\partial}{\partial x_2}+\ldots+\frac{\partial}{\partial x_n}\right)^kh(x+tv)=0;
\end{equation}
indeed if $t=0$ in (\ref{eq:int}), then we get 
(\ref{eq:claim}).

\medskip

Let $\varphi(t,x)=\displaystyle\sum_{j=1}^n h(x+te_j)+h(x+tv)$. 
According to $(\ref{eq:claim})$ the function $\varphi(t,x)$ 
depends only of $x$. In particular differentiating $k$ times
with respect to $t$ we get 
\[
\frac{\partial^k\varphi(t,x)}{\partial t^k}=\displaystyle\sum_{j=1}^n\frac{\partial^k}{\partial x_j}h(x+te_j)+(-1)^k\left(\frac{\partial}{\partial x_1}+\frac{\partial}{\partial x_2}+\ldots+\frac{\partial}{\partial x_n}\right)^kh(x+tv)=0
\]
Furthermore doing $t=0$ we get $T_k(h)=0$.
\end{proof}

\begin{proof}[{\sl Proof of Theorem \ref{thm:baz}}]
Now suppose that $f\colon\mathcal{U}\to f(\mathcal{U})\subset\mathbb{C}^n$
is a biholomorphism such that the vector fields 
$f_*\frac{\partial}{\partial x_1}$, $f_*\frac{\partial}{\partial x_2}$,
$\ldots$, $f_*\frac{\partial}{\partial x_n}$, $f_*\left(-\frac{\partial}{\partial x_1}-\frac{\partial}{\partial x_2}-\ldots-\frac{\partial}{\partial x_n}\right)$ satisfy the
barycentric property. Setting $f=(f_1,f_2,\ldots,f_n)$ we see
that it is equivalent to 
\[
\displaystyle\sum_{j=1}^nf_\ell(x+te_j)+f_\ell(x+tv)=(n+1)f_\ell(x)\qquad\forall\,1\leq\ell\leq n.
\]
Therefore each component $f_\ell$ of $f$ satisfies (\ref{eq:claim}) so 
that $f_\ell$ belongs to $\displaystyle\bigcap_{k\geq 2}\ker(T_k)$
for any $1\leq \ell\leq n$ (Lemme \ref{claim1}). The idea is 
to prove that 
$\displaystyle\bigcap_{k\geq 2}\ker(T_k)\subset\mathbb{C}[x_1,x_2,\ldots,x_n]$: 
if $h\in\displaystyle\bigcap_{k\geq 2}\ker(T_k)$, 
then $h$ is a polynomial. 

Let $\mathcal{P}$ be the Noetherian ring of linear differential 
operators on $\mathcal{O}(\mathcal{U})$ with constant coefficients
\[
\mathcal{P}=\big\{P\left(\frac{\partial}{\partial x_1},\frac{\partial}{\partial x_2},\ldots,\frac{\partial}{\partial x_n}\right)\,\vert\, P\in\mathbb{C}[z_1,z_2,\ldots,z_n]\big\}
\]
and let $\mathcal{I}=\langle T_k\,\vert\,k\geq 2\rangle$ be the ideal of 
$\mathcal{P}$ generated by all the operators $T_k$, 
$k\geq 2$. Note that if $S$ belongs to $\mathcal{I}$, 
then $\displaystyle\bigcap_{k\geq 2}\ker(T_k)$ is 
contained in $\ker(S)$. 

\begin{claim}\label{claim2}
{\sl There exists $p\in\mathbb{N}$ such that 
$\frac{\partial^p}{\partial x_j^p}$ belongs to 
$\mathcal{I}$ for all $1\leq j\leq n$.}
\end{claim}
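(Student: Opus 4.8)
The plan is to work in the polynomial ring $R=\mathbb{C}[z_1,\ldots,z_n]$, identifying $\mathcal{P}$ with $R$ via $\partial/\partial x_j\leftrightarrow z_j$, so that $\mathcal{I}$ corresponds to the ideal $I=\langle q_k:k\ge 2\rangle$ where $q_k=z_1^k+z_2^k+\cdots+z_n^k+(-1)^k(z_1+z_2+\cdots+z_n)^k$. It suffices to show that for some $p$ the monomial $z_j^p$ lies in $I$ for each $j$; by symmetry it is enough to treat $z_1$. First I would introduce the new variable $s=z_1+z_2+\cdots+z_n$ and note that $q_k=p_k(z_1,\ldots,z_n)+(-1)^k s^k$, where $p_k$ is the $k$-th power sum. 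The power sums $p_2,p_3,\ldots$ together with the elementary symmetric polynomials generate the same ideal of positive-degree symmetric polynomials, so the strategy is to extract from the $q_k$ enough symmetric information to force a power of $z_1$ into $I$.

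The key step is to pass to the quotient $A=R/I$ and show it is a finite-dimensional $\mathbb{C}$-vector space; then $z_1$ is nilpotent in $A$, which is exactly the assertion that $z_1^p\in I$ for some $p$ (and by symmetry the same $p$, after enlarging, works for all $j$). To see finite-dimensionality, consider the $\mathbb{C}$-algebra map $R\to \mathbb{C}[z_1,\ldots,z_n,s]/(s-\sum z_i)\cong R$; the elements $q_k$ say that in $A$ the power sum $p_k$ equals $-(-1)^k s^k=(-1)^{k+1}s^k$ for every $k\ge 2$. Newton's identities then express every elementary symmetric function $e_1,e_2,\ldots,e_n$ as a polynomial in the $p_k$, hence (modulo $I$) as a polynomial in $s$ alone; in particular $e_1=z_1+\cdots+z_n=s$ already, and inductively each $e_m$ becomes congruent mod $I$ to an explicit polynomial $c_m(s)$. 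Consequently $A$ is generated over $\mathbb{C}[s]$ by the images of $z_1,\ldots,z_n$, which are integral over $\mathbb{C}[s]$ because they satisfy the monic relation $\prod_{i}(T-z_i)=T^n-e_1T^{n-1}+\cdots\equiv T^n-sT^{n-1}+c_2(s)T^{n-2}-\cdots$ in $A[T]$. Thus $A$ is a finitely generated module over $\mathbb{C}[s]$.

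It remains to cut down the $\mathbb{C}[s]$ in the base: I would use $q_2$ and $q_3$ (or a short combination of the first few $q_k$) to show that $s$ itself is nilpotent in $A$. Concretely, $q_2=p_2+s^2$ and $q_3=p_3-s^3$ give $p_2\equiv -s^2$, $p_3\equiv s^3$ mod $I$; combining these with the relation already present ($p_1=e_1=s$) and with Newton's identity $p_3=e_1p_2-e_2p_1+3e_3$, one eliminates the $e_i$ and lands on a nontrivial polynomial relation in $s$ alone lying in $I$ — i.e. $s^N\in I$ for some $N$ — once $n$ is taken into account, the combinatorics collapses because the "symmetric part" of each $q_k$ is rigidly tied to $s^k$. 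With $s$ nilpotent in $A$ and $A$ module-finite over $\mathbb{C}[s]$, the ring $A$ is a finite-dimensional local-type Artinian $\mathbb{C}$-algebra, hence every $z_j$ is nilpotent in it. Therefore there is $p\in\mathbb{N}$ with $\partial^p/\partial x_j^p\in\mathcal{I}$ for all $j$.

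The main obstacle I anticipate is the last bookkeeping step: showing that the symmetric identities among the $q_k$ actually force $s$ (equivalently $e_1$) to be nilpotent mod $I$, rather than merely showing $A$ is finite over $\mathbb{C}[s]$. One must check that no $\mathbb{C}[s]$-torsion-free quotient of $A$ survives — equivalently, that the variety $V(I)\subset\mathbb{C}^n$ is just the origin. This is where the explicit presence of the $(-1)^k s^k$ term is essential: on $V(I)$ every power sum $p_k$ is determined by $s$, which over $\mathbb{C}$ pins the multiset $\{z_1,\ldots,z_n\}$ to be $\{s,0,\ldots,0\}$ up to order, and then $p_2=s^2$ versus $q_2=0$ forces $2s^2=0$, so $s=0$ and all $z_i=0$. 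Making this scheme-theoretic (not just set-theoretic) is the delicate point, and it is handled exactly by the Newton's-identity computation sketched above.
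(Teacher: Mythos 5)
Your global strategy (transfer $\mathcal{I}$ to the ideal $I=\langle q_k\,\vert\,k\geq 2\rangle$ of $\mathbb{C}[z_1,\ldots,z_n]$ and show each $z_j$ is nilpotent modulo $I$) is the right reduction, and in spirit it differs from the paper, which proves that the zero locus of $\widetilde{\mathcal{I}}$ is $\{0\}$ (by viewing $z_1,\ldots,z_n,S$ with $S=-\sum_i z_i$ as an $(n+1)$-tuple all of whose power sums vanish, via a Vandermonde-type matrix) and then invokes the Nullstellensatz to get $\mathfrak{m}_n^p\subset\widetilde{\mathcal{I}}$. However, the central step of your plan --- that $s=z_1+\cdots+z_n$ is nilpotent in $A=R/I$ --- is not actually established, and the two justifications you offer are flawed. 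First, finite-dimensionality of $A$ over $\mathbb{C}$ does not by itself make $z_1$ nilpotent (it only makes $z_1$ algebraic; nilpotence needs precisely that the zero locus is the origin, which is the point at issue). Second, your concrete mechanism ``use $q_2$ and $q_3$, or a short combination of the first few $q_k$'' fails for $n\geq 3$: for $n=3$ the points $(s,\mathbf{i}s,-\mathbf{i}s)$ satisfy $q_2=q_3=0$ for every $s$, so no power of $s$ lies in $\langle q_2,q_3\rangle$; one genuinely needs the relations up to order about $n+1$. Third, the set-theoretic claim that the conditions ``pin the multiset $\{z_1,\ldots,z_n\}$ to be $\{s,0,\ldots,0\}$'' is false: that multiset has power sums $s^k$, whereas modulo $I$ one has $p_k\equiv(-1)^{k+1}s^k$. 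The correct statement is that the $n+1$ numbers $z_1,\ldots,z_n,-s$ have all power sums equal to zero, hence all vanish --- which is exactly the paper's Claim 3, the ingredient your sketch is missing.

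The good news is that your Newton's-identities route can be completed, and then it becomes a genuine, Nullstellensatz-free alternative with an explicit exponent. By induction using $p_k\equiv(-1)^{k+1}s^k$ for $2\leq k\leq n$ one gets $e_m\equiv s^m\pmod I$ for $1\leq m\leq n$; then the Newton identity of order $n+1$, where no new elementary symmetric function appears ($e_{n+1}=0$ in $n$ variables), reads $p_{n+1}=\sum_{i=1}^n(-1)^{i-1}e_i\,p_{n+1-i}$ and yields $(n+1)\,s^{n+1}\in I$, so $s^{n+1}\in I$. Feeding this back, each $e_m\equiv s^m$ is nilpotent mod $I$, and the monic relation $z_j^n-e_1z_j^{n-1}+\cdots+(-1)^ne_n=0$ then makes every $z_j$ nilpotent mod $I$, with a computable bound on $p$. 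As written, though, the proposal leaves the decisive nilpotence of $s$ unproved and rests it on an incorrect identification of the zero set, so it does not yet constitute a proof of the Claim.
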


Claim \ref{claim2} implies that if $h$ belongs
to $\displaystyle\bigcap_{k\geq 2}\ker(T_k)$, then $h$ is a 
polynomial of degree at most $n(p-1)$.

\begin{proof}[{\sl Proof of Claim \ref{claim2}}]
Let $\Phi\colon\mathcal{P}\to\mathcal{O}_n$ be the 
unique ring homomorphism satisfying 
\[
\Phi\left(\frac{\partial}{\partial x_j}\right)=z_j  \qquad\,\forall 1\leq j\leq n.
\]
Note that $\Phi(T_k)=z_1^k+z_2^k+\ldots+z_n^k+(-1)^k(z_1+z_2+\ldots+z_n)^k$.
Let us set
\begin{align*}
&P_k(z)=z_1^k+z_2^k+\ldots+z_n^k+(-1)^k(z_1+z_2+\ldots+z_n)^k,
&&\widetilde{\mathcal{I}}=\langle P_k\,\vert\,k\geq 2\rangle,
&&\Phi(\mathcal{I})=\widetilde{\mathcal{I}}.
\end{align*}

\begin{claim}\label{claim3}
{\sl One has 
\[
Z(\widetilde{\mathcal{I}})=\big\{z\in\mathbb{C}^n\,\vert\,P_k(z)=0\quad\forall\,k\geq 2\big\}=\big\{0\big\}.
\]}
\end{claim}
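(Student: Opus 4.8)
The plan is to recognize each $P_k$ as a Newton power sum in one more variable. Put $s=z_1+z_2+\cdots+z_n$ and introduce the $(n+1)$ quantities
\[
w=(w_1,w_2,\ldots,w_{n+1}):=(z_1,z_2,\ldots,z_n,-s).
\]
Then for every $k\geq 1$ one has $P_k(z)=w_1^k+w_2^k+\cdots+w_{n+1}^k=:p_k(w)$, the $k$-th power sum of $w$; moreover $p_1(w)=(z_1+\cdots+z_n)-s=0$ identically, so the constraint "$P_1=0$" is automatic. Consequently, if $z\in Z(\widetilde{\mathcal{I}})$, then $p_k(w)=0$ for \emph{all} $k\geq 1$, and not merely for $k\geq 2$.

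Next I would invoke Newton's identities relating the power sums $p_k(w)$ to the elementary symmetric polynomials $e_k(w)$: for each $1\leq k\leq n+1$ there is a relation expressing $k\,e_k(w)$ as a $\mathbb{Z}$-linear combination of the products $e_j(w)\,p_{k-j}(w)$ with $0\leq j<k$ (with $e_0(w)=1$). Since every $p_i(w)$ vanishes, an immediate induction on $k$ gives $e_k(w)=0$ for $k=1,\ldots,n+1$. Hence the monic polynomial with roots $w_1,\ldots,w_{n+1}$ is
\[
\prod_{i=1}^{n+1}(T-w_i)=T^{n+1}-e_1(w)T^n+\cdots+(-1)^{n+1}e_{n+1}(w)=T^{n+1},
\]
so $w_1=w_2=\cdots=w_{n+1}=0$. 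In particular $z_1=z_2=\cdots=z_n=0$, which gives $Z(\widetilde{\mathcal{I}})\subseteq\{0\}$; the reverse inclusion is trivial since $P_k(0)=0$ for all $k$. This is exactly the input needed to apply Hilbert's Nullstellensatz to $\widetilde{\mathcal{I}}$ and deduce Claim \ref{claim2}.

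There is no genuine obstacle: the argument is classical symmetric-function theory. The single point deserving care is that Newton's identities recover $e_1,\ldots,e_m$ from $p_1,\ldots,p_m$ only when $m$ is at least the number of variables — here $m=n+1$ variables $w_i$ — so one really must use the relations $P_k=0$ for $k$ ranging up to $n+1$, which is precisely what membership in $Z(\widetilde{\mathcal{I}})$ provides. One should also note explicitly that $p_k(w)=P_k(z)$ for $k\le n+1$ are well-defined polynomial identities in $\mathbb{C}[z_1,\ldots,z_n]$ (the introduction of the auxiliary coordinate $-s$ is purely bookkeeping and involves no new variable), so no subtlety about the ambient ring $\mathcal{P}$ or the map $\Phi$ intrudes.
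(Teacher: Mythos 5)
Your proof is correct. The reduction you make at the outset --- setting $w=(z_1,\dots,z_n,-s)$ with $s=z_1+\cdots+z_n$ and observing that $P_k(z)$ is exactly the $k$-th power sum $p_k(w)$ of these $n+1$ numbers, with $p_1(w)=0$ holding automatically --- is precisely the reduction the paper makes (there the extra coordinate is called $S=z_{n+1}$). Where you diverge is in how you finish. The paper rewrites the vanishing of $p_1,\dots,p_{n+1}$ as a linear system $Q_{n+1}(z)\,{}^{t}v=0$ with $v=(1,\dots,1)$, where $Q_{n+1}$ has rows $(z_1^{j},\dots,z_{n+1}^{j})$, and then invokes (with only a sketch, ``it can be checked by induction on $n$'') the lemma that $Q_{n+1}(z)\,{}^{t}u=0$ for a vector $u$ with all entries positive forces $z=0$; this is a weighted Vandermonde argument, stated in slightly greater generality than needed. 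You instead use Newton's identities: since $p_1(w)=\cdots=p_{n+1}(w)=0$ and each identity $k\,e_k=\sum_{i=1}^{k}(-1)^{i-1}e_{k-i}\,p_i$ has every term carrying a factor $p_i$, all elementary symmetric functions $e_1(w),\dots,e_{n+1}(w)$ vanish, so $\prod_i(T-w_i)=T^{n+1}$ and $w=0$. Your route has the advantage of being completely written out and purely algebraic (no positivity or grouping of repeated roots is needed, only characteristic zero to divide by $k$), whereas the paper's lemma, if one bothered to prove it, would also cover weighted barycentric conditions where the coefficient vector is a general positive $u$ rather than $(1,\dots,1)$. Your closing remarks --- that one genuinely needs $P_k=0$ up to $k=n+1$, and that the auxiliary coordinate introduces no issue with the ring $\mathcal{P}$ or the map $\Phi$ --- are accurate and worth having on record.
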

From $Z(\widetilde{\mathcal{I}})=\{0\}=Z(\mathfrak{m}_n)$ one gets (using 
the definition of $\sqrt{\widetilde{\mathcal{I}}}$) that 
$\sqrt{\widetilde{\mathcal{I}}}=\mathfrak{m}_n$. Accor\-ding to 
Hilbert's theorem (Nullstellensatz) one obtains that 
$\widetilde{\mathcal{I}}\supset \mathfrak{m}_n^p$ for some $p$. 
As a result $z_j^p$ belongs to $\widetilde{\mathcal{I}}$
for all $1\leq j\leq n$ and so $\frac{\partial^p}{\partial z_j^p}$
belongs to $\mathcal{I}$ for all $1\leq j\leq n$.
\end{proof}

\begin{proof}[{\sl Proof of Claim \ref{claim3}}]
Define $S:=-(z_1+z_2+\ldots+z_n)$ so that $P_k=z_1^k+z_2^k+\ldots+z_n^k+S^k$.
Therefore if $z$ belongs $Z(\widetilde{\mathcal{I}})$, then
\[
(**)\left\{
\begin{array}{lllll}
z_1+z_2+\ldots+z_n+S=0\\
z_1^2+z_2^2+\ldots+z_n^2+S^2=0\\
\ldots \\
z_1^n+z_2^n+\ldots+z_n^n+S^n=0\\
z_1^{n+1}+z_2^{n+1}+\ldots+z_n^{n+1}+S^{n+1}=0
\end{array}
\right.
\]
Doing $S=z_{n+1}$ system $(**)$ is equivalent to
$Q_{n+1}v^{\,\,\!t}=0$ where $Q_{n+1}$ is the 
matrix
\[
Q_{n+1}(z)=\left(\begin{array}{cccc}
z_1 & z_2 & \ldots & z_{n+1} \\
z_1^2 & z_2^2 & \ldots & z_{n+1}^2 \\
\vdots & & & \vdots \\
z_1^{n+1} & z_2^{n+1} & \ldots & z_{n+1}^{n+1} \\
\end{array}
\right)
\]
and $v=(1,1,\ldots,1)$. Finally it can be checked by induction
on $n\geq 0$ that if $Q_{n+1}(z)u^{\,\,\!t}=0$ for 
some $u=(u_1,u_2,\ldots,u_{n+1})$, where $u_j>0$
for all $1\leq j\leq n+1$, then $z=0$.
\end{proof}
\end{proof}

\section{Description of the $2$-chambars}\label{sec:2cham}

\subsection{Examples coming from foliations by straight lines}\label{subsec:foliationsbystraightlines}

In order to precise the previous statements we recall 
the classification of foliations by straight lines on 
$\mathbb{P}^3_\mathbb{C}$ that can be found in 
\cite{Cerveau} (according to Jorge Pereira  
this classification was already known to 
Kummer). We do not know if such a classification
exists on $\mathbb{P}^3_\mathbb{R}$.

Let $\mathcal{F}$ be a holomorphic foliation on 
$\mathbb{P}^n_\mathbb{C}$. Chow theorem asserts
that $\mathcal{F}$ is algebraic; such a 
foliation $\mathcal{F}$ has singularities. We 
say that $\mathcal{F}$ is a \textsl{foliation by straight lines}
if the generic leaf is contained in a line 
(in fact a line without a few points). Let us 
mention the difference between the real case: 
foliations by straight lines of $\mathbb{P}^3_\mathbb{R}$
without singularities exist. The typical 
example is produced by Hopf fibration: the real
projectivization of complex vector lines 
of~$\mathbb{C}^2\simeq\mathbb{R}^4$ gives such 
a foliation $\mathcal{H}$. Setting $z=x_1+\mathbf{i}x_2$
and $w=x_3+\mathbf{i}x_4$ these foliations have 
the first integral
\[
\frac{z}{w}=\frac{z\overline{w}}{\vert w\vert^2}=\frac{x_1x_3-x_2x_4+\mathbf{i}(x_1x_4+x_2x_3)}{x_3^2+x_4^2}
\]
In particular $\frac{x_1x_3-x_2x_4}{x_3^2+x_4^2}$ and 
$\frac{x_1x_4+x_2x_3}{x_3^2+x_4^2}$ are
real first integrals of $\mathcal{H}$.

Let us recall the classification of foliations by 
straight lines of $\mathbb{P}^3_\mathbb{C}$:

\begin{thm}[\cite{Cerveau}]\label{thm:clasfolP3}
{\sl Every holomorphic foliation by straight lines in 
$\mathbb{P}^3_\mathbb{C}$ is, up to linear 
equivalence, of one of the following types
\begin{itemize}
\item[1.] a radial foliation at a point,

\item[2.] a radial foliation "in the pages 
of an open book", {\it i.e.} a family of 
radial foliations of dimension $2$ each 
contained in a plane of the family of planes
containing a fixed line;

\item[3.] a foliation associated with the twisted
cubic $t\mapsto(t,t^2,t^3)$; here the $($closure of the$)$ leaves of the
foliation are the chords and the lines tangent to 
the twisted cubic.
\end{itemize}}
\end{thm}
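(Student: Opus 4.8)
The plan is to reinterpret the problem as the classification of order-one line congruences in $\mathbb{P}^3_\mathbb{C}$, which is the classical statement attributed to Kummer. First I would introduce the \emph{Gauss map} of $\mathcal{F}$: on the Zariski-dense open set $U\subset\mathbb{P}^3_\mathbb{C}$ where the leaf through a point $p$ is an open subset of a line $\ell_p$, set $\mathcal{G}(p)=\ell_p\in\mathrm{Gr}(1,3)$. Since $\mathcal{F}$ is algebraic (Chow), $\mathcal{G}$ is a rational map, and its image $S=\overline{\mathcal{G}(U)}$ is an irreducible closed subvariety of the Grassmannian whose generic fibre is a line (hence one-dimensional); so $S$ is a surface, i.e. a \emph{congruence of lines}. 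The hypothesis that $\mathcal{F}$ is a foliation translates exactly into the statement that through a generic point of $\mathbb{P}^3_\mathbb{C}$ passes \emph{exactly one} line of $S$, i.e. that $S$ has \emph{order one}. It then remains to classify order-one congruences up to $\mathrm{PGL}_4(\mathbb{C})$.

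The core step is the \emph{focal analysis} of $S$ inside the Klein quadric $\mathbb{G}=\mathrm{Gr}(1,3)\subset\mathbb{P}^5$. Let $\mathcal{I}\subset U\times S$ be the incidence variety $\{(p,\ell)\,:\,p\in\ell\}$ with its two projections; the map $\mathcal{I}\to S$ is a $\mathbb{P}^1$-bundle and, because $S$ has order one, $\mathcal{I}\to\mathbb{P}^3_\mathbb{C}$ is birational. The \emph{focal locus} $F$ is the branch locus of $\mathcal{I}\to\mathbb{P}^3_\mathbb{C}$, equivalently the locus of points of a line $\ell\in S$ at which infinitely near lines of $S$ meet $\ell$. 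A degree estimate on $\mathcal{I}$, using order one, shows that $F$ cannot be a genuine surface, so $\dim F\le 1$, and one splits into cases:
\begin{itemize}
\item[$\diamond$] If $F$ reduces to a point $p_0$, every line of $S$ passes through $p_0$, so $S$ lies in the plane of $\mathbb{G}$ parametrizing the lines through $p_0$; order one forces equality, and $\mathcal{F}$ is the radial foliation at $p_0$ (type 1).
\item[$\diamond$] If $F$ is a curve, the two focal points of a generic line $\ell\in S$ lie on one-dimensional components of $F$. Either these components coincide with a single curve $C$ (so $\ell$ is a bisecant or tangent of $C$); then order one says a generic point of $\mathbb{P}^3_\mathbb{C}$ lies on exactly one such line, i.e. $C$ has \emph{one apparent double point}, and the classical classification of space curves with that property leaves only the plane conic (whose secant lines fill only a plane, excluded) and the twisted cubic, giving type 3. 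Or one focal component is a line $L_0$ with the congruence fibred over the pencil of planes through $L_0$ by the radial pencils centred on the second focal component $C$; this is exactly the ``open book'' of type 2 (the admissible curves $C$ being those meeting a generic plane of the pencil in one point, so that a modulus appears — consistent with the loose phrasing of type 2).
\end{itemize}

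Finally I would record the normal forms: $\mathrm{PGL}_4(\mathbb{C})$ acts transitively on points, so all radial foliations are linearly equivalent; it acts transitively on twisted cubics, so type 3 is rigid; for type 2 one brings $L_0$ to a standard position. The main obstacle is the focal analysis itself — precisely, proving that order one forbids a two-dimensional focal surface, and invoking (or reproving) the nineteenth-century classification of space curves with one apparent double point. I would either cite these classical facts or establish them by an explicit intersection computation on $\mathcal{I}$; an alternative, more computational route that avoids congruences altogether is to first show that a foliation by lines on $\mathbb{P}^3_\mathbb{C}$ has degree at most $1$, then dispatch the degree-$0$ case (trivially radial) and classify degree-$1$ foliations by curves on $\mathbb{P}^3_\mathbb{C}$ through their defining vector field, extracting those whose orbits are lines.
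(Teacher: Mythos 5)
You should first be aware that the paper contains no proof of this statement: Theorem~\ref{thm:clasfolP3} is imported from \cite{Cerveau} (and attributed, via Jorge Pereira, to Kummer), with only the constructions of the three types sketched afterwards. So there is no in-paper argument to measure your proposal against; what follows assesses the proposal on its own terms. Your translation of the problem into the classification of order-one line congruences is correct and is indeed the classical (Kummer) route, consistent with the paper's attribution. The skeleton is sound: the Gauss map is rational by Chow, its image $S\subset\mathrm{Gr}(1,3)$ is a surface, ``foliation by lines'' is exactly ``order one'', and the bound $\dim F\le 1$ does follow from order one --- though the clean argument is not a ``degree estimate'' but Zariski's main theorem applied to the birational morphism $\mathcal{I}\to\mathbb{P}^3_\mathbb{C}$ (after normalizing $\mathcal{I}$): its non-isomorphism locus has codimension at least $2$ in the target, and the focal locus maps into it. The case $F=\{p_0\}$ is then handled correctly, since $S$ sits inside the $\alpha$-plane of lines through $p_0$ and must equal it by dimension.

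The genuine gaps are concentrated in the focal-curve case, and they are exactly the nineteenth-century lemmas you flag at the end. First, the dichotomy ``both focal points trace one curve $C$'' versus ``one focal component is a \emph{line} $L_0$'' is asserted, not derived: if the focal points lie on two distinct components $C_1$, $C_2$, every line of $S$ meets both, and order one gives an enumerative identity of the shape $\deg C_1\cdot\deg C_2-\#(C_1\cap C_2)=1$; you still need an argument forcing one $C_i$ to be a line (and $C_2$ to meet it in $\deg C_2-1$ points) before the open book appears. Second, in the single-curve case you must classify curves with one apparent double point \emph{including reducible and degenerate ones} --- two skew lines, for instance, give an order-one congruence, and your dichotomy has to route it to type~2 (spine one line, centers on the other) rather than leave it stranded; likewise the tangent lines in type~3 form only a one-parameter subfamily and should be identified as limits of chords. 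None of this is a wrong turn, but as written the proposal delegates the entire classification of degenerate first-order congruences to classical references. Your ``alternative computational route'' (bound the degree of the foliation, then classify low-degree foliations by lines through their defining vector field) is in fact closer in spirit to the method of the cited reference than the congruence-theoretic plan you develop.
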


Foliations of the first type correspond to foliations
by parallel lines in a well-chosen affine chart
(singular point at infinity).

To construct a foliation of the second type we 
consider an open book, {\it i.e.} a pencil of 
hyperplanes, for instance $\frac{x_1}{x_2}=$ constant;
in any page $\frac{x_1}{x_2}=c$ we fix a point 
$(\underline{x_1},c\underline{x_2},\underline{x_3})$
and ask that any leaf of $\mathcal{F}$ is a line 
contained in a page $\frac{x_1}{x_2}=c$ and 
passes through the prescribed point 
$(c\underline{x_2},\underline{x_2},\underline{x_3})$
(\emph{see} \cite{Cerveau} for further details).

\begin{align*}
    &\includegraphics[width=0.2\linewidth]{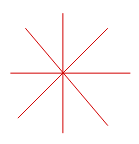} 
    &&\includegraphics[width=0.2\linewidth]{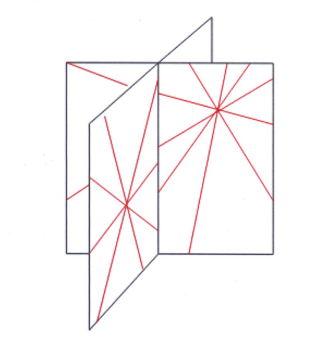}
    &&\includegraphics[width=0.2\linewidth]{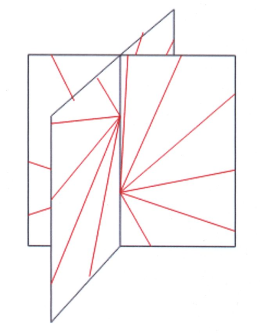}
    &&\includegraphics[width=0.2\linewidth]{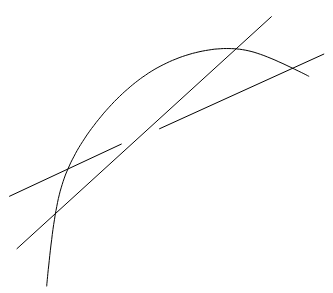} \\
    & \quad\quad\text{type 1.} && \quad \quad\text{type 2.} &&  \quad\quad\text{type 2.} &&  \quad\quad\text{type 3.}
\end{align*}

Remark that Theorem \ref{thm:clasfolP3}  gives the description 
of algebraic foliations by straight lines in the affine 
space~$\mathbb{C}^3$.

Let us now explain how we can construct a 
$2$-chambar from a foliation $\mathcal{F}$ by 
lines defined on an open subset $\mathcal{U}$
of $\mathbb{C}^n$. 
For a good choice of the affine 
coordinates $x_i$ the foliation
$\mathcal{F}$ is defined by a 
vector field 
\[
X=\frac{\partial}{\partial x_1}+\alpha_2\frac{\partial}{\partial x_2}+\alpha_3\frac{\partial}{\partial x_3}+\ldots+\alpha_n\frac{\partial}{\partial x_n}
\]
on $\mathcal{U}$. 
Of course, in general, the $\alpha_i$'s are 
meromorphic and we consider 
$\mathcal{U}^*=\displaystyle\mathcal{U}\smallsetminus\bigcup_{i=2}^n(\text{poles of $\alpha_i$})$.
Then if $m$ belongs to $\mathcal{U}^*$
the trajectory of $X$ passing through $m$
is a line $D_m$ and $\exp(tX)_{\vert D_m}$
is a translation flow on $D_m$. The 
pair $(X,-X)$ thus defines a $2$-chambar.

One can next consider $f\cdot X$, where $f$
is any meromorphic first integral of $X$, 
instead of~$X$. Since $f$ is constant on
 the trajectories of $X$, $f\cdot X$ still 
 defines a translation flow on any trajectory
 of $X$, and $(f\cdot X,-f\cdot X)$ is also a 
 $2$-chambar.

\subsection{Some properties}

The barycentric property for a 
$2$-chambar $\mathrm{Ch}(X_1,X_2)$
implies that $X_1+X_2=0$ and can be 
rewritten as 
\[
\varphi_t(x)+\varphi_{-t}(x)=2x\qquad\forall\,x\in\mathcal{U}
\]
where $\varphi_t$ denotes the flow of $X=X_1$.

Differentiating the previous equality with respect to time $t$, 
we get
\[
\overset{\bullet}{\varphi_t}(x)-\overset{\bullet}{\varphi_{-t}}(x)=X\big(\varphi_t(x)\big)-X\big(\varphi_{-t}(x)\big)=0;
\]
differentiating a second time with respect to $t$, we obtain 
\[
DX\big(\varphi_t(x)\big)\overset{\bullet}{\varphi_t}(x)+DX\big(\varphi_{-t}(x)\big)\overset{\bullet}{\varphi_{-t}}(x)=0
\]
where $DX\colon\mathcal{U}\to\mathbb{R}^n$ (or 
$DX\colon\mathcal{U}\to\mathbb{C}^n$) denotes the differential 
of $X$.

If 
$X=\displaystyle\sum_{i=1}^n\alpha_i(x)\frac{\partial}{\partial x_i}$, the above relation 
is equivalent to
\[
DX(X)=\displaystyle\sum_{i=1}^nX(\alpha_i)\frac{\partial}{\partial x_i}=0
\]
In particular the coefficients $\alpha_k$ are first integrals of  
$X$, $2\leq k\leq n$. As a result the $\alpha_k$ are constant along the trajectories 
of $X$; these trajectories are thus (contained in) lines.

Note that in dimension $1$ we can write $X=\alpha\frac{\partial}{\partial x}$
and the above relation is equivalent to $\alpha\frac{\partial\alpha}{\partial x}=0$; 
hence~$\alpha$ is constant. On any of its trajectories the flow 
of $X$ thus coincides with the flow of a constant vector field.
As a result one can state:

\begin{thm}\label{thm:dim1}
{\sl Let $\mathcal{U}$ be an open subset of $\mathbb{R}^n$ 
$($resp. $\mathbb{C}^n)$.
Let $X_1$, $X_2$ be two analytic $($resp. holomorphic$)$ vector fields 
on $\mathcal{U}$. Assume that $X_1$ and $X_2$ satisfy the barycentric 
property. 

Then the leaves of $\mathcal{F}_{X_1}=\mathcal{F}_{X_2}$ are contained in 
lines; on each of these lines the flows $\exp(tX_k)_{\vert D}$
are translation flows.}
\end{thm}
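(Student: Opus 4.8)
The plan is to reduce everything to a single vector field and then extract a pointwise identity by differentiating the barycentric relation in the time variable. Since $\mathrm{Ch}(X_1,X_2)$ is a $2$-chambar, putting $t=0$ in the barycentric property gives $X_1+X_2=0$; set $X:=X_1$, so that $X_2=-X$, $\mathcal{F}_{X_1}=\mathcal{F}_{X_2}=\mathcal{F}_X$ (the two fields are collinear), and the flow of $X_2$ is $\varphi_{-t}$ where $\varphi_t:=\exp(tX)$. The barycentric condition $\exp(tX_1)+\exp(tX_2)=2\,\mathrm{id}$ then reads
\[
\varphi_t(x)+\varphi_{-t}(x)=2x
\]
for every $x\in\mathcal{U}$ and $|t|$ small; this is the only input we use.

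Next I would differentiate this identity in $t$. A first differentiation gives $X(\varphi_t(x))=X(\varphi_{-t}(x))$, i.e.\ $X\circ\varphi_t=X\circ\varphi_{-t}$ near $t=0$; differentiating once more and evaluating at $t=0$ yields $2\,DX(x)\cdot X(x)=0$, hence
\[
DX(X)=0 \quad\text{on }\mathcal{U}.
\]
Writing $X=\sum_{i=1}^n\alpha_i\,\frac{\partial}{\partial x_i}$ in affine coordinates, the vector field $DX(X)$ has $i$-th component $X(\alpha_i)$, so $DX(X)=0$ says exactly that each coefficient $\alpha_i$ is a first integral of $X$.

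From here the geometric conclusion is immediate. Along a trajectory $\gamma(t)=\varphi_t(x)$ one has $\dot\gamma(t)=X(\gamma(t))=\sum_i\alpha_i(\gamma(t))\,\frac{\partial}{\partial x_i}=\sum_i\alpha_i(x)\,\frac{\partial}{\partial x_i}=X(x)$, a constant vector, because each $\alpha_i$ is constant on $\gamma$. Hence $\gamma(t)=x+tX(x)$: every nonsingular leaf of $\mathcal{F}_X$ is an open piece of a straight line $D$, and $\exp(tX)_{\vert D}$ is the translation flow $t\mapsto x+tX(x)$; the same holds for $X_2=-X$ with opposite velocity. This proves the theorem.

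There is no real obstacle here: the content is the passage from the functional equation $\varphi_t(x)+\varphi_{-t}(x)=2x$ to the pointwise relation $DX(X)=0$. The only points deserving care are that the differentiations are legitimate — the flow is jointly analytic (resp.\ holomorphic) in $(t,x)$ where defined, and $x$ ranges over all of $\mathcal{U}$, so $DX(X)=0$ holds identically — and that the statement about leaves being contained in lines is to be read for the generic leaf, since at a zero of $X$ the trajectory degenerates to a point. The real-analytic case is verbatim the same computation.
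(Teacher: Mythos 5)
Your proof is correct and follows essentially the same route as the paper: reduce to $X_2=-X_1$, differentiate the identity $\varphi_t(x)+\varphi_{-t}(x)=2x$ twice in $t$ to obtain $DX(X)=0$, i.e.\ the coefficients $\alpha_i$ are first integrals of $X$, and conclude that each trajectory is contained in a line on which the flow is $t\mapsto x+tX(x)$. The only cosmetic difference is that you make the translation-flow conclusion explicit by integrating $\dot\gamma=X(x)$ along a leaf, which the paper handles via the one-dimensional remark; no gap either way.
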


In particular in dimension $1$ any $2$-chambar $(X,-X)$ is 
produced by a constant vector field.
Remark also that any local 
$2$-chambar in one variable 
can be globalized.

\begin{cor}
{\sl Let $X$ be a rational vector field on $\mathbb{C}^n$.
Assume that $(X,-X)$ defines a $2$-chambar. Then
$\exp(tX)=\mathrm{id}+tX^0$ defines a flow of 
birational maps of $\mathbb{C}^n$.}
\end{cor}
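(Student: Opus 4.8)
The plan is to extract the flow explicitly from formula (\ref{eq:al}) together with the computation already carried out for $2$-chambars, and then to recognise a degree-one (in $t$) rational family as birational. Write $X=\sum_{i=1}^n\alpha_i(x)\frac{\partial}{\partial x_i}$ with the $\alpha_i$ rational, and let $X^0\colon\mathbb{C}^n\dashrightarrow\mathbb{C}^n$ be the associated rational map $x\mapsto(\alpha_1(x),\dots,\alpha_n(x))$, so that $\mathrm{id}+tX^0$ denotes $x\mapsto x+tX^0(x)$. The discussion preceding Theorem~\ref{thm:dim1} shows that the barycentric property for $(X,-X)$ forces $DX(X)=0$, i.e. $X(\alpha_j)=0$ for every $j$; equivalently $X^2(x_j)=X(\alpha_j)=0$, and hence $X^k(x_j)=0$ for all $k\ge 2$ and all $j$. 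Substituting this into (\ref{eq:al}) gives, for $\varphi_t=\exp(tX)$,
\[
\varphi_t(x)=\bigl(x_1+t\,\alpha_1(x),\dots,x_n+t\,\alpha_n(x)\bigr)=x+t\,X^0(x),
\]
so $\varphi_t=\mathrm{id}+tX^0$ is a rational self-map of $\mathbb{C}^n$ for each $t\in\mathbb{C}$, with $\varphi_0=\mathrm{id}$.

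Next I would check that $\varphi_t$ is birational with inverse $\varphi_{-t}$. Since $X(\alpha_j)=0$ says exactly that each $\alpha_j$ is a first integral of $X$, one has $\alpha_j\circ\varphi_s=\alpha_j$ for small $s$, that is $X^0\circ(\mathrm{id}+sX^0)=X^0$; both sides being rational in $(s,x)$, this persists for all $s$. Taking $s=-t$ and composing the explicit formulas,
\[
\varphi_t\circ\varphi_{-t}(x)=\bigl(x-tX^0(x)\bigr)+t\,X^0\!\bigl(x-tX^0(x)\bigr)=\bigl(x-tX^0(x)\bigr)+t\,X^0(x)=x,
\]
and symmetrically $\varphi_{-t}\circ\varphi_t=\mathrm{id}$; thus $\varphi_t$ is birational and $t\mapsto\varphi_t$ is a one-parameter group of birational maps of $\mathbb{C}^n$.

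The step requiring genuine care — the main obstacle — is the passage from identities that are initially only local and analytic (the flow identity for $\varphi_s$ and the first-integral identity $\alpha_j\circ\varphi_s=\alpha_j$, valid on the Zariski-open set where $X$ is holomorphic) to identities of rational maps on all of $\mathbb{C}^n$. This is handled by the identity principle, together with the observation that for small $t\neq 0$ the map $\varphi_t$ is a local biholomorphism, hence dominant, so that the compositions above are legitimate as rational maps and an equality holding on a nonempty open set holds identically. One may also note in passing that the same computation re-proves the case $p=2$ of Proposition~\ref{pro:dim1pol}: in dimension one $X(\alpha)=\alpha\alpha'=0$ forces $\alpha$ constant, so $\varphi_t=\mathrm{id}+t\alpha$ is a translation.
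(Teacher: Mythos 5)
Your proof is correct and follows essentially the same route the paper intends: the corollary is stated as an immediate consequence of the computation preceding Theorem~\ref{thm:dim1}, namely that the barycentric condition forces $DX\cdot X=0$, so the coefficients $\alpha_j$ are first integrals, the series (\ref{eq:al}) truncates to $\exp(tX)=\mathrm{id}+tX^0$, and $\mathrm{id}-tX^0$ is a rational inverse. Your added care about extending the local analytic identities to identities of rational maps (identity principle, dominance) is a routine but harmless elaboration of what the paper leaves implicit.
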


Note that in $\exp(tX)=\mathrm{id}+tX^0$ the 
letter $X^0$ denotes the map whose components are the 
components of the vector field $X$, a system
of coordinates having been chosen.

\begin{rem}
In the real case there is an other proof of Theorem 
\ref{thm:dim1} which is geometric. 

Let $\Gamma$ be a generic leaf of $\mathcal{F}_{X_1}=\mathcal{F}_{X_2}$.
Assume that $\Gamma$ is not (contained in) a line. If 
$x\in\Gamma$ is a generic point, then there exists an hyperplane 
$\Sigma$ tangent to $\Gamma$ at $x$ such that
\begin{itemize}
\item[$\diamond$]  the germ $\Gamma_{,x}$ is contained in one of the 
half spaces delimited by $\Sigma$,

\item[$\diamond$] $\Gamma_{,x}\cap\Sigma=\{x\}$
\end{itemize}

\begin{center}
\includegraphics[width=6cm]{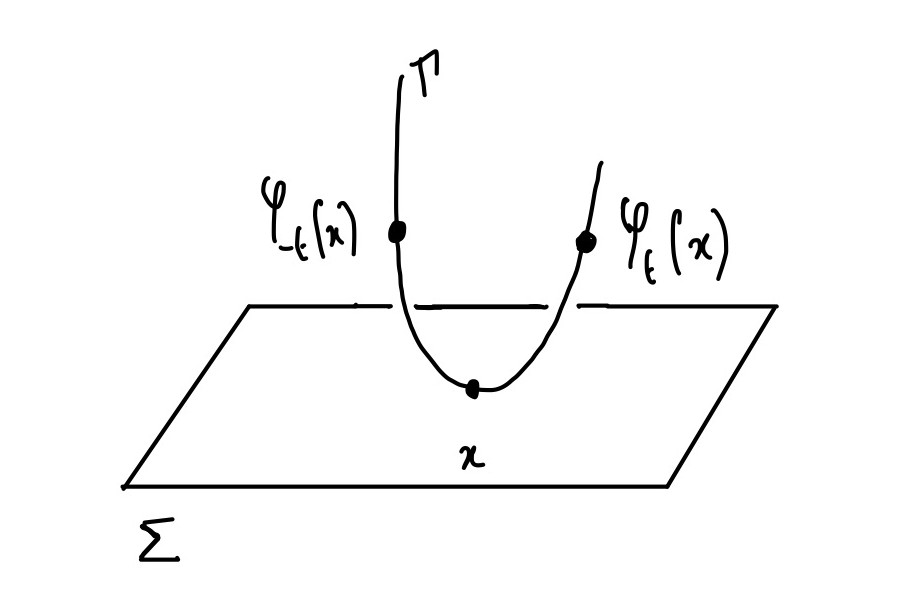}
\end{center}

If we set $\varphi_t=\exp tX_1$, 
then $\varphi_t(x)-x+\varphi_{-t}(x)-x\not\equiv 0$: contradiction.
\end{rem}

Let 
$X=\displaystyle\sum_{i=1}^n\alpha_i\frac{\partial}{\partial x_i}$
be a germ of vector fields at the origin of
$\mathbb{C}^n$. Denote by 
$\mathrm{Sing}(X)=\{\alpha_1=\alpha_2=\ldots=\alpha_n=0\}$ 
the singular set of $X$.

The following statement is a special case of 
Theorem \ref{thm:singdim}; its proof is algebraic 
in contrast with the geometric proof of 
Theorem \ref{thm:singdim}.

\begin{thm}\label{thm:2sing}
{\sl Let $\mathrm{Ch}(X,-X)$ be a $2$-chambar 
at $0\in\mathbb{C}^n$. Assume that
$X$ is singular at $0$, that is 
$\{0\}\subset\mathrm{Sing}(X)$. 

Then $\dim\mathrm{Sing}(X)\geq 1$.}
\end{thm}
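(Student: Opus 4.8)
The plan is to argue algebraically, using the identity $DX(X)=0$ already established for $2$-chambars (obtained by differentiating $\varphi_t(x)+\varphi_{-t}(x)=2x$ twice in $t$ and setting $t=0$), the rank theorem, and the Nullstellensatz. First I would write $X=\displaystyle\sum_{i=1}^n\alpha_i\,\frac{\partial}{\partial x_i}$ with $\alpha_i\in\mathcal{O}(\mathbb{C}^n,0)$ and $\alpha_i(0)=0$, and introduce the germ of holomorphic map $X^0=(\alpha_1,\ldots,\alpha_n)\colon(\mathbb{C}^n,0)\to(\mathbb{C}^n,0)$ whose components are those of $X$; its Jacobian matrix is $DX=\big(\partial\alpha_i/\partial x_j\big)$, and $DX(X)=0$ says exactly that $DX(p)\cdot X^0(p)=0$ for every $p$. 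Since $X\not\equiv 0$, the vector $X^0(p)$ is a non-zero element of $\ker\big(DX(p)\big)$ on the dense open set $\{X^0\neq 0\}$, so $\det DX\equiv 0$ there and hence identically. Therefore the generic rank of the Jacobian of $X^0$ is at most $n-1$, and the image of a small representative $X^0\colon U\to\mathbb{C}^n$ has dimension $\le n-1$, in particular empty interior in $\mathbb{C}^n$.

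Next I would argue by contradiction, assuming $\dim_0\mathrm{Sing}(X)=0$. Since $\mathrm{Sing}(X)=(X^0)^{-1}(0)=V(\alpha_1,\ldots,\alpha_n)$ is a germ of analytic set at $0$, this would force $\mathrm{Sing}(X)=\{0\}$ as a germ, so by the Nullstellensatz the ideal $I=(\alpha_1,\ldots,\alpha_n)\subset\mathcal{O}_n$ would have $\sqrt{I}=\mathfrak{m}_n$, whence $\mathfrak{m}_n^k\subset I$ for some $k$ and $\mathcal{O}_n/I$ would be a finite-dimensional $\mathbb{C}$-vector space. That makes the map germ $X^0$ finite, hence open, so its image would contain a neighbourhood of $0$ — contradicting the previous paragraph. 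Thus $\dim_0\mathrm{Sing}(X)\ge 1$ (and of course $\le n-1$ as $X\not\equiv 0$), which is the assertion.

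The routine ingredients are: the already-proved identity $DX(X)=0$; the fact that the dimension of the image of a holomorphic map germ equals the generic rank of its Jacobian; and the equivalence ``$(\alpha_1,\ldots,\alpha_n)$ is $\mathfrak{m}_n$-primary'' $\Longleftrightarrow$ ``$X^0$ is a finite, hence open, map germ''. The step I expect to be the main obstacle is making sure that $\det DX^0\equiv 0$ genuinely forbids $X^0$ from being dominant: this is exactly where the hypothesis $X\not\equiv 0$ is used essentially, and the dichotomy ``finite/open branched cover versus everywhere-degenerate Jacobian'' is the conceptual core of the argument; the remaining details are bookkeeping.
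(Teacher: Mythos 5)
Your argument is correct, but it follows a genuinely different route from the paper's. Both proofs start from the same identity $DX\cdot X^{0}=0$ (equivalently $X(\alpha_k)=0$ for all $k$), but the paper reads the gradients $\bigl(\tfrac{\partial\alpha_k}{\partial x_1},\ldots,\tfrac{\partial\alpha_k}{\partial x_n}\bigr)$ as syzygies of the ideal $(\alpha_1,\ldots,\alpha_n)$: under the contradiction hypothesis $\dim\mathrm{Sing}(X)=0$ the $\alpha_i$ form a regular sequence, so by the cited result of Tougeron all relations are generated by the trivial (Koszul) ones, forcing every $\tfrac{\partial\alpha_k}{\partial x_i}$ to lie in $(\alpha_1,\ldots,\alpha_n)$, which is then refuted by comparing algebraic multiplicities (some partial derivative has order strictly smaller than that of $\alpha_k$). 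You instead read the same identity as degeneracy of the Jacobian: $\det DX\equiv 0$ on the dense set $\{X^{0}\neq 0\}$ and hence everywhere, so $X^{0}$ has generic rank at most $n-1$ and its image has empty interior; on the other hand an isolated zero makes $(\alpha_1,\ldots,\alpha_n)$ $\mathfrak{m}_n$-primary by the R\"uckert Nullstellensatz, so $X^{0}$ is a finite, hence open, map germ — contradiction. Your version trades the syzygy/Koszul machinery and the multiplicity count for the standard local structure theory of finite holomorphic map germs (finite fiber $\Rightarrow$ branched cover onto a neighbourhood) together with the fact that generic Jacobian rank bounds the dimension of the image; this makes the proof more geometric and self-contained modulo those standard facts, while the paper's proof is purely ideal-theoretic. (Note also that the paper offers yet a third, geometric, proof via Theorem \ref{thm:singdim} on $t$-polynomial vector fields, of which this statement is a special case; your argument is independent of that as well.)
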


\begin{proof}[{\sl Proof}]
The condition $X(\alpha_k)=0$, $1\leq k\leq n$, is equivalent to
\[
\displaystyle\sum_{i=1}^n\alpha_i\frac{\partial\alpha_k}{\partial x_i}=0\qquad 1 \leq k\leq n.
\]
Hence the partial derivatives 
$\left(\frac{\partial\alpha_k}{\partial x_1},\frac{\partial\alpha_k}{\partial x_2},\ldots,\frac{\partial\alpha_k}{\partial x_n}\right)$ 
are relations of the ideal $(\alpha_1,\alpha_2,\ldots,\alpha_n)$. 

Assume by contradiction that 
$\dim\mathrm{Sing}(X)=0$.  
Then according to \cite{Tougeron} the 
relations are gene\-rated by the trivial
relations 
\[
(0,0,\ldots,0,\underbrace{\alpha_j}_{\text{$i$th  coordinate}},0,\ldots,0,\underbrace{-\alpha_i}_{\text{ $j$th coordinate}},0,0,\ldots,0);
\]
this gives a contradiction with the following fact: the algebraic 
multiplicity at $0$ of one of  
the $\frac{\partial \alpha_k}{\partial x_i}$
is less than the algebraic multiplicity
at $0$ of $\alpha_k$.
\end{proof}

\begin{rem}
Let $u\in\mathcal{O}^*(\mathbb{C}^n,0)$ be 
a unit. Then the vector field 
$u\cdot\displaystyle\sum_{i=1}^nx_i\frac{\partial}{\partial x_i}$ 
which has linear trajectories
can not belong to a $2$-chambar; but the 
rational field 
$\frac{1}{x_1}\displaystyle\sum_{i=1}^nx_i\frac{\partial}{\partial x_i}$
can.
\end{rem}

\section{Rigid chambars}\label{sec:rigidcham}

\subsection{Flows which are polynomial in the time $t$}

\begin{defi}
Let $X$ be an holomorphic vector field on the 
open set $\mathcal{U}\subset\mathbb{C}^n$. 
We say that~$X$ is a
\textsl{$t$-polynomial} vector field if 
$t\mapsto \exp tX$ is polynomial. The 
\textsl{$t$-degree} of $X$ is the usual 
degree in the variable $t$ and is 
denoted by $t.d(X)\in\mathbb{N}\cup\{\infty\}$. 
\end{defi}

We have seen a lot of examples of $t$-polynomial 
vector fields: constant vector fields, 
nilpotent vector fields, the vector field 
$2\sqrt{x}\frac{\partial}{\partial x}$, ...

\medskip

If $\mathcal{U}=\mathbb{C}^n$, then the 
trajectories of a $t$-polynomial vector field
are points or rational curves. 

\begin{pro}
{\sl Let $X$ be a $t$-polynomial vector field of $t$-degree $\nu$ on 
the open set $\mathcal{U}\subset~\mathbb{C}^n$.
Write $\exp tX$ as 
$\mathrm{Id}+tF_1+t^2F_2+\ldots+t^\nu F_\nu$, 
with $F_k\in\mathcal{O}(\mathcal{U})$. Then 
the components $F_{\nu,1}$, $F_{\nu,2}$, 
$\ldots$, $F_{\nu,n}$ of $F_\nu$ are first 
integrals of $X$.

In particular in the $1$-dimensional case, 
$F_\nu$ is a non-zero constant.}
\end{pro}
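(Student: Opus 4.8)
Write $\varphi_t=\exp tX=\mathrm{Id}+tF_1+t^2F_2+\ldots+t^\nu F_\nu$ with $F_k=(F_{k,1},\ldots,F_{k,n})\colon\mathcal{U}\to\mathbb{C}^n$ holomorphic, where $F_\nu\not\equiv 0$ because $\nu$ is exactly the $t$-degree of $X$; note $\nu\ge 1$ since $X$ is not identically zero. The plan is to play the two standard expressions for $\frac{\partial\varphi_t}{\partial t}$ against each other. By definition of the flow one has $\frac{\partial\varphi_t}{\partial t}(x)=X(\varphi_t(x))$, while differentiating the one-parameter group law $\varphi_{t+s}=\varphi_t\circ\varphi_s$ in $s$ at $s=0$ (equivalently, using the invariance $(\varphi_t)_*X=X$ of $X$ under its own flow) gives
\[
\frac{\partial\varphi_t}{\partial t}(x)=D\varphi_t(x)\cdot X(x),
\]
where $D\varphi_t$ denotes the spatial Jacobian of $x\mapsto\varphi_t(x)$. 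I will use this second formula.

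The left-hand side equals $F_1+2tF_2+\ldots+\nu t^{\nu-1}F_\nu$, a polynomial in $t$ of degree $\nu-1$. On the right-hand side, $D\varphi_t=\mathrm{Id}+tDF_1+\ldots+t^\nu DF_\nu$, so
\[
D\varphi_t(x)\cdot X(x)=X(x)+\sum_{k=1}^{\nu}t^k\,DF_k(x)\cdot X(x).
\]
For each fixed $x\in\mathcal{U}$ the two sides are polynomials in $t$ that agree for $|t|$ small, hence coincide as polynomials; comparing the coefficients of $t^\nu$ forces $DF_\nu(x)\cdot X(x)=0$ for every $x$. Writing $X=\sum_i X_i\frac{\partial}{\partial x_i}$ as a derivation, the $j$-th component of $DF_\nu\cdot X$ is $\sum_i\frac{\partial F_{\nu,j}}{\partial x_i}X_i=X(F_{\nu,j})$; hence $X(F_{\nu,j})=0$ for all $j$, i.e.\ each component $F_{\nu,j}$ is a first integral of $X$.

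For the one-variable statement, $n=1$ and $F_\nu$ is a single holomorphic function on the connected set $\mathcal{U}$ with $X(F_\nu)=\alpha\,\frac{\partial F_\nu}{\partial x}=0$, where $X=\alpha\frac{\partial}{\partial x}$; since $X\not\equiv 0$, $\alpha$ vanishes only on a discrete set, so $\frac{\partial F_\nu}{\partial x}\equiv 0$ and $F_\nu$ is constant, and it is nonzero because $F_\nu\not\equiv 0$. I do not expect a serious obstacle: the only points needing a little care are the legitimacy of comparing Taylor coefficients in $t$ (immediate since $\varphi_t$ is assumed polynomial in $t$) and the bookkeeping observation that $\frac{\partial\varphi_t}{\partial t}$ has $t$-degree strictly smaller than $\varphi_t$, which is precisely what kills the top coefficient $DF_\nu\cdot X$ on the right-hand side.
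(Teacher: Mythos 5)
Your proof is correct and follows essentially the same route as the paper: both arguments rest on the one-parameter group identity $\exp tX\circ\exp sX=\exp(t+s)X$ together with extraction of the coefficient of $t^\nu$. The paper reads off $F_\nu\circ\exp sX=F_\nu$ directly from that identity (flow-invariance of $F_\nu$), whereas you first differentiate it in $s$ at $s=0$ to get $\dot\varphi_t=D\varphi_t\cdot X$ and then land on the equivalent infinitesimal form $X(F_{\nu,j})=0$.
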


\begin{proof}[{\sl Proof}]
It is a direct consequence of the 
identity $\exp tX\circ \exp sX=\exp(s+t)X$: 
the coefficient of $t^\nu$ in that identity
is exactly
\[
F_\nu(\exp sX)=F_\nu.
\]
This implies the statement.
\end{proof}

Remark the $F_{\nu,k}$ may be constant; this is the case for 
the flow of $X=2\sqrt{x}\frac{\partial}{\partial x}$. A contrario 
if a $t$-polynomial vector 
field $X$ of degree $\nu$ is 
singular at a point, say $0$ ({\it i.e.} $X(0)=0$), 
then obviously some of 
the $F_{\nu,k}=\frac{X^{\nu}(x_k)}{\nu !}$ 
are not constant. 
In particular in dimension $2$, a $t$-polynomial vector field $X$
singular at the origin $0\in\mathbb{C}^2$, $X(0)=0$,
has a non-constant holomorphic first integral $f$. 
The generic leaves of $X$ are the levels of $f$;
note that since the flow is polynomial one has the following important property: $X_{\vert f^{-1}(0)}\equiv 0$.

The $t$-polynomial vector fields produce examples
of $p$-chambars as we have seen pre\-viously.
Typically if $\sigma$ is a  primitive $\nu$-th root
of unity and $t\cdot d(X)=\nu$, then 
$X$, $\sigma X$, $\ldots$, $\sigma^{\nu-1}X$
defines a (rigid) $\nu$-chambar. 

If $t\cdot d(X)=1$, then $\exp tX=\mathrm{Id}+tF_1$
and the foliation associated to $X$ is a 
foliation by straight lines. 
Conversely to a foliation by straight lines we 
can associate a (meromorphic) $t$-polynomial vector
field $X$ such that $t\cdot d(X)=1$.

In dimension $2$, consider a foliation 
given by the vector field 
$X=f\frac{\partial}{\partial x}+\frac{\partial}{\partial y}$. 
Then $X$ is a $t$-polynomial vector field of degree
$1$ if and only if the foliation $\mathcal{F}_X$
is a foliation by straight lines; this means that $f$ 
satisfies the non-linear PDE
\[
0=X(f)=f\frac{\partial f}{\partial x}+\frac{\partial f}{\partial y};
\]
note that this PDE is the famous inviscid Burgers' equation, 
a well-known PDE in fluid mechanic. Similarly
$t$-polynomial vector fields of degree $2$ on open set of
$\mathbb{C}^2$ correspond to foliations in parabolas
etc. In that case appear generalizations of Burgers' 
equation as the reader can see.

 The following result gives the classification of the $t$-polynomial vector field on the complex line.

\begin{thm}\label{thm:classtpolyvf}
{\sl Let $X(x)=a(x)\frac{\partial}{\partial x}$ be a germ at
$0\in\mathbb{C}$ of a holomorphic vector field. 
Assume that the flow of~$X$ is polynomial in $t$ of
$t$-degree $\ell$. Then $a=f'\circ\phi$ where
\begin{itemize}
\item[$\diamond$] $f$ is a polynomial of degree $\ell$ 
with $f(0)=0$ and $f'(0)=a(0)\not=0$,; 

\item[$\diamond$] $\phi\colon(\mathbb{C},0)\to(\mathbb{C},0)$
is a local inverse of $f$: $f\circ\phi(x)=x$.
\end{itemize}

In other 
words $X$ is conjugate to the constant 
vector field $\frac{\partial}{\partial x}$ via a polynomial (local)
diffeomorphism.}
\end{thm}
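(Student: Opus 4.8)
The plan is to analyze what the polynomiality of the flow forces on the vector field $X=a(x)\frac{\partial}{\partial x}$ near $0$, where $a(0)\neq 0$ (the case $a(0)=0$ being excluded since then $X^k(x)$ would vanish to increasing order and, as in the proof of Proposition \ref{pro:dim1pol}, the degrees $d(\ell)$ would grow, contradicting polynomiality unless $a\equiv 0$). So I will assume $a(0)\neq 0$; then $X$ is locally conjugate to $\frac{\partial}{\partial x}$ by \emph{some} biholomorphism, and the content of the theorem is that this linearizing map can be chosen to be a polynomial — equivalently, that the straightening coordinate is the inverse of a polynomial.

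The key computation is the flow formula $(\ref{eq:al})$: writing $\varphi_t(x)=x+\sum_{k\geq 1}\frac{1}{k!}X^k(x)\,t^k$, polynomiality of $t$-degree $\ell$ means $X^k(x)\equiv 0$ for all $k>\ell$ while $X^\ell(x)\not\equiv 0$. First I would introduce the straightening function: since $a(0)\neq 0$, set $g(x)=\int_0^x \frac{ds}{a(s)}$, a local biholomorphism fixing $0$; then $g$ conjugates $X$ to $\frac{\partial}{\partial y}$, i.e. $g(\varphi_t(x))=g(x)+t$. Now the crucial observation is the group law argument already used in the Proposition preceding this theorem: if $\exp tX=\mathrm{Id}+tF_1+\cdots+t^\ell F_\ell$ then $F_\ell=\frac{X^\ell(x)}{\ell!}$ is a first integral of $X$, hence constant on trajectories; but in dimension one a first integral of a vector field with $a(0)\neq 0$ is locally constant, so $F_\ell\equiv c$ for some $c\in\mathbb{C}^*$. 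Thus $X^\ell(x)$ is a non-zero constant.

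From $X^\ell(x)=\ell!\,c$ one gets, after straightening, that $X^\ell$ corresponds to $\frac{\partial^\ell}{\partial y^\ell}$ applied to the inverse change of coordinates — more directly, I would argue as follows. Let $\phi=g^{-1}$ be the inverse straightening, so $\varphi_t(x)=\phi(g(x)+t)$ and expanding in $t$ gives $\frac{X^k(x)}{k!}=\frac{1}{k!}\phi^{(k)}(g(x))$, i.e. $X^k(x)=\phi^{(k)}(g(x))$ for all $k$. Polynomiality of $t$-degree $\ell$ then says $\phi^{(k)}(g(x))\equiv 0$ for $k>\ell$ and $\phi^{(\ell)}(g(x))\not\equiv 0$; since $g$ is a local biholomorphism, its image is open and these identities hold on an open set, hence $\phi^{(\ell+1)}\equiv 0$ identically near $0$. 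Therefore $\phi$ is a polynomial of degree $\ell$, with $\phi(0)=0$ and $\phi'(0)=1/g'(0)=a(0)\neq 0$. Setting $f:=g$, the local inverse of $\phi$, we have $a=\phi'\circ g=f'\circ\phi$... — here I need to be careful with which of $f,\phi$ is the polynomial: the statement wants $f$ polynomial and $\phi=f^{-1}$, so I relabel, taking $f$ to be the polynomial of degree $\ell$ (what I called $\phi$ above) and $\phi$ its local inverse (what I called $g$). Then $\varphi_t(x)=f(\phi(x)+t)$, differentiating at $t=0$ gives $a(x)=f'(\phi(x))$, and $f(0)=0$, $f'(0)=a(\phi(0))=a(0)\neq 0$, as required; $X$ is conjugate to $\frac{\partial}{\partial x}$ by the polynomial map $f$.

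The main obstacle, and the step deserving the most care, is the reduction to the case $a(0)\neq 0$: one must rule out a genuine singularity at $0$. If $a(0)=0$ then $a(x)=x^m u(x)$ with $u(0)\neq 0$, $m\geq 1$; a direct induction shows $X^k(x)$ vanishes to order exactly $k(m-1)+m$ (when $m\geq 2$) or has a constant-order leading behaviour growing in degree, so $X^k(x)$ is never eventually zero unless $a\equiv 0$ — this is exactly the mechanism in the proof of Proposition \ref{pro:dim1pol}, and I would cite that computation rather than redo it. Everything else — the group-law identity forcing $X^\ell$ constant, and the passage from $\phi^{(\ell+1)}\equiv 0$ on an open set to $\phi$ polynomial — is routine once the straightening coordinate is in hand.
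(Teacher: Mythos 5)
Your proof is correct and takes essentially the same route as the paper: straighten $X$ by the inverse of $g(x)=\int_0^x \frac{ds}{a(s)}$, read off $X^k(x)=f^{(k)}\big(f^{-1}(x)\big)$ from the $t$-expansion of $\varphi_t(x)=f(f^{-1}(x)+t)$ (your direct expansion is exactly the paper's recurrence $g_{k+1}=g_k'$, and is the shortcut the paper itself notes in the remark after its proof), then conclude $f^{(\ell+1)}\equiv 0$ and handle $a(0)=0$ by the same vanishing-order induction the paper uses. The only blemish is the parenthetical order count in the singular case: $X^k(x)$ vanishes to order $k(m-1)+1$, not $k(m-1)+m$, a slip that does not affect the conclusion $X^k(x)\not\equiv 0$.
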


\begin{proof}[{\sl Proof}]
Suppose that $a(0)\not=0$. In this case the vector field 
$X$ is conjugated to a constant vector field, say
$Y=\frac{\partial}{\partial x}$. Let $f$ be an element
of $\mathrm{Diff}(\mathbb{C},0)$ such that $f_*Y=X$. 
The flow $\varphi_t$ of $X$ can be written as
\[
\varphi_t(x)=f\big(f^{-1}(x)+t\big),
\]
where $f^{-1}\in\mathrm{Diff}(\mathbb{C},0)$ is the 
local inverse of $f$. We thus have $a(x)=f'\circ f^{-1}(x)$.
As we have seen in \S\ref{sec:remeg} (\ref{eq:al}) 
\[
\varphi_t(x)=x+\displaystyle\sum_{k\geq 1}\frac{1}{k!}X^k(x)t^k;
\]
since $t\cdot d(X)=d$ we must have $X^k(x)=0$ for 
all $k\geq d+1$. Note that the functions $f_k(x)=X^k(x)$, $k\geq 1$, 
satisfy the recurrence rule:
\begin{itemize}
\item[(i)] $f_1=a$, 

\item[(ii)] $f_{k+1}=a f'_k$, $\forall\,k\geq 1$.
\end{itemize}
Let us define another sequence of germs at $0\in\mathbb{C}$
as $g_k=f_k\circ f$, $k\geq 1$. This new sequence 
satisfies the recurrence rule:
\begin{itemize}
\item[(i')] $g_1=f_1\circ f=a\circ f=f'$, 

\item[(ii')] $g_{k+1}=f_{k+1}\circ f=a\circ f\cdot f'_k\circ f=f'_k\circ f\cdot f'=(f_k\circ f)'=g'_k$, $\forall\,k\geq 1$.
\end{itemize}
Therefore from (i') and (ii') we get for all $k\geq 1$
\[
g_k=\frac{\partial^k f}{\partial x^k}.
\]
Now, as $f_{\ell+1}\equiv 0$ we have $g_{\ell+1}\equiv 0$ and so 
$f$ is a polynomial of degree at most $\ell$. But since
the flow $\varphi_t$ has degree $\ell$, $f$ must be
of degree exactly $\ell$.

Suppose by contradiction that $a(0)=0$. In this case 
we can write $a(x)=x^\ell h(x)$ where $\ell\geq 1$
and $h(0)\not=0$. But using the recurence 
rule (ii) it is possible to prove that 
$f_k(x)=x^{\ell k-k+1}h_k(0)$ where $h_k(0)\not=0$ 
for all $k\geq 1$. As a consequence the flow can 
not be polynomial in $t$.
\end{proof}

\begin{rem}
Fixing $x=0$ in the third line of the proof we immediately 
get that $f$ is polynomial; we followed a longer process 
because it is essential in the study of the case $a(0)=0$.
\end{rem}

Theorem \ref{thm:classtpolyvf} implies that a germ 
of holomorphic $t$-polynomial vector field in one variable 
has no singularities. This is not the case in $n\geq 2$ 
variables (consider for instance $x_2\frac{\partial}{\partial x_1}$).
Nevertheless Theorem \ref{thm:classtpolyvf} has a natural
generalization in $n\geq 2$ variables, but with an 
additional assumption of "non-singularities":

\begin{thm}
{\sl Let $X=\displaystyle\sum_{i=1}^na_i(x)\frac{\partial}{\partial x_i}$
be a germ at $0$ of a non-singular $t$-polynomial 
vector field, $a_1(0)\not=0$ for fixing ideas. 

There exists 
$f\in\mathrm{Diff}(\mathbb{C}^n,0)$ a germ of diffeomorphism
which is polynomial in the variable $x_1$ such that 
$X=f_*\frac{\partial}{\partial x_1}$, {\it i.e.}
$\varphi_t(x)=f(f^{-1}(x)+te_1)$ where $\varphi_t$
is the flow of $X$ and $f^{-1}$ is the local 
inverse of $f$
at $0$.}
\end{thm}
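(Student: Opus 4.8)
The plan is to reduce to the one-variable statement by an induction on the dimension $n$, using the non-singularity hypothesis $a_1(0)\neq 0$ as a flow-box at every stage. Since $a_1(0)\neq 0$, the classical flow-box theorem gives a germ of diffeomorphism $g\in\mathrm{Diff}(\mathbb{C}^n,0)$ such that $g_*\frac{\partial}{\partial x_1}=X$; writing $\varphi_t$ for the flow of $X$, this means $\varphi_t(x)=g\big(g^{-1}(x)+te_1\big)$. The whole content of the theorem is that $g$ may be taken \emph{polynomial in $x_1$}. As in the proof of Theorem \ref{thm:classtpolyvf}, I would extract this from the Taylor-in-$t$ expansion of the flow. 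Formula (\ref{eq:al}) gives $\varphi^j_t(x)=x_j+\sum_{k\geq1}\frac{1}{k!}X^k(x_j)\,t^k$, and the $t$-polynomial hypothesis forces $X^k(x_j)\equiv 0$ for $k$ large, for every $j$. So I set $f_k^{(j)}=X^k(x_j)$ and transport by $g$: put $G=g^{-1}$ and consider $h_k^{(j)}:=f_k^{(j)}\circ g$. The key computation is that, just as in the one-variable case, the recursion $f_{k+1}^{(j)}=X(f_k^{(j)})$ pulls back under $g$ — which conjugates $X$ to $\partial/\partial x_1$ — to the plain iterated derivative, so that $h_k^{(j)}=\partial^k g_j/\partial x_1^k$ where $g_j$ is the $j$-th component of $g$. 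Since $f_k^{(j)}\equiv0$ for $k$ large, $\partial^k g_j/\partial x_1^k\equiv 0$ for $k$ large, i.e. each $g_j$ is a polynomial in $x_1$ of bounded degree (with coefficients holomorphic in $x_2,\dots,x_n$). This is exactly the desired $f:=g$.

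The main technical point — and the step I expect to be the real obstacle — is the bookkeeping in the chain-rule identity $f_{k+1}^{(j)}\circ g=\partial_{x_1}(f_k^{(j)}\circ g)$. In one variable this is the short computation $g_{k+1}=f_{k+1}\circ f=(a\circ f)\cdot(f_k'\circ f)=f_k'\circ f\cdot f'=(f_k\circ f)'$. In $n$ variables one must be slightly more careful: $f_{k+1}^{(j)}=X(f_k^{(j)})=\sum_i a_i\,\partial f_k^{(j)}/\partial x_i$, and one uses that $X=g_*\frac{\partial}{\partial x_1}$ means precisely $X(\psi)\circ g=\frac{\partial}{\partial x_1}(\psi\circ g)$ for every germ $\psi$ (this is the defining property of pushforward of a vector field acting on functions). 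Applying this with $\psi=f_k^{(j)}$ gives $f_{k+1}^{(j)}\circ g=\frac{\partial}{\partial x_1}(f_k^{(j)}\circ g)=\frac{\partial}{\partial x_1}h_k^{(j)}$, and the base case $h_0^{(j)}=x_j\circ g=g_j$ (with the convention $f_0^{(j)}=x_j$, so $f_1^{(j)}=a_j$) closes the induction: $h_k^{(j)}=\partial^k g_j/\partial x_1^k$.

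To finish, I would note that the $t$-polynomiality of $\varphi_t$ says $f_k^{(j)}\equiv 0$ for all $k>\ell$ and all $j$, where $\ell=t.d(X)$; hence $\partial^{\ell+1}g_j/\partial x_1^{\ell+1}\equiv0$, so $g_j(x_1,\dots,x_n)=\sum_{m=0}^{\ell}c_{j,m}(x_2,\dots,x_n)\,x_1^m$ with each $c_{j,m}$ holomorphic near $0\in\mathbb{C}^{n-1}$. Setting $f:=g$ we get $X=f_*\frac{\partial}{\partial x_1}$ with $f$ a germ of diffeomorphism polynomial in $x_1$, and $f^{-1}=g^{-1}$ is the asserted local inverse, so that $\varphi_t(x)=f(f^{-1}(x)+te_1)$. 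One small point worth checking is that the flow-box diffeomorphism can genuinely be chosen so that $f^{-1}(0)=0$ and $f$ is a germ at $0$ with $f(0)=0$; this is standard, using that $X(0)\neq 0$. I would remark, as in the one-variable proof, that fixing all variables except $x_1$ recovers exactly the statement of Theorem \ref{thm:classtpolyvf} fibrewise, which is a useful sanity check; the genuinely new content here is only that the flow-box coordinates can be arranged jointly, which the argument above provides since the transported functions $h_k^{(j)}$ are honest derivatives of the \emph{same} diffeomorphism $g$.
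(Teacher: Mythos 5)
Your argument is correct, but it follows a different mechanism than the paper's proof. You take an arbitrary flow-box conjugacy $g$ with $g_*\frac{\partial}{\partial x_1}=X$ and transport the recursion $X^{k+1}(x_j)=X\big(X^k(x_j)\big)$ through $g$ via the identity $(X\psi)\circ g=\frac{\partial}{\partial x_1}(\psi\circ g)$, obtaining $X^k(x_j)\circ g=\frac{\partial^k g_j}{\partial x_1^k}$; the $t$-polynomiality then kills these for $k>\ell=t.d(X)$, so every component $g_j$ is a polynomial in $x_1$ with coefficients holomorphic in $(x_2,\ldots,x_n)$. This is a faithful $n$-variable version of the recursion used for Theorem \ref{thm:classtpolyvf}, and it even yields slightly more than asked: \emph{any} flow-box chart for $X$ is automatically polynomial in $x_1$, no normalization needed. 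The paper instead normalizes the conjugacy by requiring $f(0,x_2,\ldots,x_n)=(0,x_2,\ldots,x_n)$ and then simply observes that $\varphi_t(0,x_2,\ldots,x_n)=f(t,x_2,\ldots,x_n)$, so the polynomiality of $f$ in $x_1$ is read off directly from the $t$-polynomiality of the flow restricted to the transversal $\{x_1=0\}$ — a one-line geometric argument, at the cost of fixing a particular choice of $f$. Your chain-rule bookkeeping is sound (the identity you invoke is exactly the definition of pushforward acting on functions, and the base case $h_0^{(j)}=g_j$ closes the induction), and the point you flag about arranging $g(0)=0$ is indeed standard, so there is no gap; the two proofs differ only in that yours is computational and normalization-free, while the paper's is shorter and makes the polynomial structure of $f$ immediately visible.
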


\begin{proof}[{\sl Proof}]
Let $f$ be a local conjugacy between $X$ and 
$\frac{\partial}{\partial x_1}$
satisfying $f(0,x_2,x_3,\ldots,x_n)=(0,x_2,x_3,\ldots,x_n)$
(it is well-known that such a conjugacy exists). In 
particular $\varphi_t(x)=f(f^{-1}(x)+te_1)$ and 
\[
\varphi_t(0,x_2,x_3,\ldots,x_n)=f(t,x_2,x_3,\ldots,x_n);
\]
in particular $f$ is thus polynomial in the variable $x_1$.
\end{proof}

\subsection{Rigid chambars on $\mathbb{R}^n$ and foliations by straight lines}

The following statement generalizes to 
the real case the property satisfied by the 
$2$-chambars:

\begin{thm}\label{thm:realnotcomp}
{\sl If $\mathrm{Ch}(a_1X,a_2X,\ldots,a_pX)$
is a rigid $p$-chambar on an open subset of 
$\mathbb{R}^n$, then the foliation $\mathcal{F}_X$
associated to $X$ is a foliation by straight lines.}
\end{thm}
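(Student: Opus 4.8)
The plan is to reduce to a statement about a single trajectory of $X$, exactly as in the $2$-chambar case (Theorem \ref{thm:dim1}), but using the extra flexibility of having $p$ summands with $\sum_k a_k = 0$. Write $\varphi_t = \exp(tX)$ for the flow of $X$; then $\exp(t\,a_k X) = \varphi_{a_k t}$, and the barycentric condition reads
\[
\sum_{k=1}^p \varphi_{a_k t}(x) = p\,x \qquad \text{for all } x \text{ and } t \text{ small.}
\]
Fix a generic point $x$ at which $X(x)\neq 0$, and consider the real-analytic curve $\gamma(s) = \varphi_s(x)$, parametrized so that $\gamma'(s) = X(\gamma(s))$. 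The identity becomes $\sum_k \gamma(a_k t) = p\,\gamma(0)$. First I would differentiate twice in $t$ and set $t=0$: this yields $\big(\sum_k a_k^2\big)\,\gamma''(0) = 0$. Since the $a_k$ are nonzero, $\sum_k a_k^2 \neq 0$ in the real case (it is a sum of squares of nonzero reals), so $\gamma''(0) = 0$. Because $x$ was an arbitrary generic point on the leaf and we may re-base the flow at any point $\gamma(s_0)$ of the trajectory, we get $\gamma''(s) = 0$ identically, i.e. the trajectory is an affine line segment and, on it, $\varphi_t$ is the translation flow. Hence every generic leaf of $\mathcal{F}_X$ is contained in a straight line, which is exactly the assertion.

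Alternatively, and perhaps more cleanly, I would argue intrinsically: the condition $DX(X) = 0$ (the coefficients of $X$ being first integrals of $X$) is what forces linear trajectories, as already exploited in \S\ref{subsec:foliationsbystraightlines} for $2$-chambars. To obtain $DX(X)=0$ here, differentiate $\sum_k \varphi_{a_k t}(x) = p x$ twice in $t$: the first derivative gives $\sum_k a_k X(\varphi_{a_k t}(x)) = 0$, and differentiating again and evaluating at $t=0$ gives $\big(\sum_k a_k^2\big)\, DX(x)\cdot X(x) = 0$. Again $\sum_k a_k^2 \neq 0$ over $\mathbb{R}$ gives $DX(X) = 0$, so each coefficient $\alpha_i$ of $X$ (in suitable affine coordinates where, say, $\alpha_1 \equiv 1$ near the generic point) is a first integral of $X$, hence constant along trajectories, hence those trajectories are lines.

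The main obstacle — and the only place where the real hypothesis is genuinely used — is ensuring $\sum_k a_k^2 \neq 0$; this is precisely what can fail over $\mathbb{C}$ (take $a_k$ the $(\nu{+}1)$-th roots of unity as in Proposition \ref{pro:irrcham}, where $\sum a_k = \sum a_k^2 = 0$), which is why the complex analogue is false. One should also be slightly careful that the conclusion "$\gamma'' \equiv 0$" holds on the open dense set where $X \neq 0$ and then note that $\mathcal{F}_X$ is the foliation defined by $X$ away from its singular locus, so "foliation by straight lines" refers exactly to the generic leaves; this is the same convention as in Theorem \ref{thm:dim1}, and no extra work is needed to globalize on each leaf since the vanishing of $\gamma''$ is a closed condition that holds on a dense subset of each leaf.
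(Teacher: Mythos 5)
Your proposal is correct and follows essentially the same route as the paper: differentiate the barycentric identity $\sum_k\varphi_{a_kt}(x)=px$ twice in $t$, obtain $\bigl(\sum_k a_k^2\bigr)\,DX\cdot X=0$, and use that over $\mathbb{R}$ a sum of squares of nonzero reals is nonzero to conclude $DX\cdot X=0$, hence the coefficients of $X$ are first integrals and the nonsingular trajectories are straight lines. Your remarks on where the real hypothesis enters (and on the complex counterexample with roots of unity) match the paper's discussion following the theorem.
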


\begin{proof}[{\sl Proof}]
As in the proof of Theorem \ref{thm:dim1}
we get by successive derivations the 
equalities
\[
\left\{
\begin{array}{ll}
\displaystyle\sum_{k=1}^pa_k=0\\
\left(\displaystyle\sum_{k=1}^pa_k^2\right)DX\cdot X=0
\end{array}
\right.
\]
Since $a_k\not=0$ for any $1\leq k\leq p$
one has $DX\cdot X=0$. As a 
result all the non-singular trajectories
of $X$ are straight lines.
\end{proof}

Theorem \ref{thm:realnotcomp} can not be generalized 
to the complex case. Let us give a 
counter example of Theorem \ref{thm:realnotcomp}
in the complex case in dimension $2$.
Consider on $\mathbb{C}^2$ the linear
vector field
\[
X=x\frac{\partial}{\partial x}+2y\frac{\partial}{\partial y}.
\]
The closure of its trajectories are 
the parabola $y=cx^2$ with
$c\in\mathbb{P}^1_\mathbb{C}$ 
(if $c\in\{0,\,\infty\}$, then the 
trajectory is a line). Let us 
consider the vector field
\[
Y=\frac{1}{x}X=\frac{\partial}{\partial x}+\frac{2y}{x}\frac{\partial}{\partial y}
\]
which is holomorphic outside $x=0$. Its 
$1$-parameter group is the group of 
birational maps 
\[
(\exp tY)(x,y)=\left(x+t,\left(\frac{x+t}{x}\right)^2y\right).
\]
Hence if $a_k$ belongs to $\mathbb{C}^*$, 
then one has 
\[
(\exp ta_kY)(x,y)=\left(x+a_kt,\left(\frac{x+a_kt}{x}\right)^2y\right).
\]
Take some non zero constants $a_1$, $a_2$,
$\ldots$, $a_p$, $p\geq 3$, such that 
\[
\displaystyle\sum_{k=1}^pa_k=\displaystyle\sum_{k=1}^pa_k^2=0.
\]
Then the vector fields $Y_k=a_kY$, 
$1\leq k\leq p$, form a $p$-chambar
on the open set
$\mathcal{U}=\mathbb{C}^2\smallsetminus\{x=~0\}$.
But the trajectories of $Y$, that 
are almost the trajectories of $X$, 
are not straight lines.

\begin{rem}
Let $X$ be a germ at $0\in\mathbb{C}^n$
of holomorphic vector 
field. Suppose that there exist some constants $a_1$, $a_2$, 
$\ldots$, $a_p$ such 
that the $X_k=a_kX$ generate a $p$-chambar.
If $X$ is not singular at $0$, $X(0)\not=0$,
then $\mathrm{Ch}(a_1X,a_2X,\ldots,a_pX)$
is locally conjugate to the constant 
$p$-chambar
$\mathrm{Ch}\left(a_1\frac{\partial}{\partial x},a_2\frac{\partial}{\partial x},\ldots,a_p\frac{\partial}{\partial x}\right)$. 
Indeed if $\phi$ is a local diffeomorphism
that conjugates~$X$ to 
$\frac{\partial}{\partial x}$ and if 
$a$ belongs to $\mathbb{C}$, then 
$\phi$ conjugates $aX$ to 
$a\frac{\partial}{\partial x}$. Be careful it 
does not mean that the image of a constant 
$p$-chambar via a diffeomorphism 
is a $p$-chambar (see Theorem 
\ref{thm:cstpchamb}).
\end{rem}

\subsection{Rigid and semi-rigid chambars on $\mathbb{C}^n$}

\subsubsection{Rigid chambars on $\mathbb{C}^n$ and 
$t$-polynomial vector fields}

\begin{thm}\label{thm:rigidp}
{\sl Let $\mathrm{Ch}(X,a_1X,a_2X,\ldots,a_{p-1}X)$ be 
a germ at $0\in\mathbb{C}^n$ of rigid $p$-chambar. 

Then the flow $\varphi_t$ of $X$ is polynomial of
degree at most $p-1$, as a function of the time $t$.

If $t.d(X)=d$, then $a_1$, $a_2$, $\ldots$, $a_p$
satisfy
\[
a_1^\ell+a_2^\ell+\ldots+a_p^\ell=0 \quad\forall\,1\leq\ell\leq d.
\]

In particular if $d=p-1$, then $a_1^p=a_2^p=\ldots=a_p^p$. 

Moreover
if the $p$-chambar $(a_1X,a_2X,\ldots,a_pX)$ is irreducible,
then $\frac{a_k}{a_1}$ is a primitive $p$-th root of unity
for some $1\leq k\leq p$.}
\end{thm}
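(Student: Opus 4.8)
The plan is to use Proposition \ref{pro:csqeqal} combined with formula (\ref{eq:al}). Write $\varphi_t(x) = x + \sum_{k\geq 1} \frac{1}{k!} X^k(x)\, t^k$ componentwise, i.e. $\varphi_t(x_j) = x_j + \sum_{k\geq 1}\frac{1}{k!}X^k(x_j)t^k$. The flow of $a_i X$ is obtained simply by rescaling time $t \mapsto a_i t$, so its $j$-th component is $x_j + \sum_{k\geq 1}\frac{a_i^k}{k!}X^k(x_j)t^k$. First I would sum over the $p$ vector fields and invoke the barycentric property: $\sum_{i=1}^p \varphi_{a_i t}(x_j) = p\, x_j$ for every $j$ and every small $t$. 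Comparing coefficients of $t^k$ for each $k\geq 1$ yields
\[
\Big(\sum_{i=1}^p a_i^k\Big)\, X^k(x_j) = 0 \qquad \forall\, k\geq 1,\ \forall\, j.
\]
Let $s_k := \sum_{i=1}^p a_i^k$ denote the $k$-th power sum of the $a_i$. The identity says: for each $k$, either $s_k = 0$ or $X^k(x_j) = 0$ for all $j$, the latter being equivalent to $X^k \equiv 0$ as a vector of functions (hence $\varphi_t$ polynomial of degree $< k$).

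Next I would argue that the set of $k\geq 1$ with $X^k \not\equiv 0$ is an initial segment $\{1,2,\ldots,d\}$ (possibly all of $\mathbb{N}$ a priori): indeed if $X^k(x_j)=0$ for all $j$ then $X^{k+1}(x_j) = X(X^k(x_j)) = X(0) = 0$, so once the coefficient vanishes it stays vanishing. Combined with the displayed identity this gives $s_1 = s_2 = \cdots = s_d = 0$, where $d = t.d(X)$ if the flow is polynomial. To show the flow IS polynomial of degree $\leq p-1$: suppose not, i.e. $X^k\not\equiv 0$ for all $k\leq p$; then $s_1 = s_2 = \cdots = s_p = 0$, forcing all power sums $s_1,\ldots,s_p$ of the $p$ numbers $a_1,\ldots,a_p$ to vanish, which by Newton's identities forces all elementary symmetric functions to vanish, hence $a_1 = \cdots = a_p = 0$, contradicting $a_i \in \mathbb{C}^*$. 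Therefore $X^k\equiv 0$ for some $k\leq p$, i.e. $\varphi_t$ is polynomial of degree $\leq p-1$; this is the main structural point, and I expect the bookkeeping with Newton's identities over $\mathbb{C}$ to be the only delicate part (one needs $p$ vanishing power sums of $p$ quantities to conclude they all vanish, which is valid since $\mathrm{char}\, \mathbb{C} = 0$). The same argument shows $s_\ell = 0$ for $1\leq \ell\leq d = t.d(X)$, and when $d = p-1$ the vanishing of $s_1,\ldots,s_{p-1}$ means all elementary symmetric functions $e_1,\ldots,e_{p-1}$ vanish, so the $a_i$ are the roots of $T^p - e_p = 0$ with $e_p = (-1)^{p-1}a_1\cdots a_p \neq 0$; hence $a_i^p = e_p$ is the same for all $i$, giving $a_1^p = a_2^p = \cdots = a_p^p$.

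Finally, for the irreducibility statement, I would observe that from $a_1^p = \cdots = a_p^p$ we get $a_k/a_1$ is a $p$-th root of unity for every $k$; write $a_k/a_1 = \zeta^{m_k}$ with $\zeta = e^{2\pi i/p}$ and $m_1 = 0$. If none of the $a_k/a_1$ were a \emph{primitive} $p$-th root of unity, then every $m_k$ would lie in a proper subgroup of $\mathbb{Z}/p\mathbb{Z}$; but $\mathbb{Z}/p\mathbb{Z}$ for $p$ prime has no proper nontrivial subgroups, so this needs the general (non-prime) treatment: if every exponent $m_k$ is divisible by some prime $q \mid p$, set $p' = p/q$; then the $a_k$ only take $p'$ distinct values up to the common scaling, and grouping the chambar by these values one checks each group sums to zero (since $\sum_{i} a_i = s_1 = 0$ already, and more refined: the multiset of $a_i$ is invariant under multiplication by $\zeta^{p'}$, a primitive $q$-th root, forcing each fiber-sum to vanish because $\sum_{\text{$q$-th roots of }1} = 0$), exhibiting a nontrivial sub-chambar and contradicting irreducibility. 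Hence some $a_k/a_1$ must be a primitive $p$-th root of unity. I expect this last descent argument — extracting a reducibility from a non-primitive root of unity via the fact that a full set of $q$-th roots of unity sums to zero — to require the most care, but it is elementary finite group theory plus the definition of reducible chambar.
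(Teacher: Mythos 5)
Your argument for the first three assertions is correct and is essentially the paper's own proof: expand each flow via (\ref{eq:al}), note that the coefficient of $t^k$ in the barycentric identity is $\tfrac{1}{k!}\bigl(\sum_i a_i^k\bigr)X^k(x_j)$, use the initial-segment remark ($X^k\equiv 0\Rightarrow X^{k+1}\equiv 0$) together with Newton's identities (all $a_i\neq 0$ forces some power sum $s_\ell\neq 0$ with $\ell\leq p$) to get $X^\ell\equiv 0$, hence $t.d(X)\leq p-1$; the vanishing $s_1=\cdots=s_d=0$ and the case $d=p-1$ follow exactly as you say (the paper leaves these ``other facts'' to the reader).

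The gap is in your proof of the last clause. First, ``no $a_k/a_1$ is a primitive $p$-th root of unity'' does \emph{not} imply that all exponents $m_k$ are divisible by one common prime $q\mid p$: for $p=6$ the ratios could a priori have orders $2$ and $3$ only, and then no single $q$ divides every $m_k$, so your descent never starts. Second, even in the case you treat, the multiset $\{a_k\}$ need not be invariant under multiplication by $\zeta^{p'}$ (adding $p'=p/q$ to an exponent generally destroys divisibility by $q$), and in any case a subfamily $S$ with $\sum_{i\in S}a_i=0$ is not yet a sub-chambar: since $t.d(X)=p-1\geq 2$, being a chambar forces $\sum_{i\in S}a_i^m=0$ for \emph{all} $1\leq m\leq p-1$, and the orbit sums you produce already fail this at $m=q$. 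Fortunately none of this machinery is needed. In the only regime where the clause makes sense, namely $d=p-1$ (without it the clause is false: take $p=4$, $X=2\sqrt{x}\,\frac{\partial}{\partial x}$ and generic distinct $a_i$ with $s_1=s_2=0$, which gives an irreducible rigid $4$-chambar whose ratios are not roots of unity), Newton's identities give $e_1=\cdots=e_{p-1}=0$, so $\prod_i(T-a_i)=T^p-c$ with $c=a_1^p\neq 0$. This polynomial has $p$ pairwise distinct roots, so the multiset $\{a_1,\ldots,a_p\}$ is exactly the set of all $p$-th roots of $c$ (equivalently, recall that the vector fields of a chambar are distinct, hence so are the $a_i$); therefore $\{a_k/a_1\}$ is the full group of $p$-th roots of unity and in particular contains a primitive one. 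Your group-theoretic descent should simply be replaced by this observation.
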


\begin{proof}[{\sl Proof}]
Write $X$ as $\displaystyle\sum_{k=1}^n X_k\frac{\partial}{\partial x_k}$;
the barycentric condition is the following
\begin{eqnarray*}
& & px_j=px_j+t\big(a_1+a_2+\ldots+a_p\big)X_j+\frac{t^2}{2}\big(a_1^2+a_2^2+\ldots+a_p^2\big)X(X_j)\\
& & \hspace{1cm}+\ldots+\frac{t^k}{k!}\big(a_1^k+a_2^k+\ldots+a_p^k\big)X^{k-1}(X_j)+\ldots
\end{eqnarray*}
for $j=1$, $2$, $\ldots$, $n$.

The fact that the coefficients $a_k$ are different
from zero implies that a Newton formula 
\[
a_1^\ell+a_2^\ell+\ldots+a_p^\ell
\]
is non zero for an $\ell\leq p$. As a consequence 
$X^m(X_j)\equiv 0$ for all $m\geq \ell-1$ and $1\leq j\leq n$. 
This implies that the flow of $X$, and 
the flows of the $a_kX$, are polynomial in $t$.

The other facts can be checked by the reader.
\end{proof}

\subsubsection{A property of the singular set}

Let $X$ be a holomorphic vector field 
defined on an open subset $\mathcal{U}$ of $\mathbb{C}^n$. 
Denote by $\mathcal{F}_X$ the singular one 
dimensional foliation defined by $X$ on 
$\mathcal{U}$. 
A \textsl{separatrix} $\gamma$ of $X$ through 
$x_0\in\mathrm{Sing}(X)$ is a germ of analytic 
curve at $x_0$ such that 
\begin{itemize}
\item[$\diamond$] $X\not\equiv 0$ on 
$\gamma\smallsetminus\{x_0\}$,

\item[$\diamond$] $x_0$ belongs to $\gamma$,

\item[$\diamond$] $\gamma\smallsetminus\{x_0\}$ is a leaf of the
germ of $\mathcal{F}_X$ at $x_0$. 
\end{itemize}
This means
that $x_0$ belongs to $\gamma$ and if 
$x$ belongs to $\gamma\smallsetminus\{x_0\}$, 
then $X(x)\not=0$ and $T_x\gamma=\mathbb{C}\cdot X(x)$.

\smallskip

Let $X$ be an holomorphic vector field defined on a 
closed ball $B=\overline{B(0,r)}$ with $X(0)=0$. 
We suppose that $X$ is a $t$-polynomial vector field, 
that is $t\mapsto\varphi_t(x)$ is polynomial in $t$,
$x\in B$, $\varphi_t=\exp tX$. Note that for 
any $x\in B$, $t\mapsto\varphi_t(x)$ can be 
extended on all the line $\mathbb{C}$. As a 
consequence if $x\in B$, the leaf $\mathcal{L}_x$
of $\mathcal{F}_X$ in $B$ is 
\begin{itemize}
\item either the 
point $x$ (case $x\in\mathrm{Sing}(X)$),

\item or the connected component of $\mathcal{L}'_x\cap B$ 
containing $x$ where $\mathcal{L}'_x$ is the 
rational curve image of $t\mapsto\varphi_t(x)$.
\end{itemize} 

\begin{lem}\label{lem:tecnotclos}
{\sl Suppose that $x$ does not belong to $\mathrm{Sing}(X)$; then
$0$ does not belong to the closure $\overline{\mathcal{L}_x}$ 
of $\mathcal{L}_x$ in~$B$.}
\end{lem}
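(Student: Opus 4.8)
The statement says: if $X$ is a $t$-polynomial vector field on $B=\overline{B(0,r)}$ with $X(0)=0$, and $x\notin\mathrm{Sing}(X)$, then $0\notin\overline{\mathcal{L}_x}$. The plan is to argue by contradiction: suppose $0\in\overline{\mathcal L_x}$. Since $\mathcal L_x$ is (a connected component of the intersection with $B$ of) the rational curve $\mathcal L'_x$ parametrized by $t\mapsto\varphi_t(x)$, there is a sequence $t_m\in\mathbb C$ with $\varphi_{t_m}(x)\to 0$. I would then exploit the fact emphasized just before Theorem~\ref{thm:classtpolyvf} in the excerpt: since the flow is polynomial in $t$, one has $X_{\mid f^{-1}(0)}\equiv 0$ near a singular point — more precisely, the top-degree coefficient $F_\nu$ of $\varphi_t$ has components that are first integrals of $X$, and if $X(0)=0$ then the leading coefficient $F_\nu$ must vanish somewhere; the key structural input is that the leaf-closure through a non-singular point cannot accumulate at a singular point of a polynomial flow.

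\textbf{Main line of argument.} Fix $x\notin\mathrm{Sing}(X)$ and write $\varphi_t(x)=x+tF_1(x)+\dots+t^\nu F_\nu(x)$ with $\nu=t.d(X)\ge 1$ (if $\nu=0$ then $X$ is constant and nonzero, contradicting $X(0)=0$, so we may assume $\nu\geq1$; actually since $X$ is globally $t$-polynomial on $B$ and $X(0)=0$, the flow fixes $0$, so $\varphi_t(0)=0$ for all $t$). Consider the entire curve $c\colon\mathbb C\to\mathbb C^n$, $c(t)=\varphi_t(x)$. If $0\in\overline{\mathcal L_x}$, pick $t_m$ with $c(t_m)\to 0$. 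Since $c$ is a polynomial map of degree $\nu\ge1$ in $t$ and $c(t)\to 0$ forces (looking at the component where $F_\nu\neq0$, if $\nu\geq1$ and $F_\nu(x)\neq0$) that the sequence $t_m$ stays bounded — because for large $|t|$, $|c(t)|\sim |F_\nu(x)||t|^\nu\to\infty$. So after passing to a subsequence $t_m\to t_\infty\in\mathbb C$, and by continuity $c(t_\infty)=\varphi_{t_\infty}(x)=0$. But $\varphi_{t_\infty}$ is the time-$t_\infty$ flow, a local biholomorphism fixing... wait — here is the crux: $\varphi_{t_\infty}(x)=0=\varphi_{t_\infty}(0)$ would contradict injectivity of the flow map, provided $0$ and $x$ lie in a common domain where $\varphi_{t_\infty}$ is injective. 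The case $F_\nu(x)=0$ must be handled separately: then $\deg c<\nu$, but we can induct on the degree, or observe that $c$ is still a nonconstant polynomial (nonconstant since $x\notin\mathrm{Sing}(X)$ implies $F_1(x)=X(x)\neq0$), so the same boundedness/extraction argument applies as soon as $c$ is nonconstant, i.e. $c(t)\to0$ along $|t|\to\infty$ is impossible for a nonconstant polynomial map, hence $t_m$ is bounded.

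\textbf{The injectivity step is the obstacle.} The real content is concluding $\varphi_{t_\infty}(x)=0=\varphi_{t_\infty}(0)\Rightarrow x=0$. Since $\varphi_t$ is the flow of a holomorphic vector field $X$ defined near $\overline{B(0,r)}$ and is a polynomial diffeomorphism in the sense of Theorem~\ref{thm:classtpolyvf}'s philosophy (more precisely, $\varphi_t$ is biholomorphic onto its image on a fixed neighborhood of any compact, for $t$ in a compact set), the flow maps are injective on $B$ for each fixed $t$: indeed $\varphi_{-t}\circ\varphi_t=\mathrm{id}$ wherever both sides are defined, and because the flow is \emph{polynomial} in $t$ these identities extend to all of $B$ and all $t\in\mathbb C$. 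Hence $\varphi_{t_\infty}$ is injective on $B$, so $\varphi_{t_\infty}(x)=\varphi_{t_\infty}(0)$ gives $x=0$, contradicting $x\notin\mathrm{Sing}(X)$ (as $0\in\mathrm{Sing}(X)$). Therefore $0\notin\overline{\mathcal L_x}$. I expect the delicate point to be justifying the global injectivity of $\varphi_{t_\infty}$ on all of $B$ rather than just on a small neighborhood — this uses that polynomiality in $t$ lets the germ-level group law $\varphi_s\circ\varphi_t=\varphi_{s+t}$ propagate by analytic continuation to the whole ball, which should be spelled out carefully.
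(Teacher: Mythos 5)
Your overall strategy is the paper's own argument read in the contrapositive order. The paper notes that, $0$ being a singular point, any sequence $(t_n)$ with $\varphi_{t_n}(x)\to 0$ must satisfy $\vert t_n\vert\to\infty$, and then the nonconstant polynomial $t\mapsto\varphi_t(x)$ forces $\vert\varphi_{t_n}(x)\vert\to\infty$, a contradiction. You instead use the polynomial growth to force the $t_m$ to stay bounded, extract $t_\infty$ with $\varphi_{t_\infty}(x)=0$, and then try to contradict the fact that $0$ is singular. The two ingredients (properness of a nonconstant polynomial map of $t$, and impossibility of the orbit meeting the singular point at finite time) are the same; only the order differs.

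The gap is in the step you yourself flag: the claimed injectivity of $\varphi_{t_\infty}$ on all of $B$. The identity $\varphi_{-t}\circ\varphi_t=\mathrm{id}$ does not ``extend to all of $B$ and all $t\in\mathbb{C}$ by polynomiality'': for a fixed $y$ the composition is only defined for those $t$ with $\varphi_t(y)$ inside the domain of $X$, and analytic continuation propagates the identity only along the connected component of that parameter set containing $t=0$; nothing guarantees that $t_\infty$ lies in it, and injectivity of $y\mapsto\varphi_{t_\infty}(y)$ on the whole ball is not a formal consequence of polynomiality in $t$. Moreover you do not need any global injectivity: what is required is only that $\varphi_{t_\infty}(x)=0$ forces $x=0$, i.e.\ that a trajectory cannot reach the equilibrium $0$ in finite (complex) time. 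That is the standard local uniqueness statement at an equilibrium: near $0$ one has $\vert X(y)\vert\leq L\vert y\vert$, so by a Gronwall estimate any solution of $\dot{\gamma}=X(\gamma)$ which vanishes at some time vanishes identically near that time; since $t\mapsto\varphi_t(x)$ is a polynomial, the identity theorem then gives $\varphi_t(x)\equiv 0$, hence $x=\varphi_0(x)=0$, contradicting $x\notin\mathrm{Sing}(X)$. This is precisely the fact the paper invokes, without comment, in the sentence ``Since $0\in\mathrm{Sing}(X)$ one has $\lim\vert t_n\vert=+\infty$''. If you replace the injectivity detour by this local uniqueness argument, your proof becomes correct and coincides with the paper's.
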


\begin{proof}[{\sl Proof}]
Assume by contradiction that 
$0$ belongs to $\overline{\mathcal{L}_x}$. Then
there is a sequence $(t_n)_n$ of complex numbers such that
$\displaystyle\lim_{n\to +\infty}\varphi_{t_n}(x)=0$. 
Since $0\in\mathrm{Sing}(X)$ one has
$\displaystyle\lim_{n\to +\infty}\vert t_n\vert=+\infty$,
and as $t\mapsto\varphi_t(x)$ is 
polynomial (non constant)
$\displaystyle\lim_{n\to +\infty}\vert\varphi_{t_n}(x)\vert=+\infty$:
contradiction.
\end{proof}

\begin{thm}\label{thm:singdim}
{\sl Let $X\in\chi(\mathbb{C}^n,0)$ be a germ of a 
$t$-polynomial vector field at the origin of 
$\mathbb{C}^n$. 

Assume that $\mathrm{Sing}(X)\not=\emptyset$. Then 
$\dim\mathrm{Sing}(X)\geq 1$. 

Moreover $X$ has no separatrices
through a singularity.}
\end{thm}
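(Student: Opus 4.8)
The plan is to read everything off the explicit shape of a polynomial flow. Represent $X$ on a closed ball $B=\overline{B(0,r)}$; since the germ $\mathrm{Sing}(X)$ at $0$ is a non-empty analytic set it contains $0$, so $X(0)=0$. Write $\varphi_t=\exp tX=\mathrm{Id}+tF_1+\cdots+t^\nu F_\nu$ with $\nu=t.d(X)\ge 1$, $F_k\in\mathcal{O}(B)^n$, $F_\nu\not\equiv 0$; since $X$ has no constant term one checks inductively that $F_k(0)=0$ for $k\ge 1$, so every $\varphi_t$ fixes $0$ and the flow is a one-parameter group of germs of biholomorphisms at $0$. Expanding $\varphi_s\circ\varphi_t=\varphi_{s+t}$ and comparing the coefficients of $s^j$ yields the identity
\[
F_j(\varphi_t(x))=\sum_{\ell\ge j}\binom{\ell}{j}\,t^{\,\ell-j}F_\ell(x)\qquad(0\le j\le\nu),
\]
which already shows that the components of $F_\nu$ are first integrals of $X$.

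For the assertion that $X$ has no separatrix through a singularity I would argue by contradiction. Let $\gamma$ be a separatrix through $x_0\in\mathrm{Sing}(X)$; translating the germ to $x_0$ (a translation preserves the $t$-polynomial property and the polynomiality of the flow) we may assume $x_0=0$. Then $\gamma$ is an irreducible germ of analytic curve through $0$, so $\gamma\setminus\{0\}$ is connected, consists of non-singular points of $X$, and — by the definition of a separatrix — coincides near $0$ with a single $X$-orbit; hence for any $x\in\gamma\setminus\{0\}$ close to $0$ the leaf $\mathcal{L}_x$ of $X$ in $B$ contains this punctured branch, and therefore $0\in\overline{\gamma\setminus\{0\}}\subset\overline{\mathcal{L}_x}$. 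This contradicts Lemma \ref{lem:tecnotclos}.

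For the inequality $\dim\mathrm{Sing}(X)\ge 1$ I would introduce the nested germs of analytic sets
\[
Z_k=\{x\in B\ :\ F_\nu(x)=F_{\nu-1}(x)=\cdots=F_{\nu-k+1}(x)=0\}\qquad(0\le k\le\nu),
\]
so $Z_0=B$. The identity above gives three facts: each $Z_k$ contains $0$ and is invariant under the flow; for $x\in Z_k$ the map $F_{\nu-k}$ is \emph{constant along the $X$-orbit of $x$} (in the identity with $j=\nu-k$ every term other than $\ell=\nu-k$ carries a factor $F_\ell(x)$ with $\ell>\nu-k$, which vanishes on $Z_k$); and $Z_\nu=\mathrm{Sing}(X)$ (a point is fixed by the whole flow iff all $F_\ell$, $\ell\ge 1$, vanish there, and $\mathrm{Sing}(X)$ is exactly the fixed set of the flow). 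I then prove $\dim Z_k\ge 1$ for all $k$ by induction on $k$, starting from $\dim Z_0=n\ge 1$. Suppose $\dim Z_k\ge 1$ and let $W$ be an irreducible component of $Z_k$ of maximal dimension $d=\dim Z_k$. Since $\varphi_t$ is a connected family of biholomorphisms fixing $0$, it fixes $W$; if $W\subset\mathrm{Sing}(X)$ we are done, as then $\dim\mathrm{Sing}(X)\ge d\ge 1$. Otherwise $X\not\equiv 0$ on $W$, so the orbits through the non-singular points of $X$ in $W$ fill a dense open subset of $W$ and are one-dimensional; as $F_{\nu-k}|_W$ is constant along them, its generic rank is $\le d-1$. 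The fibre-dimension inequality then gives
\[
\dim\big(W\cap Z_{k+1}\big)=\dim\big(F_{\nu-k}|_W\big)^{-1}(0)\ \ge\ d-(d-1)=1,
\]
and since $0\in W\cap Z_{k+1}$ we conclude $\dim Z_{k+1}\ge 1$. Applying this with $k=\nu$ yields $\dim\mathrm{Sing}(X)=\dim Z_\nu\ge 1$.

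The conceptual steps are short; the care goes into the last paragraph — that irreducible components of a flow-invariant germ are themselves flow-invariant, and the elementary fact $\dim g^{-1}(g(0))\ge\dim W-\mathrm{rank}\,g$ for a holomorphic map $g$ on an irreducible germ $W$, here applied to $g=F_{\nu-k}|_W$, whose rank drop is precisely the statement that it is a first integral of $X|_W$. The only input from outside this argument is Lemma \ref{lem:tecnotclos}, used once, for the separatrix part; the dimension statement is otherwise self-contained. I expect the main obstacle to be purely bookkeeping: handling the possible reducibility and singularities of the intermediate sets $Z_k$ cleanly, and making sure at each step one works with a component passing through $0$ of the right dimension, rather than any genuine conceptual difficulty.
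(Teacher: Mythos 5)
Your proposal is correct in substance, but for the dimension statement it takes a genuinely different route from the paper. The paper's proof is purely geometric: assuming $0$ is an isolated singularity, it picks $x_n\to 0$, uses that each leaf $\mathcal{L}_{x_n}$ is closed in the ball and meets the boundary sphere, extracts a limit point $y_0$ of boundary points $y_n\in\mathcal{L}_{x_n}$, and applies Lemma \ref{lem:tecnotclos} to the leaf through $y_0$ to force the nearby leaves $\mathcal{L}_{x_n}$ away from a small ball around $0$, contradicting $x_n\to 0$; the only inputs are Lemma \ref{lem:tecnotclos} and a compactness argument on leaves. You instead exploit the explicit expansion $\varphi_t=\mathrm{Id}+\sum_k t^kF_k$: the identity extracted from $\varphi_s\circ\varphi_t=\varphi_{s+t}$ gives the flow-invariant descending chain $Z_k$ on which the next coefficient $F_{\nu-k}$ becomes a first integral, and the fibre-dimension theorem pushes $\dim\geq 1$ down to $Z_\nu=\mathrm{Sing}(X)$. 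This is closer in spirit to the algebraic proof of Theorem \ref{thm:2sing} than to the paper's proof of Theorem \ref{thm:singdim}; it yields a finer picture (a stratification by vanishing of the top flow coefficients) at the cost of nontrivial local analytic geometry (flow-invariance of irreducible components, semicontinuity of fibre dimension), whereas the paper's argument is shorter and needs none of that. Your separatrix argument is exactly the intended use of Lemma \ref{lem:tecnotclos}, which the paper's written proof does not even detail. The bookkeeping point you flagged is the one step that genuinely must be fixed, since the theorem is about the germ at $0$: run the induction on local dimensions at $0$, taking $W$ an irreducible component of the germ of $Z_k$ at $0$ (all such components contain $0$) and applying the fibre-dimension estimate at the point $0$ via upper semicontinuity of $x\mapsto\dim_x g^{-1}(g(x))$; as literally written, a component of maximal dimension need not pass through $0$, in which case neither the assertion $0\in W\cap Z_{k+1}$ nor the case $W\subset\mathrm{Sing}(X)$ says anything about the germ of $\mathrm{Sing}(X)$ at $0$. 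With that germ-level adjustment your induction closes correctly.
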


\begin{proof}[{\sl Proof}]
Assume that $X$ is defined on the ball 
$B=\overline{B(0,r)}$ and that $0$ is an
isolated singularity of $X$. Let $(x_n)_n$ 
be a sequence of points of $B$ such that 
$\displaystyle\lim_{n\to+\infty}x_n=0$. 
The leaf $\mathcal{L}_{x_n}$ is closed
in $B$ and cuts the sphere 
$S(0,r)=B\smallsetminus B(0,r)$.
Let $y_n$ be a point in $\mathcal{L}_{x_n}\cap S(0,r)$
and $y_0$ a limit point of $y_n$, 
up to extraction $y_0=\displaystyle\lim_{n\to +\infty} y_n$. 
According to Lemma \ref{lem:tecnotclos} the point $0$ 
does not belong to $\overline{\mathcal{L}_{y_0}}$
and $\mathcal{L}_{y_0}$ can be seen as the 
leaf of the restriction of 
$\mathcal{F}_{X\vert B\smallsetminus B(0,r')}$
for $r'$ sufficiently small. The fact that 
$y_0=\displaystyle\lim_{n\to +\infty} y_n$ implies that $\mathcal{L}_{y_n}$ is 
contained in $B\smallsetminus B(0,r')$ for $n$
sufficiently large:
contradiction with $\displaystyle\lim_{n\to +\infty}x_n=0$.
\end{proof}

\begin{cor}\label{cor:rigidp}
{\sl Let $\mathrm{Ch}(a_1X,a_2X,\ldots,a_pX)$ be 
a rigid $p$-chambar on an open set 
$\mathcal{U}$ of $\mathbb{C}^n$. Then 
\begin{itemize}
\item[$\diamond$] either $\mathrm{Sing}(X)=\emptyset$, 
that is $X$ is regular;

\item[$\diamond$] or $\dim\,\mathrm{Sing}(X)\geq 1$.
\end{itemize}}
\end{cor}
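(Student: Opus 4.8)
The statement is Corollary \ref{cor:rigidp}, and the natural strategy is to deduce it immediately from Theorem \ref{thm:rigidp} together with Theorem \ref{thm:singdim}. First I would invoke Theorem \ref{thm:rigidp}: since $\mathrm{Ch}(a_1X,a_2X,\ldots,a_pX)$ is a rigid $p$-chambar (possibly after dividing through so that one of the coefficients equals $1$, which does not affect the hypotheses), the flow $\varphi_t=\exp tX$ is polynomial in the time $t$; in other words $X$ is a $t$-polynomial vector field on $\mathcal{U}$. This is the only input from the rigid-chambar hypothesis that is needed.

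Next I would argue locally. If $\mathrm{Sing}(X)=\emptyset$ we are in the first alternative and there is nothing more to prove. Otherwise pick a point $x_0\in\mathrm{Sing}(X)$. Working in a small ball around $x_0$ we may regard $X$ as a germ of a $t$-polynomial vector field at $x_0$ with $\mathrm{Sing}(X)\neq\emptyset$, so Theorem \ref{thm:singdim} applies and gives $\dim_{x_0}\mathrm{Sing}(X)\geq 1$. Since this holds at every singular point, $\dim\mathrm{Sing}(X)\geq 1$, which is the second alternative. One should remark that being $t$-polynomial is a local property (it only refers to the Taylor expansion in $t$ of the flow), so the passage to germs is harmless; alternatively one can quote Theorem \ref{thm:singdim} directly in its germ form at each singularity.

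The only genuinely substantive step is already packaged in the two theorems being cited, so there is essentially no obstacle remaining once the reduction is made; the proof is a two-line deduction. If one wanted to be self-contained, the heart of the matter is the argument behind Theorem \ref{thm:singdim}, which uses Lemma \ref{lem:tecnotclos}: the leaves of a $t$-polynomial vector field in a ball are closed and extend to rational curves, so a sequence of leaves through points approaching an isolated singularity would, via their intersections with the boundary sphere, produce a leaf whose closure avoids $0$, contradicting the convergence. But for the corollary itself nothing beyond a clean citation is required.
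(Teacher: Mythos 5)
Your proposal is correct and is exactly the intended deduction: the paper leaves Corollary \ref{cor:rigidp} as an immediate consequence of Theorem \ref{thm:rigidp} (the flow of $X$ is polynomial in $t$) combined with the germ statement Theorem \ref{thm:singdim} at each singular point, which is precisely your two-line argument, including the harmless normalization of the coefficient to $1$.
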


\begin{eg}
Let $X$ be a linear nilpotent vector field on 
$\mathbb{C}^n$. Then the flow $\exp tX$ 
is polynomial of degree $d=\mathrm{rk}\,X$.
Moreover $\dim\mathrm{Sing}(X)=n-d$.
For instance if $X^{n-1}\not=0$, then
$\dim\mathrm{Sing}(X)=~1$.
\end{eg}

\begin{prob}
Does there exist a vector field with an isolated singularity 
belonging to a $p$-chambar?
\end{prob}

\begin{rem}
Recall that the Camacho-Sad theorem (\cite{CamachoSad}) says
that a holomorphic foliation $\mathcal{G}$ by curves at the 
origin $0$ of $\mathbb{C}^2$ has an invariant curve passing through 
$0$. As a consequence if $X$ is a $t$-polynomial vector field
at the origin $0$ of $\mathbb{C}^2$, with $X(0)=0$, then the invariant
curves of the foliation associated to $X$ are contained in 
the singular set $\mathrm{Sing}(X)$. 
\end{rem}

The previous considerations suggest 
in dimension $\geq 3$ the following 
question:

\begin{que}
Let $X$ be a germ at 
$0\in\mathbb{C}^n$ of holomorphic vector field. Assume that the closure
of the integral curves are analytic. 
Does $X$ preserve an invariant curve 
passing through $0$ ?
\end{que}

\subsubsection{Semi-rigid chambars on $\mathbb{C}^n$}

\begin{defi}
A $p$-chambar $\mathrm{Ch}(X_1,X_2,\ldots,X_p)$
on an open subset of $\mathbb{C}^n$ is 
\textsl{semi-rigid} if the $X_k$ are coli\-nears, 
that is if $X_1\wedge X_k=0$ for any $2\leq k\leq p$. 
\end{defi}

In dimension $1$ all chambars are semi-rigid.

\begin{eg}
The $3$-chambar 
$\mathrm{Ch}\left(\frac{\partial}{\partial x},y\frac{\partial}{\partial x},-(y+1)\frac{\partial}{\partial x}\right)$ 
on $\mathbb{C}^2$ is semi-rigid but not rigid.
\end{eg}

\begin{eg}
The $4$-chambar 
$\mathrm{Ch}\left(\frac{\partial}{\partial x},-\frac{\partial}{\partial x},y\frac{\partial}{\partial x},-y\frac{\partial}{\partial x}\right)$ 
on $\mathbb{C}^2$
is semi-rigid but not rigid. Note that 
it is a non-irreducible chambar.
\end{eg}

\begin{pro}\label{pro:semirigid}
{\sl Let $\mathrm{Ch}(X_1,X_2,X_3)$ be a semi-rigid $3$-chambar on 
an open subset of $\mathbb{C}^n$. 
Then one of the following holds\footnote{Note that the two properties are not mutually exclusive.}:
\begin{itemize}
\item[$\diamond$] $\mathcal{F}_{X_1}=\mathcal{F}_{X_2}=\mathcal{F}_{X_3}$
and $\mathcal{F}_{X_i}$ is a foliation by straight lines;

\item[$\diamond$] $\mathrm{Ch}(X_1,X_2,X_3)$ is a rigid 
chambar.
\end{itemize}}
\end{pro}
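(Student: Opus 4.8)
The plan is to invoke Proposition \ref{pro:csqeqal}, which rewrites the barycentric property as the system of identities $\sum_{k=1}^3 X_k^\ell(x_j)=0$ for every $\ell\geq 1$ and every coordinate $x_j$. Put $X:=X_1$; since $X\not\equiv 0$, the set $\mathcal{U}^*:=\mathcal{U}\smallsetminus\mathrm{Sing}(X)$ is a dense connected open subset of $\mathcal{U}$, and semi-rigidity ($X_1\wedge X_k=0$) lets us write $X_2=g\,X$ and $X_3=h\,X$ on $\mathcal{U}^*$ with $g,h\in\mathcal{O}(\mathcal{U}^*)$ and $g,h\not\equiv 0$. The case $\ell=1$ (equivalently $X_1+X_2+X_3=0$) gives
\[
1+g+h\equiv 0 .
\]

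Next I would exploit the case $\ell=2$. As derivations one has $X_2^2=g\,X(g)\,X+g^2\,X^2$ and $X_3^2=h\,X(h)\,X+h^2\,X^2$, so summing $X_k^2(x_j)$ over $k=1,2,3$ and letting $j$ run over $1,\dots,n$ turns $\sum_k X_k^2(x_j)=0$ into the single vector identity
\[
(1+g^2+h^2)\,DX(X)+\big(g\,X(g)+h\,X(h)\big)\,X=0\qquad\text{on }\mathcal{U}^* ,
\]
where, as in \S\ref{sec:2cham}, $DX(X)=\sum_{j}X^2(x_j)\,\frac{\partial}{\partial x_j}$. The whole argument then hinges on a dichotomy: either $DX(X)\wedge X\equiv 0$ on $\mathcal{U}^*$, or not.

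If $DX(X)\wedge X\equiv 0$ on $\mathcal{U}^*$, then along any integral curve $\gamma$ of $X$ the acceleration $\gamma''=DX(X)(\gamma)$ is a scalar multiple of the velocity $\gamma'=X(\gamma)$; hence $\gamma'$ keeps a fixed direction and $\gamma$ is contained in a straight line — the same reasoning already used for Theorem \ref{thm:dim1}. So $\mathcal{F}_X$ is a foliation by straight lines, and since the $X_k$ are colinear $\mathcal{F}_{X_1}=\mathcal{F}_{X_2}=\mathcal{F}_{X_3}=\mathcal{F}_X$: this is the first alternative. Otherwise $DX(X)$ and $X$ are linearly independent at some point of $\mathcal{U}^*$, hence on a nonempty connected open ball $V\subset\mathcal{U}^*$; on $V$ the displayed identity forces $1+g^2+h^2=0$, which together with $g+h=-1$ gives $gh=1$, so $g$ and $h$ are the two roots $\mathbf{j},\mathbf{j}^2$ of $T^2+T+1$ at every point of $V$. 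Being holomorphic on the connected $V$ they are constant there, and by the identity theorem they are constant on $\mathcal{U}^*$; as both sides are holomorphic and $\mathcal{U}^*$ is dense, this yields $X_2=\mathbf{j}\,X_1$ and $X_3=\mathbf{j}^2\,X_1$ on all of $\mathcal{U}$, exhibiting $\mathrm{Ch}(X_1,X_2,X_3)$ as a rigid chambar, the second alternative.

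The computations behind the $\ell=1,2$ relations are routine; the only points that need a little care are the passage from relations valid on $\mathcal{U}^*$ to statements on $\mathcal{U}$ (connectedness and density of $\mathcal{U}^*$, the identity theorem) and the elementary lemma that a curve whose acceleration is everywhere proportional to its velocity lies on a line. Neither is a genuine obstacle, since both already underlie the $2$-chambar discussion of \S\ref{sec:2cham}; I do not expect any essential difficulty beyond organising them.
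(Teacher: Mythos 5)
Your proof is correct and follows essentially the same route as the paper: you derive the same second–order identity $(1+g^2+h^2)\,DX(X)+\bigl(gX(g)+hX(h)\bigr)X=0$ (the paper's $2(1+f+f^2)DX\cdot X+(1+2f)X(f)X=0$ with $g=f$, $h=-(1+f)$) and split on whether the coefficient of $DX(X)$ vanishes, obtaining either $X\wedge DX\cdot X\equiv 0$ (foliation by lines) or $g,h$ constant roots of $T^2+T+1$ (rigidity). The only differences are cosmetic: you obtain the identity from Proposition \ref{pro:csqeqal} with $\ell=2$ rather than by differentiating the flow relation twice, and you work with holomorphic quotients on $\mathcal{U}\smallsetminus\mathrm{Sing}(X_1)$ instead of a meromorphic $f$ on $\mathcal{U}$.
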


\begin{proof}[{\sl Proof}]
Let $\mathcal{U}$ be an open subset of $\mathbb{C}^n$ where the 
$X_i$'s are defined. Set $X_1=X$; then $X_2=fX$ 
where $f$ denotes a meromorphic function defined 
on $\mathcal{U}$. The barycentric condition implies
that $X_3=-(1+f)X$. The equality 
\[
\displaystyle\sum_{k=1}^3 DX_k\cdot X_k=0
\]
obtained by derivation from the barycentric 
property can be rewritten as follows
\begin{equation}\label{eq:semirigid}
2(1+f+f^2)DX\cdot X+(1+2f)X(f)\cdot X=0.
\end{equation}
that implies that 
\[
(1+f+f^2)X\wedge DX\cdot X=0.
\]
If $1+f+f^2=0$, then $f$ is constant and 
$\mathrm{Ch}(X_1,X_2,X_3)$ is rigid.
Otherwise, we have $X\wedge DX\cdot X=0$
and so $\mathcal{F}_X$ is a foliation
by lines.
\end{proof}

\begin{que}
Does there exist a generalization of Proposition \ref{pro:semirigid} for 
$p$-chambars, $p\geq 3$~?

The answer is positive in the real case:

\begin{pro}
{\sl Let $\mathrm{Ch}(X_1,X_2,\ldots,X_p)$ be a semi-rigid 
$p$-chambar on an open subset $\mathcal{U}\subset\mathbb{R}^n$, 
$n\geq 2$. Then 
$\mathcal{F}_{X_1}=\mathcal{F}_{X_2}=\ldots=\mathcal{F}_{X_p}$
is a foliation by straight lines.}
\end{pro}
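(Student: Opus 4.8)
The plan is to imitate the proof of Proposition \ref{pro:semirigid} almost verbatim, replacing the dichotomy "$1+f+f^2\equiv 0$ versus $1+f+f^2\not\equiv 0$" by a positivity argument that is available precisely because the functions involved are now real-valued. First I would use semi-rigidity to put the chambar in colinear form: since $X_1\not\equiv 0$, on the open dense set $\mathcal{U}'=\mathcal{U}\smallsetminus\{X_1=0\}$ one can write $X=X_1$ and $X_k=f_kX$ with $f_k$ analytic and real-valued, $f_1\equiv 1$; the barycentric property gives $f_1+f_2+\ldots+f_p=0$.

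Next, differentiating the barycentric identity $\sum_k X_k(\varphi^k_t(x))=0$ once with respect to $t$ and setting $t=0$ yields, exactly as in the proofs of Theorems \ref{thm:dim1} and \ref{thm:realnotcomp}, the relation $\sum_{k=1}^p DX_k\cdot X_k=0$. Expanding each term by the product rule, $DX_k\cdot X_k=f_k\,X(f_k)\,X+f_k^2\,DX\cdot X$, and summing over $k$ one obtains
\[
\tfrac{1}{2}\,X(g)\,X+g\,(DX\cdot X)=0,\qquad g:=f_1^2+f_2^2+\ldots+f_p^2 ,
\]
using $\sum_k f_k\,X(f_k)=\tfrac12 X(g)$. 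Taking the exterior product with $X$ kills the first term and leaves $g\,(X\wedge DX\cdot X)=0$.

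This is where the hypothesis that we work over $\mathbb{R}$ is used: since the $f_k$ are real-valued we have $g=\sum_k f_k^2\geq f_1^2=1>0$ on all of $\mathcal{U}'$, so we may divide to get $X\wedge DX\cdot X=0$ on $\mathcal{U}'$, hence on all of $\mathcal{U}$ by analytic continuation. As in Theorem \ref{thm:realnotcomp}, $DX\cdot X$ proportional to $X$ means that the acceleration $\overset{\bullet\bullet}{\varphi}_t$ of every integral curve of $X$ is proportional to its velocity $\overset{\bullet}{\varphi}_t$, i.e. the non-singular trajectories of $X$ are straight lines, so $\mathcal{F}_X=\mathcal{F}_{X_1}$ is a foliation by straight lines. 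Finally, since $X_k=f_kX$ with $f_k\not\equiv 0$ (because $X_k\not\equiv 0$), all the $\mathcal{F}_{X_k}$ coincide with $\mathcal{F}_X$, which is the assertion.

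I do not expect a genuine obstacle here: the argument is a one-line computation followed by the elementary observation $\sum f_k^2\geq 1$. The only point requiring care is the bookkeeping near $\{X_1=0\}$, where the quotients $f_k$ are only defined after removing this analytic subset, together with the use of analyticity to propagate $X\wedge DX\cdot X=0$ across it — handled exactly as in the earlier real statements.
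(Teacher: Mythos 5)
Your proof is correct and follows essentially the same route as the paper: write $X_k=f_kX$, use $\sum_k DX_k\cdot X_k=0$, expand by the product rule, wedge with $X$ to get $\bigl(\sum_k f_k^2\bigr)\,X\wedge DX\cdot X=0$, and use the reality of the $f_k$ to kill the sum-of-squares factor. Your normalization $f_1\equiv 1$ (so $\sum f_k^2\geq 1$) and the bookkeeping on $\mathcal{U}\smallsetminus\{X_1=0\}$ are only cosmetic refinements of the paper's argument.
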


\begin{proof}[{\sl Proof}]
Since the chambar is semi-rigid we can write 
$X_j=f_j\cdot X$ where $X$ is a vector field
on $\mathcal{U}$ and $f_j\colon\mathcal{U}\to\mathbb{R}$, 
$1\leq j\leq p$. Note that 
\[
DX_j\cdot X_j=D(f_j\cdot X)\cdot(f_jX)=f_j\cdot X(f_j)\cdot X+f_j^2\cdot DX\cdot X.
\]
In particular we get
\[
0=\displaystyle\sum_{k=1}^pDX_k\cdot X_k=\left(\displaystyle\sum_{k=1}^pf_k\cdot X(f_k)\right)\cdot X+\left(\displaystyle\sum_{k=1}^pf_k^2\right)\cdot DX\cdot X.
\]
Taking the wedge product with $X$ in the above
relation, we get
\[
\left(\displaystyle\sum_{k=1}^pf_k^2\right)X\wedge DX\cdot X=0.
\]
Since the $f_k$'s are non identically zero, 
we get $X\wedge DX\cdot X\equiv 0$. 
Therefore, $\mathcal{F}_X$ is a foliation
by straight lines.
\end{proof}

\end{que}

\section{Description of $3$-chambars and $4$-chambars in one variable}\label{sec:3cham4cham}

\subsection{Description of $3$-chambars in one variable}

\begin{thm}\label{thm:loc3bfc}
{\sl Let $\mathcal{B}$ be a holomorphic
$3$-chambar on some connected open subset of $\mathbb{C}$. Then 
\begin{itemize}
\item[$\diamond$] either $\mathcal{B}$ is a 
constant $3$-chambar;

\item[$\diamond$] or 
$\mathcal{B}=\mathrm{Ch}\Big(a(x)\frac{\partial}{\partial x},\mathbf{j}a(x)\frac{\partial}{\partial x},\mathbf{j}^2a(x)\frac{\partial}{\partial x}\Big)$, where $a(x)=\sqrt{\lambda x+\mu}$ with $\lambda\in\mathbb{C}^*$, 
$\mu\in\mathbb{C}$.
\end{itemize}

\noindent In particular, $\mathcal{B}$ is a rigid chambar.}
\end{thm}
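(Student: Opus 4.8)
The plan is to write $X_k = a_k\,\partial_x$ (each $a_k\not\equiv 0$, the three distinct) and to turn the barycentric condition, via Proposition \ref{pro:csqeqal}, into the infinite system $\sum_{k=1}^{3}X_k^{\ell}(x)=0$, $\ell\ge 1$. The cases $\ell=1,2$ give $a_1+a_2+a_3=0$ and $\sum_k a_ka_k'=0$, i.e. $C:=\tfrac12\sum_k a_k^{2}$ is a constant (equivalently the elementary symmetric function $e_2$ of the $a_k$ is constant, while $e_1=0$). The argument then splits according to whether $C$ vanishes.

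\emph{Case $C=0$.} Substituting $a_3=-a_1-a_2$ turns $\sum_k a_k^{2}=0$ into $a_1^{2}+a_1a_2+a_2^{2}\equiv 0$; since $a_1\not\equiv 0$, the meromorphic function $a_2/a_1$ has no poles and takes values among the two roots of $t^{2}+t+1$, hence is a constant. After relabelling the three fields, $a_2=\mathbf{j}a_1$ and $a_3=\mathbf{j}^{2}a_1$. With $a:=a_1$ the remaining equations read $(a\,\partial_x)^{\ell}(x)\,(1+\mathbf{j}^{\ell}+\mathbf{j}^{2\ell})=0$, which is automatic unless $3\mid\ell$ and then asserts $(a\,\partial_x)^{\ell}(x)=0$; as an iterated derivative that vanishes once stays zero, the whole system collapses to $(a\,\partial_x)^{3}(x)=a\,(aa')'=0$, i.e. $(a^{2})'$ is a constant $\lambda$, i.e. $a^{2}=\lambda x+\mu$. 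If $\lambda\ne 0$ we are in the second alternative; if $\lambda=0$ then $a$ is constant and $\mathcal B$ is a constant $3$-chambar.

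\emph{Case $C\ne 0$.} Now the $a_k$ are the three roots of $T^{3}-CT-e_3(x)$ with $e_3:=a_1a_2a_3$; since $e_1=0$ and $e_2$ is constant, $\mathcal B$ is a constant chambar as soon as $e_3$ is, so I assume $e_3$ non-constant and derive a contradiction. On a small disc avoiding the (discrete) locus where the cubic has a repeated root, Cardano's substitution produces holomorphic functions $u$ and $v=C/(3u)$ with $uv=C/3=:m\ne 0$ constant, $u$ non-constant, and $a_k=\mathbf{j}^{k}u+\mathbf{j}^{2k}v$. Writing $w:=u'/u$ (so $u'=uw$, $v'=-vw$ and bilinear expressions such as $u^{3}\pm v^{3}$, $u^{2}v$, $uv^{2}$ differentiate very simply), a direct computation of the iterated fields — using repeatedly that $\sum_k\mathbf{j}^{jk}$ equals $3$ if $3\mid j$ and $0$ otherwise — gives
\[
\sum_{k=1}^{3}X_k^{3}(x)=3w'(u^{3}-v^{3})+6w^{2}(u^{3}+v^{3}),\qquad \sum_{k=1}^{3}X_k^{4}(x)=30\,m^{2}\,w w'.
\]
Vanishing of the second sum and $m\ne 0$ force $ww'\equiv 0$, hence (as $u$ non-constant gives $w\not\equiv 0$) $w'\equiv 0$, so $w$ is a constant $c\ne 0$ and $u=u_0e^{cx}$, whence $u^{3}$ is non-constant. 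Putting $w'=0$ in the first sum gives $u^{3}+v^{3}\equiv 0$, which together with $u^{3}v^{3}=m^{3}\ne 0$ forces $u^{6}\equiv -m^{3}$ — impossible. So $e_3$ is constant, all $a_k$ are constant, and $\mathcal B$ is a constant $3$-chambar.

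In every case $\mathcal B$ is one of the two listed types, and each is manifestly rigid, the three fields being constant multiples of a single vector field (respectively $c_k\,\partial_x$ and $\mathbf{j}^{k-1}a\,\partial_x$). I expect the only real work to be the two displayed identities in the case $C\ne 0$: one must expand $X_k^{4}(x)=a_k\,\frac{d}{dx}X_k^{3}(x)$ in the Cardano coordinates and check that after summation over $k$ it collapses to the strikingly simple $30\,m^{2}ww'$ (and similarly for $X_k^{3}$). That collapse is what makes the case $C\ne 0$ tractable; everything else is routine.
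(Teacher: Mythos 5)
Your proof is correct, and it takes a genuinely different route from the paper's. You reduce everything to the conditions $\sum_{k}X_k^{\ell}(x)=0$ for $\ell\le 4$ (Proposition \ref{pro:csqeqal}) and organize the dichotomy around the constant $\sum_k a_k^2$ (i.e. $e_2$, since $e_1=0$): when it vanishes, the pointwise identity $a_1^2+a_1a_2+a_2^2=0$ forces $a_2/a_1\in\{\mathbf{j},\mathbf{j}^2\}$, hence rigidity, and the whole system collapses to $a(aa')'=0$, i.e. $a^2=\lambda x+\mu$; when it does not, you parametrize the roots of $T^3-CT-e_3$ \`a la Cardano, $a_k=\mathbf{j}^k u+\mathbf{j}^{2k}v$ with $uv=m$ constant, and the two displayed identities
\[
\sum_{k}X_k^{3}(x)=3w'(u^{3}-v^{3})+6w^{2}(u^{3}+v^{3}),\qquad
\sum_{k}X_k^{4}(x)=30\,m^{2}ww',
\]
which I have checked and which are correct, force $e_3$ to be constant, hence a constant chambar. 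The paper proceeds instead by writing $\big(X_i^k(x)\big)'=f_k\,a_i'$ for meromorphic $f_k$, splitting on the vanishing of $F_k=f_2f_{k+1}-f_3f_k$ and $G_k$, and in the main case deriving $a_i^2=c_i^2K+d_i$, equalizing the shifts $d_i/c_i^2$ through a global identity between multivalued square roots, with a separate degenerate sub-case ending in an exponential-flow contradiction. Your version uses only finitely many Newton-type conditions, avoids both the multivalued-function step and the sub-case analysis on the $f_k$'s, and the restriction to a disc off the discriminant locus is harmless by the identity theorem since $e_3$ and the discriminant are holomorphic on the connected domain; the paper's bookkeeping, on the other hand, is stated in a form the authors reuse verbatim for the weighted variant (Theorem \ref{thm:loc3bfcweight}). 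If you write this up, make two small steps explicit: the matching of $(a_1,a_2,a_3)$ with the three Cardano expressions is a single fixed permutation on the disc (the roots are distinct there and the conditions are symmetric in $k$), and once $e_1,e_2,e_3$ are constant each $a_k$ is a continuous selection from the root set of a fixed cubic, hence constant on the connected open set.
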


\begin{rem}
In a certain sense Theorem 
\ref{thm:loc3bfc} shows that the set of 
$3$-chambars on a connected set
of $\mathbb{C}$ has two 
"irreducible components".
\end{rem}

\begin{proof}
Set $\mathcal{B}=\mathrm{Ch}(X_1,X_2,X_3)$.
We can write $X_k=a_k(x)\frac{\partial}{\partial x}$, 
where $a_k\in\mathcal{O}_1$, $1\leq k\leq 3$.
The barycentric property implies that 
$\displaystyle\sum_{i=1}^3X_i^k(x)=0$ for any $k\geq 1$.

Assume that the $a_i$'s are non-constant and that $X_i^2(x)\not=0$
for any $1\leq i\leq 3$. Furthermore,
$X_i^{k+1}(x)=a_i\big(X_i^k(x)\big)^\prime$ thus
\begin{equation}\label{eq:5.4}
\left\{
\begin{array}{lll}
a_1^\prime+a_2^\prime+a_3^\prime=0\\
a_1a_1^\prime+a_2a_2^\prime+a_3a_3^\prime=0\\
\big(X_1^k(x)\big)^\prime+\big(X_2^k(x)\big)^\prime+\big(X_3^k(x)\big)^\prime=0\\
a_1\big(X_1^k(x)\big)^\prime+a_2\big(X_2^k(x)\big)^\prime+a_3\big(X_3^k(x)\big)^\prime=0
\end{array}
\right.
\end{equation}
As a consequence, for any $k\geq 2$, there exists a meromorphic function $f_k$
such that $\big(X_i^k(x)\big)^\prime=f_ka_i^\prime$ for any $1\leq i\leq 3$
where $f_2\not=0$.
This yields to 
\[
X_i^{k+1}(x)=a_i\big(X_i^k(x)\big)^\prime=f_ka_ia_i^\prime=f_kX_i^2(x)\quad\forall\,1\leq i\leq 3,\,\forall\,k\geq 2
\]
and to
\[
f_k\big(X_i^2(x)\big)^\prime+f_k^\prime X_i^2(x)=\big(X_i^{k+1}(x)\big)^\prime=f_{k+1}a_i^\prime\quad\forall 1\leq i\leq 3,\,\forall\,k\geq 2.
\]
In particular 
\begin{align*}
    & f_2\big(X_i^2(x)\big)^\prime+f_2^\prime X_i^2(x)=f_3a_i^\prime, && f_k\big(X_i^2(x)\big)^\prime+f_k^\prime X_i^2(x)=f_{k+1}a_i^\prime\quad\forall\,k\geq 3
\end{align*}
and so $X_i^2(x)$ satisfies an equation of the form  $F_k\big(X_i^2(x)\big)^\prime+G_kX_i^2(x)=0$ where $F_k=f_2f_{k+1}-f_3f_k$
and $G_k=f_2^\prime f_{k+1}-f_3f_k^\prime$ $\forall\,k\geq 3$.

\begin{enumerate}
\item Let us assume first that $F_k\not=0$ for some $k\geq 3$.
In this case, for any $1\leq i\leq 3$ there exists  constants $c_i$ such that
$X_i^2(x)=c_i^2H$ where $H=\exp\big(-\int G_k/F_k\,\mathrm{d}x\big)$. 
As a result, the equa\-lity
$a_ia_i^\prime=c_i^2H$ holds for any $1\leq i\leq 3$, and $a_i^2=c_i^2K+d_i$ for some complex 
numbers $d_i$. At a generic point $x_0$
the function $K$ is holomorphic and 
(by implicit function theorem) conjugated
to $\varepsilon+x$, $\varepsilon=K(x_0)$.
The barycentric property 
\[
\displaystyle\sum_{i=1}^3a_i=\displaystyle\sum_{i=1}^3c_i\left(K+\frac{d_i}{c_i^2}\right)^{1/2}=0
\]
implies 
\[
\displaystyle\sum_{i=1}^3c_i\left(x+\varepsilon+\frac{d_i}{c_i^2}\right)^{1/2}=0
\]
which is a global identity between 
multivaluate elementary functions. 
By looking the special points 
roots of $x+\varepsilon+\frac{d_i}{c_i^2}$
we see that 
\[
\frac{d_1}{c_1^2}=\frac{d_2}{c_2^2}=\frac{d_3}{c_3^2}:=\mu.
\]
As a result,  $a_i=c_i(K+~\mu)^{1/2}$
which implies $a_ia_i^\prime=c_i^2K^\prime$.
According to the second equation of  (\ref{eq:5.4}) 
we have
\[
(c_1^2+c_2^2+c_3^2)K^\prime=0.
\]

\begin{itemize}
\item[$\bullet$] If $c_1^2+c_2^2+c_3^2\not=0$, then
$K$ is constant.

\item[$\bullet$] If $c_1^2+c_2^2+c_3^2=0$, then
up to multiplication by a constant either 
$(c_1,c_2,c_3)=(1,\mathbf{j},\mathbf{j}^2)$, or
$(c_1,c_2,c_3)=(1,\mathbf{j}^2,\mathbf{j})$.
\end{itemize}

Let us recall that if $X=b(x)\frac{\partial}{\partial x}$, then by 
formula  (\ref{eq:al}):
\begin{eqnarray}\label{eq:5.1}
(\exp tX)(x)&=&x+tb(x)+\frac{t^2}{2}b(x)b^\prime(x)+\frac{t^3}{3!}\Big(b(x)b^\prime(x)^2+b^2(x)b^{\prime\prime}(x)\Big)\nonumber\\
&&\hspace{1cm}+\frac{t^4}{4!}b(x)\Big(b(x)b^\prime(x)^2+b^2(x)b^{\prime\prime}(x)\Big)'+\ldots
\end{eqnarray}
From (\ref{eq:5.1}) we get
$\Big(\displaystyle\sum_{k=1}^3c_k^3\Big)\cdot\big(K^{\prime\prime}K^2+K'^2K\big)=0$
and 
\[
K^{\prime\prime}K^2+K'^2K=0
\]
since $c_1^3+c_2^3+c_3^3=3$. Therefore
\[
0=K^{\prime\prime}K^2+K'^2K=K(K^{\prime\prime}K+K'^2)=K(KK^\prime)^\prime 
\]
and 
$KK^\prime=\frac{\lambda}{2}$ for some $\lambda$ in 
$\mathbb{C}$. As a result $K^2=\lambda x+\mu$
for some $\mu\in\mathbb{C}$. 

\item If $F_k=0$ for all $k\geq 3$, but $G_\ell\not=0$ for some $\ell$, 
then $a_ia_i^\prime=0$, $1\leq i \leq 3$, so that the $a_i$'s are 
constant.

\item If $F_k=G_k=0$ for all $k\geq 3$ we get $f_2f_{k+1}=f_kf_3$
and $f_2^\prime f_{k+1}=f_k^\prime f_2$ for any $k\geq 3$ which 
implies the follo-wing possibilities:
\begin{itemize}
\item[$\diamond$] If $f_3=0$, then $a_ia_i^\prime=k_i$, where the 
$k_i$'s are constant. Hence, we get $a_i(x)=(k_ix+m_i)^{1/2}$. 
From the first equation of (\ref{eq:5.4}) we get 
$\displaystyle\sum_{i=1}^3(k_ix+m_i)^{1/2}=0$.
Therefore, $\frac{m_1}{k_1}=\frac{m_2}{k_2}=\frac{m_3}{k_3}=m$ and $a_i=k_i^{1/2}(x+m)^{1/2}$.

\item[$\diamond$] If $f_3\not=0$, then $f_2f_k^\prime=f_2^\prime f_k$
for any $k\geq 3$, and there exists thus a constant 
$c_k$ such that $f_k=c_kf_2$.

In particular 
\[
\big(X_i^k(x)\big)^\prime=c_kf_2a_i^\prime=c_k\big(X_i^2(x)\big)^\prime
\]
so $X_i^{k+1}(x)=c_kX_i^3(x)$ for any $k\geq 3$ and $1\leq i\leq 3$. 
But, $\big(X_i^3(x)\big)^\prime=c_3\big(X_i^2(x)\big)^\prime$ 
implies $f_2X_i^2(x)=X_i^3(x)=c_3X_i^2(x)+d_3$ where $d_3$ denotes 
a complex number. From $(f_2-c_3)X_i^2(x)=d_3$, we get $d_3=0$ and 
$f_2=c_3$; as a result, for any $k\geq 2$ there exists $\alpha_k\in\mathbb{C}$
such that 
\[
\big(X^k_i(x)\big)^\prime=f_ka_i^\prime=c_kc_3a_i^\prime=\alpha_k a_i^\prime
\quad\forall 1\leq i\leq 3.
\]
Consequently, for any $k\geq 2$ there exist $\alpha_k$ and 
$\beta_k$ in $\mathbb{C}$ such that 
\[
X_i^k(x)=\alpha_k a_i(x)+\beta_k\quad\forall\, 1\leq i\leq 3
\]
and
\[
\varphi_t^i(x)=x+F(t)a_i(x)+G(t)
\]
where $\varphi_t^i$ is the flow of $X_i$ and 
\begin{align*}
&F(t)=t+\displaystyle\sum_{k\geq 2}\frac{\alpha_k}{k!}t^k, && G(t)=\displaystyle\sum_{k\geq 2}\frac{\beta_k}{k!}t^k.
\end{align*}
Recall that if $X$ is a vector field and if 
$\varphi_t$ is its flow, then the 
derivation of an holomorphic function $f$
by $X$ satisfies 
\[
X(f)=\frac{\partial}{\partial s}f\circ\varphi_s\Big\vert_{s=0}
\]
In particular in one variable, by taking 
$f(x)=\varphi_t^i(x)$ (holomorphic function
with parameter $t$) we get 
\begin{eqnarray*}
F^\prime(t)a_i(x)+G^\prime(t)&=&a_i(x)\frac{\partial}{\partial x}\big(x+F(t)a_i(x)+G(t)\big)\\
&=&a_i(x)+F(t)a_i(x)a_i^\prime(x).
\end{eqnarray*}
Looking at the coefficients of $t$ in both sides of the equality we get
$2\alpha_2 a(x)=a(x)a^\prime(x)t$, 
that is $2\alpha_2a(x)=a(x)a^\prime(x)$
and so $2\alpha_2=a^\prime(x)$.
Therefore, 
$a_i(x)=2\alpha_2(x-x_i)$, and 
\[
\varphi_t^i(x)=x_i+e^{2\alpha_2t}(x-x_i).
\]
However, this is not possible for a chambar, unless $\alpha_2=0$, and the $a_i$'s 
are constant.
\end{itemize}
\end{enumerate}
\end{proof}

\begin{cor}\label{cor:loc3bfr}
{\sl Let $\mathcal{B}=\mathrm{Ch}(X_1,X_2,X_3)$ be a local 
$3$-chambar on $\mathbb{R}$. Then $\mathcal{B}$ is a 
constant $3$-chambar 
$\mathrm{Ch}\Big(c_1\frac{\partial}{\partial x},c_2\frac{\partial}{\partial x},c_3\frac{\partial}{\partial x}\Big)$
with $c_i$ non-zero real numbers such that  $c_1+c_2+c_3=0$.}
\end{cor}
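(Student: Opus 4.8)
The plan is to deduce the real statement directly from the complex classification, Theorem \ref{thm:loc3bfc}, by complexification. By definition $\mathcal{B} = \mathrm{Ch}(X_1,X_2,X_3)$ consists of three distinct real-analytic vector fields $X_k = a_k(x)\frac{\partial}{\partial x}$ on an interval $\mathcal{U} \subset \mathbb{R}$; each $a_k$ extends to a holomorphic function on some connected open set $\widetilde{\mathcal{U}} \subset \mathbb{C}$ containing $\mathcal{U}$, and the flows $\exp(tX_k)$ are jointly holomorphic in $(t,x)$ on a common neighbourhood of $\{0\}\times\mathcal{U}$ in $\mathbb{C}\times\mathbb{C}$. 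The barycentric identity $\sum_{k=1}^3 \exp(tX_k)(x) = 3x$ holds for $x\in\mathcal{U}$ and real $t$ near $0$; both sides being holomorphic in $(t,x)$, it persists by analytic continuation on that whole neighbourhood. Thus $(X_1,X_2,X_3)$, now read as holomorphic vector fields on $\widetilde{\mathcal{U}}$, is a holomorphic $3$-chambar, and Theorem \ref{thm:loc3bfc} applies.

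It then remains to rule out the non-constant alternative of Theorem \ref{thm:loc3bfc}. Suppose that, as a set, $\{X_1,X_2,X_3\}$ equals $\{a\frac{\partial}{\partial x},\,\mathbf{j}a\frac{\partial}{\partial x},\,\mathbf{j}^2 a\frac{\partial}{\partial x}\}$ with $a(x)=\sqrt{\lambda x+\mu}$, $\lambda\in\mathbb{C}^*$. Then there are indices $i\neq j$ with $a_i = a$ and $a_j = \mathbf{j}a$. Since $a$ is not identically zero (a standing hypothesis on the vector fields of a chambar), there is a real point $x_0\in\mathcal{U}$ with $a(x_0)\neq 0$; but $a_i(x_0)=a(x_0)$ and $a_j(x_0)=\mathbf{j}a(x_0)$ are both real because the $X_k$ are real vector fields, so $\mathbf{j}=a_j(x_0)/a_i(x_0)\in\mathbb{R}$, contradicting $\mathbf{j}^3=1$, $\mathbf{j}\neq 1$. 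Hence $\mathcal{B}$ is a constant $3$-chambar, and since the $a_k$ are real-valued on $\mathcal{U}$ its constants are real.

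Finally, for a constant $3$-chambar $\mathrm{Ch}\big(c_1\frac{\partial}{\partial x},c_2\frac{\partial}{\partial x},c_3\frac{\partial}{\partial x}\big)$ the elementary computation recalled in \S\ref{subsec:elex} gives $c_1+c_2+c_3=0$; the $c_k$ are non-zero by the standing convention that the vector fields of a chambar are not identically zero (and they are pairwise distinct by the definition of a chambar), which is precisely the asserted form. This corollary is essentially immediate once Theorem \ref{thm:loc3bfc} is in hand; the only point deserving a line of care is the complexification step — one must verify that the full chambar relation, and not merely its first-order consequence $\sum_k X_k = 0$, survives analytic continuation — and this presents no difficulty here because the flows $\exp(tX_k)$ are themselves holomorphic in both variables on a common polydisc.
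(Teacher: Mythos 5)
Your proof is correct and follows the route the paper intends: the corollary is stated right after Theorem \ref{thm:loc3bfc} without a separate proof, the implicit argument being exactly yours — complexify the real-analytic chambar (the barycentric identity persists by analytic continuation, or equivalently via the identities $\sum_k X_k^\ell(x)=0$ of Proposition \ref{pro:csqeqal}), apply the complex classification, and observe that the non-constant case is impossible since the ratio $\mathbf{j}$ of two of the real coefficients would have to be real. Your write-up just makes the complexification and reality steps explicit; no gap.
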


\subsection{$p$-chambar with weights}

\begin{defi}
Let us consider $p$ 
analytic vector fields $X_1$, $X_2$, 
$\ldots$, $X_p$, defined on some open subset $\mathcal{U}$ 
of~$\mathbb{R}^n$
(resp. $\mathbb{C}^n$), with flows $t\mapsto\varphi_t^\ell$, 
$1\leq \ell\leq p$. Consider also non-zero real (resp. complex)
numbers $\alpha_1$, $\alpha_2$, $\ldots$, $\alpha_p$ and
$\alpha=\displaystyle\sum_\ell\alpha_\ell$.

\noindent We say that $X_1$, $X_2$, $\ldots$, $X_p$ define a \textsl{holomorphic $p$-chambar
with weights} $\alpha_1$, $\alpha_2$,
$\ldots$, $\alpha_p$ if 
\begin{equation}\label{eq:chambweigh}
\alpha_1\,\varphi_t^1(x)+\alpha_2\,\varphi_t^2(x)+\ldots+\alpha_p\,\varphi_t^p(x)=\alpha\,x,
\end{equation}
for all $(t,x)$ where the above formula makes sense. 
\end{defi}

\begin{rem}
This definition is equivalent to 
\[
\alpha_1\,X_1^k(x_\ell)+\alpha_2\,X_2^k(x_\ell)+\ldots+\alpha_p\,X_p^k(x_\ell)=0\quad\forall\,k\geq 1,\,\forall\,1\leq \ell\leq n.
\]
We remark that the condition is not equivalent to consider the flows
of the vector fields $\alpha_\ell X_\ell$, 
$1\leq \ell\leq n$.
\end{rem}

The classification of $3$-chambars (Theorem \ref{thm:loc3bfc}) can 
be extended to this type of chambars with an adaptation in the second
case:

\begin{thm}\label{thm:loc3bfcweight}
{\sl Assume that $X_1$, $X_2$ and $X_3$ define a holomorphic
$3$-chambar $\mathcal{B}$ with weights $\alpha_1$, $\alpha_2$, 
$\alpha_3$ on some connected open subset of $\mathbb{C}$. Then 
\begin{itemize}
\item[$\diamond$] either $\mathcal{B}$ is a 
constant $3$-chambar,

\item[$\diamond$] or 
$\mathcal{B}=\mathrm{Ch}\Big(\beta_1 a(x)\frac{\partial}{\partial x},\beta_2 a(x)\frac{\partial}{\partial x},\beta_3 a(x)\frac{\partial}{\partial x}\Big)$ where $a(x)=\sqrt{\lambda x+\mu}$ with $\lambda\in\mathbb{C}^*$, 
$\mu\in\mathbb{C}$ and
\begin{align*}
& \alpha_1\beta_1+\alpha_2\beta_2+\alpha_3\beta_3=\alpha_1\beta_1^2+\alpha_2\beta_2^2+\alpha_3\beta_3^2=0.
\end{align*}
\end{itemize}

\noindent In particular, $\mathcal{B}$ is a rigid chambar.}
\end{thm}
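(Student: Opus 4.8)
The plan is to rerun the proof of Theorem~\ref{thm:loc3bfc}, carrying the weights $\alpha_i$ along; they enter only linearly, and the argument changes in exactly one essential place. Write $X_k=a_k(x)\frac{\partial}{\partial x}$. As noted after the definition of a weighted chambar, $(\ref{eq:chambweigh})$ is equivalent to $\sum_{i=1}^3\alpha_iX_i^k(x)=0$ for every $k\ge1$, and since $X_i^{k+1}(x)=a_i\bigl(X_i^k(x)\bigr)'$ this gives the weighted analogue of system $(\ref{eq:5.4})$, with every $\sum(\cdot)_i$ replaced by $\sum\alpha_i(\cdot)_i$. First one reduces, exactly as for Theorem~\ref{thm:loc3bfc}, to the case where all $a_i$ are non-constant and $X_i^2(x)\not\equiv0$: if some $a_i$ were constant, differentiating $\sum\alpha_ia_i=0$ and combining with $\sum\alpha_ia_ia_i'=0$, together with the fact that the $X_i$ are distinct, forces all $a_i$ constant, i.e. the constant case. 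Next, the triple $(a_1',a_2',a_3')$ and each triple $\bigl((X_1^k)',(X_2^k)',(X_3^k)'\bigr)$ solve the same pair of linear equations, with coefficient rows $(\alpha_1,\alpha_2,\alpha_3)$ and $(\alpha_1a_1,\alpha_2a_2,\alpha_3a_3)$, which are independent at a generic point because the $a_i$ are not all equal; hence there are meromorphic functions $f_k$ ($k\ge2$, $f_2\not\equiv0$) with $(X_i^k)'=f_ka_i'$, and — word for word as in Theorem~\ref{thm:loc3bfc} — one obtains $F_k\bigl(X_i^2(x)\bigr)'+G_kX_i^2(x)=0$ with $F_k=f_2f_{k+1}-f_3f_k$, $G_k=f_2'f_{k+1}-f_3f_k'$.

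The case analysis on these equations is the one in Theorem~\ref{thm:loc3bfc}. The cases that end with all $a_i$ constant do not see the weights (in the sub-case $f_3\not\equiv0$ one still reaches a flow of the form $x\mapsto x_i+e^{ct}(x-x_i)$, which a chambar cannot contain). In the remaining cases one finds $a_i^2=c_i^2\kappa+d_i$ for a common non-constant function $\kappa$ (either $\kappa=x$, or a primitive of $2\exp(-\int G_k/F_k)$). Here the branch-point argument applies with a single change of bookkeeping: in the identity $\sum_{i=1}^3\alpha_ic_i\sqrt{\kappa+d_i/c_i^2}=0$ every coefficient $\alpha_ic_i$ is non-zero (because $\alpha_i\ne0$ and $c_i\ne0$), so, isolating the indices that share a given value of $d_i/c_i^2$, one sees that unless all these branch points coincide one gets a relation $\alpha_ic_i+\alpha_jc_j=0$ and then the vanishing of the remaining, non-zero, term — absurd. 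Hence all the $d_i/c_i^2$ are equal; absorbing the common value into $\kappa$, we get $X_k=\beta_k\,a(x)\frac{\partial}{\partial x}$ for a common non-constant function $a$ (with $a^2=\kappa+\mathrm{const}$), the $\beta_k$ being pairwise distinct and non-zero because the $X_k$ are distinct and non-zero.

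Now the one genuinely new step. With $X_k=\beta_k\,a\,\frac{\partial}{\partial x}$ one computes $X_k^1(x)=\beta_ka$, $X_k^2(x)=\beta_k^2aa'$, $X_k^3(x)=\beta_k^3\,a(aa')'$, so the weighted barycentric identity at orders $1$, $2$, $3$ reads
\[
\Bigl(\sum_k\alpha_k\beta_k\Bigr)a=0,\qquad \Bigl(\sum_k\alpha_k\beta_k^2\Bigr)aa'=0,\qquad \Bigl(\sum_k\alpha_k\beta_k^3\Bigr)a(aa')'=0 .
\]
Since $a$ is non-constant, the first two give $\sum_k\alpha_k\beta_k=\sum_k\alpha_k\beta_k^2=0$, the two relations in the statement. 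Because the $\alpha_k$ are all non-zero and the $\beta_k$ pairwise distinct and non-zero, the $3\times3$ matrix with rows $(\beta_1^\ell,\beta_2^\ell,\beta_3^\ell)$, $\ell=1,2,3$, is invertible, so these two relations force $\sum_k\alpha_k\beta_k^3\ne0$; hence $(aa')'\equiv0$, i.e. $a(x)^2=\lambda x+\mu$ with $\lambda\ne0$ (otherwise $a$ is constant), so $a(x)=\sqrt{\lambda x+\mu}$. Conversely, for such an $a$ one has $aa'\equiv\lambda/2$, whence $X_k^j(x)=0$ for $j\ge3$ and $\varphi_t^k(x)=x+\beta_ka(x)t+\tfrac14\beta_k^2\lambda\,t^2$, so $\sum_k\alpha_k\varphi_t^k(x)=\alpha x$ holds precisely when $\sum_k\alpha_k\beta_k=\sum_k\alpha_k\beta_k^2=0$; since $X_k=\beta_k\bigl(a(x)\frac{\partial}{\partial x}\bigr)$ the chambar is rigid. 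The main obstacle, exactly as in Theorem~\ref{thm:loc3bfc}, is keeping the branch-point/multivaluedness argument airtight for three terms (in particular ruling out the coincidence of only two branch points); the new input — the Vandermonde relation yielding $\sum_k\alpha_k\beta_k^3\ne0$ — is elementary.
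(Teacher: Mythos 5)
Your proposal is correct and is essentially the proof the paper has in mind: the paper gives no separate argument for Theorem~\ref{thm:loc3bfcweight} beyond "adapt the proof of Theorem~\ref{thm:loc3bfc}", and that is exactly what you do — the weighted system replacing (\ref{eq:5.4}), the same $f_k,F_k,G_k$ case analysis, the branch-point argument with coefficients $\alpha_ic_i\neq0$, and the Vandermonde observation that $\sum_k\alpha_k\beta_k=\sum_k\alpha_k\beta_k^2=0$ forces $\sum_k\alpha_k\beta_k^3\neq0$, which is precisely the "adaptation in the second case" replacing the unweighted use of $c_1^3+c_2^3+c_3^3=3$. One caveat, inherited from the paper rather than introduced by you: your exclusion of the exponential flows (and indeed the theorem as literally stated) tacitly needs $\alpha=\alpha_1+\alpha_2+\alpha_3\neq0$, since for $\alpha=0$ the distinct affine fields $X_i=c(x-x_i)\frac{\partial}{\partial x}$ with $\sum_i\alpha_ix_i=0$ satisfy $\sum_i\alpha_i\varphi_t^i(x)=\alpha x$ identically while being neither constant nor of the square-root form.
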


\subsection{Almost $p$-chambar}

\begin{defi}
Let $X$ be a vector field. We say that $X$ is 
\textsl{almost a $p$-chambar} if there exist non-zero
vector fields $X_2$, $X_3$, $\ldots$, $X_p$ such that
$(X,X_2,X_3,\ldots,X_p)$ is a $p$-chambar.

We say that $X$ is \textsl{almost a chambar}
if there exists an integer $p$ such that $X$ is almost a 
$p$-chambar.
\end{defi}

\begin{rem}
If $X$ is almost a $p$-chambar, then $X$ 
is almost a $(p+q)$-chambar for any $q\geq 2$.
\end{rem}

\begin{eg}
The constant vector fields are almost $p$-chambars
for any $p\geq 2$.
\end{eg}

\begin{eg}
Let $X$ be a nilpotent linear vector field, and 
let $p$ be its index of nilpotence.
Then $X$ is almost a $p$-chambar.
\end{eg}

We suspect that most vector fields are not almost
chambars. Let us give an explicit example 
in (real or complex) dimension $1$:

\begin{pro}
{\sl If $\lambda$ is a non-zero constant, then the vector
field $\lambda x\frac{\partial}{\partial x}$ 
is 
\begin{itemize}
\item[$\diamond$] not almost a $2$-chambar in a neighborhood of $0$;

\item[$\diamond$] not almost a $3$-chambar in a neighborhood of $0$.
\end{itemize}}
\end{pro}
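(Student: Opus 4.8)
The plan is to prove the two assertions separately, in each case reducing to a result established earlier. The first assertion will follow from an immediate computation with the flow of $\lambda x\frac{\partial}{\partial x}$ (or, if one prefers, from Theorem~\ref{thm:dim1}); the second will follow from the classification of $3$-chambars in one variable, Theorem~\ref{thm:loc3bfc}.

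For the $2$-chambar statement, I would suppose for contradiction that there is a vector field $X_2$ with $(\lambda x\frac{\partial}{\partial x},X_2)$ a $2$-chambar on a connected neighbourhood of $0$. The barycentric property forces $X_1+X_2=0$, so $X_2=-\lambda x\frac{\partial}{\partial x}$. The flow of $\lambda x\frac{\partial}{\partial x}$ is $\varphi_t(x)=x\,\mathrm{e}^{\lambda t}$ and that of $-\lambda x\frac{\partial}{\partial x}$ is $\varphi_{-t}(x)=x\,\mathrm{e}^{-\lambda t}$, so the identity $\varphi_t(x)+\varphi_{-t}(x)=2x$ reads $\mathrm{e}^{\lambda t}+\mathrm{e}^{-\lambda t}\equiv 2$ for $t$ small, which is false since $\lambda\neq 0$. (Equivalently: by Theorem~\ref{thm:dim1} the field $\lambda x\frac{\partial}{\partial x}$ would have to be a nonzero constant vector field, contradicting the fact that it vanishes at $0$.)

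For the $3$-chambar statement, I would suppose that $(\lambda x\frac{\partial}{\partial x},X_2,X_3)$ is a $3$-chambar on a connected neighbourhood $\mathcal{U}$ of $0$, shrunk so that $X_2$ and $X_3$ are holomorphic there. By Theorem~\ref{thm:loc3bfc} this chambar is either constant --- impossible, since $\lambda x\frac{\partial}{\partial x}$ is not of the form $c\frac{\partial}{\partial x}$ with $c$ a nonzero constant --- or it equals $\mathrm{Ch}\big(a(x)\frac{\partial}{\partial x},\mathbf{j}a(x)\frac{\partial}{\partial x},\mathbf{j}^{2}a(x)\frac{\partial}{\partial x}\big)$ with $a(x)=\sqrt{\kappa x+\mu}$ for some $\kappa\in\mathbb{C}^{*}$, $\mu\in\mathbb{C}$ (the constant $\kappa$ being unrelated to the $\lambda$ of the statement). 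In the second case $\lambda x=\mathbf{j}^{k}a(x)$ for some $k\in\{0,1,2\}$; squaring gives $\lambda^{2}x^{2}=\mathbf{j}^{2k}(\kappa x+\mu)$, and comparing the coefficients of $x^{2}$ yields $\lambda^{2}=0$, a contradiction. So no such $X_2,X_3$ exist.

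I do not expect a genuine obstacle here: the whole argument is elementary once Theorem~\ref{thm:loc3bfc} is available, and the only points requiring attention are bookkeeping --- restricting to a connected neighbourhood of $0$ on which the hypothetical partner fields are holomorphic, and keeping the constant $\lambda$ of the statement distinct from the one appearing in the classification. If one wanted to bypass the classification, the $3$-chambar case could instead be done with Proposition~\ref{pro:csqeqal}: writing $X_2=b(x)\frac{\partial}{\partial x}$ and $X_3=c(x)\frac{\partial}{\partial x}$ and using $X_1^{\ell}(x)=\lambda^{\ell}x$, the relations $X_1^{\ell}(x)+X_2^{\ell}(x)+X_3^{\ell}(x)=0$ for all $\ell\geq 1$ can be pushed to a contradiction by a short recursion on the coefficients of $b$ and $c$; but quoting Theorem~\ref{thm:loc3bfc} is the shorter route, and the one I would take.
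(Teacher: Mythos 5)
Your proof is correct and follows exactly the route the paper takes: the paper disposes of the $2$-chambar case as an immediate consequence of the barycentric condition (the partner must be $-\lambda x\frac{\partial}{\partial x}$, which fails), and derives the $3$-chambar case directly from the classification in Theorem~\ref{thm:loc3bfc}, just as you do. Your explicit checks (the flow identity $\mathrm{e}^{\lambda t}+\mathrm{e}^{-\lambda t}\not\equiv 2$ and the degree comparison after squaring $\lambda x=\mathbf{j}^{k}\sqrt{\kappa x+\mu}$) simply fill in the details the paper leaves to the reader.
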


\begin{rem}
The first assertion of the statement is clear. 

The second one is a direct consequence of the classification
of the $3$-chambars (Theorem~\ref{thm:loc3bfc}). 
Note that the argument does not use the property of 
nilpotency of linear chambar; indeed if $(X_1,X_2,\ldots,X_p)$ is 
a $p$-chambar containing $X=\lambda x\frac{\partial}{\partial x}$ then it 
is possible that one of the $X_k(0)$ is non zero. We conjecture that any 
semi-simple linear vector field $\displaystyle\sum_{i=1}^n\lambda_ix_i\frac{\partial}{\partial x_i}$, $\lambda_i\not=0$, 
is not almost a $p$-chambar.
\end{rem}

\subsection{Some remarks on $4$-chambars in one variable}

The $2$-chambars and $3$-chambars on an open subset
of $\mathbb{C}$ are rigid. This property is not 
satisfied by all the $4$-chambars. Consider the
vector fields $X=2\sqrt{x}\,\frac{\partial}{\partial x}$ 
and $Y=2\sqrt{x+\varepsilon}\,\frac{\partial}{\partial x}$,
$\varepsilon\not=0$, on a suitable domain of 
$\mathbb{C}$. As we know the flows of $X$ and 
$Y$ are 
\begin{align*}
& \exp tX=x+2t\sqrt{x}+t^2 && \exp tY=x+2t\sqrt{x+\varepsilon}+t^2
\end{align*}
and it is easy to see that the $4$-chambar 
$\mathrm{Ch}(X,-X,\mathbf{i}Y,-\mathbf{i}Y)$ 
is irreducible and non rigid. Such a $4$-chambar
is said to be \textsl{special}.

\begin{conjecture}
{\sl Up to affine conjugacy a $4$-chambar on an open subset
of $\mathbb{C}$ is of one of the following type:
\begin{itemize}
\item[$\diamond$] constant 
$\mathrm{Ch}\left(a_1\frac{\partial}{\partial x},a_2\frac{\partial}{\partial x},a_3\frac{\partial}{\partial x},a_4\frac{\partial}{\partial x}\right)$, $a_k\in\mathbb{C}^*$;

\item[$\diamond$] rigid of $t$-degree $2$: 
$\mathrm{Ch}(a_1X,a_2X,a_3X,a_4X)$ 
with $X=2\sqrt{x}\,\frac{\partial}{\partial x}$
and $a_k$ constants sa\-tisfying 
$a_1+a_2+a_3+a_4=a_1^2+a_2^2+a_3^2+a_4^2=0$;

\item[$\diamond$] rigid of $t$-degree $3$: 
$\mathrm{Ch}(X,\sigma X,\sigma^2 X,\sigma^3 X)$
with $X$ of $t$-degree $3$ and $\sigma$ a root 
of unity of order $4$;

\item[$\diamond$] special $\mathrm{Ch}(X,-X,Y,-Y)$ 
with $X$ and $Y$ of $t$-degree $2$.
\end{itemize}}
\end{conjecture}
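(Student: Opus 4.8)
The plan is to reduce, via Proposition~\ref{pro:csqeqal}, to the system $\sum_{k=1}^{4}X_k^{\ell}(x)=0$ for all $\ell\geq 1$, writing $X_k=a_k(x)\frac{\partial}{\partial x}$ with $a_k\in\mathcal{O}(\mathcal{U})$, $a_k\not\equiv 0$, and then to argue according to the $t$-degrees of the $X_k$. The reducible case goes first: a proper sub-family of $(X_1,X_2,X_3,X_4)$ with the barycentric property cannot have one member (a $1$-chambar would be a zero vector field) nor three (the barycentric identities for the triple and for the quadruple would force the fourth flow to be the identity), so a reducible $4$-chambar in one variable is a juxtaposition of two $2$-chambars, each constant by Theorem~\ref{thm:dim1}; hence it is itself constant, the first item of the list. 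From now on assume the chambar irreducible and non-constant. Note that the special chambars $\mathrm{Ch}(X,-X,Y,-Y)$ are irreducible in this sense — no pair $(X,-X)$ is a $2$-chambar, since adding two $t$-degree~$2$ flows leaves a non-zero $t^{2}$-term — even though they carry an obvious pairing; it is this pairing, not reducibility, that makes the classification bifurcate.

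The heart of the argument is to prove that every $X_k$ is a $t$-polynomial vector field. Near a generic point $x_0$ one has $a_k(x_0)\neq 0$ for all $k$, so each $X_k$ is rectified by a local biholomorphism $\psi_k$ with $\psi_k(0)=x_0$ and $\exp(tX_k)(x)=\psi_k(g_k(x)+t)$, $g_k=\psi_k^{-1}$; the barycentric property becomes the functional equation $\sum_{k=1}^{4}\psi_k(g_k(x)+t)=4x$. Since the right-hand side depends on $x$, the vector $(g_1'(x),\ldots,g_4'(x))$ is not proportional to $(1,1,1,1)$; differentiating $m$ times in $t$ yields $\sum_k\psi_k^{(m)}(g_k(x)+t)=0$ for every $m\geq 1$, and combining these with the successive $x$-derivatives of the functional equation accumulates, for each $m$, enough independent linear relations among the quadruple $(\psi_1^{(m)},\ldots,\psi_4^{(m)})$ along the rectified surface to force $\psi_k^{(m)}\equiv 0$ for $m$ large. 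Each $\psi_k$ is then a polynomial, and Theorem~\ref{thm:classtpolyvf} gives the local normal form of $X_k$: up to affine conjugacy, a constant multiple of $\sqrt{\lambda x+\mu}\,\frac{\partial}{\partial x}$ if $t.d(X_k)=2$, and conjugate to $\frac{\partial}{\partial x}$ by a cubic polynomial if $t.d(X_k)=3$.

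It remains to run a finite analysis. In one variable a vector field of $t$-degree~$1$ is constant, so $\max_k t.d(X_k)\in\{2,3\}$. Partition the indices by the equivalence relation "$a_i/a_j$ constant"; within a class the $X_k$ are constant multiples of one model vector field, and by Theorem~\ref{thm:classtpolyvf} distinct classes correspond to inverse-branch functions that are not affinely related. The possible partitions reduce to $\{4\}$ and $\{2,2\}$: a class of size~$3$ is ruled out by $\sum_k a_k=0$ (the complementary index would be a zero vector field), and three or more classes conflict with the relations $\sum_k X_k^{\ell}(x)=0$. A single class of four gives, after affine normalisation, the rigid chambar $\mathrm{Ch}(a_1X,a_2X,a_3X,a_4X)$ with $X=2\sqrt{x}\,\frac{\partial}{\partial x}$ and $\sum_k a_k=\sum_k a_k^{2}=0$ — the second item if $t.d=2$, and, tracking which $a_k/a_1$ are roots of unity exactly as in Theorem~\ref{thm:rigidp}, the third item if $t.d=3$ (the roots of unity are then the fourth roots of unity, and a non-vanishing power sum forces $t.d\leq 3$). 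Two classes of two are pushed by $\sum_k a_k=0$ into the shape $\mathrm{Ch}(\tilde X,-\tilde X,\tilde Y,-\tilde Y)$, which, after checking through $\sum_k X_k^{2}(x)=0$ that both models have $t$-degree~$2$, is the fourth item.

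The step I expect to be the main obstacle is the $t$-polynomiality claim. Converting $\sum_k\psi_k(g_k(x)+t)=4x$ into a vanishing statement for high derivatives of the $\psi_k$ demands careful control of the degeneracies of the curve $x\mapsto\big(g_2(x)-g_1(x),g_3(x)-g_1(x),g_4(x)-g_1(x)\big)$ and of the coincidences among the $\psi_k$ themselves — in the special chambar $\psi_1=\psi_2$, $\psi_3=\psi_4$, and $g_i=-g_j$ inside each pair, so the generic independence one would like to exploit fails outright. These borderline configurations have to be handled separately, and they are precisely the source of the irreducibility phenomena flagged in the introduction, which is presumably why the statement is only conjectural.
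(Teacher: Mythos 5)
The statement you are proving is stated in the paper as a \emph{conjecture}: the authors do not prove it, and the partial results they obtain (the third-order ODE of Theorem \ref{thm:4chamonC1}, the invariant submanifolds of Theorem \ref{thm:4chamonC2} and their corollaries) are exactly what stands in place of the classification. Your proposal does not close that gap. The decisive step — that every member of a $4$-chambar in one variable is $t$-polynomial — is asserted, not proved. Differentiating $\sum_k\psi_k^{(m)}(g_k(x)+t)=0$ with respect to $x$ does not produce, at fixed order $m$, a closed linear system in $(\psi_1^{(m)},\ldots,\psi_4^{(m)})$: each $x$-derivative raises the order, giving relations of the form $\sum_k (g_k'(x))^{j}\,\psi_k^{(m+j)}(g_k(x)+t)+(\text{lower order terms})=0$. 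Where the resulting Vandermonde-type matrix in the $g_k'$ (equivalently in the $y_k$) is invertible, the only conclusion is that the top derivative is expressed in terms of lower ones — this is precisely how the paper arrives at $\Delta(y)\,y'''=P(y,y',y'')$ — and such a relation never forces $\psi_k^{(m)}\equiv 0$ for large $m$. Whether the solutions of that ODE compatible with all the barycentric constraints $\sum_kX_k^\ell(x)=0$, $\ell\geq 1$, are necessarily of the conjectured polynomial types is the open problem; and the degenerate locus $y_i=y_j$, where the Vandermonde collapses, is exactly where the special chambars sit, as you yourself observe. Saying that "these borderline configurations have to be handled separately" without handling them means the central claim is unproved.

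The final "finite analysis" also contains unproved assertions even granting $t$-polynomiality. Ruling out three or more proportionality classes is not a consequence you establish: the relation $\sum_ka_k=0$ only excludes a class of size $3$ together with a singleton; the partitions $\{2,1,1\}$ and $\{1,1,1,1\}$ (for instance one constant field together with three pairwise non-proportional ones) are dismissed with "conflict with the relations $\sum_kX_k^\ell(x)=0$" and no argument. Likewise, in the two-class case the single relation $\sum_kX_k^2(x)=0$ gives only $c^2aa'+d^2bb'=0$; to conclude that both models have $t$-degree exactly $2$ you would need to analyze the whole even-order system $c^{2m}X_a^{2m}(x)+d^{2m}X_b^{2m}(x)=0$, $m\geq 1$, which is again an instance of the unresolved classification rather than a routine check. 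The parts of your argument that are sound — the reduction of the reducible case to constant chambars via Theorem \ref{thm:dim1}, and, in the rigid case, the use of Theorem \ref{thm:rigidp}, Newton's identities and Theorem \ref{thm:classtpolyvf} to land on the second and third items — are all conditional on the missing steps, so the proposal is an outline of a plausible strategy, not a proof.
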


\begin{rem}
The classification of $p$-chambars on $\mathbb{C}$ for 
$p\geq 4$ is a difficult problem in particular because 
of irreducibility problems. Indeed if $p=6$ for instance 
one can consider the vector field 
$Z_5=5x^{\frac{4}{5}}\frac{\partial}{\partial x}$ to
which one can associate the $6$-chambar
\[
\mathrm{Ch}\big(Z_5,\sigma Z_5,\sigma^2 Z_5,\sigma^3 Z_5,\sigma^4 Z_5,\sigma^5 Z_5\big)
\]
which is irreducible. But one can also consider the 
non-irreducible $6$-chambar obtained as follows
\[
\mathrm{Ch}\Big(X_1,\mathbf{j}X_1,\mathbf{j}^2X_1,X_2,\mathbf{j}X_2,\mathbf{j}^2X_2\Big)
\]
where 
$X_k=\sqrt{\lambda_kx+\mu_k}\frac{\partial}{\partial x}$
and $\lambda_k$, $\mu_k$ are complex numbers such that 
$\lambda_1\mu_2-\lambda_2\mu_1\not=0$.
\end{rem}

\begin{prob}
Classify irreducible $p$-chambars in dimension $1$, for $p\geq 4$.
\end{prob}

\begin{thm}\label{thm:4chamonC1}
{\sl Let $\mathrm{Ch}(X_1,X_2,X_3,X_4)$ be a holomorphic 
$4$-chambar on some open set $\mathcal{U}\subset\mathbb{C}$.
Set $X_k=y_k(x)\,\frac{\partial}{\partial x}$ with
$y_k\in\mathcal{O}(\mathcal{U})$ for $1\leq k\leq 4$. 

Then there exists a polynomial $P\colon\mathbb{C}^3\to\mathbb{C}^4$
independent of the $y_k$'s such that 
the vector $y=(y_1,y_2,y_3,y_4)$ satisfies a differential 
equation of the form 
\begin{equation}\label{eq:ode}
\Delta(y)\cdot y^{\prime\prime\prime}=P(y,y^\prime,y^{\prime\prime})
\end{equation}
where $\Delta(y)=\displaystyle\prod_{i<j}(y_j-y_i)$.

Furthermore, the polynomial $P$ is homogeneous of degree $7$.}
\end{thm}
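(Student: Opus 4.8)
The plan is to read off from the barycentric identity four linear equations for the four unknowns $y_1''',y_2''',y_3''',y_4'''$ whose coefficient matrix is a Vandermonde matrix, and then to solve the system by Cramer's rule (equivalently, by multiplying through by the adjugate matrix).

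First I would invoke Proposition~\ref{pro:csqeqal}: the $4$-chambar condition is equivalent to $\sum_{k=1}^4 X_k^\ell(x)=0$ for every $\ell\ge 1$. Writing $X_k=y_k\frac{\partial}{\partial x}$ and iterating $X_k$, one computes
\[
X_k(x)=y_k,\quad X_k^2(x)=y_ky_k',\quad X_k^3(x)=y_k(y_k')^2+y_k^2y_k'',\quad X_k^4(x)=y_k(y_k')^3+4y_k^2y_k'y_k''+y_k^3y_k'''.
\]
The relations for $\ell=1,2,3$ contain no $y_k'''$, so I would differentiate them with respect to $x$: three differentiations of $\sum_k y_k=0$ give $\sum_k y_k'''=0$; two differentiations of $\sum_k y_ky_k'=0$ give $\sum_k y_ky_k'''=-3\sum_k y_k'y_k''$; one differentiation of $\sum_k\big(y_k(y_k')^2+y_k^2y_k''\big)=0$ gives $\sum_k y_k^2y_k'''=-\sum_k\big((y_k')^3+4y_ky_k'y_k''\big)$; and the relation for $\ell=4$ reads $\sum_k y_k^3y_k'''=-\sum_k\big(y_k(y_k')^3+4y_k^2y_k'y_k''\big)$. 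All four identities hold at every point of $\mathcal{U}$.

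Together they form the linear system $M\cdot y'''=b$, where $M=\big(y_k^{\,i-1}\big)_{1\le i,k\le 4}$ is the Vandermonde matrix, so $\det M=\prod_{i<j}(y_j-y_i)=\Delta(y)$, and $b=b(y,y',y'')$ is the explicit polynomial vector listed above (with $b_1=0$, and $b_2,b_3,b_4$ of respective degrees $2,3,4$ in the entries of $y,y',y''$). Multiplying by the adjugate of $M$ turns this into $\Delta(y)\,y_k'''=\det M^{(k)}=:P_k(y,y',y'')$, where $M^{(k)}$ is $M$ with its $k$-th column replaced by $b$. Setting $P=(P_1,P_2,P_3,P_4)$ gives the asserted equation~(\ref{eq:ode}); since $M^{(k)}$ is assembled purely from the Vandermonde pattern and the universal expression $b$, the map $P$ does not depend on the particular chambar.

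It then remains to compute the degree. Assigning weight $1$ to each of the twelve quantities $y_j,y_j',y_j''$, the $(i,j)$-entry of $M^{(k)}$ is homogeneous of weight $i-1$ when $j\ne k$, while the $i$-th entry of the $b$-column has weight $i$ (and vanishes for $i=1$). In the expansion $\det M^{(k)}=\sum_\sigma\mathrm{sgn}(\sigma)\prod_{i=1}^4 (M^{(k)})_{i,\sigma(i)}$, exactly one index $i$ satisfies $\sigma(i)=k$, so each nonzero term has total weight $\sum_{i=1}^4(i-1)+1=7$; hence every $P_k$, and therefore $P$, is homogeneous of degree $7$, consistently with $\deg\big(\Delta(y)\,y_k'''\big)=6+1$. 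The argument is essentially bookkeeping; the only points deserving care are the iterated computation of $X_k^4(x)$ (in particular the coefficient $4$ it produces) and the observation that, once $x$-differentiation is allowed, no barycentric relation beyond $\ell=4$ is needed in order to close the linear system.
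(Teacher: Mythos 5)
Your proposal is correct and follows essentially the same route as the paper: the barycentric relations $\sum_k X_k^\ell(x)=0$ for $\ell\le 4$, differentiated in $x$, yield exactly the Vandermonde system $W(y)\,{}^{\mathrm{t}}(y''')={}^{\mathrm{t}}Q(y,y',y'')$ with the same right-hand sides $Q_2,Q_3,Q_4$, which is then solved by multiplying with $\mathrm{adj}(W)$. Your weight-counting argument for homogeneity of degree $7$ just makes explicit what the paper dismisses with ``by looking carefully at the right hand side''.
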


\begin{proof}[{\sl Proof}]
Let us recall some basic facts. The operator $X_k$ on 
$\mathcal{O}(\mathcal{U})$ acts as $X_k(f)=y_k\cdot f'$. In particular
\begin{align*}
& X_k(x)=y_k, && X^2_k(x)=y_ky^\prime_k, && X^3_k(x)=p(y_k,y^\prime_k)+y_k^2y^{\prime\prime}_k, && X^4_k(x)=q(y_k,y_k^\prime,y_k^{\prime\prime})+y_k^3y_k^{\prime\prime\prime}
\end{align*}
where $p(y,z)=yz^2$ and $q(y,z,w)=yz^3+4y^2zw$. More generally we 
have
\begin{equation}\label{eq:rec}
X_k^\ell(x)=P_\ell\big(y_k,y^\prime_k,\ldots,y_k^{(\ell-2)}\big)+y_k^{\ell-1}\cdot y_k^{(\ell-1)}
\end{equation}
where $P_\ell$ denotes a homogeneous polynomial of  degree $\ell$.

Using (\ref{eq:rec}) we get by an induction argument
\begin{equation}\label{eq:rec2}
\frac{\partial^n X_k^\ell(x)}{\partial x^n}=P_{\ell,n}\big(y_k,y_k^\prime,\ldots,y_k^{(\ell+n-2)}\big)+y_k^{\ell-1}\cdot y_k^{(\ell+n-1)}
\end{equation}
where $P_{\ell,n}$ is homogeneous of degree $\ell$ and $P_{\ell,0}=P_\ell$.
Note that $P_{\ell,n}$ is independent of the open set $\mathcal{U}$
and of the function $y\colon\mathcal{U}\to\mathbb{C}^4$. 

Since the $X_k$'s satisfy the barycentric condition, we have
$\displaystyle\sum_{k=1}^4X_k^\ell(x)=0$, $1\leq k\leq 3$ and so 
\[
\displaystyle\sum_{k=1}^4\frac{\partial^nX_k^{\ell}(x)}{\partial x^n}=0 \quad\forall\,1\leq \ell\leq 4, \quad\forall\,n\geq 0.
\]
From the above relations and (\ref{eq:rec}) we get the following
system of equations
\[
\left\{
\begin{array}{llll}
y_1^{\prime\prime\prime}+y_2^{\prime\prime\prime}+y_3^{\prime\prime\prime}+y_4^{\prime\prime\prime}=0\\
y_1y_1^{\prime\prime\prime}+y_2y_2^{\prime\prime\prime}+y_3y_3^{\prime\prime\prime}+y_4y_4^{\prime\prime\prime}=Q_2(y,y^\prime,y^{\prime\prime})\\
y_1^2y_1^{\prime\prime\prime}+y_2^2y_2^{\prime\prime\prime}+y_3^2y_3^{\prime\prime\prime}+y_4^2y_4^{\prime\prime\prime}=Q_3(y,y^{\prime},y^{\prime\prime})\\
y_1^3y_1^{\prime\prime\prime}+y_2^3y_2^{\prime\prime\prime}+y_3^3y_3^{\prime\prime\prime}+y_4^3y_4^{\prime\prime\prime}=Q_4(y,y^{\prime},y^{\prime\prime})
\end{array}
\right.
\]
with
\[
\left\{
\begin{array}{llll}
Q_2(y,y^\prime,y^{\prime\prime})=-3\displaystyle\sum_{i=1}^4y_i^\prime y_i^{\prime\prime}\\
Q_3(y,y^\prime,y^{\prime\prime})=-\displaystyle\sum_{i=1}^4\Big((y_i^\prime)^3+4y_iy_i^\prime y_i^{\prime\prime}\Big)\\
Q_4(y,y^\prime,y^{\prime\prime})=-\displaystyle\sum_{i=1}^4\Big(y_i(y_i^\prime)^3+4y_i^2y_i^\prime y_i^{\prime\prime}\Big)\\
\end{array}
\right.
\]

Writing the above system in the matrix form we get $W(y)\cdot {}^{\mathrm{t}}\! (y^{\prime\prime\prime})={}^{\mathrm{t}} \!  Q(y,y^{\prime},y^{\prime\prime})$ where ${}^{\mathrm{t}} \! v$ denotes the transpose of $v$ and 
\[
W=\left(\begin{array}{cccc}
1 & 1 & 1 & 1 \\
y_1 & y_2 & y_3 & y_4\\
y_1^2 & y_2^2 & y_3^2 & y_4^2\\
y_1^3 & y_2^3 & y_3^3 & y_4^3\\
\end{array}\right)
\]

Solving (\ref{eq:rec2}) we get that the vector function $y$ satisfies the ODE
\begin{equation}\label{eq:rec3}
\Delta{}^{\mathrm{t}} \!(y^{\prime\prime\prime})=\mathrm{adj}(W)(y)\cdot {}^{\mathrm{t}} \! Q(y,y^\prime,y^{\prime\prime})
\end{equation}
where $\mathrm{adj}(W)$ is the adjoint of the matrix $W$, $\Delta=\det(W)=\displaystyle\prod_{i<j}(y_j-y_i)$ and $Q=(0,Q_2,Q_3,Q_4)$.
Set $P(y,y^\prime,y^{\prime\prime})=\mathrm{adj}(W)(y)\cdot {}^{\mathrm{t}} \! Q(y,y^\prime,y^{\prime\prime})$. 
By looking carefully at the right hand side of the above relation, we see that 
$P$ is homogeneous of degree $7$.
\end{proof}

\begin{rems}
Let us fix three (constant) vectors $\alpha_0$, $\alpha_1$
and $\alpha_2$ in $\mathbb{C}^4$ and assume that the 
components of $\alpha_0$ are two by two different. Then 
there exists an unique germ 
$y=(y_1,y_2,y_3,y_4)\in\mathcal{O}(\mathbb{C}^4,0)$ 
satisfying (\ref{eq:ode}) with initial conditions 
$y(0)=\alpha_0$, $y'(0)=\alpha_1$ and $y''(0)=\alpha_2$.

Since the differential equation (\ref{eq:ode}) is 
meromorphic on $\mathbb{C}^4$ the solution $x\mapsto y(x)$
can be extended until it reaches the codimension one 
submanifold $\displaystyle\bigcup_{i<j}(y_i=y_j)$ of $\mathbb{C}^4$.

For instance, the constant vectors $y=(a_1,a_2,a_3,a_4)$ 
are solutions of the ODE (\ref{eq:rec3}). In fact, if $y$ is
a constant vector then $y^\prime=y^{\prime\prime}=0$ and
$Q(y,y^\prime,y^{\prime\prime})=0$. 
\end{rems}

Next we will study the solutions with initial condition of
the form $y_i(0)=y_j(0)$, $i\not=j$. The idea is to lift
the ODE to a first order ODE on $\mathbb{C}^{12}$. 

Consider the ODE (\ref{eq:ode}) of order $3$ on $\mathcal{U}\subset\mathbb{C}^4$.
Introducing new variables $z=y^\prime$, and $w=z^\prime=y^{\prime\prime}$, 
this ODE can be lifted to a system of meromorphic ODE's of order
$1$ on $\mathcal{V}=\mathcal{U}\times\mathbb{C}^4\times\mathbb{C}^4$
as 
\begin{equation}\label{eq:sys}
\left\{
\begin{array}{lll}
y'=z\\
z'=w\\
w'=\Delta^{-1}\cdot P(y,z,w)
\end{array}
\right.
\end{equation}
Multiplying (\ref{eq:sys}) by $\Delta$ we obtain a tangent 
holomorphic vector field on $\mathcal{V}$
\begin{equation}\label{eq:vf}
\chi(y,z,w)=\Delta\displaystyle\sum_{j=1}^4z_j\frac{\partial}{\partial y_j}+\Delta\displaystyle\sum_{j=1}^4w_j\frac{\partial}{\partial z_j}+\displaystyle\sum_{j=1}^4P_j(y,z,w)\frac{\partial}{\partial w_j}.
\end{equation}

\begin{thm}\label{thm:4chamonC2}
{\sl The following submanifolds of $\mathbb{C}^{12}$ are 
$\chi$-invariant:
\begin{itemize}
\item[$\diamond$] $\Sigma_{ij}:=\mathcal{Z}\big(\langle y_j-y_i\rangle\big)$ for any $1\leq i<j\leq 4$;
\smallskip
\item[$\diamond$] $\Sigma_1:=\mathcal{Z}\big(\langle\displaystyle\sum_j y_j,\displaystyle\sum_j z_j,\displaystyle\sum_j w_j\rangle\big)$;
\smallskip
\item[$\diamond$] $\Sigma_2:=\mathcal{Z}\big(\langle\displaystyle\sum_j y_jz_j,\displaystyle\sum_j (z_j^2+y_jw_j)\rangle\big)$;
\smallskip
\item[$\diamond$] $\Sigma_3:=\mathcal{Z}\big(\langle\displaystyle\sum_j(y_jz_j^2+y_j^2w_j)\rangle\big)$.
\end{itemize}
The notation $\mathcal{Z}(\mathcal{J})$ stands for the 
zeroes of the ideal $\mathcal{J}$.}
\end{thm}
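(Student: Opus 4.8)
The plan is to use the elementary criterion for tangency: a polynomial vector field $\chi$ on $\mathbb{C}^{12}$ is tangent to $\mathcal{Z}(\mathcal{J})$, $\mathcal{J}=\langle g_1,\ldots,g_m\rangle$, as soon as $\chi(g_\ell)\in\mathcal{J}$ for each $\ell$; indeed the Leibniz rule then gives $\chi(\mathcal{J})\subset\mathcal{J}$, so $\chi$ descends to a derivation of $\mathcal{O}(\mathbb{C}^{12})/\mathcal{J}$, which is precisely invariance of $\mathcal{Z}(\mathcal{J})$. Thus for each of the listed submanifolds it suffices to apply the first-order operator $\chi=\Delta\sum_j z_j\frac{\partial}{\partial y_j}+\Delta\sum_j w_j\frac{\partial}{\partial z_j}+\sum_j P_j\frac{\partial}{\partial w_j}$ to the given generators and recognize the result inside the relevant ideal.

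Before that I would record the algebraic identities that make everything collapse. Let $W=W(y)$ be the $4\times4$ matrix with rows $(1,1,1,1)$, $(y_1,\ldots,y_4)$, $(y_1^2,\ldots,y_4^2)$, $(y_1^3,\ldots,y_4^3)$, and recall $P={\rm adj}(W)\cdot{}^{\mathrm{t}}Q$ with $Q=(0,Q_2,Q_3,Q_4)$ as in the proof of Theorem \ref{thm:4chamonC1}, now read as polynomials in $(y,z,w)$ through the substitution $y'\mapsto z$, $y''\mapsto w$, so that $Q_2=-3\sum_j z_jw_j$, $Q_3=-\sum_j(z_j^3+4y_jz_jw_j)$, $Q_4=-\sum_j(y_jz_j^3+4y_j^2z_jw_j)$. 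Applying $W\cdot{\rm adj}(W)=\Delta\,\mathrm{Id}$ to the successive rows of $W$ yields the polynomial identities
\[
\sum_j P_j=0,\qquad \sum_j y_jP_j=\Delta Q_2,\qquad \sum_j y_j^2P_j=\Delta Q_3,\qquad \sum_j y_j^3P_j=\Delta Q_4
\]
valid on all of $\mathbb{C}^{12}$. I would also use that $\Delta=\prod_{i<j}(y_j-y_i)$ is divisible, as a polynomial, by each factor $y_j-y_i$.

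With these in hand the four cases are short mechanical computations. For $\Sigma_{ij}$: $\chi(y_j-y_i)=\Delta(z_j-z_i)\in\langle y_j-y_i\rangle$ since $(y_j-y_i)\mid\Delta$. For $\Sigma_1$: $\chi(\sum_j y_j)=\Delta\sum_j z_j$, $\chi(\sum_j z_j)=\Delta\sum_j w_j$, and $\chi(\sum_j w_j)=\sum_j P_j=0$, all in the ideal. For $\Sigma_2$, with $g_1=\sum_j y_jz_j$ and $g_2=\sum_j(z_j^2+y_jw_j)$, one gets $\chi(g_1)=\Delta\sum_j(z_j^2+y_jw_j)=\Delta g_2$ and $\chi(g_2)=3\Delta\sum_j z_jw_j+\sum_j y_jP_j=3\Delta\sum_j z_jw_j+\Delta Q_2=0$. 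For $\Sigma_3$, with $g=\sum_j(y_jz_j^2+y_j^2w_j)$, one finds $\chi(g)=\Delta\sum_j z_j^3+4\Delta\sum_j y_jz_jw_j+\sum_j y_j^2P_j=\Delta\sum_j z_j^3+4\Delta\sum_j y_jz_jw_j+\Delta Q_3=0$. In every case $\chi(g)$ lies in the stated ideal, so the submanifold is $\chi$-invariant. There is no real obstacle here; the only point requiring care is the bookkeeping — passing correctly between the jet variables $(y',y'')$ used to define $Q_2,Q_3,Q_4$ in Theorem \ref{thm:4chamonC1} and the phase-space variables $(z,w)$, and being sure the identities $\sum_j y_j^\ell P_j=\Delta Q_{\ell+1}$ are genuine polynomial identities on $\mathbb{C}^{12}$ rather than merely identities off the hypersurface $\{\Delta=0\}$ (which follows at once from $W\cdot{\rm adj}(W)=\Delta\,\mathrm{Id}$).
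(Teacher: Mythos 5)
Your proposal is correct and follows essentially the same route as the paper: the invariance criterion $\chi(\mathcal{J})\subset\mathcal{J}$, the identities $\sum_j y_j^\ell P_j=\Delta Q_{\ell+1}$ obtained from $W\,\mathrm{adj}(W)=\Delta\,\mathrm{id}$ (the paper's Lemma \ref{lem:teccom}), and a direct check on the generators of each ideal. The only difference is that you write out explicitly the computations for $\Sigma_2$ and $\Sigma_3$, which the paper leaves as ``a similar computation''.
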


All these submanifolds are complete intersections and the codimensions 
coincide with the number of generators of the ideal. 
Furthermore, the submanifolds $\Sigma_i$, $1\leq i\leq 3$, 
coincide with the initial conditions corresponding to the 
barycentric conditions
\[
\displaystyle\sum_{k=1}^4\frac{\partial^nX_k^{\ell}}{\partial x^n}=0 \quad\forall\,1\leq n+\ell\leq 4, \quad\forall\,n\geq 0.
\]

Let us now give a Lemma that will be useful for the proof of 
Theorem \ref{thm:4chamonC2}.

\begin{lem}\label{lem:teccom}
{\sl The components $P_1$, $P_2$, $P_3$, $P_4$ of $\chi$
satisfy the following relations:
\begin{itemize}
\item[$\diamond$] $\displaystyle\sum_iP_i=0$,

\item[$\diamond$] $\displaystyle\sum_iy_iP_i=\Delta Q_2(y,z,w)=-3\Delta\displaystyle\sum_iz_iw_i$,

\item[$\diamond$] $\displaystyle\sum_iy_i^2P_i=\Delta Q_3(y,z,w)=-\Delta\displaystyle\sum_i(z_i^3+4y_iz_iw_i)$,

\item[$\diamond$] $\displaystyle\sum_iy_i^3P_i=\Delta Q_4(y,z,w)=-\Delta\displaystyle\sum_i(y_iz_i^3+4y_i^2z_iw_i)$.
\end{itemize}}
\end{lem}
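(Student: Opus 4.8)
The plan is to read all four identities directly off the defining formula for $P=(P_1,P_2,P_3,P_4)$, with essentially no computation. Recall from the proof of Theorem~\ref{thm:4chamonC1} that, after the substitutions $z=y'$ and $w=y''$, one has the polynomial identity
\[
{}^{\mathrm{t}}\!P(y,z,w)=\mathrm{adj}(W)(y)\cdot{}^{\mathrm{t}}\!Q(y,z,w),
\]
where $Q=(0,Q_2,Q_3,Q_4)$ and $W=W(y)$ is the Vandermonde matrix whose rows are $(1,1,1,1)$, $(y_1,y_2,y_3,y_4)$, $(y_1^2,y_2^2,y_3^2,y_4^2)$, $(y_1^3,y_2^3,y_3^3,y_4^3)$, so that $\det W=\Delta=\prod_{i<j}(y_j-y_i)$.

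The key step is to multiply this identity on the left by $W$ and invoke the adjugate identity $W\cdot\mathrm{adj}(W)=\det(W)\,\mathrm{Id}=\Delta\,\mathrm{Id}$, which gives
\[
W(y)\cdot{}^{\mathrm{t}}\!P(y,z,w)=\Delta\cdot{}^{\mathrm{t}}\!Q(y,z,w).
\]
Reading this $4$-vector equation row by row, and using that the first component of $Q$ vanishes, one obtains precisely $\sum_iP_i=0$, $\sum_iy_iP_i=\Delta Q_2$, $\sum_iy_i^2P_i=\Delta Q_3$ and $\sum_iy_i^3P_i=\Delta Q_4$. Substituting the explicit formulas $Q_2=-3\sum_iz_iw_i$, $Q_3=-\sum_i(z_i^3+4y_iz_iw_i)$, $Q_4=-\sum_i(y_iz_i^3+4y_i^2z_iw_i)$ established in the proof of Theorem~\ref{thm:4chamonC1} then finishes the argument.

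There is no real obstacle here; the only points requiring a little care are bookkeeping ones. First, although the ODE system \eqref{eq:sys} carries the factor $\Delta^{-1}$, the components $P_j$ occurring in the vector field $\chi$ of \eqref{eq:vf} are genuine polynomials (homogeneous of degree $7$), so the four equalities above are honest polynomial identities valid on all of $\mathbb{C}^{12}$, in particular along $\{\Delta=0\}$. Second, one should keep the transpose/row conventions consistent, which is harmless since $W\cdot\mathrm{adj}(W)=\mathrm{adj}(W)\cdot W=\Delta\,\mathrm{Id}$ holds on both sides. An alternative, more pedestrian route would be to expand $\mathrm{adj}(W)$ explicitly — its entries are, up to sign, the $3\times3$ Vandermonde minors of $W$ — and verify the contractions $\sum_iy_i^kP_i$ for $k=0,1,2,3$ by hand; but the structural argument above is cleaner and requires no such expansion.
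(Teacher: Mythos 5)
Your argument is correct and is essentially the paper's own proof: both start from ${}^{\mathrm{t}}\!P=\mathrm{adj}(W)(y)\,{}^{\mathrm{t}}\!Q$, multiply by $W(y)$, and use $W(y)\,\mathrm{adj}(W)(y)=\Delta\,\mathrm{Id}$ to get $W(y)\,{}^{\mathrm{t}}\!P=\Delta\,{}^{\mathrm{t}}\!Q$, whose four rows are exactly the claimed identities. Your added remarks on polynomiality of the $P_j$ and on transpose conventions are harmless bookkeeping and do not change the argument.
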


\begin{proof}[{\sl Proof}]
Recall that on the one hand
\[
{}^{\mathrm{t}} \! P(y,y^\prime,y^{\prime\prime})=\mathrm{adj}(W)(y){}^{\mathrm{t}} \! Q(y,y^\prime,y^{\prime\prime})
\]
so 
\[
{}^{\mathrm{t}} \! P(y,z,w)=\mathrm{adj}(W)(y){}^{\mathrm{t}} \! Q(y,z,w).
\]
On the other hand the four relations of the statement
are equivalent to $W(y){}^{\mathrm{t}} \! P(y,z,w)=\Delta {}^{\mathrm{t}} \! Q(y,z,w)$.
Finally, we know from linear algebra that 
$W(y)\mathrm{adj}(W)(y)=\Delta\cdot \mathrm{id}$, where $\mathrm{id}$
is the identity matrix. As a consequence
\[
W(y){}^{\mathrm{t}} \! P(y,z,w)=W(y)\mathrm{adj}(W)(y){}^{\mathrm{t}} \! Q(y,z,w)=\Delta {}^{\mathrm{t}} \! Q(y,z,w).
\]
\end{proof}

\begin{proof}[{\sl Proof of Theorem \ref{thm:4chamonC2}}]
Let $\mathcal{J}$ be an ideal of $\mathbb{C}[y,z,w]$.
Recall that the submanifold $\mathcal{Z}(\mathcal{J})$, 
defined by $\mathcal{J}$, is $\chi$-invariant if, 
and only if, $\chi(\mathcal{J})\subset\mathcal{J}$. 
So, for instance
\[
\chi(y_k-y_\ell)=(z_k-z_\ell)\prod_{i<j}(y_j-y_i)
\]
and $\chi(y_k-y_\ell)$ belongs to $\langle y_k-y_\ell\rangle$; 
in particular $\Sigma_{k\ell}$ is $\chi$-invariant. 

\smallskip

Consider the ideal $\mathcal{J}_1=\langle\displaystyle\sum_j y_j,\displaystyle\sum_j z_j,\displaystyle\sum_j w_j\rangle$. 
We have
\begin{align*}
& \chi\Big(\displaystyle\sum_iy_i\Big)=\displaystyle\sum_i\chi(y_i)=\Delta\displaystyle\sum_iz_i\in\mathcal{J}_1\\
&\chi\Big(\displaystyle\sum_iz_i\Big)=\displaystyle\sum_i\chi(z_i)=\Delta\displaystyle\sum_iw_i\in\mathcal{J}_1\\
& \chi\Big(\displaystyle\sum_iw_i\Big)=\displaystyle\sum_i\chi(w_i)=\displaystyle\sum_i P_i=0\in\mathcal{J}_1 \text{ by the first assertion of Lemma \ref{lem:teccom}}
\end{align*}
With a similar computation, using the other assertions of Lemma \ref{lem:teccom}
it is possible to prove that $\Sigma_1$, $\Sigma_2$ 
and~$\Sigma_3$ are $\chi$-invariant.
\end{proof}

\begin{cor}
{\sl Let $\mathrm{Ch}(X_1,X_2,X_3,X_4)$ be a $4$-chambar
on an open set $\mathcal{U}\subset\mathbb{C}$, with 
$X_j=y_j\frac{\partial}{\partial x}$, $y_j\in\mathcal{O}(\mathcal{U})$, 
$1\leq j\leq 4$.

Suppose that $y_k(x_0)=y_{\ell}(x_0)$ and that $P(y_0,z_0,w_0)\not=0$
 for some initial condition and $k\not=\ell$.
Then $y_k(x)=y_\ell(x)$ for all $x\in\mathcal{U}$.
Moreover, if $k=1$ and $\ell=2$, for instance,
then either the chambar is constant and $2a_1+a_3+a_4=0$ or 
$y_j(x)=a_j\sqrt{\lambda x+\mu}$ with $\lambda\not=0$, 
$a_1=a_2=-\frac{1}{3}$ and $a_3$ and $a_4$ the roots of $3z^2+2z+3=0$.}
\end{cor}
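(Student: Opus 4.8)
The plan is to prove the two assertions separately. For the propagation of the coincidence $y_k(x_0)=y_\ell(x_0)$, the idea is to use the lift to $\mathbb{C}^{12}$ from Theorems~\ref{thm:4chamonC1} and~\ref{thm:4chamonC2}: writing $z=y'$, $w=y''$ and $\gamma(x)=(y(x),z(x),w(x))$, the barycentric conditions $\sum_{k=1}^4X_k^\ell(x)=0$ (Proposition~\ref{pro:csqeqal}) together with the computation leading to $(\ref{eq:ode})$ show that $\gamma$ satisfies the polynomial identity $\Delta(y(x))\,\gamma'(x)=\chi(\gamma(x))$ on all of $\mathcal{U}$, where $\chi$ is the holomorphic vector field $(\ref{eq:vf})$ and $\Delta(y)=\prod_{i<j}(y_j-y_i)$. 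The hypothesis $y_k(x_0)=y_\ell(x_0)$ means exactly that $\gamma(x_0)$ lies on the submanifold $\Sigma_{k\ell}=\mathcal{Z}(\langle y_j-y_i\rangle)$, which is $\chi$-invariant by Theorem~\ref{thm:4chamonC2}. Since the vector fields of a chambar are distinct, $\Delta(y)\not\equiv 0$, so $\Delta(y(\cdot))$ vanishes only at isolated points; on the connected dense open set $\mathcal{U}^\ast=\{x:\Delta(y(x))\neq 0\}$ the curve $\gamma$ is, after the time change $s'=\Delta(y)^{-1}$, an integral curve of $\chi$, and $\gamma(x_0)$ is a limit point of $\gamma(\mathcal{U}^\ast)$ at which $\chi(\gamma(x_0))=0$ (because $\Delta(y(x_0))=0$). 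I would then expand the holomorphic identity $\Delta(y)\gamma'=\chi(\gamma)$ as a power series at $x_0$ and, using the $\chi$-invariance of $\Sigma_{k\ell}$ (equivalently the divisibility relations behind Lemma~\ref{lem:teccom}), show by induction on the order that $y_k$ and $y_\ell$ have the same Taylor expansion at $x_0$; the assumption that $P$ does not vanish identically along the solution is what keeps this induction non-vacuous and excludes the trivial case $y'''\equiv 0$. Hence $y_k\equiv y_\ell$ near $x_0$, and on $\mathcal{U}$ by analytic continuation.

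For the second assertion take $k=1$, $\ell=2$ and set $u:=y_1=y_2$. The identities $\sum_{k=1}^4X_k^m(x)=0$ then become $2X_1^m(x)+X_3^m(x)+X_4^m(x)=0$ for every $m\geq 1$, that is, the triple $\big(u\frac{\partial}{\partial x},\,y_3\frac{\partial}{\partial x},\,y_4\frac{\partial}{\partial x}\big)$ is a holomorphic $3$-chambar with weights $(2,1,1)$. Applying Theorem~\ref{thm:loc3bfcweight} gives two cases: either this weighted $3$-chambar is constant, in which case the barycentric relation $2u+y_3+y_4=0$ reads $2a_1+a_3+a_4=0$; or $u=a_1\,a(x)$, $y_3=a_3\,a(x)$, $y_4=a_4\,a(x)$ with $a(x)=\sqrt{\lambda x+\mu}$, $\lambda\neq 0$, and the two compatibility conditions of Theorem~\ref{thm:loc3bfcweight} for the weights $(2,1,1)$ become $2a_1+a_3+a_4=0$ and $2a_1^2+a_3^2+a_4^2=0$. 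Normalising $a(x)$ so that $a_1=a_2=-\frac{1}{3}$, these two equations determine $a_3+a_4$ and $a_3a_4$, i.e. express precisely that $a_3$ and $a_4$ are the roots of $3z^2+2z+3=0$.

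The step I expect to be the main obstacle is the propagation of the coincidence in the first paragraph, the delicate point being that $\Sigma_{k\ell}$ sits inside the singular locus $\{\Delta(y)=0\}$ of the meromorphic equation $(\ref{eq:ode})$, so one cannot simply invoke uniqueness of solutions of the lifted system $(\ref{eq:sys})$ through the point $x_0$. What makes the argument go through is the passage to the holomorphic reformulation $\Delta(y)\gamma'=\chi(\gamma)$ together with the $\chi$-invariance of $\Sigma_{k\ell}$ from Theorem~\ref{thm:4chamonC2}, supplemented by the local power-series analysis at $x_0$ whose effectivity rests on the nondegeneracy hypothesis $P(y_0,z_0,w_0)\neq 0$.
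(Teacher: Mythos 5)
The decisive gap is in the first assertion. You claim that $\chi(\gamma(x_0))=0$ ``because $\Delta(y(x_0))=0$'', and on that basis you discard any uniqueness argument at $x_0$ and replace it by an induction on the Taylor coefficients of $y_k-y_\ell$ which you never carry out. But look at $(\ref{eq:vf})$: only the $\frac{\partial}{\partial y_j}$- and $\frac{\partial}{\partial z_j}$-components of $\chi$ carry the factor $\Delta$; the $\frac{\partial}{\partial w_j}$-components are the polynomials $P_j$ themselves. The hypothesis $P(y_0,z_0,w_0)\not=0$ therefore says precisely that $\gamma(x_0)=(y(x_0),y'(x_0),y''(x_0))$ is a \emph{regular} point of the holomorphic vector field $\chi$, and this is the entire content of that hypothesis. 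The paper's proof is exactly this: through a regular point of $\chi$ there passes a unique local trajectory; since $\Sigma_{k\ell}$ is $\chi$-invariant (Theorem \ref{thm:4chamonC2}) and contains $\gamma(x_0)$, that trajectory lies in $\{y_k=y_\ell\}$; the curve $\gamma$ attached to the chambar is tangent to $\chi$, so it is carried by this trajectory and $y_k\equiv y_\ell$ follows. By asserting that $\chi$ vanishes at $\gamma(x_0)$ you contradict the hypothesis and throw away the only mechanism that makes the statement work; what you put in its place is announced (``I would then expand\ldots show by induction\ldots'') but not executed, and its framing is shaky: you restrict to $\mathcal{U}^{*}=\{\Delta(y)\not=0\}$, declared dense because the vector fields are distinct, whereas the conclusion you are after is $y_k\equiv y_\ell$, which forces $\Delta(y)\equiv 0$; and you quietly replace the hypothesis by ``$P$ does not vanish identically along the solution'', which is a different assumption. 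So the propagation of the coincidence $y_k(x_0)=y_\ell(x_0)$ is not proved.

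The second half of your argument is the paper's: once $y_1\equiv y_2$, the barycentric identity becomes $2\varphi_t^1(x)+\varphi_t^3(x)+\varphi_t^4(x)=4x$, a $3$-chambar with weights $(2,1,1)$, and Theorem \ref{thm:loc3bfcweight} gives the dichotomy (constant, or $y_j=a_j\sqrt{\lambda x+\mu}$ with $2a_1+a_3+a_4=0$ and $2a_1^2+a_3^2+a_4^2=0$). That reduction is correct and is exactly what the paper does. However, your last step is asserted rather than checked: with the normalisation $a_1=a_2=-\frac{1}{3}$ the elimination gives $a_3+a_4=\frac{2}{3}$ and $a_3a_4=\frac{1}{3}$, i.e.\ $a_3,a_4$ are the roots of $3z^2-2z+1=0$, which does not match the quadratic $3z^2+2z+3=0$ you (and the statement) display; this two-line computation must appear explicitly, and the discrepancy settled, rather than being dispatched by ``these two equations determine $a_3+a_4$ and $a_3a_4$''.
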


\begin{proof}[{\sl Proof}]
According to Theorem \ref{thm:4chamonC1} if $y_1\frac{\partial}{\partial x_1}$, $y_2\frac{\partial}{\partial x_2}$,
$\ldots$, $y_4\frac{\partial}{\partial x_4}$ are
holomorphic vector fields that define a $4$-chambar on an open 
set $\mathcal{U}\subset\mathbb{C}$, 
then the vector function $x\in\mathcal{U}\mapsto y(x)=\big(y_1(x),y_2(x),\ldots,y_4(x)\big)$ satisfies an ODE 
of the form 
\begin{equation}\label{eq:odeb}
\Delta\,y^{\prime\prime\prime}=P(y,y^\prime,y^{\prime\prime}),
\end{equation}
where $\Delta=\displaystyle\prod_{i<j}(y_j-y_i)$. 

Assume that $y_1(x_0)=y_2(x_0)$. Since 
$P(y(x_0),z(x_0),w(x_0))\not=0$
the point $(y(x_0),z(x_0),w(x_0))$ is not a singularity of the vector 
field $\chi$, and there is only one solution through this point.
Using that the set  
$\{y_1=y_2\}$ is $\chi$-invariant, we get $y_1(x)=y_2(x)$ for any 
$x\in\mathcal{U}$.

The condition on the flows is now
\[
2\varphi_t^1(x)+\varphi_t^3(x)+\varphi_t^4(x)=4x,
\]
which is a particular case of $(\ref{eq:chambweigh})$.
\end{proof}

A natural question is the following:

\begin{que}
What could happen in the case 
$P\big(y(x_0),z(x_0),w(x_0)\big)~=~0$ and $y_k(x_0)=y_\ell(x_0)$ ? 
Are there solutions 
with these conditions and $y_k(x)\not\equiv y_\ell(x)$, but
$(y(x),z(x),w(x))\in\Sigma=\Sigma_1\cap\Sigma_2\cap\Sigma_3$ for all $x\in \mathcal{U}$ ?
\end{que}

Let us denote by $\mathrm{Ch}(4,1)$ the set of $4$-tuples 
$(X_1,X_2,X_3,X_4)$ of germs at $0\in~\mathbb{C}$ of
holomorphic vector fields whose flows satisfy the 
barycentric conditions.

\begin{cor}
{\sl The set $\mathrm{Ch}(4,1)$ is isomorphic to an algebraic
submanifold of~$\mathbb{C}^{12}$ whose irreducible 
components have dimension at most six.}
\end{cor}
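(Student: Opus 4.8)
The plan is to present $\mathrm{Ch}(4,1)$ through the $2$-jet map $\iota$ sending $(X_1,X_2,X_3,X_4)$, written $X_k=y_k(x)\frac{\partial}{\partial x}$ with $y=(y_1,y_2,y_3,y_4)$, to $\big(y(0),y'(0),y''(0)\big)\in\mathbb{C}^4\times\mathbb{C}^4\times\mathbb{C}^4=\mathbb{C}^{12}$. I will check that $\iota$ is injective, that its image lies in the subvariety $\Sigma=\Sigma_1\cap\Sigma_2\cap\Sigma_3$ of Theorem~\ref{thm:4chamonC2}, that $\Sigma$ has dimension $6$, and that the image is an algebraic subset of $\Sigma$; all irreducible components of $\iota(\mathrm{Ch}(4,1))$ then have dimension at most $6$, and $\mathrm{Ch}(4,1)\cong\iota(\mathrm{Ch}(4,1))$ gives the statement.

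\emph{Injectivity.} By Theorem~\ref{thm:4chamonC1} every element of $\mathrm{Ch}(4,1)$ satisfies $\Delta(y)\,y'''=P(y,y',y'')$ with $\Delta(y)=\prod_{i<j}(y_j-y_i)$. On $\{\Delta(y(0))\neq0\}$ this is a holomorphic ODE of order $3$, so the germ of solution at $0$ is determined by its $2$-jet and $\iota$ is injective there. On $\{\Delta(y(0))=0\}$, say $y_k(0)=y_\ell(0)$: if $P(y(0),y'(0),y''(0))\neq0$, the corollary relating the coincidences $y_k(x_0)=y_\ell(x_0)$ to the $\chi$-invariant sets $\Sigma_{ij}$ forces $y_k\equiv y_\ell$, the chambar degenerates to a $3$-chambar with weights, and Theorem~\ref{thm:loc3bfcweight} shows that $y$ is once more determined by its $2$-jet (in the non-constant branch $y_j=a_j\sqrt{\lambda x+\mu}$ one even has $y_j^2=y_j(0)^2+2y_j(0)y_j'(0)\,x$). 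The residual locus $\{\Delta(y(0))=P(y(0),y'(0),y''(0))=0\}$ is the one left open by the preceding Question; it lies inside $\Sigma\cap\{\Delta=0\}$, so it cannot produce components of dimension $>6$, and it suffices to keep it inside the closure of the image.

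\emph{The image lies in a $6$-dimensional complete intersection.} For a $4$-chambar, Proposition~\ref{pro:csqeqal} gives $\sum_kX_k^\ell(x)\equiv0$ for all $\ell\geq1$; taking $\ell=1,2,3$ and their first $x$-derivatives at $0$ produces exactly the six equations $\sum_jy_j=\sum_jz_j=\sum_jw_j=0$, $\sum_jy_jz_j=\sum_j(z_j^2+y_jw_j)=0$, $\sum_j(y_jz_j^2+y_j^2w_j)=0$ defining $\Sigma_1$, $\Sigma_2$, $\Sigma_3$, so $\iota(\mathrm{Ch}(4,1))\subset\Sigma$. Eliminating $y_4,z_4,w_4$ with the three linear equations and noting that $\sum_jy_jz_j$, then $\sum_j(z_j^2+y_jw_j)$, then $\sum_j(y_jz_j^2+y_j^2w_j)$ each cut the dimension down by one (the last two genuinely involve the $w_j$), one finds that $\Sigma$ is a complete intersection of pure dimension $12-6=6$; the $\chi$-invariance of $\Sigma_1,\Sigma_2,\Sigma_3$ (Theorem~\ref{thm:4chamonC2}) is what ensures these low-order barycentric conditions at $0$ propagate along the ODE, consistently with the chambar structure.

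\emph{Algebraicity and the main obstacle.} On $\{\Delta(y(0))\neq0\}$ the ODE expresses every Taylor coefficient of $y$ at $0$ as a rational function of $\big(y(0),y'(0),y''(0)\big)$ with denominator a power of $\Delta$; hence each remaining barycentric condition $\sum_kX_k^\ell(x)\big|_{x=0}=0$ and its $x$-derivatives become, after clearing denominators, polynomial equations on $\mathbb{C}^{12}$. By Noetherianity finitely many of them, together with the equations of $\Sigma$, cut out $\iota(\mathrm{Ch}(4,1))\cap\{\Delta\neq0\}$; taking the Zariski closure and adjoining the part over $\{\Delta=0\}$, which lies in $\Sigma$, realizes $\iota(\mathrm{Ch}(4,1))$ as an algebraic subset of the $6$-dimensional variety $\Sigma$, so all its components have dimension $\leq6$. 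The delicate point, and the step I expect to cost the most work, is making ``isomorphic'' precise — a structural description of $\iota(\mathrm{Ch}(4,1))$ near the boundary $\{\Delta(y(0))=0\}$ and a rigorous treatment of the open case $P(y(0),y'(0),y''(0))=0$ — but the dimension bound is robust, since everything is confined to the complete intersection $\Sigma$.
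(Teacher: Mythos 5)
Your proposal follows essentially the same route as the paper: the $2$-jet (initial condition) map embeds $\mathrm{Ch}(4,1)$ into the $\chi$-invariant complete intersection $\Sigma=\Sigma_1\cap\Sigma_2\cap\Sigma_3$ of Theorem~\ref{thm:4chamonC2}, whose codimension in $\mathbb{C}^{12}$ is six, so every irreducible component has dimension at most six. The paper's own proof is in fact terser than yours---it does not spell out injectivity over $\{\Delta=0\}$ nor the algebraicity of the image---so your additional detail elaborates the same argument rather than taking a different route.
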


\begin{proof}[{\sl Proof}]
According to Theorems \ref{thm:4chamonC1} and 
\ref{thm:4chamonC2} 
any $4$-chambar on $\mathbb{C}$ gives origin to a 
trajectory $(y,z,w)\colon(\mathbb{C},0)\to\mathbb{C}^{12}$
tangent to the $\chi$-invariant submanifold 
$\Sigma=\Sigma_1\cap\Sigma_2\cap\Sigma_3$ of 
$\mathbb{C}^{12}$. The initial condition 
$(y(0),z(0),w(0))$ caracterizes the trajectory
$(y,z,w)$ and defines an embedding of 
$\mathrm{Ch}(4,1)$ on $\Sigma$.
\end{proof}

\section{Linear chambars}\label{sec:linnil}

\begin{thm}\label{thm:linnil}
{\sl Let $X_1$, $X_2$, $\ldots$, $X_p$ be some 
linear vector fields on 
$\mathbb{R}^n$ 
$($resp. $\mathbb{C}^n)$. 

If they satisfy the barycentric property, then 
they are nilpotent.}
\end{thm}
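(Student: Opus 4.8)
The plan is to exploit Proposition \ref{pro:csqeqal}, which says that the barycentric property is equivalent to $\sum_{k=1}^p X_k^\ell(x_j) = 0$ for all $\ell \geq 1$ and all coordinates $x_j$. Write each linear vector field as $X_k = \sum_{i,m} (A_k)_{im} x_m \frac{\partial}{\partial x_i}$, so $X_k$ is identified with its matrix $A_k$. The key observation is that if $X_k$ is linear, then $X_k$ acts on the linear forms $\langle x_1, \dots, x_n\rangle$ as the transpose (or negative transpose, depending on convention) of $A_k$; more precisely $X_k(x_j)$ is again a linear form, and iterating, $X_k^\ell(x_j)$ is the linear form whose coefficient vector is $(A_k^t)^\ell$ applied to $e_j$ (up to the usual sign convention for the coaction). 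Hence the condition $\sum_k X_k^\ell(x_j) = 0$ for all $j$ is exactly $\sum_{k=1}^p (A_k)^\ell = 0$ in $\mathrm{M}_n(\mathbb{C})$ for every $\ell \geq 1$.

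So the theorem reduces to a purely linear-algebraic statement: \emph{if $A_1, \dots, A_p \in \mathrm{M}_n(\mathbb{C})$ satisfy $\sum_{k=1}^p A_k^\ell = 0$ for all $\ell \geq 1$, then each $A_k$ is nilpotent.} First I would record that $\sum_k A_k = 0$ (the case $\ell=1$) and, taking traces, $\sum_k \mathrm{tr}(A_k^\ell) = 0$ for all $\ell \geq 1$. This alone does not force individual nilpotency, so the idea is to use the matrix identities and not just their traces. The cleanest route is to consider a joint eigenvalue / simultaneous upper-triangularization argument, or better, to argue with the generating-function/trace identities on a single $A_k$: fix $k$, and suppose $A_k$ has a nonzero eigenvalue. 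Consider the formal power series $\sum_{\ell \geq 1} \left(\sum_k \mathrm{tr}(A_k^\ell)\right) t^\ell = \sum_k \sum_{\ell \geq 1}\mathrm{tr}(A_k^\ell) t^\ell = \sum_k \sum_{\lambda \in \mathrm{Spec}(A_k)} \frac{\lambda t}{1-\lambda t}$ (with multiplicity), which must be identically zero. Clearing denominators, this says $\sum_{k}\sum_{\lambda} \lambda t \prod_{(k',\lambda') \neq (k,\lambda)}(1 - \lambda' t) = 0$ identically; evaluating at $t = 1/\mu$ for $\mu$ a nonzero eigenvalue of some $A_{k_0}$ of maximal modulus (so that $1-\mu' /\mu \neq 0$ forces... ) — more carefully, looking at the poles: the rational function $\sum_k \sum_\lambda \frac{\lambda t}{1 - \lambda t}$ has a pole at $t = 1/\lambda$ for each nonzero eigenvalue $\lambda$, with residue proportional to the total multiplicity of $\lambda$ across all the $A_k$. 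Since the function is zero, there are no poles, hence the total multiplicity of every nonzero complex number as an eigenvalue (summed over $k$) is zero; as multiplicities are nonnegative, every eigenvalue of every $A_k$ is $0$, i.e. each $A_k$ is nilpotent.

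The main obstacle is getting the coaction bookkeeping exactly right — verifying that $\sum_k X_k^\ell(x_j) = 0$ for all $j,\ell$ is genuinely equivalent to $\sum_k A_k^\ell = 0$ for all $\ell$ (the right power of the right matrix, with the right sign), which is a routine but sign-sensitive computation; once that is in place the residue argument on $\sum_k \sum_\lambda \frac{\lambda t}{1-\lambda t} \equiv 0$ is short and robust. As a sanity check, the final sentence of the theorem — that the flows $\exp(tX_k)$ are polynomial in $t$ — is then immediate: a nilpotent linear vector field $X_k$ satisfies $X_k^n(x_j) = 0$, so formula (\ref{eq:al}) terminates, giving $\exp(tX_k)(x) = x + tA_k x + \cdots + \frac{t^{n-1}}{(n-1)!}A_k^{n-1}x$.
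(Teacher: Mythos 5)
Your proposal is correct and follows essentially the same route as the paper: reduce the barycentric property to the matrix identities $\sum_{k}A_k^\ell=0$ for all $\ell\geq 1$, take traces to get vanishing of all power sums of the eigenvalues, and conclude each $A_k$ is nilpotent. The only difference is cosmetic: the paper obtains the identities by expanding the exponential flows directly and leaves the ``all power sums vanish $\Rightarrow$ all eigenvalues vanish'' step implicit, whereas you route through Proposition \ref{pro:csqeqal} and justify that step with the generating-function/pole argument.
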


\begin{proof}[{\sl Proof}]
The flow $\varphi_t^k$ of $X_k$ can be written
\[
\varphi_t^k(x)=(\exp tA_k)(x)
\]
where the $A_k$ belong to $\mathrm{End}(\mathbb{R}^n)$
or $\mathrm{End}(\mathbb{C}^n)$. We identify 
the $A_k$ to some matrices. The barycentric property
is equivalent to 
\[
\displaystyle\sum_{k=1}^p\sum_{\ell=0}^\infty\frac{t^\ell}{\ell !}A_k^\ell=p\mathrm{Id}
\]
that implies $\displaystyle\sum_{k=1}^p A_k^\ell=0$ for 
any $\ell\geq 1$. Let $\lambda_{k,j}$ be the eigenvalues
of $A_k$, $1\leq j\leq n$. We get for all $\ell\geq 1$
\[
0=\mathrm{Tr}\Big(\displaystyle\sum_{k=1}^pA_k^\ell\Big)=\displaystyle\sum_{k=1}^p\sum_{j=1}^n\lambda_{k,j}^n.
\]
As a result all the $\lambda_{k,j}$ are equal to zero. 
\end{proof}

\begin{rem}
The $\varphi_t^k$ are polynomial in $x$ and $t$.
\end{rem}

\begin{rem}
If $p=2$, then the indices of nilpotence are $2$ 
({\it i.e.} $A^2=0$) and we recover the fact that
the trajectories are straight lines. Note also 
that if $X$ is a nilpotent vector field of 
index $2$, then the pair $(X,-X)$ is a $2$-chambar.
\end{rem}

\begin{eg}
Let $X$ be a nilpotent linear vector field of 
order $p$. Let $\sigma=\exp\left(\frac{2\mathbf{i}\pi}{p}\right)$
be a primitive $p$-th root of unity. Then 
the vector fields $X$, $\sigma X$, $\sigma^2X$, 
$\ldots$, $\sigma^{p-1}X$ satisfy the barycentric 
property.
\end{eg}

\begin{rem}
Let $\mathrm{Ch}(X_1,X_2,\ldots,X_p)$ be a linear
$p$-chambar. Denote by $k$ the maximal order of 
nilpotence of the $X_i$'s. Take $\ell<k$ an 
integer. Then 
$\mathrm{Ch}(X_1^\ell,X_2^\ell,\ldots,X_p^\ell)$
is a $q$-chambar for some $q\leq p$. The 
inequality comes from the fact that two 
$X_k^\ell$ can be equal or $X_k^\ell$ can be zero. 
The fact that $q<p$ measures some degeneration and
if $q=p$ for any $\ell<k$ it gives some
condition of transversality.
\end{rem}

\begin{rem}
Let $\mathrm{Ch}(X_1,X_2,\ldots,X_p)$ be a singular
$p$-chambar such that $X_k(0)=0$. Denote by 
$A_i$ the linear part of $X_i$ for $1\leq i\leq p$.

Assume that the $A_i$'s generate a linear $p$-chambar
$\mathrm{Ch}(A_1,A_2,\ldots,A_p)$.

Consider the homothety $h_s\colon x\mapsto sx$, 
$s\in\mathbb{C}^*$ and 
\[
X_k^s=h_{s*}X_k=A_k+s(\ldots)
\]
We construct in this way a family  
$\mathrm{Ch}^s=\mathrm{Ch}(X_1^s,X_2^s,\ldots,X_p^s)$
of $p$-chambars, all conjugate for $s\not=0$, 
and that joins the initial chambar 
$\mathrm{Ch}^1=\mathrm{Ch}(X_1,X_2,\ldots,X_p)$
to the linear chambar 
$\mathrm{Ch}^0=\mathrm{Ch}(A_1,A_2,\ldots,A_p)$.
\end{rem}

\subsection{Linear $p$-chambars in dimension $2$}

\begin{lem}
{\sl Let $B$ be $(2\times 2)$-matrix with complex coefficients.
If $\mathrm{Tr}(B)=0$, then $B$ is the sum of two 
nilpotent matrices.}
\end{lem}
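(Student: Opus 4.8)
The plan is to reduce $B$ to a normal form under conjugation for which the decomposition is transparent, exploiting the fact that "being a sum of two nilpotent matrices" is invariant under conjugation: if $B=PCP^{-1}$ and $C=N_1+N_2$ with $N_1,N_2$ nilpotent, then $B=(PN_1P^{-1})+(PN_2P^{-1})$ is again such a sum.

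First I would dispose of the degenerate case. If $B$ is a scalar matrix $\lambda\,\mathrm{Id}$, then $\mathrm{Tr}(B)=2\lambda=0$ forces $B=0=0+0$, and we are done. So assume $B$ is not scalar. Then there is a vector $v$ which is not an eigenvector of $B$ (otherwise every nonzero vector would be an eigenvector, and comparing $B(u+w)$ with $Bu+Bw$ for two independent eigenvectors $u,w$ would force $B$ to be scalar); hence $\{v,Bv\}$ is a basis of $\mathbb{C}^2$. By the Cayley--Hamilton theorem together with $\mathrm{Tr}(B)=0$ we have $B^2=-\det(B)\,\mathrm{Id}$, so $B(Bv)=-\det(B)\,v$; therefore in the basis $\{v,Bv\}$ the matrix of $B$ is the companion matrix
\[
C=\begin{pmatrix} 0 & -\det B \\ 1 & 0 \end{pmatrix}.
\]

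Finally, $C$ is visibly the sum of the two strictly triangular matrices
\[
C=\begin{pmatrix} 0 & -\det B \\ 0 & 0 \end{pmatrix}+\begin{pmatrix} 0 & 0 \\ 1 & 0 \end{pmatrix},
\]
each of which squares to zero and is therefore nilpotent; conjugating back to the original basis yields the desired decomposition of $B$. There is no genuine obstacle here: the argument is elementary, and the only points deserving a line of care are that the reduction to companion form really uses the hypothesis $\mathrm{Tr}(B)=0$ — which is unavoidable, since any sum of two nilpotent $2\times2$ matrices has zero trace — and the separate treatment of the scalar case, where $\mathrm{Tr}(B)=0$ collapses $B$ to $0$.
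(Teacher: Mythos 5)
Your proof is correct and follows essentially the same route as the paper's second proof: conjugate a trace-zero $B$ to an antidiagonal matrix and split it into the two strictly triangular (hence nilpotent) pieces. The only difference is that you justify the reduction explicitly (cyclic vector plus Cayley--Hamilton, with the scalar case $B=0$ treated separately), whereas the paper simply asserts the conjugacy to $\left(\begin{smallmatrix}0 & x\\ y & 0\end{smallmatrix}\right)$.
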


\begin{proof}[{\sl First proof}]
If $B=0$, then the result holds. 

Let us now assume that $B\not=0$. Let us write 
$B$ as $\left(
\begin{array}{cc}
a & b\\
c & -a
\end{array}
\right)$. We are looking forward two nilpotent matrices
\begin{align*}
& A=\left(
\begin{array}{cc}
x & y \\
z & -x
\end{array}
\right)
&& 
A'=\left(
\begin{array}{cc}
x' & y' \\
z' & -x'
\end{array}
\right)
\end{align*}
such that $B=A+A'$. We thus have to solve
the following system
\[
\left\{
\begin{array}{lllll}
x+x'=a\\
y+y'=b\\
z+z'=c\\
x^2+yz=0\\
x^{'2}+y'z'=0
\end{array}
\right.
\]
(the last two conditions guaranteeing nilpotence). 
After elimination of $x'$, $y'$ and $z'$ we get
\[
\left\{
\begin{array}{ll}
x^2+yz=0\\
(a-x)^2+(b-y)(c-z)=0
\end{array}
\right.
\]
that is
\[
\left\{
\begin{array}{ll}
x^2+yz=0\\
2ax+bz+cy-a^2-bc=0
\end{array}
\right.
\]
which is the non-trivial intersection of a 
quadric and of a plane. These two sets 
intersect along a plane conic.
\end{proof}

\begin{proof}[{\sl Second proof}]
Since $\mathrm{Tr}(B)=0$, then $B$ is conjugate to 
$\left(\begin{array}{cc}0 & x\\y & 0\end{array}\right)$
for some $x$, $y$ in $\mathbb{C}$ (note 
that if $B$ is nilpotent, then $y=0$).
We conclude using the fact that 
\[
\left(\begin{array}{cc}0 & x\\y & 0\end{array}\right)=\underbrace{\left(\begin{array}{cc}0 & x\\0 & 0\end{array}\right)}_{\text{nilpotent}}+\underbrace{\left(\begin{array}{cc}0 & 0\\y & 0\end{array}\right)}_{\text{nilpotent}}
\]
\end{proof}

\begin{cor}
{\sl Let $A_3$, $A_4$, $\ldots$, $A_p$ be $(p-2)$ nilpotent
$(2\times 2)$-matrices.

There exist two nilpotent $(2\times 2)$-matrices $A_1$, $A_2$ such 
that the flows $\varphi_t^k=\exp tA_k$, $1\leq k\leq p$, 
satisfy the barycentric property.}
\end{cor}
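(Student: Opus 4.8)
The plan is to combine the Lemma just proved with the criterion for linear vector fields extracted in the proof of Theorem~\ref{thm:linnil}. Recall from that proof that linear vector fields $X_1,\ldots,X_p$ with associated matrices $A_1,\ldots,A_p$ satisfy the barycentric property if and only if $\sum_{k=1}^p A_k^\ell=0$ for every $\ell\geq 1$. The first point I would make is that a nilpotent $(2\times 2)$-matrix $A$ has characteristic polynomial $\lambda^2$, hence $A^2=0$, so $A^\ell=0$ for all $\ell\geq 2$. Therefore, \emph{for a $p$-tuple of nilpotent $(2\times 2)$-matrices} the infinitely many conditions above reduce to the single linear condition $A_1+A_2+\ldots+A_p=0$.

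With this in hand the argument is short. Given the nilpotent matrices $A_3,\ldots,A_p$, I would set
\[
B:=-\big(A_3+A_4+\ldots+A_p\big).
\]
Since each $A_k$ is nilpotent one has $\mathrm{Tr}(A_k)=0$, whence $\mathrm{Tr}(B)=0$. By the Lemma, $B$ can be written as $B=A_1+A_2$ with $A_1$ and $A_2$ nilpotent $(2\times 2)$-matrices. Then $A_1+A_2+A_3+\ldots+A_p=0$ with all $A_k$ nilpotent, so $A_k^\ell=0$ for $\ell\geq 2$ and $\sum_k A_k^\ell=0$ for every $\ell\geq 1$; by the criterion above the flows $\varphi_t^k=\exp(tA_k)$ satisfy the barycentric property.

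There is no real obstacle here: the only step requiring attention is the passage from the tower of identities $\sum_k A_k^\ell=0$, $\ell\geq 1$, to the single identity $\sum_k A_k=0$, which is precisely where the hypothesis $n=2$ (nilpotent $2\times2$ matrices square to zero) enters. If one wanted in addition the $X_k$ to be pairwise distinct, so as to have a genuine $p$-chambar in the strict sense, one could use that the Lemma in fact produces a plane conic of solutions $(A_1,A_2)$ and pick a point on it making $A_1,A_2,A_3,\ldots,A_p$ pairwise distinct; but as stated the corollary only requires the barycentric property, so this refinement is unnecessary.
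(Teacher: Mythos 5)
Your proof is correct and follows essentially the same route as the paper: apply the preceding Lemma to the trace-zero matrix $-(A_3+\ldots+A_p)$ to get nilpotent $A_1,A_2$ with $\sum_k A_k=0$, and then use that a nilpotent $(2\times 2)$-matrix squares to zero (the paper phrases this as $\exp tA_k=\mathrm{Id}+tA_k$, you phrase it via the criterion $\sum_k A_k^\ell=0$ for all $\ell\geq 1$, which is the same observation). Your explicit remark about why the infinitely many conditions collapse to the single linear one is a welcome clarification but not a different argument.
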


\begin{proof}[{\sl Proof}]
Let $A_1$ and $A_2$ be two nilpotent matrices such that
\[
A_1+A_2+A_3+\ldots+A_p=0.
\]
As $\exp tA_k=\mathrm{Id}+tA_k$ in dimension $2$, the 
$p$-tuple $(A_1,A_2,\ldots,A_p)$ suits.
\end{proof}

\begin{rem}
If $A_1$, $A_2$, $A_3$ are nilpotent $(2\times 2)$-matrices
that satisfy the barycentric pro\-perty, then the $A_i$ 
are $\mathbb{C}$-colinear, {\it i.e.} $\mathrm{Ch}(A_1,A_2,A_3)$ is rigid. Indeed the nilpotent 
$(2\times 2)$-matrices form a quadratic cone.
\end{rem}

\subsection{Linear $3$-chambars}

The following example illustrates that we can find 
solutions to the barycentric property in some 
Lie algebras of vector fields. In the particular
case $n=3$ one can find $p$-chambars in the 
Heisenberg Lie algebra~$\mathfrak{h}_3$ formed by matrices 
\[
M(\alpha,\beta,\gamma)=\left(
\begin{array}{ccc}
0 & \alpha & \gamma \\
0 & 0 & \beta \\
0 & 0 & 0
\end{array}
\right).
\]
One has $M^2(\alpha,\beta,\gamma)=M(0,0,\alpha\beta)$.
The barycentric property for the vector fields $X_k$ 
corresponding to the matrices $M(\alpha_k,\beta_k,\gamma_k)$, 
$k=1$, $\ldots$, $p$, is equivalent to the equalities
\[
\displaystyle\sum_{k=1}^p\alpha_k=\sum_{k=1}^p\beta_k=\sum_{k=1}^p\gamma_k=\sum_{k=1}^p\alpha_k\beta_k=0.
\]
In the coefficients space $(\mathbb{C}^3)^p$ 
the barycentric property is the intersection 
of three hyperplanes and one quadric
which has thus dimension $3p-4$.

\begin{thm}\label{thm:lin3cham}
{\sl Let $\mathrm{Ch}(X_1,X_2,X_3)$ be a linear $3$-chambar
on $\mathbb{C}^3$. Then up to conjugacy, the~$X_i$'s 
$($identified to their matrices$)$ are contained in the 
Heisenberg Lie algebra $\mathfrak{h}_3~\subset~\mathrm{gl}(3,\mathbb{C})$.}
\end{thm}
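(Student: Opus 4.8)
The starting point is Theorem~\ref{thm:linnil}: under the barycentric hypothesis the matrices $A_i$ (I identify each $X_i$ with its matrix) are nilpotent, and expanding every $\exp(tA_i)$ as a power series in $t$ shows that the barycentric property is equivalent to $\sum_i A_i^{\ell}=0$ for all $\ell\ge 1$. Since a nilpotent $3\times 3$ matrix satisfies $A_i^3=0$, the only surviving relations are
\[
A_1+A_2+A_3=0,\qquad A_1^2+A_2^2+A_3^2=0 .
\]
Eliminating $A_3=-(A_1+A_2)$, the hypothesis reads: $A_1$, $A_2$, $A_1+A_2$ are nilpotent and $2A_1^2+2A_2^2+A_1A_2+A_2A_1=0$. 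The goal is to produce a complete flag $0\subset V_1\subset V_2\subset\mathbb{C}^3$ invariant under $A_1,A_2,A_3$ with $A_iV_j\subset V_{j-1}$; in a basis adapted to such a flag all three matrices are strictly upper triangular, and the strictly upper triangular $3\times 3$ matrices are exactly the Heisenberg algebra $\mathfrak{h}_3$.

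The key step, which I expect to be the main obstacle, is a purely formal consequence of the quadratic relation: $[A_1,A_2^2]=0$ and, by symmetry, $[A_2,A_1^2]=0$. The plan is to rewrite the relation as $A_2A_1=-(A_1A_2+2A_1^2+2A_2^2)$ and substitute this identity repeatedly into $A_2^2A_1$, using $A_1^3=A_2^3=0$ to discard all cubic terms; after a few substitutions $A_2^2A_1$ is expressed as $-3A_1A_2^2+4A_2^2A_1$, which forces $A_2^2A_1=A_1A_2^2$. In particular $A_1^2$ and $A_2^2$ commute as well.

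Next I would produce a common null vector $v$ of $A_1$ and $A_2$ — it then lies in $\ker A_3$ automatically — by the following trichotomy. If $A_1^2\ne 0$, then $A_1$ is a single $3\times 3$ Jordan block, so $\mathrm{im}(A_1^2)=\ker(A_1)$ is a line; since $A_2$ commutes with $A_1^2$ it preserves $\mathrm{im}(A_1^2)$, and being nilpotent it must vanish on this invariant line, so $\ker A_1\subset\ker A_2$. The case $A_2^2\ne 0$ is symmetric. If $A_1^2=A_2^2=0$, then $\dim\ker A_1\ge 2$ and $\dim\ker A_2\ge 2$, hence $\ker A_1\cap\ker A_2\ne 0$. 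In every case set $V_1=\mathbb{C}v$, which is invariant under all $A_i$.

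Finally I would pass to the quotient map $\pi\colon\mathbb{C}^3\to\mathbb{C}^3/V_1$, where the induced matrices $\bar A_1,\bar A_2,\bar A_3$ are nilpotent and still satisfy the barycentric property; by the two-dimensional case (a barycentric triple of nilpotent $2\times 2$ matrices is $\mathbb{C}$-colinear) they are all proportional to a single nilpotent $N$. If $N=0$ then $A_i\mathbb{C}^3\subset V_1$ for every $i$ and any plane $V_2$ with $V_1\subset V_2\subset\mathbb{C}^3$ works; if $N\ne 0$ then $\mathrm{im}(N)=\ker(N)$ is a line $\bar L$ in $\mathbb{C}^3/V_1$, and $V_2:=\pi^{-1}(\bar L)$ is a plane containing $V_1$ with $A_iV_2\subset V_1$ and $A_i\mathbb{C}^3\subset V_2$ for all $i$. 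Either way the flag $0\subset V_1\subset V_2\subset\mathbb{C}^3$ has the required properties, so the $A_i$ are simultaneously strictly upper triangularizable and hence, up to conjugacy, lie in $\mathfrak{h}_3$.
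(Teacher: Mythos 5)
Your argument is correct, and it takes a genuinely different route from the paper's. The paper proves Theorem \ref{thm:lin3cham} by a rank dichotomy with explicit normal forms: if some $X_i$ has rank $2$ it is conjugated to the regular nilpotent Jordan matrix and a direct computation forces the other two matrices into $\mathfrak{h}_3$; if all three have rank $1$, $X_1$ is normalized to the elementary matrix with a single corner entry and a case analysis on the column structure of $X_2$ finishes. You instead work invariantly: after reducing (via Theorem \ref{thm:linnil} and $A_i^3=0$) to the two relations $A_1+A_2+A_3=0$ and $A_1^2+A_2^2+A_3^2=0$, you derive the commutation identities $[A_1,A_2^2]=[A_2,A_1^2]=0$ and use them to build a complete invariant flag. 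Your substitution computation is right, and can be streamlined: multiplying $A_1A_2+A_2A_1=-2A_1^2-2A_2^2$ on the left and on the right by $A_2$ and subtracting gives $A_2^2A_1-A_1A_2^2=2\bigl(A_1^2A_2-A_2A_1^2\bigr)$, and the identity obtained by exchanging $A_1$ and $A_2$ then forces both commutators to vanish. The trichotomy producing a common kernel line and the passage to the $2$-dimensional quotient are sound; note only that the colinearity of nilpotent $2\times 2$ barycentric triples is recorded in the paper as a remark without proof (the quadratic-cone observation), so you should include its short justification, e.g.\ writing the nonzero matrices as $A_k=u_kv_k^{T}$ with $v_k^{T}u_k=0$ and using $\det(A_1+A_2)=-\mathrm{tr}(A_1A_2)=-(v_1^{T}u_2)(v_2^{T}u_1)$. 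In spirit your proof is closer to the paper's general Theorem \ref{thm:3linchambar} on $\mathbb{C}^n$, which shows by trace arguments that every non-constant polynomial in $X_1,X_2$ is nilpotent and then triangularizes; what your version buys is a short, coordinate-free argument with no case-by-case matrix computation, while the paper's $3\times 3$ proof yields explicit normal forms, and its general theorem covers all dimensions, whereas your flag construction (resting on the quotient to dimension $2$) is specific to $n=3$.
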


\begin{proof}[{\sl Proof}]
Let us identified $X_i$ to its matrix. 

We will 
distinguish two cases according to the rank of 
the $X_i$'s.
\begin{itemize}
\item[$\diamond$] If one of the $X_i$'s has rank $2$, 
for instance $X_1$, then 
up to conjugacy one can assume that 
$X_1=\left(
\begin{array}{ccc}
0 & 1 & 0\\
0 & 0 & 1\\
0 & 0 & 0
\end{array}
\right)$. We are now looking for $X_2$ and~$X_3$ 
such that $X_2$ and $X_3$ are nilpotent 
(in particular their traces are zero) and
$X_1+X_2+X_3=X_1^2+X_2^2+X_3^2=0$. A 
straightforward computation implies that 
$X_2$ and $X_3$ belong to $\mathfrak{h}_3$.

\item[$\diamond$] It suffices now to deal with 
the case where the three nilpotent matrices $X_1$, 
$X_2$ and $X_3$ have rank $1$. Up to conjugacy
one can suppose that 
$X_1=\left(
\begin{array}{ccc}
0 & 0 & 1\\
0 & 0 & 0\\
0 & 0 & 0
\end{array}
\right)$. As $X_2$ has rank $1$ the three 
columns of $X_2$ are colinear, {\it i.e.}
$X_2=(\lambda E,\mu E,\nu E)$ where 
$E=\left(
\begin{array}{c}
a\\
b\\
c
\end{array}
\right)\not=0$. 
Then $X_3=-X_1-X_2=\left(-\lambda E,-\mu E,-\nu E-\left(
\begin{array}{c}
1\\
0\\
0
\end{array}\right)\right)$. Let us distinguish
three cases:
\begin{itemize}
\item[$\bullet$] First assume that 
$\lambda=\mu=0$. Changing the notations if 
needed let us take $\nu=1$. Then 
\begin{align*}
&X_1=\left(
\begin{array}{ccc}
0 & 0 & 1\\
0 & 0 & 0\\
0 & 0 & 0
\end{array}
\right), &&
X_2=\left(
\begin{array}{ccc}
0 & 0 & a\\
0 & 0 & b\\
0 & 0 & c
\end{array}
\right), &&
X_3=-\left(
\begin{array}{ccc}
0 & 0 & a+1\\
0 & 0 & b\\
0 & 0 & c
\end{array}
\right).
\end{align*}
Since $X_1$ and $X_2$ are nilpotent,
$c$ has to be $0$; but $c=0$ leads to 
$X_2^2=X_3^2=0$, and the $X_i$ belong
to $\mathfrak{h}_3$.

\item[$\bullet$] Now suppose $\lambda\not=0$, 
{\it i.e.} $\lambda=1$. Then 
\begin{align*}
& X_2=\left(
\begin{array}{ccc}
a & \mu a & \nu a\\
b & \mu b & \nu b\\
c & \mu c & \nu c
\end{array}
\right), &&X_3=-\left(
\begin{array}{ccc}
a & \mu a & \nu a+1\\
b & \mu b & \nu b\\
c & \mu c & \nu c
\end{array}
\right).
\end{align*}
As $X_3$ has rank $1$, the coefficients 
$b$ and $c$ are zero. Therefore 
$X_2=\left(
\begin{array}{ccc}
a & \mu a & \nu a\\
0 & 0 & 0\\
0 & 0 & 0
\end{array}
\right)$; since $X_2$ is nilpotent, 
$a$ has to be $0$. As a consequence
$X_2=0$ which is impossible (the matrices are 
implicitly 
assumed to be non-zero).

\item[$\bullet$] Finally assume that $\lambda=0$
and $\mu\not=0$, that is $\lambda=0$ and 
$\mu=1$ and 
\begin{align*}
& X_2=\left(
\begin{array}{ccc}
0 & a & \nu a\\
0 & b & \nu b\\
0 & c & \nu c
\end{array}
\right), &&X_3=-\left(
\begin{array}{ccc}
0 & a & \nu a+1\\
0 & b & \nu b\\
0 & c & \nu c
\end{array}
\right).
\end{align*}
The fact that $\mathrm{rk}\,X_3=1$ leads to 
$b=c=0$ and 
\begin{align*}
& X_2=\left(
\begin{array}{ccc}
0 & a & \nu a\\
0 & 0 & 0\\
0 & 0 & 0
\end{array}
\right), &&X_3=-\left(
\begin{array}{ccc}
0 & a & \nu a+1\\
0 & 0 & 0\\
0 & 0 & 0
\end{array}
\right)
\end{align*}
belong to $\mathfrak{h}_3$.
\end{itemize}
\end{itemize}
\end{proof}

In fact the statement holds in any dimension but we keep 
the previous result and its proof because 
this last one is much more easier. Let us start
by some definitions, notations and intermediate
results of non-commutative algebra.

A \textsl{monomial} of $k$-variables on 
$\mathrm{End}(\mathbb{C}^n)$ is a map 
$f\colon\mathrm{End}(\mathbb{C}^n)^k\to\mathrm{End}(\mathbb{C}^n)$
of the form 
\[
f(X_1,X_2,\ldots,X_k)=X_{i_1}^{k_1}X_{i_2}^{k_2}\ldots X_{i_r}^{k_r}
\]
where $r\geq 1$, $i_j\in\{1,\,2,\,\ldots,\,k\}$ and 
$k_j\geq 0$ for any $1\leq j\leq r$. By convention 
$X_i^0=1$.

We say that the monomial is \textsl{reduced} if 
\begin{itemize}
\item[$\diamond$] $k_j\geq 1$ for any $1\leq j\leq r$;

\item[$\diamond$] $i_j\not= i_{j+1}$ for any $1\leq j\leq r-1$.
\end{itemize}

The \textsl{degree} of $f$ is $\deg f=\displaystyle\sum_{i=1}^rk_i$.
A \textsl{polynomial of $k$ variables} on $\mathrm{End}(\mathbb{C}^n)$
is a linear combination of monomials of $k$ variables on 
$\mathrm{End}(\mathbb{C}^n)$:
\[
P(X_1,X_2,\ldots,X_k)=\displaystyle\sum_{j=1}^sa_jF_j(X_1,X_2,\ldots,X_k)
\]
with $a_1$, $a_2$, $\ldots$, $a_s$ in $\mathbb{C}$. 
The \textsl{degree} of $P$ is 
$\deg P=\max\{\deg(F_j)\,\vert\, a_j\not=0\}$.
If $\deg F_j\geq 1$ for any $1\leq j\leq s$, then we say that 
$P$ is \textsl{without constant term}.

If $\mathrm{Ch}(X_1,X_2,X_3)$ is a $3$-linear chambar on $\mathbb{C}^n$,
we denote by 
$\mathcal{G}=\langle X_1,\,X_2,\,X_3\rangle\subset\mathrm{End}(\mathbb{C}^n)\simeq\mathrm{gl}(n,\mathbb{C})$
the sub-algebra generated by $X_1$, $X_2$ and $X_3$. As previously we identify the
linear vector field $X_j$ with 
elements of $\mathrm{End}(\mathbb{C}^n)$.

We can now state the result:

\begin{thm}\label{thm:3linchambar}
{\sl Let $\mathrm{Ch}(X_1,X_2,X_3)$ be a linear $3$-chambar
on $\mathbb{C}^n$. Let $\mathcal{G}=\langle X_1,\,X_2,\,X_3\rangle$
be the algebra of linear transformations generated by $X_1$, 
$X_2$ and~$X_3$.

If $Y_1$, $Y_2$, $\ldots$, $Y_n$ belong to $\mathcal{G}$, 
then $Y_1Y_2\ldots Y_n=0$.

In particular, up to conjugacy, the $X_i$'s 
$($identified to their matrices$)$ are contained in the 
Heisenberg Lie algebra $\mathfrak{h}_n\subset\mathrm{gl}(n,\mathbb{C})$.}
\end{thm}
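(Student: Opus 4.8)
The plan is to prove that the associative subalgebra $\mathcal G=\langle X_1,X_2,X_3\rangle\subseteq\mathrm{End}(\mathbb C^n)$ is \emph{nil}, i.e.\ that every element of $\mathcal G$ is a nilpotent endomorphism. Both conclusions of the theorem then follow at once: a subalgebra of $\mathrm{End}(\mathbb C^n)$ all of whose elements are nilpotent can be simultaneously put in strictly upper triangular form (a classical fact), so after a linear change of coordinates $X_1,X_2,X_3\in\mathfrak h_n$; and a product of $n$ strictly upper triangular $n\times n$ matrices is $0$, which is exactly the assertion $Y_1\cdots Y_n=0$. By Theorem~\ref{thm:linnil} the $X_i$ are nilpotent, and expanding the exponentials the barycentric property is equivalent to the matrix identities $X_1^{\ell}+X_2^{\ell}+X_3^{\ell}=0$ for all $\ell\ge 1$ (the case $\ell=1$ gives $X_3=-(X_1+X_2)$, so $\mathcal G$ is in fact generated by $X_1,X_2$; we keep the symmetric form of the relations).

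First I would reduce ``$\mathcal G$ is nil'' to a family of trace identities. Every $Y\in\mathcal G$ is a $\mathbb C$-combination of reduced monomials in $X_1,X_2,X_3$, each power $Y^m$ again lies in $\mathcal G$, and a matrix $Y$ with $\mathrm{Tr}(Y^m)=0$ for $m=2,\dots,n+1$ is automatically nilpotent: Newton's identities force $e_k(Y)=e_1(Y)^k/k!$ for $k\le n$, and the $(n+1)$-st identity then forces $e_1(Y)=0$. Since $Y^m$ has degree $\ge m$, it therefore suffices to prove $\mathrm{Tr}(w)=0$ for every monomial $w$ of degree $\ge 2$; and applying the same criterion to a monomial $w$ of fixed degree $d$ (whose powers $w^m$, $m\ge 2$, have degree $dm\ge 4$ once $d\ge 2$) reduces this further to
\[
\mathrm{Tr}(w)=0\qquad\text{for every monomial }w\text{ in }X_1,X_2,X_3\text{ of degree }\ge 4 ,
\]
the cases $d=1,2,3$ being recovered for free ($X_i$ is nilpotent; $\mathrm{Tr}(X_iX_j)=\mathrm{Tr}(X_i^2)=0$ since $X_i$ is nilpotent; a degree-$3$ monomial has all its powers of degree $\ge 6\ge 4$, hence is nilpotent by the criterion). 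Equivalently, one may argue inside a hypothetical nonzero simple quotient $M_d(\mathbb C)$ of $\mathcal G$: there $\mathrm{id}_{M_d}$ is a combination of monomials in the nilpotent generators $\overline X_i$ and has trace $d\ne 0$, so the vanishing of all monomial traces is exactly what produces the contradiction.

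The remaining, and genuinely substantial, step is the degree-$\ge 4$ trace vanishing, for which the formalism of polynomials in $k$ noncommuting variables developed above is the right tool. The mechanism is to multiply the homogeneous relations $\sum_k X_k^{\ell}=0$ (for $2\le\ell\le L$) on one side by monomials of degree $L-\ell$, take traces, and exploit the cyclic invariance of $\mathrm{Tr}$ together with the symmetry of the hypotheses under permutations of $X_1,X_2,X_3$; this yields, for each degree $L\ge 4$, a linear system whose unknowns are the traces of the cyclic classes of degree-$L$ monomials, and one shows by induction on $L$ (using the lower-degree vanishings already obtained, as well as those obtained within degree $L$) that the system forces every such trace to $0$. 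I expect the main difficulty to be precisely this bookkeeping—which products $M\cdot\bigl(\sum_k X_k^{\ell}\bigr)$ to trace against, and in what order—made more delicate by the fact that in degree $3$ the analogous system is \emph{underdetermined} (it yields only $\mathrm{Tr}(X_1^2X_2)+\mathrm{Tr}(X_1X_2^2)=0$), so the argument must be organised to start at degree $4$ and never appeal to degree $3$. Everything else is the structure theory and the Newton-identity reduction recalled above.
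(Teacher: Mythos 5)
Your outer structure matches the paper's: reduce everything to showing that every element of $\mathcal{G}$ is nilpotent (then simultaneous strict triangularization gives both $Y_1\cdots Y_n=0$ and the inclusion in $\mathfrak{h}_n$), and reduce nilpotency, via Newton's identities and the fact that powers of monomials are monomials, to the vanishing of $\mathrm{Tr}(w)$ for monomials $w$ in $X_1,X_2,X_3$. That part is sound. But the proposal has a genuine gap exactly at the theorem's core: the vanishing of these monomial traces is never proved. You propose to trace the relations $\sum_k X_k^{\ell}=0$ against monomials of complementary degree and to show by induction on the degree $L$ that the resulting linear system in the cyclic trace classes forces all unknowns to zero, but you explicitly defer the ``bookkeeping'' and give no argument that the system is determined in every degree. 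Your own observation that the analogous system in degree $3$ is underdetermined shows this is not automatic; the degree-$4$ system does happen to close up (one can check it by hand), but no mechanism is offered that guarantees this in all higher degrees, and that is precisely where the content of the theorem lies.

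The paper closes this gap with a different, and decisive, idea: a non-commutative \emph{matrix} identity rather than a family of trace identities. Its Lemma \ref{lem:tec1} shows, by induction on the length of the monomial and starting from $X_1^k+X_2^k=-X_3^k$ and $X_1^kX_2^j+X_2^kX_1^j=2X_3^{k+j}$, that for every monomial $f$ in two variables
\[
f(X_1,X_2)+f(X_2,X_1)=n(f)\,X_3^{\deg f},\qquad n(f)\in\mathbb{Z}.
\]
Taking traces of this identity applied to $f$ and to all its powers (which are again monomials) gives $\sum_j\lambda_j^m+\sum_j\mu_j^m=0$ for all $m$, where $\lambda_j,\mu_j$ are the eigenvalues of $f(X_1,X_2)$ and $f(X_2,X_1)$; hence each monomial evaluation is nilpotent and in particular has zero trace, and then every polynomial without constant term in $X_1,X_2$ is nilpotent (Lemma \ref{lem:tec2}). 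This symmetrization identity is exactly the missing ingredient that makes your trace-vanishing step work uniformly in all degrees, without any degree-by-degree linear algebra; without it (or a proved substitute), your proposal is a plausible plan rather than a proof.
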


\begin{proof}[{\sl Proof}]
Let us start the proof with the following statement:
\begin{lem}\label{lem:tec1}
{\sl Let $\mathrm{Ch}(X_1,X_2,X_3)$ be a linear $3$-chambar
on $\mathbb{C}^n$. 

Let $f$ be a monomial of two variables on 
$\mathrm{End}(\mathbb{C}^n)$. 

There exists $n(f)\in\mathbb{Z}$ such that
\[
f(X_1,X_2)+f(X_2,X_1)=n(f)\cdot X_3^{\deg f}.
\]}
\end{lem}

\begin{proof}[{\sl Proof}]
For instance from
\[
X_1^k+X_2^k=-X_3^k\qquad\forall\,k\geq 1
\]
we get 
\[
X_3^{k+j}=(X_1^k+X_2^k)(X_1^j+X_2^j)=X_1^{k+j}+X_2^{k+j}+X_1^kX_2^j+X_2^kX_1^j=-X_3^{k+j}+X_1^kX_2^j+X_2^kX_1^j
\]
and so $X_1^kX_2^j+X_2^kX_1^j=2X_3^{k+j}$.

A reduced monomial $g$ of two variables on 
$\mathrm{End}(\mathbb{C}^n)$ can be written 
as 
\[
g(X,Y)=X^{k_1}Y^{j_1}X^{k_2}\ldots Y^{j_r}
\]
where $k_1\geq 0$, $j_r\geq 0$, $k_2$, $k_3$, 
$\ldots$, $k_r\geq 1$ and $j_1$, $j_2$, 
$\ldots$, $j_{r-1}\geq 1$. Note that 
$\deg g=\displaystyle\sum_{i=1}^r(k_i+j_i)$. Let us 
introduce the following definitions:
\begin{itemize}
\item[$\diamond$] the $X$-length of $g$ is $\ell_X(g)=\#\{i\,\vert\,k_i>0\}$;

\item[$\diamond$] the $Y$-length of $g$ is $\ell_Y(g)=\#\{i\,\vert\,j_i>0\}$;

\item[$\diamond$] the length of $g$ is $\ell(g)=\ell_X(g)+\ell_Y(g)$.
\end{itemize}

The proof is by induction on $\ell(f)$. Let us 
state the induction assumption: given $m\in\mathbb{N}$
the assertion of the Lemma is true for any reduced
monomial $g$ with $\ell(g)\leq m$. 

The induction assumption is true if $m\leq 2$: 
\begin{itemize}
\item[$\diamond$] for 
$\ell(f)=1$ it is a consequence of the 
equality $X_1^k+X_2^k=-X_3^k$;

\item[$\diamond$] for $\ell(f)=2$ it is 
a consequence of the equality 
$X_1^kX_2^j+X_2^kX_1^j=2X_3^{k+j}$.
\end{itemize}

Assume that the assertion of the Lemma is true for 
$m\geq 2$ and let us prove that it is true for 
$m+1$. Let $f$ be a monomial with length 
$m+1\geq 3$. Without loss of generality 
we can assume that $f(X,Y)=X^kY^jX^mg(X,Y)$; 
note that $\ell(g)=\ell(f)-3=m-2$. Using that
$X_1^kX_2^j+X_2^kX_1^j=2X_3^{k+j}$ we have
\begin{eqnarray*}
& & f(X_1,X_2)+f(X_2,X_1) \\
& & \hspace{1cm} =X_1^kX_2^jX_1^mg(X_1,X_2)+X_2^kX_1^jX_2^mg(X_2,X_1)\\
& & \hspace{1cm} = (2X_3^{k+j}-X_2^kX_1^j)X_1^mg(X_1,X_2)+(2X_3^{k+j}-X_1^kX_2^j)X_2^mg(X_2,X_1)\\
& & \hspace{1cm} = 2X_3^{k+j}(X_1^mg(X_1,X_2)+X_2^mg(X_2,X_1))-X_2^kX_1^{j+m}g(X_1,X_2)-X_1^kX_2^{j+m}g(X_2,X_1)\\
& & \hspace{1cm} = 2X_3^{k+j}\Big(g_1(X_1,X_2)+g_1(X_2,X_1)\Big)-\Big(g_2(X_1,X_2)+g_2(X_2,X_1)\Big)
\end{eqnarray*}
where $g_1(X,Y)=X^mg(X,Y)$ and $g_2(X,Y)=Y^kY^{j+m}g(X,Y)$.
Note that 
\begin{align*}
& \ell(g_1)=1+\ell(g)=m-1 && \text{and} &&\ell(g_2)=\ell(g)+2=m.
\end{align*}
Therefore the induction assumption implies that for $i\in\{1,\,2\}$
\[
g_i(X_1,X_2)+g_i(X_2,X_1)=\ell(g_i)X_3^{\deg f}.
\]
Hence 
\[
f(X_1,X_2)+f(X_2,X_1)=\ell(f)X_3^{\deg f}
\]
where $\ell(f)=2\ell(g_1)-\ell(g_2)$.
\end{proof}

\begin{lem}\label{lem:tec2}
{\sl Let $\mathrm{Ch}(X_1,X_2,X_3)$ be a linear $3$-chambar
on $\mathbb{C}^n$. 

Let $P(X,Y)$ be a polynomial of two variables on 
$\mathrm{End}(\mathbb{C}^n)$. 
Assume that $P$ is without constant term. 

Then $P(X_1,X_2)$ is 
nilpotent, that is $P(X_1,X_2)^n=0$.}
\end{lem}

\begin{proof}[{\sl Proof}]
Assume first that $P$ is a reduced monomial. Set $d=\deg P$.
Denote by $\lambda_1$, $\lambda_2$, $\ldots$, $\lambda_n$
(resp. by $\mu_1$, $\mu_2$, $\ldots$, $\mu_n$) the eigenvalues
of $P(X_1,X_2)$ (resp. $P(X_2,X_1)$). It follows from 
Lemma \ref{lem:tec1} that 
\[
\displaystyle\sum_j\lambda_j+\sum_j\mu_j=\mathrm{tr}\big(P(X_1,X_2)+P(X_2,X_1)\big)=\mathrm{tr}\big(n(P)X_3^d\big)=0.
\]
Given any $m\in\mathbb{N}$, since $P(X,Y)^m$
is a monomial we have
\[
\sum_j\lambda_j^m+\sum_j\mu_j^m=\mathrm{tr}\big(P(X_1,X_2)^m+P(X_2,X_1)^m\big)=0\qquad\forall\,m\in\mathbb{N}.
\]
This implies that 
$\lambda_1=\lambda_2=\ldots=\lambda_n=0$ and so 
$P(X_1,X_2)$ is nilpotent. In particular we get
$\mathrm{tr}(P(X_1,X_2))=0$. 

\bigskip

Suppose now that $P$ is a polynomial of two variables on 
$\mathrm{End}(\mathbb{C}^n)$ without constant term. Since $P$ is a linear
combination of non constant monomials we get 
$\mathrm{tr}(P(X_1,X_2))=0$. Similarly, given $m\in\mathbb{N}$
then $P(X,Y)^m$ is also a polynomial without constant term and so 
$\mathrm{tr}(P(X_1,X_2)^m)=~0$. Therefore 
$P(X_1,X_2)$ is nilpotent and as $P(X_1,X_2)$ belongs
to $\mathrm{End}(\mathbb{C}^n)$ we get 
$P(X_1,X_2)^n=~0$. 
\end{proof}

Let $\mathfrak{g}$ be any Lie algebra.
Recall some classical well known facts. 
If $x$ belongs to $\mathfrak{g}$, 
$y\mapsto[x,y]$ is an endomorphism 
of~$\mathfrak{g}$, which we denote $\mathrm{ad}\,x$. We say 
that $x$ is \textsl{ad-nilpotent} if 
$\mathrm{ad}\,x$ is a nilpotent endomorphism. 
If~$\mathfrak{g}$ is nilpotent, then all elements of 
$\mathfrak{g}$ are ad-nilpotent. The converse is also true, 
it is the Engel Theorem (\cite{Humphreys}). In 
particular a Lie algebra $\mathfrak{g}$ whose all elements 
are ad-nilpotent is triangularizable. 
If now $\mathfrak{g}$ 
is a matrices algebra whose all elements are nilpotent
(for the multiplication), then the algebra is 
up to conjugacy contained in the Heisenberg Lie 
algebra~$\mathfrak{h}_n$. This ends the proof of the theorem.

\end{proof}

\subsection{Some remarks on linear $4$-chambars}

As previously we will identify the vector 
field $X_i$ to its matrix. 

\begin{defi}
A $p$-chambar 
$\mathrm{Ch}(X_1,X_2,\ldots,X_p)$ has 
\textsl{rank $r$} if $r$ 
is the maximal rank of the $X_i$.
\end{defi}

Let us start with the following property:

\begin{pro}
{\sl Let $\mathrm{Ch}(X_1,X_2,X_3,X_4)$ be a linear
$4$-chambar. If $\mathrm{Ch}(X_1,X_2,X_3,X_4)$ 
has rank $2$, then 
$\mathrm{Ch}(X_1,X_2,X_3,X_4)$ is irreducible.}
\end{pro}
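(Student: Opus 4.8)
The plan is a short argument by contradiction resting on two elementary facts: there is no linear $1$-chambar, and a linear $2$-chambar is exactly a pair $(A,-A)$ with $A^2=0$. As agreed in this subsection we identify each $X_i$ with its $3\times 3$ matrix.

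First I would enumerate the possible reductions. A hypothetical $1$-chambar $\mathrm{Ch}(X)$ would force $\exp tX=\mathrm{Id}$ for all small $t$, hence $X=0$, which is excluded; so every chambar has at least two members, and consequently the only way to split the $4$-tuple into two sub-chambars is $4=2+2$. Thus, if $\mathrm{Ch}(X_1,X_2,X_3,X_4)$ were reducible, then after renumbering both $\mathrm{Ch}(X_1,X_2)$ and $\mathrm{Ch}(X_3,X_4)$ would be $2$-chambars. For a linear $2$-chambar $\mathrm{Ch}(A,B)$ the barycentric property $\exp tA+\exp tB=2\,\mathrm{Id}$ gives $B=-A$ upon differentiating at $t=0$, and then reads $\exp tA+\exp(-tA)=2\,\mathrm{Id}$; expanding in powers of $t$ and keeping the even ones yields $A^{2m}=0$ for every $m\ge 1$, in particular $A^2=0$. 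Hence in our situation $X_2=-X_1$, $X_4=-X_3$, $X_1^2=X_3^2=0$.

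Then I would bring in the rank hypothesis. Since $\mathrm{rk}(-A)=\mathrm{rk}(A)$, the rank of the chambar is $\max(\mathrm{rk}\,X_1,\mathrm{rk}\,X_3)$, so if it equals $2$ we may, up to swapping the two pairs, assume $\mathrm{rk}\,X_1=2$. But $X_1^2=0$ forces $\mathrm{Im}\,X_1\subset\ker X_1$, whence $\mathrm{rk}\,X_1\le\dim\ker X_1=3-\mathrm{rk}\,X_1$, i.e.\ $\mathrm{rk}\,X_1\le 1$: a contradiction. Therefore $\mathrm{Ch}(X_1,X_2,X_3,X_4)$ cannot be reduced, i.e.\ it is irreducible.

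The argument is essentially immediate once the first step is settled, and the only point that really needs a word of justification is that a reducible $4$-chambar is a concatenation of two $2$-chambars, which in turn boils down to the observation that a non-zero vector field is never a chambar on its own. I would also point out that dimension $3$ is used in an essential way in the last inequality: over $\mathbb{C}^n$ with $n\ge 4$ there are rank-$2$ matrices $A$ with $A^2=0$, and then $\mathrm{Ch}(A,-A,B,-B)$ is a reducible rank-$2$ linear $4$-chambar, so the statement is specific to the present setting.
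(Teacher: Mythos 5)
Your proof is correct and follows essentially the same route as the paper: assuming reducibility, the $4$-chambar splits into two $2$-chambars $(A,-A)$ with $A^2=0$, which in the ambient dimension $3$ forces $\mathrm{rk}\,A\le 1$, contradicting the rank $2$ hypothesis. You merely spell out the steps the paper compresses (there are no $1$-chambars, so the splitting must be $2+2$), and your closing observation that the statement really needs dimension $\le 3$ --- an assumption the paper's proof also uses implicitly, and which fails on $\mathbb{C}^n$ for $n\ge 4$ --- is accurate.
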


\begin{proof}[{\sl Proof}]
Suppose, by contradiction, that 
$\mathrm{Ch}(X_1,X_2,X_3,X_4)$ 
is reducible. Then $\mathrm{Ch}(X_1,X_2,X_3,X_4)$ 
consists of two pairs of $2$-chambars:  
the trajectories are thus lines and 
the $X_i$'s (identified with their 
matrices) have rank~$1$.
\end{proof}

\subsubsection{A first family of examples}\label{subsubsec:1ex}
Consider the four following matrices 
\begin{align*}
& X_1=\left(
\begin{array}{ccc}
0 & 0 & \alpha\\
0 & 0 & \beta\\
0 & 0 & 0
\end{array}
\right), &&
X_2=\left(
\begin{array}{ccc}
0 & \gamma & 0\\
0 & 0 & 0 \\
0 & \delta & 0  
\end{array}
\right),\\
&X_3=\left(
\begin{array}{ccc}
0& a & -\frac{ab}{c}\\
0& b & -\frac{b^2}{c}\\
0& c & -b
\end{array}
\right),&&
X_4=\left(
\begin{array}{ccc}
0& d & \frac{db}{e} \\
0& -b & -\frac{b^2}{e}\\
0& e & b
\end{array}
\right)
\end{align*}
where $ \alpha$, $\beta$, $\gamma$, 
$\delta$, $a$, $b$, $c$, 
$d$, $e$ are complex
numbers satisfying the following 
conditions
\begin{align*}
& \gamma+a+d=0, &&\alpha-\frac{ab}{c}+\frac{db}{e}=0, && \beta-\frac{b^2}{c}-\frac{b^2}{e^2}=0, && \delta+c+e=0.
\end{align*}
These matrices define a $4$-chambar 
generically irreducible whose 
elements are not contained in a nilpotent
algebra. Indeed 
\begin{itemize}
\item[$\diamond$] on the one hand the 
nilpotent algebras of matrices are 
triangularizable; in particular the 
eigenvalues of a commutator are zero;

\item[$\diamond$] on the other hand
the eigenvalues of the commutator
$[X_1,X_2]=\left(
\begin{array}{ccc}
0& \alpha\delta & -\beta\gamma \\
0& \beta\delta & 0\\
0& 0 & -\beta\delta 
\end{array}
\right)$ are non-zero as soon as
$\beta\delta\not=0$.
\end{itemize}

Remark that the $X_i$'s have a common 
kernel for generic values of the 
parameters.

\subsubsection{A second family of examples}
Let us consider 
\begin{align*}
& X_1=\left(
\begin{array}{ccc}
0 & 0 & 0\\
1 & 0 & 0\\
0 & 1 & 0
\end{array}
\right), &&
X_2=\left(
\begin{array}{ccc}
0 & a & 0\\
0 & 0 & 0 \\
b & -c-2 & 0  
\end{array}
\right),\\
&X_3=\left(
\begin{array}{ccc}
0 & -a & 0\\
0 & 0 & 0\\
b & c & 0
\end{array}
\right),&&
X_4=\left(
\begin{array}{ccc}
0 & 0 & 0 \\
-1 & 0 & 0\\
-2b & 1 & 0
\end{array}
\right)
\end{align*}

Then $(X_1,X_2,X_3,X_4)$ is a linear
$4$-chambar of rank $2$ in $\mathbb{C}^3$
and the $X_i$'s (identified to their matrices)
are not contained in a nilpotent algebra of matrices.

\bigskip

More generally for $1\leq j\leq 4$ set 
\[
X_j=\left(
\begin{array}{cc}
A_j & 0\\
B_j & 0
\end{array}
\right)
\]
where $A_j$ is a $(2\times 2)$-matrix and 
$B_j$ is a $(1\times 2)$-matrix such that
\[
\left\{
\begin{array}{ll}
A_j^2=0\\
\sum_{j=1}^4B_jA_j=0
\end{array}
\right.
\]

Then $(X_1,X_2,X_3,X_4)$ is a linear
$4$-chambar of rank $2$ in $\mathbb{C}^3$
and the $X_i$'s (identified to their matrices)
are not contained in a nilpotent algebra of matrices.

\subsubsection{A third family of examples}\label{subsubsec:3ex}

Consider 
\begin{align*}
& X_1=\left(
\begin{array}{ccc}
0 & 0 & 0\\
a & 0 & b\\
c & 0 & 0
\end{array}
\right), && X_2=\left(
\begin{array}{ccc}
0 & \alpha & 0\\
0 & 0 & 0 \\
-c & \gamma & 0
\end{array}
\right), \\
& X_3=\left(
\begin{array}{ccc}
0 & 0 & 0\\
-a & 0 & -b \\
c & 0 & 0
\end{array}
\right), && X_4=\left(
\begin{array}{ccc}
0 & -\alpha & 0\\
0 & 0 & 0 \\
-c & -\beta & 0
\end{array}
\right) 
\end{align*}
where $a$, $b$, $c$, $\alpha$, $\beta$
denote some complex numbers. Note that 
\begin{align*}
& X_1+X_2=\left(\begin{array}{ccc}
0 & \alpha & 0\\
a & 0 & b\\
0 & \beta & 0
\end{array}
\right)&& X_1+tX_2=\left(\begin{array}{ccc}
0 & t\alpha & 0\\
a & 0 & b\\
(1-t)c & t\beta & 0
\end{array}
\right) 
\end{align*}
so that 
\begin{itemize}
\item[$\diamond$] $X_1+X_2$ has rank $2$ generically 
on $a$, $b$, $\alpha$ and $\beta$, 

\item[$\diamond$] $X_1+tX_2$ has rank $3$ generically 
on $t$.
\end{itemize}

The eigenvalues of the commutator 
$[X_1,X_2]=\left(\begin{array}{ccc}
-a\alpha & 0 & -b\alpha \\
-bc & a\alpha+b\beta & 0\\
-a\beta & \alpha c & -b\beta
\end{array}
\right)$ 
are non-zero as soon as $\alpha bc\not=0$.
As a consequence $\mathrm{Ch}(X_1,X_2,X_3,X_4)$
is a $4$-chambar generically irreducible 
and the matrices associated to the $X_i$'s
are not contained in a nilportent algebra
of matrices.

Note that for generic values of parameters
the $X_i$'s do not all have the same 
kernel. As a consequence examples of 
\S\ref{subsubsec:1ex} and \S\ref{subsubsec:3ex} are not conjugated.
\bigskip

Finally one can state:

\begin{pro}
{\sl There exist linear, irreducible $4$-chambars
on $\mathbb{C}^3$
with the two following pro\-perties
\begin{itemize}
\item[$\diamond$] their flows are generically
quadratic in $t$;

\item[$\diamond$] the associated matrices are not
contained in a nilpotent algebra of matrices.
\end{itemize}}
\end{pro}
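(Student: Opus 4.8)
The plan is to take the three explicit families of matrices written down in \S\ref{subsubsec:1ex}--\S\ref{subsubsec:3ex} and to check that the second and third of them simultaneously satisfy every assertion of the statement; the first family also witnesses the non-nilpotence of the generated algebra, but its matrices all have square zero, so its flows are merely linear in $t$, which is why we pass to the rank $2$ families. So let $(X_1,X_2,X_3,X_4)$ be one of those $4$-tuples of linear vector fields, identified with $3\times 3$ matrices $A_1,A_2,A_3,A_4$; I would verify in turn: $(i)$ that it is a $4$-chambar; $(ii)$ that each flow $\exp(tX_j)$ is polynomial of degree exactly $2$ in $t$ for generic parameter values; $(iii)$ that it is irreducible; $(iv)$ that the subalgebra of $\mathrm{gl}(3,\mathbb{C})$ generated by the $X_j$ is not nilpotent.

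Points $(i)$ and $(ii)$ go together. By Theorem \ref{thm:linnil} a linear chambar consists of nilpotent matrices, so here $A_j^3=0$ for each $j$, whence $\exp(tX_j)=\mathrm{Id}+tA_j+\frac{t^2}{2}A_j^2$ and
\[
\sum_{j=1}^4\exp(tX_j)=4\,\mathrm{Id}+t\sum_{j=1}^4A_j+\frac{t^2}{2}\sum_{j=1}^4A_j^2.
\]
Thus the barycentric property reduces to the two identities $\sum_jA_j=0$ and $\sum_jA_j^2=0$: the first is precisely the set of relations already imposed on the parameters, and the second is a short direct computation with the explicit matrices. Moreover, for generic parameter values one of the $A_j$ has rank $2$ (e.g.\ $A_1$ in \S\ref{subsubsec:3ex} when $bc\neq 0$, and likewise $A_1$ in the second family); such an $A_j$ is a single nilpotent Jordan block, so $A_j^2\neq 0$ and its flow is genuinely of degree $2$ in $t$, giving $(ii)$.

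For $(iii)$, I would observe that a $4$-chambar is reducible exactly when some nonempty proper subfamily satisfies the barycentric property (the complementary subfamily then automatically does too, since the sum of the four flows is $4\,\mathrm{Id}$); a subfamily of size $1$ would be a zero field and one of size $3$ would force the remaining field to vanish, so reducibility means some pair $(X_i,X_j)$, $i\neq j$, is a $2$-chambar, i.e.\ $X_j=-X_i$ together with $A_i^2=0$. Then $X_1+X_2+X_3+X_4=0$ forces the complementary pair to sum to $0$ as well, and the identity $\sum_kA_k^2=0$ then forces its two matrices also to have square zero; hence a reducible linear $4$-chambar consists entirely of matrices of square zero, in particular of rank at most $1$. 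Consequently any linear $4$-chambar of rank $2$ --- such as the second and third families --- is irreducible. Finally, $(iv)$ is already built into the construction: for generic parameters the commutator $[X_1,X_2]$ has a nonzero eigenvalue ($\beta\delta\neq 0$, resp.\ $\alpha bc\neq 0$), whereas a nilpotent algebra of matrices is, by Engel's theorem, simultaneously strictly upper-triangularizable, so all of its elements --- commutators included --- are nilpotent; contradiction. The only point needing any care is the reducibility analysis in $(iii)$ together with the verification of the identity $\sum_jA_j^2=0$ for the explicit matrices; everything else is routine manipulation of $3\times 3$ matrices.
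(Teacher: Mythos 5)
Your proposal is correct and follows essentially the paper's own route: take the second and third (rank-two) families of matrices, get quadratic flows from $A_j^2\neq 0$, irreducibility from the fact that a reducible linear $4$-chambar splits into pairs $(X,-X)$ with $X^2=0$ and hence has rank at most $1$ (the paper's rank-$2$ proposition), and non-nilpotence of the generated algebra from a commutator with a non-zero eigenvalue. Two details to fix when writing it up: the genericity condition $\beta\delta\neq 0$ pertains to the first family, so for the second family you must compute its own commutator (e.g.\ $[X_1,X_2]$ there has eigenvalues $\pm a$ and $0$), and the nilpotency $A_j^3=0$ of the explicit matrices should be verified directly rather than deduced from Theorem~\ref{thm:linnil}, which presupposes the chambar property you are in the middle of checking.
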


\section{Homogeneous chambars}\label{sec:homogcham}

\subsection{First properties}

Let $\mathcal{B}=\mathrm{Ch}(X_1,X_2,\ldots,X_p)$ be a 
$p$-chambar at $0\in\mathbb{C}^n$. 
We say that~$\mathcal{B}$ is 
\textsl{homogeneous of degree $\nu$} if any 
$X_i$ is homogeneous of degree~$\nu$. 

\begin{rem}
Let $\mathrm{Ch}(X,-X)$ be a homogeneous $2$-chambar on $\mathbb{C}^2$. 
Then up to linear conjugacy $X=x^\nu\frac{\partial}{\partial y}$ 
(the proof is an exercise).
\end{rem}

Given two holomorphic vector fields $X$ and $Y$ on $\mathbb{C}^n$, we define the set of colinearity between $X$ and $Y$ as
\[
\mathrm{Col}(X,Y):=\big\{m\in\mathbb{C}^n\,\vert\, X(m)\wedge Y(m)=0\big\}.
\]

\begin{rems}
We would like to remark the following facts:
\begin{itemize}
\item[$\diamond$] $\mathrm{Col}(X,Y)$ is an analytic set;

\item[$\diamond$] if $\mathrm{Col}(X,Y)\not=\emptyset$, then 
$\dim_\mathbb{C}(\mathrm{Col}(X,Y))\geq 1$;

\item[$\diamond$] if $X$ and $Y$ are homogeneous vector fields, then 
$\dim_\mathbb{C}(\mathrm{Col}(X,Y))\geq 1$;

\item[$\diamond$] il $X$ is homogeneous and $Y=R$ is the radial
vector field of $\mathbb{C}^n$, then 
$\mathrm{Col}(X,R)$ is an union of straight lines through the origin $0\in\mathbb{C}^n$. Il 
$X\wedge R\not=0$, then the vector fields $X$ and $R$ generate a singular foliation~$\mathcal{F}$ 
of dimension $2$ of $\mathbb{C}^n$. There is a holomorphic foliation 
$\widetilde{\mathcal{F}}$ on $\mathbb{P}^{n-1}_\mathbb{C}$ such
that $\mathcal{F}=\pi^*(\widetilde{\mathcal{F}})$. It is possible to 
prove that 
\[
\mathrm{Col}(X,R)=\pi^{-1}\big(\mathrm{Sing}(\widetilde{\mathcal{F}})\big)=\mathrm{Sing}(\mathcal{F}).
\]
\end{itemize}
\end{rems}

The various previous examples suggest the following conjecture: 

\begin{conjecture}\label{thm:hom}
{\sl Let $\mathrm{Ch}(X_1,X_2,\ldots,X_p)$
be a homogeneous $p$-chambar of degree
$\nu\geq 1$ on $\mathbb{C}^n$, where $p\geq 2$. 
Then $\mathrm{Col}(X_k,R)=\mathrm{Sing}(X_k)$\footnote{\,Recall that $\mathrm{Sing}(X_k)$ is the 
singular set of $X_k$:
\[
\mathrm{Sing}(X_k)=\big\{m\in\mathbb{C}^n\,\vert\,X_k(m)=0\big\}.
\]
} for any $k\geq 1$.
In particular $\dim\mathrm{Sing}(X_k)\geq 1$.}
\end{conjecture}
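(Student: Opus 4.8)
The inclusion $\mathrm{Sing}(X_k)\subseteq\mathrm{Col}(X_k,R)$ is immediate, and once equality is known the final assertion $\dim\mathrm{Sing}(X_k)\geq 1$ follows from the remarks preceding the statement, since $\mathrm{Col}(X_k,R)$ is a cone of dimension $\geq 1$. So the whole content is the inclusion $\mathrm{Col}(X_k,R)\subseteq\mathrm{Sing}(X_k)$, i.e. that $X_k(m)\wedge R(m)=0$ with $m\neq 0$ forces $X_k(m)=0$. Fix $k$, say $k=1$, and write $X_1(m)=\lambda m$; we must show $\lambda=0$. If $\nu=1$ this is immediate from Theorem \ref{thm:linnil}: $X_1$ is then a nilpotent matrix, hence has no non-zero eigenvalue, whereas $X_1(m)=\lambda m$ exhibits $m$ as an eigenvector. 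So assume from now on $\nu\geq 2$ and, for contradiction, $\lambda\neq 0$.

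The first step is a homogeneity computation. Writing $X_1=\sum_i A_i\frac{\partial}{\partial x_i}$ with $A_i$ homogeneous of degree $\nu$, each coordinate $X_1^\ell(x_j)$ is homogeneous of degree $\ell(\nu-1)+1$, and $X_1^{\ell+1}(x_j)=\sum_i A_i\,\partial_i\big(X_1^\ell(x_j)\big)$; evaluating at $m$, using $A_i(m)=\lambda m_i$ and Euler's identity $\sum_i m_i\,\partial_i H(m)=(\deg H)\,H(m)$, one gets $X_1^{\ell+1}(x_j)(m)=\lambda\big(\ell(\nu-1)+1\big)X_1^\ell(x_j)(m)$, hence
\[
X_1^\ell(m)=\lambda^\ell c_\ell\,m,\qquad c_\ell=\prod_{i=1}^{\ell-1}\big(i(\nu-1)+1\big)\neq 0 .
\]
In particular the line $L=\mathbb{C}\,m$ is $X_1$-invariant and $\varphi_t^1(m)=\big(1-(\nu-1)\lambda t\big)^{-1/(\nu-1)}m$, so on $L$ the flow of $X_1$ is the (non $t$-polynomial) flow escaping to infinity along $L$ as $t\to t^\ast=\tfrac{1}{(\nu-1)\lambda}$. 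The same computation gives, for any index $k$ with $X_k(m)=\mu_k m$, the identity $X_k^\ell(m)=\mu_k^\ell c_\ell\,m$ with the same $c_\ell$. Hence, \emph{if} every $X_k$ were radial at $m$, the barycentric relations of Proposition \ref{pro:csqeqal} would read $c_\ell\big(\sum_k\mu_k^\ell\big)m=0$ for all $\ell\geq 1$; since $c_\ell\neq0$ and $m\neq0$ all power sums $\sum_k\mu_k^\ell$ vanish, so all $\mu_k=0$, contradicting $\lambda=\mu_1\neq0$. Thus the problem reduces to showing that $m\in\mathrm{Col}(X_1,R)$ implies $m\in\mathrm{Col}(X_k,R)$ for every $k$.

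To establish this I would study the flows along $L$. Restricting the barycentric identity to $x\in L$ gives $\sum_{k\geq 2}\varphi_t^k(x)=px-\varphi_t^1(x)\in L$, a function of $t$ that acquires a pole (if $\nu=2$) or a branch point of order $\nu-1$ (if $\nu\geq3$) at $t^\ast(x)$. Since $\sum_k\varphi_t^k(x)\equiv px$ is holomorphic and single-valued, the singular behaviour of $\varphi_t^1$ near $t^\ast(x)$ must be carried by the remaining flows; a dominant-balance argument shows that any $\varphi_t^k$ with $k\geq2$ which blows up at $t^\ast(x)$ does so along a characteristic direction $a$ with $X_k(a)=\tfrac{1}{\nu-1}a$, that is $a\in\mathrm{Col}(X_k,R)\setminus\mathrm{Sing}(X_k)$. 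The plan is then to push this ``propagation of colinearity'', together with the fact that on $L$ the leading singular parts of the $\varphi_t^k$ must sum to minus that of $\varphi_t^1$, to force the relevant characteristic directions to be proportional to $m$, so that $X_k(m)\parallel m$ for every $k$, and to close the argument by the power-sum reduction above.

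The main obstacle is precisely this last step. In contrast with the $t$-polynomial situation of \S\ref{sec:rigidcham}, the flows $\varphi_t^k$ of homogeneous vector fields of degree $\geq2$ are in general neither polynomial nor even meromorphic in $t$, and their domains of definition are not controlled a priori; one must, in particular, rule out cancellations among the $\varphi_t^k$ that keep the sum $\sum_{k\geq2}\varphi_t^k(x)$ finite (and equal to $px-\varphi_t^1(x)$) while individual terms stay singular. In low dimension, or for small $p$, one can instead verify the conjecture directly from the classification results: for $n=2$, $\nu=2$, $p=3$ it is immediate from Theorem \ref{thm:hom3cham}, where each $X_k=a_ky^2\frac{\partial}{\partial x}$ and $\mathrm{Col}(X_k,R)=\mathrm{Sing}(X_k)=\{y=0\}$.
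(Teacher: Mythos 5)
You should first be aware that the statement you are addressing is presented in the paper as a \emph{conjecture}: the authors prove it only in the special case $n=2$, $\nu=2$, $p=3$ (Theorem \ref{thm:hom3cham}), by a classification argument built on the contractions $f_j$ defined by $R\wedge X_j=f_j\,\frac{\partial}{\partial x}\wedge\frac{\partial}{\partial y}$, their common linear factors, invariant lines and first integrals (Lemmas \ref{lem:1} and \ref{lem:2}); there is no proof of the general statement in the paper to compare with. Your preliminary reductions are correct and of independent interest: the case $\nu=1$ does follow from Theorem \ref{thm:linnil}; the Euler-identity computation giving $X_k^\ell(m)=\mu_k^\ell c_\ell\,m$ whenever $X_k(m)=\mu_k m$, with $c_\ell=\prod_{i=1}^{\ell-1}\big(i(\nu-1)+1\big)\neq 0$, is right; and, combined with Proposition \ref{pro:csqeqal}, it correctly reduces the conjecture to the claim that colinearity with $R$ at a point $m$ propagates from one member of the chambar to all the others. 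This is a genuinely different angle from the paper's (which never analyses flow singularities).

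The gap is precisely that propagation step, which you acknowledge but which is where the argument ceases to be a proof. Concretely: (i) the barycentric identity is only guaranteed on the common domain of definition of the $p$ flows; the blow-up of $\varphi^1_t(m)$ at $t^\ast$ lies outside that domain, so to make the other flows ``see'' this singularity you need an analytic-continuation argument that is not supplied (the other flows may fail to extend, or become multivalued, before $t^\ast$, and the identity asserts nothing there); (ii) even granting continuation along some path, several of the $\varphi^k_t$ could become singular simultaneously with cancellations in their sum, so no individual flow need blow up, let alone along $L$; (iii) the ``dominant balance'' assertion that a blow-up of $\varphi^k_t$ must occur along a direction $a$ with $X_k(a)=\frac{1}{\nu-1}a$ is the Painlev\'e-type ansatz, not a theorem --- blow-up for a homogeneous polynomial ODE need not be of pure power type nor asymptotic to a single direction (logarithmic or more degenerate behaviour is not excluded) --- and even if it were, you would still have to force that direction to be proportional to $m$ rather than to some other point of $\mathrm{Col}(X_k,R)$, which is the heart of the matter. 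So what you have is a correct reduction plus a heuristic; the conjecture remains open beyond the special case settled by Theorem \ref{thm:hom3cham}, and your sketch, as it stands, does not close it.
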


In the same spirit we have the following problem:

\begin{prob}\label{prob:ineq}
Let $\mathrm{Ch}(X_1,X_2,\ldots,X_p)$ 
be a $($non-homogeneous$)$ $p$-chambar such that 
$X_k(0)=~0$. Do the inequalities 
$\dim\mathrm{Sing}(X_k)\geq 1$ hold ?
\end{prob}

\begin{rem}
The problem is solved in the following cases: 
\begin{itemize}
\item[$\diamond$] $\nu=1$ (Theorem \ref{thm:linnil});

\item[$\diamond$] $p=2$ (Theorem \ref{thm:2sing});

\item[$\diamond$] rigid-chambars (Corollary \ref{cor:rigidp}).
\end{itemize}
\end{rem}

We proved the conjecture in the special case of 
homogeneous $3$-chambar on~$\mathbb{C}^2$ of 
degree $2$. In fact we will prove the following:

\begin{thm}\label{thm:hom3cham}
{\sl Let $\mathrm{Ch}(X_1,X_2,X_3)$ be a homogeneous
$3$-chambar on $\mathbb{C}^2$ of degree $2$. 
Then, after a change of variables, $X_j$ can be 
written as $a_jy^2\frac{\partial}{\partial x}$, 
where $a_1+a_2+a_3=0$. In particular, any 
homogeneous $3$-chambar on $\mathbb{C}^2$ of 
degree~$2$ is rigid.}
\end{thm}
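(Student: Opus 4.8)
My plan is to exploit the characterization of the barycentric property given in Proposition \ref{pro:csqeqal}: since $X_1,X_2,X_3$ are homogeneous of degree $2$, the conditions $\sum_k X_k^\ell(x_j)=0$ for $\ell=1,2,3,\dots$ become a cascade of polynomial identities among the homogeneous components of the $X_k$. Write $X_k = A_k(x,y)\,\partial_x + B_k(x,y)\,\partial_y$ with $A_k,B_k$ homogeneous of degree $2$. The first relation ($\ell=1$) gives $A_1+A_2+A_3=0$ and $B_1+B_2+B_3=0$, so $X_1+X_2+X_3=0$; equivalently $X_3=-(X_1+X_2)$, and we are classifying pairs $(X_1,X_2)$ of degree-$2$ homogeneous fields such that the triple satisfies all the higher relations.

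The heart of the argument should be the $\ell=2$ relation $\sum_k X_k(A_k)=\sum_k X_k(B_k)=0$, i.e.
\[
\sum_{k=1}^3 \big(A_k\,\partial_x A_k + B_k\,\partial_y A_k\big)=0,\qquad
\sum_{k=1}^3 \big(A_k\,\partial_x B_k + B_k\,\partial_y B_k\big)=0.
\]
These are quadratic identities in the coefficient vectors of $A_1,A_2$ (after substituting $A_3=-A_1-A_2$, etc.). First I would use the freedom of a linear change of variables in $\mathbb{C}^2$ to normalize: if the three foliations $\mathcal{F}_{X_k}$ are not all equal, pick a common structure; in fact I expect the right normalization is to show each $X_k$ is colinear with a single vector field, i.e. the chambar is semi-rigid, and then invoke that in dimension considerations the common foliation $\mathcal{F}_{X_k}$ must be by straight lines through the origin (homogeneity forces radial-type behaviour). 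Concretely, I would argue that $\mathrm{Col}(X_1,X_2)$ is a nonempty analytic set (it contains at least a line by homogeneity), and then push to show $X_1\wedge X_2\equiv 0$ using the higher barycentric relations; once $X_1=aX$, $X_2=bX$ with $a,b$ constant and $X$ homogeneous of degree $2$, Theorem \ref{thm:rigidp} (or Proposition \ref{pro:semirigid} together with the one-variable classification Theorem \ref{thm:loc3bfc}) forces $X$ to have a $t$-polynomial flow and, being homogeneous of degree $2$ in two variables, $X$ must after a linear change be $y^2\partial_x$; the constants then satisfy $a+b+c=0$.

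The main obstacle I anticipate is ruling out the a priori possibility that $X_1,X_2,X_3$ are \emph{not} colinear — that is, genuinely establishing semi-rigidity rather than assuming it. The reducible case ($X_3=-X_1$, say, plus a separate handling) is easy, so the real work is the irreducible-looking case where the three foliations could differ. Here I would set up the $\ell=2$ and $\ell=3$ identities explicitly in coordinates, treat them as a polynomial system in the $\binom{3}{}\cdot 3 \cdot 2 = 18$ coefficients (cut down to $12$ by $X_3=-X_1-X_2$ and further by the linear-change normalization), and show the only solutions have all three vector fields proportional. A cleaner route, which I would try first, is to observe that for a homogeneous degree-$2$ field $X$ on $\mathbb{C}^2$ the flow is $t$-polynomial of degree $\le 2$ (this should follow from the same $X^m(x_j)\equiv 0$ vanishing forced by the Newton-power obstruction in Theorem \ref{thm:rigidp}-style arguments, since three nonzero constants $a_k$ cannot have $a_1^\ell+a_2^\ell+a_3^\ell=0$ for all $\ell$); then a degree-$2$ homogeneous $t$-polynomial vector field on $\mathbb{C}^2$ is, up to linear change, $y^2\partial_x$ (classify via $X^2(x_j)$ being a first integral, hence constant along leaves, combined with homogeneity), and colinearity of $X_1,X_2,X_3$ becomes automatic because they all induce the same foliation by the parabolas/lines attached to $y^2\partial_x$. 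I would close by checking $\mathrm{Col}(X_k,R)=\mathrm{Sing}(X_k)=\{y=0\}$ directly for $a_k y^2\partial_x$, which gives the ``in particular'' assertion about the singular set and matches Conjecture \ref{thm:hom}.
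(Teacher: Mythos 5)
Your proposal correctly identifies where the difficulty lies, but it does not resolve it: the whole content of the theorem is to rule out non-colinear configurations, and you leave exactly that step as ``set up the $\ell=2$ and $\ell=3$ identities in coordinates and show the only solutions are proportional'' --- an unexecuted computation, with no argument that conditions of order $\leq 3$ suffice. The barycentric property is an infinite family of identities (one in each degree, since $X_k^\ell(x_j)$ is homogeneous of degree $\ell+1$), and the paper's proof in fact does \emph{not} reduce to a low-order algebraic check: it introduces the cubic forms $f_j$ defined by $R\wedge X_j=f_j\,\partial_x\wedge\partial_y$, proves $f_j\not\equiv 0$ (Lemma \ref{lem:2}, which needs the one-variable classification Theorem \ref{thm:loc3bfc} applied to the restriction of the chambar to an invariant line --- a case, $X_j$ proportional to the radial field, that your plan never addresses), then splits via Lemma \ref{lem:1} into ``two common linear factors'' versus ``$f_j$ is a first integral of $X_j$'', and the hardest subcase is excluded by a genuinely dynamical argument: tracking $\varphi^1_t(x,0)=x/(1-\alpha tx)$ along an invariant line, using connectedness of an irreducible cubic level $f_2=c$ to produce a periodicity, and contradicting finiteness of intersection points unless $f_2,f_3$ are perfect cubes. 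Nothing of this kind (nor any substitute for it) appears in your plan, so the colinearity claim remains a gap, not a proof.

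Your proposed ``cleaner route'' is moreover circular. The Newton-power vanishing argument of Theorem \ref{thm:rigidp} (``some $a_1^\ell+\dots+a_p^\ell\neq 0$ forces $X^{\ell-1}(X_j)\equiv 0$'') applies only to a chambar already known to be rigid, i.e.\ of the form $(a_1X,a_2X,a_3X)$; it cannot be invoked to show that each member of a general homogeneous degree-$2$ $3$-chambar has $t$-polynomial flow, since rigidity is precisely what must be proved. And the unconditional claim that a homogeneous degree-$2$ field on $\mathbb{C}^2$ has $t$-polynomial flow is false ($x^2\partial_x$, or $xR$, have flows with poles in $t$); eliminating such fields from the chambar is exactly what Lemma \ref{lem:2} and the subsequent case analysis do. (A minor further point: the ``in particular'' clause of the statement asserts rigidity, not a claim about $\mathrm{Sing}(X_k)$, so your closing verification addresses the wrong conclusion, though the normal form of course makes both immediate.)
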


Let $X$ be a homogeneous vector field of 
degree $d$ on $\mathbb{C}^2$. Then $X$ has 
$d+1$ invariant straight lines through 
$0\in\mathbb{C}^2$, counted with 
multiplicity. These lines are the solutions
of $f(x,y)=0$, where $f$ is the homogeneous 
polynomial of degree $d+1$ defined by 
\begin{equation}\label{eq:deff}
R\wedge X=f(x,y)\frac{\partial}{\partial x}\wedge\frac{\partial}{\partial y} 
\end{equation}
that is $f(x,y)=\det\left(
\begin{array}{cc}
x & y\\
X(x) & X(y)
\end{array}
\right)$.
We will assume that $f\not\equiv 0$ (if 
$f\equiv 0$, then $X$ is colinear to 
the radial vector field $R$).

Since $f=0$ is $X$-invariant, then $X(f)=h\cdot f$, 
where $h$ is a homogeneous polynomial of degree 
$d-1$. Moreover, $h=0$ if and only if $f$ is a 
first integral of $X$. In this case, the foliations
defined by $X$ and by~$f$ must coincide: the 
relation $X(f)=0$ gives 
$X(x)\frac{\partial f}{\partial x}+X(y)\frac{\partial f}{\partial y}$,
and thus 
$X(x)\frac{\partial f}{\partial x}=-X(y)\frac{\partial f}{\partial y}$.
Since the degrees of $X(x)$, $X(y)$, $\frac{\partial f}{\partial x}$,
and $\frac{\partial f}{\partial y}$ are equal, we obtain
that 
\[
X=\alpha\left(\frac{\partial f}{\partial x}\frac{\partial }{\partial y}-\frac{\partial f}{\partial y}\frac{\partial }{\partial x}\right).
\]
Using that $R(f)=(d+1)f$ and (\ref{eq:deff})
we get $\alpha=\frac{1}{d+1}$ in the above relation.

In general, we have 
\begin{equation}\label{eq:deff2}
(d+1)X-hR=H(f),    
\end{equation}
where $H(f)=\frac{\partial f}{\partial x}\frac{\partial }{\partial y}-\frac{\partial f}{\partial y}\frac{\partial }{\partial x}$.

Another relation that we will use is 
\begin{equation}\label{eq:deff3}
X(f)=X\left(\det\left(\begin{array}{cc}
x & y\\
X(x) & X(y)
\end{array}
\right)\right)=\det\left(\begin{array}{cc}
x & y\\
X^2(x) & X^2(y)
\end{array}\right).
\end{equation}

\begin{lem}\label{lem:1}
{\sl Let $\mathrm{Ch}(X_1,X_2,X_3)$ be a 
homogeneous $3$-chambar of
degree $d$ on~$\mathbb{C}^2$. For 
$1\leq j\leq 3$ define $f_j$ by 
$R\wedge X_j=f_j(x,y)\frac{\partial}{\partial x}\wedge\frac{\partial}{\partial y}$. Suppose that the 
$f_j$ are 
not identically $0$. Then 
\begin{itemize}
\item[$\diamond$] either $f_1$, $f_2$ and $f_3$
have two common linear factors,

\item[$\diamond$] or $f_j$ is a first integral
of $X_j$, $1\leq j\leq 3$.
\end{itemize}}
\end{lem}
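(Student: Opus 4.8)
The plan is to distil from the barycentric property two polynomial identities relating the $f_j$ to the cofactors $h_j$ defined by $X_j(f_j)=h_jf_j$, and then to finish by an elementary divisibility argument in $\mathbb{C}[x,y]$.

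First I would note that, by (\ref{eq:deff}), $f_j=x\,X_j(y)-y\,X_j(x)$ depends linearly on $X_j$, so that $f_1+f_2+f_3=x\sum_j X_j(y)-y\sum_j X_j(x)=0$ because $\sum_jX_j=0$. Since $\{f_j=0\}$ is the union of the $X_j$-invariant lines through the origin it is $X_j$-invariant, whence $X_j(f_j)=h_jf_j$ with $h_j$ homogeneous of degree $d-1$, and by definition $f_j$ is a first integral of $X_j$ if and only if $h_j\equiv0$. Two more identities then come essentially for free. Summing (\ref{eq:deff2}), i.e. $(d+1)X_j-h_jR=H(f_j)$, over $j$ and using $\sum_jX_j=0$ together with $\sum_jH(f_j)=H(f_1+f_2+f_3)=0$ gives $(h_1+h_2+h_3)R=0$, hence $h_1+h_2+h_3=0$. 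On the other hand (\ref{eq:deff3}) reads $X_j(f_j)=x\,X_j^2(y)-y\,X_j^2(x)$, so summing over $j$ and applying Proposition \ref{pro:csqeqal} with $\ell=2$ (that is, $\sum_jX_j^2(x_i)=0$) yields $h_1f_1+h_2f_2+h_3f_3=0$.

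Substituting $f_3=-(f_1+f_2)$ into the last relation I obtain the single identity
\[
(h_1-h_3)\,f_1=(h_3-h_2)\,f_2 .
\]
If $h_1-h_3\equiv0$, then, since $f_2\not\equiv0$ and $\mathbb{C}[x,y]$ is a domain, also $h_3-h_2\equiv0$, so $h_1=h_2=h_3$; combined with $h_1+h_2+h_3=0$ this forces $h_1=h_2=h_3\equiv0$, and every $f_j$ is a first integral of $X_j$. By symmetry the same holds if $h_3-h_2\equiv0$. In the remaining case set $a=h_1-h_3\not\equiv0$, $b=h_3-h_2\not\equiv0$, write $\delta=\gcd(f_1,f_2)$ and $f_i=\delta\tilde f_i$ with $\gcd(\tilde f_1,\tilde f_2)=1$. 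Cancelling $\delta$ in $af_1=bf_2$ gives $a\tilde f_1=b\tilde f_2$, so $\tilde f_1\mid b$, say $b=\tilde f_1 w$, and then $a=\tilde f_2 w$; since $b\not\equiv0$ we have $w\not\equiv0$, and comparing degrees ($\deg a=\deg b=d-1$, $\deg\tilde f_i=(d+1)-\deg\delta$) gives $\deg\delta=\deg w+2\ge2$. Finally $\delta\mid f_3=-(f_1+f_2)$, so $\delta$ is a common factor of $f_1,f_2,f_3$ of degree $\ge2$; picking two of its linear factors yields two common linear factors of the three $f_j$.

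The two identities $f_1+f_2+f_3=0$ and (\ref{eq:deff3}) are just the product rule and linearity in $X_j$, so they are routine; the one point where the homogeneous $3$-chambar structure is genuinely used beyond $\sum_jX_j=0$ is the degenerate case $h_1=h_2=h_3$, which is disposed of by the auxiliary identity (\ref{eq:deff2}). I expect the main obstacle to be nothing deep but rather the careful divisibility bookkeeping in the coprime case — making sure the degree count really forces $\deg\gcd(f_1,f_2)\ge2$ and that none of the polynomials involved vanishes identically.
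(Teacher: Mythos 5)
Your proof is correct and follows essentially the same route as the paper: the identities $f_1+f_2+f_3=0$, $h_1f_1+h_2f_2+h_3f_3=0$ (via (\ref{eq:deff3}) and $\sum_jX_j^2=0$) and $h_1+h_2+h_3=0$ (via (\ref{eq:deff2})), leading to $(h_1-h_3)f_1=(h_3-h_2)f_2$ and then a degree/common-factor argument. Your gcd bookkeeping and the explicit treatment of the degenerate case $h_1=h_2=h_3$ just make precise what the paper handles by assuming some $h_j\not\equiv0$ and counting degrees.
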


\begin{proof}[{\sl Proof}]
First of all, using relations (\ref{eq:deff}), 
(\ref{eq:deff3}) and both
\begin{align*}
  &  \displaystyle\sum_jX_j(x)=\displaystyle\sum_jX_j(y)=0,
&&\displaystyle\sum_jX_j^2(x)=\displaystyle\sum_jX_j^2(y)=0
\end{align*}
we obtain $\displaystyle\sum_jf_j=0$ and 
$\displaystyle\sum_j X_j(f_j)=0$. If we set 
$X_j(f_j)=h_j\cdot f_j$, $1\leq j\leq 3$, then
$\displaystyle\sum_jh_j\cdot f_j=0$. On the 
other hand, since $\displaystyle\sum_jX_j=0$
and $\displaystyle\sum_jf_j=0$, we get from
(\ref{eq:deff2}) that
\[
0=\displaystyle\sum_j\big((d+1)X_j-h_jR-H(f_j)\big)=-\displaystyle\sum_jh_jR
\]
and so $\displaystyle\sum_jh_j=0$.

Let us assume that $h_j\not\equiv 0$ for some 
$1\leq j\leq 3$. In this case, from 
$\displaystyle\sum_jh_j=0$ there are 
$i\not=j$ such that $h_i\not=h_j$. Suppose 
for instance that $h_1\not=h_2$. Then 
the equalities 
\[
\left\{
\begin{array}{ll}
f_1+f_2+f_3=0\\
h_1f_1+h_2f_2+h_3f_3=0
\end{array}
\right.
\]
imply
\begin{equation}\label{eq:deff4}
(h_1-h_3)f_1=(h_3-h_2)f_2.
\end{equation}
In particular both members of relation 
(\ref{eq:deff4}) are not identically zero. 
Since $h_1-h_3$ and $h_3-h_2$ have degree
$d-1$, and $f_1$ and $f_2$ degree $d+1$, 
$f_1$ and $f_2$ must have two common factors. As 
$f_3=-f_1-f_2$ these factors are also 
factors of $f_3$.
\end{proof}

\begin{rem}
Lemma \ref{lem:1} implies that for a homogeneous 
$3$-chambar on $\mathbb{C}^2$ Problem \ref{prob:ineq}
has a positive answer, maybe except when the
$f_i$ are first integral.
\end{rem}

\begin{lem}\label{lem:2}
{\sl Let $\mathrm{Ch}(X_1,X_2,X_3)$ be a homogeneous
$3$-chambar of degree $2$ on $\mathbb{C}^2$, 
and let $f_\ell$ be as in Lemma~\ref{lem:1}.

Then the $f_\ell$'s are not identically zero.}
\end{lem}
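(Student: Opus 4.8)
The plan is to argue by contradiction. Assume some $f_\ell\equiv 0$; after relabelling, $f_1\equiv 0$. Since $f_1=xX_1(y)-yX_1(x)$ by~(\ref{eq:deff}) and $X_1(x),X_1(y)$ are homogeneous of degree $2$, this means $X_1=\ell_1\cdot R$ for a nonzero linear form $\ell_1$, where $R=x\frac{\partial}{\partial x}+y\frac{\partial}{\partial y}$; a linear change of coordinates — which preserves the barycentric property and the identical vanishing of each $f_j$ (as $f_j\equiv 0$ just says $X_j\wedge R\equiv 0$) — lets me take $\ell_1=x$, i.e.\ $X_1=xR$. Before the main argument I dispose of the possibility that two — hence, by $\sum_jf_j=0$, all three — of the $f_j$ vanish identically: then $X_j=\ell_jR$ with each $\ell_j\neq 0$ linear and $\sum_j\ell_j=0$; a direct computation gives $X_j^k(x)=k!\,\ell_j^k\,x$, so Proposition~\ref{pro:csqeqal} forces $\ell_1^k+\ell_2^k+\ell_3^k\equiv 0$ for all $k\ge 1$, and Newton's identities then give $\ell_1\ell_2\ell_3\equiv 0$ — impossible in the integral domain $\mathbb{C}[x,y]$. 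So from now on $g:=f_2=-f_3\not\equiv 0$.

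Next I pin down $X_2$ and $X_3$. Write $X_j(f_j)=h_j\,f_j$ with $h_j$ a linear form; the identities $\sum_jf_j=0$, $\sum_jX_j(f_j)=0$ and $\sum_jh_j=0$ (all obtained in the proof of Lemma~\ref{lem:1}, the last by summing~(\ref{eq:deff2}) and using $\sum_jX_j=0$) give $(h_2-h_3)\,g=0$, hence $h_2=h_3$, and since~(\ref{eq:deff2}) for $j=1$ reads $3xR-h_1R=H(f_1)=0$ we get $h_1=3x$, so $h_2=h_3=-\tfrac32x$. Substituting back into~(\ref{eq:deff2}) and using $f_3=-g$ yields
\[
X_2=-\tfrac12\,xR+\tfrac13\,H(g),\qquad X_3=-\tfrac12\,xR-\tfrac13\,H(g),
\]
with $H(g)=\frac{\partial g}{\partial x}\frac{\partial}{\partial y}-\frac{\partial g}{\partial y}\frac{\partial}{\partial x}$; thus the whole chambar is now governed by the single cubic $g$, and it remains to impose the relations $\sum_kX_k^\ell(x)=0$ of Proposition~\ref{pro:csqeqal}.

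Write $X_2=A+B$, $X_3=A-B$ with $A=-\tfrac12xR$, $B=\tfrac13H(g)$ (so $X_1=-2A$). Using the operator identity $(A+B)^2+(A-B)^2=2A^2+2B^2$, the $\ell=2$ relation becomes
\[
\frac{\partial g}{\partial x}\,\frac{\partial^2 g}{\partial y^2}-\frac{\partial g}{\partial y}\,\frac{\partial^2 g}{\partial x\,\partial y}=\frac{27}{2}\,x^3,
\]
and comparing the coefficients of $x^3,x^2y,xy^2$ (the $y^3$-coefficient vanishes automatically) gives three equations in the coefficients of $g$ whose only solution is $g=ax^3+bx^2y$ with $b^2=-\tfrac{27}{4}$ ($a$ arbitrary). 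For this $g$, $\frac{\partial g}{\partial y}=bx^2\in\mathbb{C}[x]$, so $X_1(x),X_2(x),X_3(x)\in\mathbb{C}[x]$, the subring $\mathbb{C}[x]$ is $X_k$-invariant, and on it $X_1=x^2\frac{\partial}{\partial x}$, $X_2=-(\tfrac12+\tfrac b3)x^2\frac{\partial}{\partial x}$, $X_3=-(\tfrac12-\tfrac b3)x^2\frac{\partial}{\partial x}$. With $c_1=1$, $c_2=-(\tfrac12+\tfrac b3)$, $c_3=-(\tfrac12-\tfrac b3)$ one has $X_k^\ell(x)=\ell!\,c_k^\ell\,x^{\ell+1}$, so the barycentric condition forces $c_1^\ell+c_2^\ell+c_3^\ell=0$ for all $\ell\ge1$; but $c_1+c_2+c_3=0$ while $c_2c_3=\tfrac14-\tfrac{b^2}{9}=1$, hence $c_1^3+c_2^3+c_3^3=3\neq0$ — a contradiction. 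The only step requiring genuine computation is the $\ell=2$ equation above and its resolution into the two-parameter family $g=ax^3+bx^2y$, $b^2=-\tfrac{27}{4}$; the reduction of $X_2,X_3$ in the second paragraph is formal, and the final contradiction is a one-line power-sum computation.
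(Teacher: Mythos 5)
Your proof is correct, but it follows a genuinely different route from the paper's. The paper argues geometrically: after normalizing $X_1=xR$, it picks an $X_2$-invariant line $\ell=0$ (automatically invariant for $X_1$ and hence for $X_3$), restricts the chambar to that line, invokes the one-variable classification (Theorem \ref{thm:loc3bfc}) to force all three restrictions to vanish, deduces $X_j=xL_j$ with $L_j$ linear whose invariant lines are again $x=0$, and concludes because a sum of three homographies $\frac{x}{1-ta_jx}$ cannot equal $3x$. You instead stay entirely algebraic: you first dispose of the case where all $f_j\equiv 0$ by a Newton-identity argument on the linear forms $\ell_j$, then use the identities around (\ref{eq:deff2})--(\ref{eq:deff3}) (noting correctly that for $j=1$ the polynomial $h_1$ must be \emph{defined} through (\ref{eq:deff2}), since $X_1(f_1)=h_1f_1$ is vacuous when $f_1\equiv 0$; equivalently $h_j=\mathrm{div}\,X_j$) to pin the whole chambar down to a single cubic $g$ via $X_2=-\tfrac12xR+\tfrac13H(g)$, $X_3=-\tfrac12xR-\tfrac13H(g)$, solve the order-$2$ barycentric constraint $g_xg_{yy}-g_yg_{xy}=\tfrac{27}{2}x^3$ explicitly (your solution set $g=ax^3+bx^2y$, $b^2=-\tfrac{27}{4}$, checks out, as does the automatic vanishing of the $y^3$-coefficient), and then kill the remaining family with the order-$3$ power sum $c_1^3+c_2^3+c_3^3=3\neq 0$ on the invariant subring $\mathbb{C}[x]$ — it is essential here that you go to $\ell=3$, since $c_1,c_2,c_3$ are $1,\mathbf{j},\mathbf{j}^2$ and the order-$1$ and order-$2$ conditions are satisfied. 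What your approach buys is independence from Theorem \ref{thm:loc3bfc} (the argument is self-contained, using only Proposition \ref{pro:csqeqal} and the elementary identities preceding Lemma \ref{lem:1}), at the cost of more computation; the paper's proof is shorter precisely because it can lean on the already-established one-dimensional classification.
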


\begin{proof}[{\sl Proof}]
Suppose that $f_1\equiv 0$; up to a linear 
change of coordinates we can assume that 
$X_1=xR=x^2\frac{\partial}{\partial x}+xy\frac{\partial}{\partial y}$. 
Let $\ell=0$ be a $X_2$-invariant line;
then $\ell=0$ is $X_1$-invariant, and also
$X_3$-invariant since $X_1+X_2+X_3=0$. These facts 
imply that the restriction of $X_1$, $X_2$, 
$X_3$ to $\ell=0$ define a $3$-chambar 
on the line $\ell=0$. The classificaiton 
of $3$-chambars on $\mathbb{C}$ 
(Theorem \ref{thm:loc3bfc}) implies that the $X_i$ are $0$
on $\ell=0$. In particular $\ell=0=(x=0)$ and 
$X_1=xR$, $X_2=xL_2$, $X_3=xL_3$, with 
$L_i$ linear vector field, and $R+L_2+L_3=0$. 
The same argument as before implies that 
the invariant lines of $L_2$, $L_3$ are 
necessarily $x=0$, {\it i.e.}:
\begin{align*}
    & L_2=a_2x\frac{\partial}{\partial x}+(b_2x+c_2y)\frac{\partial}{\partial y}, 
    && L_3=a_3x\frac{\partial}{\partial x}+(b_3x+c_3y)\frac{\partial}{\partial y}.
\end{align*}
The first components of the flows of $X_1$, $X_2$,
$X_3$ are respectively $\frac{x}{1-tx}$, 
$\frac{x}{1-ta_2x}$, $\frac{x}{1-ta_3x}$;
the sum of these three homographies can not be $3x$:
contradiction.
\end{proof}

\begin{prob}
Is Lemma \ref{lem:2} true in any degree ?
\end{prob}

Assume that $\mathrm{Ch}(X_1,X_2,X_3)$ 
is homogeneous of degree $2$, and that 
the $f_j'$s have two common factors. Let 
$\ell_1$ and $\ell_2$ be the two linear 
common factors of the $f_j'$s. We have
the following two possibilities:
\begin{enumerate}
\item[i)] $\ell_1\not=\ell_2$: we can thus 
assume that $xy$ is a factor of the $f_j'$s;

\item[ii)] $\ell_1=\ell_2$: we can thus suppose
that $y^2$ is a factor of the $f_j'$s.
\end{enumerate}
Another fact is that a polynomial $p$-chambar
in dimension $1$ is constant (Proposition 
\ref{pro:dim1pol}). Therefore, if a straight line
$\ell=0$ is invariant for all vector fields of 
the chambar, then $X_{j_{\vert_\ell}}=0$, and 
$\ell$ is a factor of $X_j$. In dimension $2$
this implies that $X_j=\ell\cdot L_j$, where 
$L_j$ is a linear vector field, $1\leq j\leq 3$.

In particular, i) and ii) imply the following 
possibilities:
\begin{enumerate}
\item[i')] if $\ell_1=x$ and $\ell_2=y$, then we 
must have $X_j=xyV_j$ where $V_j$ is a constant 
vector field;

\item[ii')] if $\ell_1=\ell_2=y$, then $X_j=yL_j$ where 
$R\wedge L_j=ym_j\frac{\partial}{\partial x}\wedge\frac{\partial}{\partial y}$, $m_j=a_jx+b_jy$. In particular, we must have
\begin{equation}\label{eq:deff5}
L_j=(\alpha_jx+\beta_jy)\frac{\partial}{\partial x}+\gamma_jy\frac{\partial}{\partial y}
\end{equation}
where $a_j=\gamma_j-\alpha_j$ and $b_j=-\beta_j$.
\end{enumerate}

Let us check that i') can not happen. In fact, 
let $X=xyV$, where 
$V=a\frac{\partial}{\partial x}+b\frac{\partial}{\partial y}$.
By a direct computation we find
\[
\left\{
\begin{array}{ll}
\frac{X(x)}{xy}=a,\quad\frac{X^2(x)}{xy}=a^2y+abx,\quad\frac{X^3(x)}{xy}=a^3y^2+\alpha xy+\beta x^2\\
\frac{X(y)}{xy}=b,\quad\frac{X^2(x)}{xy}=aby+b^2x,\quad\frac{X^3(y)}{xy}=b^3x^2+\gamma xy+\delta y^2
\end{array}
\right.
\]
This implies with obvious notations: for any $1\leq k\leq 3$
\begin{align*}
&a_1^k+a_2^k+a_3^k=0
&&\text{and} && b_1^k+b_2^k+b_3^k=0
\end{align*}
so $V_1=V_2=V_3=0$.

In situation ii') the vector fields $X_j=yL_j$ 
are of the form 
$X=y\left((ax+by)\frac{\partial}{\partial x}+cy\frac{\partial}{\partial y}\right)$, and a direct computation shows that 
$X(y)=cy^2$, $X^2(y)=2c^2y^3$, and $X^3(y)=6c^3y^4$.
This implies $\displaystyle\sum_jc_j^k=0$ for 
$1\leq k\leq 3$, so that $c_1=c_2=c_3=0$, and 
$X_j=y\ell_j\frac{\partial}{\partial x}$, where 
$\ell_j=a_jx+b_jy$ is linear. In particular, 
we get 
\begin{align*}
& X_j(x)=y\ell_j, && X^2_j(x)=y^2\frac{\partial\ell_j}{\partial x}\ell_j=a_jy^2\ell_j && X^3_j(x)=a_j^2y^3\ell_j; 
\end{align*}
as a consequence, $a_1^k+a_2^k+a_3^k=0$ for any $1\leq k\leq 3$.
This yields to $a_1=a_2=a_3=0$, and to 
$X_j=b_jy^2\frac{\partial}{\partial x}$ for any $1\leq j\leq 3$.
Note that the $f_j$ are first integral of $X_j$.

It remains to consider the case where $h_1=h_2=h_3=0$ and 
$f_j$ is a first integral of $X_j$, $1\leq j\leq 3$.
Let us come back to the definition of 
$f_j:=xX_j(y)-yX_j(x)$, so that $X_j(f_j)=0$. Remark 
first that $X$ is a constant multiple of the hamiltonian 
of $f$ 
\[
H(f_j)=\frac{\partial f_j}{\partial y}\frac{\partial}{\partial x}-\frac{\partial f_j}{\partial x}\frac{\partial}{\partial y};
\]
it can be checked that this follows from $X_j(f_j)=0$.
From the definition of $f_j$ and Euler's identity
we get $X_j=-\frac{1}{3}H(f_j)$. Let $\ell$ be a straight 
line invariant for $X_1$ and passing through $0$. 
Suppose by contradiction that it is not invariant by $X_2$.
We assert that, either $X_{1_{\vert\ell}}=0$, or the 
trajectories of $X_2$ and of $X_3$ are parallel straight 
lines. Assume that $X_{1_{\vert\ell}}\not=0$; we will 
see that $f_2$ is a perfect cube, {\it i.e.} $f_2=h^3$, 
where $h$ is linear, so that the trajectories of 
$X_2$ are the levels of $h$. Without lost of 
generality we can suppose that $\ell=(y=0)$. We can 
write $f_2(x,y)=ax^3+yq(x,y)$, where $q$ is homogeneous
of degree $2$ and $a\not=0$ because $y=0$ is not 
$X_2$-invariant. If $c\not=0$, then the level $f_2=c$
cuts $\ell$ in three points $z_j:=(x_j,0)$, $1\leq j\leq 3$,
where the $x_j$'s are the roots of $x^3=\frac{c}{a}$. 
If $f_2$ is not a perfect cube, then the level 
$f_2=c$ is irreducible, and so it is connected. 
Denote by $\varphi_t^j$ the flow of $X_j$, 
$1\leq j\leq 3$. Let us remark the following facts:
\begin{itemize}
\item[a)] 
$\varphi_t^1(x,0)+\varphi_t^2(x,0)+\varphi_t^3(x,0)=3(x,0)$
for all $x\in\mathbb{C}$, for all $t$ where the flows 
are defined (barycentric property);

\item[b)] 
$X_{1\vert_{y=0}}=\alpha x^2\frac{\partial}{\partial x}$ so
$\varphi_t^1(x,0)=\frac{x}{1-\alpha tx}$ and since we are 
assuming $X_{1\vert_{y=0}}\not=0$, $\alpha$ is non-zero;

\item[c)] as $(f_2=c)\cap(y=0)=\big\{(x_j,0)\,\vert\,1\leq j\leq 3\big\}$
and $f_2=c$ is connected, there exists $\tau\not=0$
such that $\varphi_\tau^2(x_1,0)=(x_2,0)$.
\end{itemize}
It is possible to prove that $\varphi_{3\tau}^2(x_1,0)=(x_1,0)$,
and more generally $\varphi_{3k\tau}^2(x_i,0)=(x_i,0)$ for all
$k\in\mathbb{Z}$, $i=1$, $2$, $3$. Since $f_3$ is a first integral
of $X_3$, the leaf of the foliation generated by $X_3$ through
$(x_1,0)$ must cut $\ell$ in at most three points. However, 
a) and b) imply that
\[
\varphi_{3k\tau}^3(x_1,0)=\left(2x_1-\frac{x_1}{1-3k\tau x_1},0\right),
\]
contradicting that the number is finite. As a result,
\begin{itemize}
\item[I)] either $f_2$ and $f_3$ are 
perfect cubes, 
\item[II)] or $X_{1\vert_{y=0}}=0$.
\end{itemize}

Let us deal with these two possibilities.

\begin{itemize}
    \item[I)] Assume that $f_2=\ell_2^3$ and $f_3=\ell_3^3$, 
where $\ell_2$ and $\ell_3$ are linear. In this case, 
the trajectories of $X_2$, and also of $X_3$, are
parallel lines. We have the alternative: 
\begin{itemize}
\item[A)] either $d\ell_2\wedge d\ell_3=0$, 
\item[B)] or $d\ell_2\wedge d\ell_3\not=0$.
\end{itemize}

In case A), we have $\ell_3=\alpha\ell_2$, $\alpha\not=0$, 
and $\ell_2$ is a line invariant for the chambar. After a 
linear change of variables we can suppose that 
$X_j=a_jy^2\frac{\partial}{\partial x}$, and the 
statement is proved. Note that in this case 
$X_{j_{\vert_\ell}}=0$ for $1\leq j\leq 3$.

In case B), after a linear change of variables, we 
can suppose that $f_2=-x^3$ and $f_3=-y^3$, which implies
$X_2=-x^2\frac{\partial}{\partial y}$, and 
$X_3=-y^2\frac{\partial}{\partial x}$. However, 
in this case we would have 
$X_1=y^2\frac{\partial}{\partial x}+x^2\frac{\partial}{\partial y}$. 
This is not a $3$-chambar because
\begin{align*}
& X_2^2(x)=X_3^2(x)=0 && \text{and} && X_1^2(x)\not=0.
\end{align*}

\item[II)] Suppose that $X_{1\vert_{y=0}}=0$. 
From the above we have the following consequences: the hamilto\-nian $H(f_j)=X_j$ is identically 
zero on the lines $f_j=0$. In particular all the irreducible components of~$f_j$ 
have multiplicity. Since the $f_j$'s have degree $3$, the $f_j$'s are perfect cubes and we conclude as previously.
\end{itemize}

This ends the proof of Theorem \ref{thm:hom3cham}.

\vspace{2cm}

\bibliographystyle{plain}

\bibliography{biblio}

\end{document}